\numberwithin{equation}{section}
\numberwithin{figure}{section}
\theoremstyle{plain}
\newtheorem{thm}{\protect\theoremname}[section]
  \theoremstyle{remark}
  \newtheorem{rem}[thm]{\protect\remarkname}
  \theoremstyle{plain}
  \newtheorem{cor}[thm]{\protect\corollaryname}
  \theoremstyle{plain}
  \newtheorem{prop}[thm]{\protect\propositionname}
  \theoremstyle{plain}
  \newtheorem{lem}[thm]{\protect\lemmaname}
\newcommand{\cB}{\mathcal{B}}
\newcommand{\cE}{\mathcal{E}}
\newcommand{\R}{\mathbb{R}}
\newcommand{\N}{\mathbb{N}}
\newcommand{\ind}[1]{\mathbbm{1}_{#1}}
\newcommand{\id}{\text{id}}
\newcommand{\abs}[1]{\lvert#1\rvert}
\newcommand{\norm}[1]{\lvert\lvert#1\rvert\rvert}
  \providecommand{\corollaryname}{Corollary}
  \providecommand{\lemmaname}{Lemma}
  \providecommand{\propositionname}{Proposition}
  \providecommand{\remarkname}{Remark}
\providecommand{\theoremname}{Theorem}
\begin{document}

\title{Axial compression of a thin elastic cylinder: bounds on the minimum
energy scaling law}

\author{Ian Tobasco}
\address[Ian Tobasco]{Courant Institute of Mathematical Sciences, 251 Mercer St.\ NY, NY, USA, 10012}
\email{tobasco@cims.nyu.edu}


\date{\today}
\begin{abstract}
We consider the axial compression of a thin elastic cylinder placed
about a hard cylindrical core. Treating the core as an obstacle, we
prove upper and lower bounds on the minimum energy of the cylinder
that depend on its relative thickness and the magnitude of axial compression.
We focus exclusively on the setting where the radius of the core is
greater than or equal to the natural radius of the cylinder. We consider
two cases: the ``large mandrel'' case, where the radius of the core exceeds that of the cylinder, and
the ``neutral mandrel'' case, where the radii of the core and cylinder are the same. In the large mandrel
case, our upper and lower bounds match
in their scaling with respect to thickness, compression, and the magnitude
of pre-strain induced by the core. We construct three types of axisymmetric
wrinkling patterns whose energy scales as the minimum in different parameter regimes, corresponding to the presence of many wrinkles,
few wrinkles, or no wrinkles at all. In the neutral mandrel case,
our upper and lower bounds match in a certain regime in which the compression is small as compared
to the thickness; in this regime, the minimum energy scales as that
of the unbuckled configuration. We achieve these results for both the
von K\'arm\'an-Donnell model and a geometrically nonlinear model
of elasticity. 
\end{abstract}

\maketitle

\section{Introduction}

In many controlled experiments involving the axial compression of
thin elastic cylinders, one observes complex folding patterns (see,
e.g., \cite{donnell1934new,horton1965imperfections,pogorelov1988bendings,seffen2014surface}).
It is natural to wonder if such patterns are required to minimize
elastic energy, or if they are instead due to loading history. Before
we can begin to answer these questions, we need to understand the
minimum energy and in particular its dependence on external parameters.
This paper offers progress towards this goal.

Since the work of Horton and Durham \cite{horton1965imperfections},
it is a common experimental practice to place the elastic cylinder
about a hard inner core that stabilizes its deformation during loading.
In this paper, we consider the minimum energy of a compressed thin
elastic cylinder fit about a hard cylindrical core (which we also
refer to as the ``mandrel''). We prove upper and lower bounds on
the minimum energy which quantify its dependence on the thickness
of the cylinder, $h$, and the amount of axial compression, $\lambda$.
Ultimately, our goal is to identify the first term in the asymptotic
expansion of the minimum energy about $h,\lambda=0$. A more modest
goal, closer to what we achieve, is to prove upper and lower bounds
that match in scaling but not necessarily in pre-factor, e.g., 
\[
Ch^{\alpha}\lambda^{\beta}\leq\min\,E\leq C'h^{\alpha}\lambda^{\beta}.
\]
When our bounds match, which they do in some cases, we will have identified
the minimum energy scaling law along with test functions that achieve
this scaling.

There is a growing mathematical literature on minimum energy scaling
laws for thin elastic sheets. Some recent studies have considered
problems in which the direction of wrinkling is known in advance.
This could be due to the presence of a tensile boundary condition
\cite{bella2014wrinkles}, or a tensile body force such as gravity
pulling on a heavy curtain \cite{bella2015coarsening}. Such a tensile
force acts as a stabilizing mechanism, in that it pulls the wrinkles
taut and sets their direction. Then, the question is typically: how
should the wavelength of the wrinkles change throughout the sheet,
in order to achieve (nearly) minimal energy? Other works concern problems
in which the direction, or even the presence, of wrinkling is unknown
\emph{a priori}. These include works on blistering patterns \cite{belgacem2000rigorous,jin2001energy};
delamination \cite{bedrossian2015blister}; herringbone patterns \cite{kohn2013analysis};
and crumpling and folding of paper \cite{conti2008confining,venkataramani2004lower}.
In these papers, an important point is the construction of energetically
favorable crumpling or folding patterns which accommodate biaxial
compressive loads. 

In our view, the cylinder-mandrel problem belongs to either category,
as a function of whether the cylinder is fit snugly onto the mandrel
or not. Our analysis addresses the following two cases: the ``large
mandrel'' case, in which the natural radius of the cylinder is smaller
than that of the core, and the ``neutral mandrel'' case, in which
the radii of the cylinder and the core are the same. In the first
case, the mandrel pre-strains the cylinder along its hoops and, in
the presence of axial compression, this drives the formation of axisymmetric
wrinkles. In this setting, we prove upper and lower bounds on the
minimum energy that match in their scaling. The neutral mandrel case
is different, as there is no pre-strain to set the direction of wrinkling.
In this case, our best upper and lower bounds do not match (so that
at least one of them is suboptimal). Nevertheless, our lower bound
is among the few examples thus far of ansatz-free lower bounds in
problems involving confinement with the possibility of crumpling.
The cylinder-mandrel problem is similar in spirit to that of \cite{kohn2013analysis}:
in some sense, the obstacle in our analysis plays the role of their
elastic substrate. A key difference, however, is that in this paper
the cost of deviating from the mandrel is felt internally by the elastic
cylinder, whereas in \cite{kohn2013analysis} the cost of deviating
from the substrate is included as separate bulk effect. In this sense,
our discussion is also similar to that in \cite{bedrossian2015blister},
where the delaminated set is unknown.

These problems belong to a larger class in which the emergence of
``microstructure'' is modeled using a nonconvex variational problem
regularized by higher order terms (see, e.g., \cite{desimone2006recent,kohn1994surface,serfaty2006vortices}).
While we would like to understand energy minimizers, and eventually
local minimizers, a natural first step is to understand how the value
of the minimum energy depends on the problem's external parameters.
Proving upper bounds is conceptually straightforward, as it involves
evaluating the energy of suitable test functions; proving lower bounds
is more difficult, as the argument must be ansatz-free.

The presence of the inner obstacle in the cylinder-mandrel setup has
a stabilizing effect. This has been exploited in experiments which
explore both the incipient buckling load \cite{horton1965imperfections},
as well as buckled states deep into the bifurcation diagram \cite{seffen2014surface}.
In practice, there is a gap between the cylinder and the core (we
call this the ``small mandrel'' case). In the recent experimental
work \cite{seffen2014surface}, the authors explore the effect of
this gap size on the resulting buckling patterns. The character of
the observed patterns depends strongly on the size of the gap between
the cylinder and the core: in some cases the resulting structures
resemble origami folding patterns (e.g., the Yoshimura pattern), while
in other cases they resemble delamination patterns (e.g., the ``telephone-cord''
patterns discussed in \cite{moon2002characterization}). 

The effect of imposing a cylindrical geometry on confined thin elastic
sheets has also been explored in the literature. In the experimental
work \cite{roman2012stress}, Roman and Pocheau consider the axial
compression of a sheet trapped between two cylindrical obstacles.
The authors explore the effect of the size of the gap between the
obstacles on the compression-driven deformation of the sheet. When
the gap is large, the sheet exhibits crumples and folds; as the gap
shrinks, the sheet ``uncrumples'' in a striking fashion. At the
smallest reported gap sizes, the sheet appears to be (almost) axially
symmetric. This raises the question of whether the deformations from
\cite{seffen2014surface} would also become axially symmetric if the
size of the gap between the cylinder and mandrel were reduced to zero.
In the large mandrel case of the present paper, we prove that axially
symmetric wrinkling patterns achieve the minimum energy scaling law.
Our upper bounds in the neutral mandrel case also use axisymmetric
wrinkling patterns, but we wonder if optimal deformations must be
axisymmetric there. 

In the recent paper \cite{paulsen2016curvature}, Paulsen et al.\
consider the axial compression of a thin elastic sheet bonded to a
cylindrical substrate. The substrate acts as a Winkler foundation,
and sets the effective shape in the vanishing thickness limit. The
effective cylindrical geometry, in turn, gives rise to an additional
geometric stiffness which adds to the inherent stiffness of the substrate.
The authors also consider the effect of applying tension along the
wrinkles; the result is a local prediction for the optimal wavelength
of wrinkles in the sheet via the ``Far-From-Threshold'' approach
\cite{davidovitch2011prototypical}. 

The cylinder-mandrel problem offers a similar opportunity to discuss
the competition between stiffness of geometrical and physical origin.
In particular, in the neutral mandrel case, our lower bounds quantify
the additional stability afforded by the cylindrical obstacle. While
a flat sheet placed along a planar obstacle is immediately unstable
to compressive uniaxial loads, the same is not true in the presence
of cylindrical obstacles: superimposing wrinkles onto a curved shape
costs additional stretching energy. In the large mandrel case,
our upper and lower bounds balance the pre-strain induced stiffness
against the bending resistance. Since the resulting bounds match up
to prefactor, our prediction for the wavelength of wrinkling is optimal
in its scaling.

The present paper is not a study of the buckling load of a thin elastic
cylinder under axial compression, though this is an interesting problem
in its own right. This is the subject of the recent papers by Grabovsky
and Harutyunyan \cite{grabovsky2015rigorous,grabovsky2015scaling},
which give a rigorous derivation of Koiter's formula for the buckling
load from a fully nonlinear model of elasticity. These papers also
discuss the sensitivity of buckling to imperfections; in the context
of the von K\'arm\'an-Donnell equations, this is discussed in \cite{horak2006cylinder}.
(See also \cite{horak2008numerical,hunt2003cylindrical} for related
work.) The existence of a large family of buckling modes associated
with the incipient buckling load of a thin cylinder is consistent
with the development of geometric complexity when buckling first occurs.
One might imagine that the complexity seen experimentally reflects
the initial and perhaps subsequent bifurcations. Nevertheless, it
still makes sense to ask whether this complexity is required for,
or even consistent with, achievement of minimal energy. We cannot
begin to answer this question without first understanding the energy
scaling law.

In this paper, we prove upper and lower bounds on the minimum energy
in the cylinder-mandrel problem. Our upper bounds are ansatz-driven,
and we achieve them by constructing competitive test functions. In
contrast, our lower bounds are ansatz-free. Given enough compression,
low-energy test functions must buckle. Buckling in the presence of
the mandrel requires ``outwards'' displacement, and this leads to
tensile hoop stresses which cost elastic energy at leading order.
Thus, the mandrel drives buckling patterns to refine their length
scales to minimize elastic energy; this is compensated for by bending
effects, which prefer larger length scales overall. Through the use
of various Gagliardo-Nirenberg interpolation inequalities, we deduce
lower bounds by balancing these effects. In the large mandrel case,
this argument proves the minimum energy scaling law. In the neutral
mandrel case, the optimal such argument leads to matching bounds only
when the compression is small as compared to the thickness. For a
more detailed discussion of these ideas, we refer the reader to \prettyref{sub:DiscussionofProofs},
following the statements of the main results.

\subsection{The elastic energies}

We now describe the energy functionals that will be discussed in this
paper. Each is a model for the elastic energy per thickness of a unit
cylinder. Throughout this paper, we let $\theta\in I_{\theta}=[0,2\pi]$
be the reference coordinate along the ``hoops'' of the cylinder
and $z\in I_{z}=[-\frac{1}{2},\frac{1}{2}]$ be the reference coordinate
along the generators. The reference domain is $\Omega=I_{\theta}\times I_{z}$.

\subsubsection{The von K\'arm\'an-Donnell model}

The first model we consider is a geometrically linear model of elasticity,
which we refer to as the von K\'arm\'an-Donnell (vKD) model. Let
$\phi:\Omega\to\R^{3}$ be a displacement field, given in cylindrical
coordinates by $\phi=(\phi_{\rho},\phi_{\theta},\phi_{z})$. Treating
the ``in-cylinder'' displacements, $\phi_{\theta},\phi_{z}$, as
``in-plane'' displacements, the elastic strain tensor is given in
the vKD model by 
\begin{equation}
\epsilon=e(\phi_{\theta},\phi_{z})+\frac{1}{2}D\phi_{\rho}\otimes D\phi_{\rho}+\phi_{\rho}e_{\theta}\otimes e_{\theta}.\label{eq:FvKstrain}
\end{equation}
Assuming a trivial Hooke's law, the elastic energy per thickness is
given in this model by 
\begin{equation}
E_{h}^{vKD}(\phi)=\int_{\Omega}\,\abs{\epsilon}^{2}+h^{2}\abs{D^{2}\phi_{\rho}}^{2}\,d\theta dz.\label{eq:EFvK}
\end{equation}
Here, the symmetric linear strain tensor $e=e\left(\phi_{\theta},\phi_{z}\right)$
is given in $\left(\theta,z\right)$-coordinates by $e_{ij}=\left(\partial_{i}\phi_{j}+\partial_{j}\phi_{i}\right)/2$,
$i,j\in\left\{ \theta,z\right\} $, and the vectors $\left\{ e_{\theta},e_{z}\right\} $
are the reference coordinate basis vectors. The first term in \prettyref{eq:EFvK}
is known as the ``membrane term'', the second is the ``bending
term'', and the parameter $h$ is the (non-dimensionalized) thickness
of the sheet. The primary interest in this functional as a model of
elasticity is in the ``thin'' regime, $h\ll1$.

We note here that, as in \cite{horak2006cylinder,horak2008numerical,hunt2003cylindrical},
we choose to call this the von K\'arm\'an-Donnell model of elasticity.
In doing so, we invite comparison with the well-known F\"oppl-von
K\'arm\'an model for the elastic energy of a thin plate. In the
F\"oppl-von K\'arm\'an model, the elastic strain tensor is given
by
\[
\epsilon=e(u_{x},u_{y})+\frac{1}{2}Dw\otimes Dw,
\]
where $u=(u_{x},u_{y})$ and $w$ are the ``in-plane'' and ``out-of-plane''
displacements respectively. The elastic energy per thickness is then
given by the direct analog of \prettyref{eq:EFvK}. The key difference
between this model and the vKD model described above is the presence
of the last term in \prettyref{eq:FvKstrain}. This term is of geometrical
origin: it arises as $\phi_{\rho}$ describes the radial, or ``out-of-cylinder'',
displacement in the present work.

To model axial confinement of the elastic cylinder in the presence
of the mandrel, we consider the minimization of $E_{h}^{vKD}$ over
the admissible set
\begin{equation}
\begin{split}A_{\lambda,\varrho,m}^{vKD}= & \{\phi:\Omega\to\R^{3}\ :\ \phi_{\rho}\in H_{\text{per}}^{2}(\Omega),\ \phi_{\theta}\in H_{\text{per}}^{1}(\Omega),\ \phi_{z}+\lambda z\in H_{\text{per}}^{1}(\Omega)\}\\
 & \quad\cap\{\phi_{\rho}\geq\varrho-1,\ \max_{\substack{i\in\left\{ \theta,z\right\} ,\,j\in\{\rho,\theta,z\}}
}\norm{\partial_{i}\phi_{j}}_{L^{\infty}(\Omega)}\leq m\}.
\end{split}
\label{eq:AFvK}
\end{equation}
The parameter $\lambda\in(0,1)$ is the relative axial confinement
of the cylinder. The parameter $\varrho\in(0,\infty)$ is the radius
of the mandrel,\footnote{We warn the reader that while we use the subscript $\rho$ to denote
the radial component of a vector in $\R^{3}$, e.g., $x_{\rho}$,
we use the symbol $\varrho$ to denote the radius of the mandrel. } which we treat as an obstacle. The parameter $m\in(0,\infty]$ gives
an \emph{a priori }bound on the ``slope'' of the displacement, $D\phi$.
(As we will show, minimization of $E_{h}^{vKD}$ under axial confinement
prefers unbounded slopes as $h\to0$. We introduce the hypothesis
$m<\infty$ in order to systematically discuss sequences of test functions
which do not feature exploding slopes.) The assumption of periodicity
in the $z$-direction is for simplicity and does not change the essential
features of the problem.

\subsubsection{A nonlinear model of elasticity}

The vKD model described in the previous section fails to be physically
valid when the ``slope'' of the displacement, $D\phi$, is too large.
In this paper, we also consider the following nonlinear model for
the elastic energy per thickness: 
\begin{equation}
E_{h}^{NL}(\Phi)=\int_{\Omega}\,\abs{D\Phi^{T}D\Phi-\id}^{2}+h^{2}\abs{D^{2}\Phi}^{2}\,d\theta dz\label{eq:ENL}
\end{equation}
where $\Phi:\Omega\to\R^{3}$ is the deformation of the cylinder.
This is related to the displacement, $\phi$, through the formulas
\[
\Phi_{\rho}=1+\phi_{\rho},\quad\Phi_{\theta}=\theta+\phi_{\theta},\ \text{and}\quad\Phi_{z}=z+\phi_{z}.
\]
The functional $E_{h}^{NL}$ is a widely-used replacement for the
fully nonlinear elastic energy of a thin sheet (see, e.g., \cite{bella2014metric,conti2008confining}).
We note two simplifications from a fully nonlinear model: the energy
is written as the sum of a membrane term and a bending term; and where
a difference of second fundamental forms between that of the deformed
and that of the undeformed configurations would usually appear, it
has been replaced by the full matrix of second partial derivatives
of the deformation, $D^{2}\Phi$.

In parallel with the vKD model, we consider the minimization of $E_{h}^{NL}$
over the admissible set 
\begin{equation}
\begin{split}A_{\lambda,\varrho,m}^{NL}= & \{\Phi:\Omega\to\R^{3}\ :\ \Phi_{\rho}\in H_{\text{per}}^{2}(\Omega),\ \Phi_{\theta}-\theta\in H_{\text{per}}^{2}(\Omega),\ \Phi_{z}-(1-\lambda)z\in H_{\text{per}}^{2}(\Omega)\}\\
 & \quad\cap\{\Phi_{\rho}\geq\varrho,\ \max_{\substack{i\in\left\{ \theta,z\right\} ,\,j\in\{\rho,\theta,z\}}
}\norm{\partial_{i}\Phi_{j}}_{L^{\infty}(\Omega)}\leq m,\ \partial_{z}\Phi_{z}\geq0\ \text{Leb-a.e.}\}.
\end{split}
\label{eq:ANL}
\end{equation}
As above, $\lambda\in(0,1)$ is the relative axial confinement, $\varrho\in(0,\infty)$
is the radius of the mandrel, and $m\in(0,\infty]$ is an $L^{\infty}$\emph{-a
priori }bound on $D\Phi$. The final hypothesis, on the sign of $\partial_{z}\Phi_{z}$,
has no analog in \prettyref{eq:AFvK}, and deserves some additional
discussion. 

One might imagine that the cylinder should fold over itself to accommodate
axial compression. Indeed, if $z\to\Phi_{z}$ need not be invertible,
one can construct test functions that have significantly lower energy
than given in \prettyref{thm:NLlargemandrelscaling} or \prettyref{thm:NLneutralbounds}.
(In the notation of these results, such test functions can be made
to have excess energy no larger than $C(\varrho_{0})\max\{[(\varrho^{2}-1)\vee h^{2}]^{1/3}h^{4/3},h^{3/2}\}$
whenever $\varrho\in[1,\varrho_{0}]$ and $h,\lambda\in(0,\frac{1}{2}]$.)
In order to avoid this, and to facilitate a direct comparison with
the geometrically linear setting, we introduce the hypothesis that
$\partial_{z}\Phi_{z}\geq0$ in the definition of \prettyref{eq:ANL}.
We remark that such a hypothesis can be relaxed; as discussed in \prettyref{rem:invertibilityhypothesis},
one only needs to prevent $\partial_{z}\Phi_{z}$ from approaching
the well at $-1$ in order to obtain our results.

\subsection{Statement of results}

We prove quantitative bounds on the minimum energy of $E_{h}^{vKD}$
and $E_{h}^{NL}$ in two cases: the ``large mandrel case'', where
$\varrho>1$, and the neutral mandrel case, where $\varrho=1$. The
small mandrel case, where $\varrho<1$, is close to the poorly understood
question of the energy scaling law of a crumpled sheet of paper, which
is still a matter of conjecture (despite significant recent progress
offered in \cite{conti2008confining}).

\subsubsection{The large mandrel case\label{sub:largemandrelresults}}

We begin with the case where $\varrho>1$. In this setting, our methods
prove the minimum energy scaling law. We state the results first for
the vKD model. Define 
\begin{equation}
\cE_{b}^{vKD}(\varrho)=\left|\Omega\right|\left(\varrho-1\right)^{2}\label{eq:EbFVK}
\end{equation}
and let $c_{0}(\lambda,h,m)=\min\{\lambda^{1/2}h^{1/4},m^{1/2}h^{1/2}\}$.
\begin{thm}
\label{thm:FvKlargemandrelscaling} Let $h,\lambda\in(0,\frac{1}{2}]$,
$\varrho\in[1,\infty)$, and $m\in[2,\infty)$. Then we have that
\[
\min_{A_{\lambda,\varrho,m}^{vKD}}E_{h}^{vKD}-\cE_{b}^{vKD}\sim_{m}\min\left\{ \lambda^{2},\max\left\{ (\varrho-1)^{4/7}h^{6/7}\lambda^{5/7}, (\varrho-1)^{2/3}h^{2/3}\lambda\right\} \right\} 
\]
whenever $\varrho-1\geq c_{0}(\lambda,h,m)$. In the case that $m=\infty$,
we have that 
\[
\min_{A_{\lambda,\varrho,\infty}^{vKD}}E_{h}^{vKD}-\cE_{b}^{vKD}\sim\min\left\{ \lambda^{2},(\varrho-1)^{4/7}h^{6/7}\lambda^{5/7}\right\} 
\]
 whenever $\varrho-1\geq c_{0}(\lambda,h,\infty)$.\end{thm}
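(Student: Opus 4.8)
\emph{Plan.} The argument has two independent halves: an ansatz-driven upper bound and an ansatz-free lower bound.

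\emph{Upper bound.} I would test with three families of axisymmetric displacements ($\phi_{\theta}\equiv 0$, with $\phi_{\rho},\phi_{z}$ functions of $z$ alone), for which the only strains are $\epsilon_{zz}=\partial_{z}\phi_{z}+\tfrac12(\partial_{z}\phi_{\rho})^{2}$ and $\epsilon_{\theta\theta}=\phi_{\rho}$ and the bending energy is $h^{2}\int_{\Omega}(\partial_{zz}\phi_{\rho})^{2}$. Writing $\phi_{\rho}=(\varrho-1)+\psi$ with $\psi\geq 0$, and choosing $\partial_{z}\phi_{z}$ (admissibly, since only its $z$-average is constrained) to cancel the oscillating part of $\tfrac12(\partial_{z}\psi)^{2}$, the strain $\epsilon_{zz}$ reduces to the constant $-\lambda+\tfrac12\int_{I_{z}}(\partial_{z}\psi)^{2}$, so the excess over $\cE_{b}^{vKD}(\varrho)$ becomes $|\Omega|\big(\lambda-\tfrac12\int_{I_{z}}(\partial_{z}\psi)^{2}\big)^{2}+2(\varrho-1)\norm{\psi}_{L^{1}(\Omega)}+\norm{\psi}_{L^{2}(\Omega)}^{2}+h^{2}\int_{\Omega}(\partial_{zz}\psi)^{2}$. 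The three choices of $\psi$ are: (i) $\psi\equiv 0$, giving $|\Omega|\lambda^{2}$; (ii) a single axisymmetric bump of width $\ell$ and amplitude $\sim\sqrt{\lambda\ell}$ tuned so $\int_{I_{z}}(\partial_{z}\psi)^{2}=2\lambda$, whose energy $\sim(\varrho-1)\sqrt{\lambda}\,\ell^{3/2}+\lambda\ell^{2}+h^{2}\lambda\ell^{-2}$, minimized over $\ell$ subject to the slope cap, equals $\sim(\varrho-1)^{4/7}h^{6/7}\lambda^{5/7}$; and (iii) a periodic array of $K$ identical bumps whose common width and amplitude are calibrated to the slope cap and whose number $K$ is fixed by $\int_{I_{z}}(\partial_{z}\psi)^{2}=2\lambda$, whose optimized energy equals $\sim_{m}(\varrho-1)^{2/3}h^{2/3}\lambda$ and which is admissible (has $K\geq 1$) exactly in the parameter range in which $(\varrho-1)^{2/3}h^{2/3}\lambda$ exceeds $(\varrho-1)^{4/7}h^{6/7}\lambda^{5/7}$. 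Taking the minimum over the available families gives the claimed upper bound; the hypothesis $\varrho-1\geq c_{0}$ is exactly what keeps the $\lambda\ell^{2}$ term in (ii) dominated, so the optimization is clean, and for $m=\infty$ only (i) and (ii) are available and the prefactor is universal.

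\emph{Lower bound.} This is the heart of the matter and must be ansatz-free. Fix an admissible $\phi$, let $E$ be its excess energy, and put $\psi=\phi_{\rho}-(\varrho-1)\geq 0$. If $E\gtrsim\lambda^{2}$ there is nothing to prove (the right-hand side is $\lesssim\lambda^{2}$), so assume $E\ll\lambda^{2}$. Two elementary facts set things up. First, integrating $\epsilon_{\theta\theta}=\partial_{\theta}\phi_{\theta}+\tfrac12(\partial_{\theta}\psi)^{2}+(\varrho-1)+\psi$ in $\theta$ and using periodicity and Cauchy--Schwarz gives $\int_{\Omega}\epsilon_{\theta\theta}^{2}\geq\cE_{b}^{vKD}(\varrho)+2(\varrho-1)\norm{\psi}_{L^{1}(\Omega)}$, so $E\geq 2(\varrho-1)\norm{\psi}_{L^{1}(\Omega)}+h^{2}\int_{\Omega}|D^{2}\psi|^{2}$. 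Second, for each fixed $\theta$, integrating $\epsilon_{zz}=\partial_{z}\phi_{z}+\tfrac12(\partial_{z}\psi)^{2}$ in $z$ and using $\int_{I_{z}}\partial_{z}(\phi_{z}+\lambda z)=0$ gives $\tfrac12\int_{I_{z}}(\partial_{z}\psi(\theta,\cdot))^{2}=\lambda+\int_{I_{z}}\epsilon_{zz}(\theta,\cdot)$; since $\int_{\Omega}\epsilon_{zz}^{2}\leq E\ll\lambda^{2}$, a Chebyshev argument produces a ``good'' set $G\subset I_{\theta}$ of measure $\gtrsim 1$ on which $\int_{I_{z}}(\partial_{z}\psi(\theta,\cdot))^{2}\,dz\gtrsim\lambda$. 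For each $\theta\in G$ I would then apply one-dimensional Gagliardo--Nirenberg inequalities to the nonnegative periodic function $u=\psi(\theta,\cdot)$, writing $\ell(\theta)=\norm{u}_{L^{1}(I_{z})}$ and $b(\theta)=\int_{I_{z}}(u'')^{2}$: a slope-free estimate (from $\norm{u'}_{L^{2}}^{2}\leq\norm{u}_{L^{2}}\norm{u''}_{L^{2}}$ together with the bounded-interval bound $\norm{u}_{L^{2}}\lesssim\norm{u}_{L^{1}}^{4/5}\norm{u''}_{L^{2}}^{1/5}+\norm{u}_{L^{1}}$) forces $b(\theta)\gtrsim\lambda^{5/3}\ell(\theta)^{-4/3}$ or $b(\theta)\gtrsim\lambda^{2}\ell(\theta)^{-2}$, while a slope-dependent estimate (using $u\geq 0$ and, after subtracting $\min_{z}u$, the elementary bound $(\mathrm{osc}_{z}u)^{2}\lesssim\norm{u'}_{L^{\infty}}\norm{u}_{L^{1}(I_{z})}$) forces $b(\theta)\gtrsim\lambda^{3}m^{-2}\ell(\theta)^{-2}$. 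Splitting $G$ according to which estimate is used, integrating in $\theta$ with $\int_{I_{\theta}}b\leq h^{-2}E$ and the convexity of $t\mapsto t^{-\beta}$ (Jensen; legitimate since $|G|\gtrsim 1$), and setting $L=\norm{\psi}_{L^{1}(\Omega)}=\int_{I_{\theta}}\ell$, one gets $E\gtrsim(\varrho-1)L+h^{2}c\,L^{-\beta}$ with $(c,\beta)$ one of $(\lambda^{5/3},\tfrac{4}{3})$, $(\lambda^{2},2)$, $(\lambda^{3}m^{-2},2)$; minimizing in $L$ gives $(\varrho-1)^{4/7}h^{6/7}\lambda^{5/7}$, $(\varrho-1)^{2/3}h^{2/3}\lambda^{2/3}$, and $\sim_{m}(\varrho-1)^{2/3}h^{2/3}\lambda$ respectively. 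Under $\varrho-1\geq c_{0}$ the second dominates the first, so the worst case is $E\gtrsim\max\{(\varrho-1)^{4/7}h^{6/7}\lambda^{5/7},(\varrho-1)^{2/3}h^{2/3}\lambda\}$; dropping the slope-dependent estimate handles $m=\infty$.

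\emph{Main difficulty.} I expect the hardest ingredient to be the sharp slope-dependent inequality $\big(\int_{I_{z}}(u')^{2}\big)^{3}\lesssim\norm{u'}_{L^{\infty}}^{2}\norm{u}_{L^{1}}^{2}\int_{I_{z}}(u'')^{2}$ for nonnegative periodic $u$: it is tight (saturated by periodic arrays of identical bumps whose slope equals $\norm{u'}_{L^{\infty}}$), but simply chaining the obvious inequalities produces only weaker powers, so I anticipate needing a level-set/rearrangement argument. The remaining work is bookkeeping: checking that across the regime $\varrho-1\geq c_{0}$, and in both sub-regimes where the single bump and the array respectively become the optimal competitor, the one-dimensional estimates combine to the three-regime lower bound rather than a strictly weaker power of the parameters.
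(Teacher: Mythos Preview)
Your plan is essentially the paper's. The upper bounds are identical in spirit: the paper also tests with axisymmetric patterns (unbuckled, one wrinkle, many wrinkles), choosing $u_z$ so that $\epsilon_{zz}\equiv 0$, and optimizes over the wrinkle width. For the lower bound, the paper likewise proves $\Delta\geq (\varrho-1)\norm{\psi}_{L^1}\vee h^2\norm{D^2\psi}_{L^2}^2\vee\norm{\tfrac12\norm{\partial_z\psi}_{L_z^2}^2-\lambda}_{L_\theta^2}^2$, uses a Chebyshev good set for the slope-free bound, and then closes with Gagliardo--Nirenberg. The only organizational difference is that for the $m$-dependent bound the paper applies a \emph{two-dimensional} interpolation on all of $\Omega$ at once (their Lemma~5.2) rather than working slice-by-slice and invoking Jensen; both routes land on the same exponents.

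Two places where you make life harder than necessary. First, your ``case B'' (from the additive $\norm{u}_{L^1}$ in the bounded-interval $L^2$ estimate) is not needed: on a fixed periodic interval the clean inequality $\norm{u'}_{L^2}\lesssim\norm{u}_{L^1}^{2/5}\norm{u''}_{L^2}^{3/5}$ holds without a lower-order term (paper's Lemma~5.1), so the two-case split never arises. Second, and more importantly, what you flag as the ``main difficulty''---the slope-dependent inequality $\norm{u'}_{L^2}^{3}\lesssim\norm{u'}_{L^\infty}\norm{u}_{L^1}\norm{u''}_{L^2}$---is in fact immediate: interpolate $\norm{u'}_{L^2}\leq\norm{u'}_{L^\infty}^{1/3}\norm{u'}_{L^{4/3}}^{2/3}$ by H\"older, then use the standard periodic Gagliardo--Nirenberg $\norm{u'}_{L^{4/3}}\lesssim\norm{u}_{L^1}^{1/2}\norm{u''}_{L^2}^{1/2}$. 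This is exactly the device behind the paper's Lemma~5.2 (stated there in 2D, but the 1D proof is identical). Your proposed oscillation route $(\mathrm{osc}\,u)^2\lesssim\norm{u'}_{L^\infty}\norm{u}_{L^1}$ is true but does not by itself yield the needed bound on $\norm{u'}_{L^2}$, so drop it in favor of the two-line argument above; no rearrangement is required.
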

\begin{rem}
\label{rem:FvKslopeexlposion} Note that the scaling law $(\varrho-1)^{2/3}h^{2/3}\lambda $
disappears from the result when one does not assume an \emph{a priori}
$L^{\infty}$-bound on $D\phi$. Indeed, this assumption changes the
character of minimizing sequences. A consequence of our methods is
a quantification of the blow-up rate of $\norm{D\phi}_{L^{\infty}}$
as $h\to0$. For instance, if we fix $\varrho\in(1,\infty)$ and $\lambda\in(0,\frac{1}{2}]$,
then the minimizers $\{\phi_{h}\}$ of $E_{h}^{vKD}$ over $A_{\lambda,\varrho,\infty}^{vKD}$
satisfy $\norm{D\phi_{h}}_{L^{\infty}}\gtrsim_{\varrho,\lambda}h^{-2/7}$
as $h\to0$. The interested reader is directed to \prettyref{sub:Blowuprate_largemandrel}
for a precise statement of the full result. In any case, we are led
by this observation to include the parameter $m$ in the definition
of the admissible set, $A_{\lambda,\varrho,m}^{vKD}$, in order to
prevent the non-physical explosion of slope that is energetically
preferred in the large mandrel vKD problem.\end{rem}
\begin{proof}
{} \prettyref{thm:FvKlargemandrelscaling} follows from \prettyref{prop:FvKUB}
and \prettyref{prop:FvKlargemandrelLB}, once we note that 
\[
\lambda h\leq\max\{h^{6/7}\lambda^{5/7}(\varrho-1)^{4/7},m^{-1/3}(\varrho-1)^{2/3}\lambda h^{2/3}\}\iff\min\{\lambda^{1/2}h^{1/4},m^{1/2}h^{1/2}\}\leq\varrho-1.
\]

\end{proof}
This theorem shows that there are three types of patterns (three ``phases'')
which achieve the minimum energy scaling law, and that there are two
types of patterns if $m=\infty$. As we will see in the proof of the
upper bounds, these patterns consist of axisymmetric wrinkles. Roughly
speaking, the phases correspond to the absence of wrinkles, the presence
of one or a few wrinkles, or the presence of many wrinkles. The distinction
between ``few'' and ``many'' is made clear in \prettyref{sec:upperbounds}
(see \prettyref{lem:FvKUB_onewrinkle} and \prettyref{lem:FvKUB_manywrinkles}).
See \prettyref{fig:wrinklingpatterns} for a depiction of these wrinkling
patterns.

\begin{figure}
\subfloat[]{\includegraphics[height=0.18\textheight]{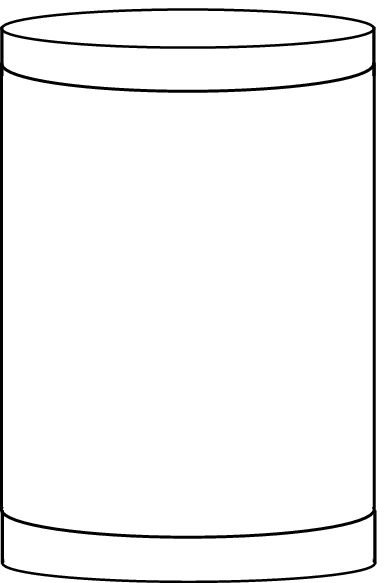}}\hspace{1.5em}\subfloat[]{\includegraphics[height=0.18\textheight]{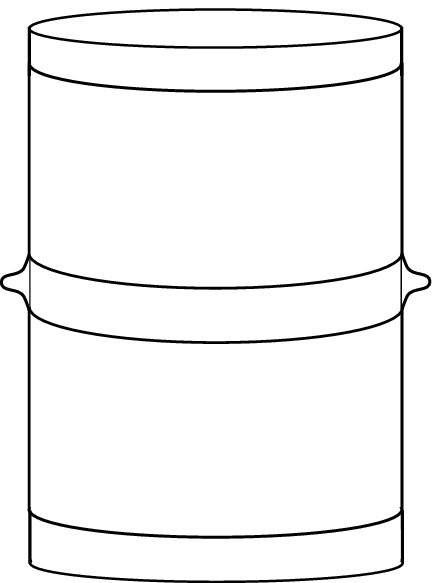}}\hspace{1.5em}\subfloat[]{\includegraphics[height=0.18\textheight]{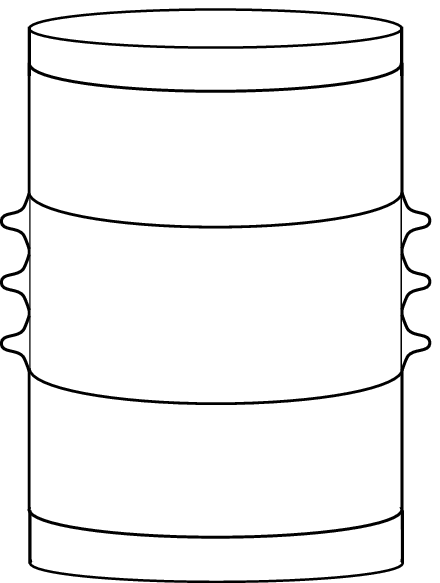}}

\caption{This figure depicts the three types of axisymmetric wrinkling patterns
that achieve the minimum energy scaling laws from \prettyref{thm:FvKlargemandrelscaling}.
In each, a thin elastic cylinder of unit radius and thickness $h$
is compressed axially by amount $\lambda$, and lies entirely outside
of an inner cylindrical mandrel of radius $\varrho>1$. Pattern A
shows the trivial wrinkling pattern, i.e., the unbuckled configuration,
which achieves an excess energy scaling as $\lambda^{2}$. Pattern
B is made up of one wrinkle, and achieves an excess energy scaling
as $(\varrho-1)^{4/7}h^{6/7}\lambda^{5/7}$. Pattern C features many
wrinkles, and achieves an excess energy scaling as $(\varrho-1)^{2/3} h^{2/3}\lambda$.
In this pattern, the number of wrinkles scales as $(\varrho-1)^{1/3}h^{-2/3}\lambda $.
A similar discussion applies for \prettyref{thm:NLlargemandrelscaling},
where $\varrho-1$ is replaced by $(\varrho^{2}-1)\vee h^{2}$. \label{fig:wrinklingpatterns}}
\end{figure}

A similar result can be proved for the nonlinear energy. Define 
\begin{equation}
\cE_{b}^{NL}(\varrho,h)=\left|\Omega\right|\left(\varrho^{2}-1\right)^{2}+\left|\Omega\right|\varrho^{2}h^{2}\label{eq:EbNL}
\end{equation}
and recall the definition of $c_{0}$ given immediately before the
statement of \prettyref{thm:FvKlargemandrelscaling} above.
\begin{thm}
\label{thm:NLlargemandrelscaling} Let $\varrho_{0}\in[1,\infty)$,
and let $h,\lambda\in(0,\frac{1}{2}]$, $\varrho\in[1,\varrho_{0}]$,
and $m\in[1,\infty)$. Then we have that
\[
\min_{A_{\lambda,\varrho,m}^{NL}}E_{h}^{NL}-\cE_{b}^{NL}\sim_{\varrho_{0},m}\min\left\{ \lambda^{2},\max\left\{ [(\varrho^{2}-1)\vee h^{2}]^{4/7}h^{6/7}\lambda^{5/7},[(\varrho^{2}-1)\vee h^{2}]^{2/3}h^{2/3}\lambda \right\} \right\} 
\]
whenever $(\varrho^{2}-1)\vee h^{2}\geq c_{0}(\lambda,h,1)$.\end{thm}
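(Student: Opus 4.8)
The plan is to follow the template of the proof of \prettyref{thm:FvKlargemandrelscaling}: prove an upper bound and a matching-in-scaling lower bound for $E_{h}^{NL}$ as two separate propositions, then combine them using the same elementary algebraic equivalence as in the vKD case, with $\varrho-1$ replaced throughout by the effective pre-strain parameter $p:=(\varrho^{2}-1)\vee h^{2}$. Two things dictate this parameter. First, the nonlinear membrane term replaces $\varrho-1$ by $\varrho^{2}-1$: for the deformation placing the cylinder on the mandrel, $\Phi=(\varrho,\theta,z)$, one computes $D\Phi^{T}D\Phi-\id=\mathrm{diag}(\varrho^{2}-1,0)$, so the membrane strain generated by sitting on the mandrel has hoop component $\varrho^{2}-1$; this is also the origin of the term $|\Omega|(\varrho^{2}-1)^{2}$ in $\cE_{b}^{NL}$. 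Second, the floor $\vee\,h^{2}$ appears because once $\varrho^{2}-1\lesssim h^{2}$ the mandrel constraint ceases to be the binding one at the relevant energy scale, and the length scales of the optimal patterns are set instead by the competition between the cylinder's own curvature and its bending, which acts as an effective pre-strain of size $h^{2}$ after optimization. Given the two propositions, the hypothesis $p\geq c_{0}(\lambda,h,1)$ is precisely what makes the transitional term present for small $p$ dominated by the maximum of the other two contributions, so the upper bound collapses to the stated $\min$--$\max$ expression and matches the lower bound up to multiplicative constants depending only on $\varrho_{0}$ and $m$ --- which is the content of $\sim_{\varrho_{0},m}$.

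For the upper bound I would lift the three axisymmetric constructions behind \prettyref{prop:FvKUB} to the nonlinear setting. The unbuckled pattern becomes the uniformly compressed cylinder $\Phi=(\varrho,\theta,(1-\lambda)z)$, whose excess over $\cE_{b}^{NL}$ is $|\Omega|\bigl((1-\lambda)^{2}-1\bigr)^{2}\sim\lambda^{2}$. The one-wrinkle and many-wrinkle patterns of \prettyref{lem:FvKUB_onewrinkle} and \prettyref{lem:FvKUB_manywrinkles}, built for a vKD mandrel of radius $\varrho$, are lifted by setting $\Phi=(1+\phi_{\rho},\theta+\phi_{\theta},z+\phi_{z})$ for the corresponding vKD test function $\phi$; since $\phi_{\rho}\geq\varrho-1$ one has $\Phi_{\rho}\geq\varrho$, and expanding $|D\Phi^{T}D\Phi-\id|^{2}$ around the wrinkled $\varrho$-cylinder shows the nonlinear membrane term reproduces the vKD membrane integrand, now with hoop pre-strain $\asymp\varrho^{2}-1$, up to corrections of strictly higher order in the wrinkle amplitude and in the slope; likewise $h^{2}|D^{2}\Phi|^{2}$ reproduces the vKD bending integrand together with the baseline $|\Omega|\varrho^{2}h^{2}$. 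This is where the hypotheses $m<\infty$ and $\varrho\leq\varrho_{0}$ are used --- to keep these corrections uniformly subordinate. One also checks admissibility in the sense of \prettyref{eq:ANL}; the key point is that the axial profiles of the vKD constructions are monotone, so $\partial_{z}\Phi_{z}\geq0$ holds automatically. When $\varrho^{2}-1<h^{2}$ one uses instead profiles that behave as if the pre-strain were $h^{2}$, at a bending cost absorbed into the $|\Omega|\varrho^{2}h^{2}$ already present in $\cE_{b}^{NL}$. The outcome is an upper bound of exactly the vKD form with $\varrho-1$ replaced by $p$, plus a transitional term.

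For the lower bound I would run the ansatz-free interpolation argument behind \prettyref{prop:FvKlargemandrelLB} directly on $E_{h}^{NL}$. The mechanism carries over unchanged: given enough axial compression, a finite-energy $\Phi$ must buckle, and because $\Phi_{\rho}\geq\varrho$ while $z\mapsto\Phi_{z}$ is nondecreasing, the buckling cannot be absorbed by folding and must displace $\Phi_{\rho}$ outward on average; this forces the hoop component of $D\Phi^{T}D\Phi-\id$ to be positive, with a contribution inherited from the buckling amplitude, which one plays against $\int_{\Omega}h^{2}|\partial_{z}^{2}\Phi_{\rho}|^{2}$ through the same Gagliardo--Nirenberg inequalities as in the vKD case, now with the extra pre-strain $p$ entering the balance. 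Two steps require care: (i) after subtracting $\cE_{b}^{NL}$, one must show that the remaining excess of the nonlinear energy bounds from below the geometrically linear excess with pre-strain $p$, up to constants depending only on $\varrho_{0}$; and (ii) one must convert smallness of $\int_{\Omega}|D\Phi^{T}D\Phi-\id|^{2}$ into usable averaged control of the relevant entries of $D\Phi$ --- in particular that $\partial_{\theta}\Phi_{\theta}$ is close to $1$ in the appropriate average sense --- since the nonconvex membrane well gives no coercivity on its own. I expect step (ii), the quantitative passage from the nonconvex nonlinear membrane term to a coercive geometrically linear surrogate, with constants explicit in $h,\lambda,p$ and with the hypothesis $\partial_{z}\Phi_{z}\geq0$ doing the work of ruling out the cheap folded competitors, to be the main obstacle. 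Once that reduction is in place, the interpolation estimates and the final algebraic combination go through as in the vKD case.
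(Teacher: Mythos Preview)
Your overall architecture is exactly the paper's: the theorem is deduced from an upper-bound proposition (\prettyref{prop:NLUBs}) and a lower-bound proposition (\prettyref{prop:NLLBs_largemandr}), combined via the same algebraic equivalence as in the vKD case with $\varrho-1$ replaced by $p=(\varrho^{2}-1)\vee h^{2}$. Your account of why $\varrho^{2}-1$ enters, why the floor $\vee\,h^{2}$ appears, and what the two delicate steps in the nonlinear lower bound are, all match the paper's reasoning (the floor indeed comes from the curvature contribution $h^{2}\Phi_{\rho}^{2}$ hidden in $h^{2}|\partial_{\theta}^{2}\Phi|^{2}$, which after subtracting $|\Omega|\varrho^{2}h^{2}$ behaves like an extra $h^{2}\|w\|_{L^{1}}$ penalty).

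There is, however, a genuine gap in your upper-bound plan. Lifting the vKD test functions by $\Phi=(1+\phi_{\rho},\theta+\phi_{\theta},z+\phi_{z})$ does \emph{not} give corrections that are ``strictly higher order.'' For the axisymmetric vKD patterns one has $\partial_{z}\Phi_{z}=1-\tfrac{1}{2}(\partial_{z}w)^{2}$ and hence
\[
g_{zz}-1=(\partial_{z}w)^{2}+\bigl(1-\tfrac{1}{2}(\partial_{z}w)^{2}\bigr)^{2}-1=\tfrac{1}{4}(\partial_{z}w)^{4}.
\]
In the many-wrinkle regime the vKD construction has $\delta\sim\lambda$ and $|\partial_{z}w|\sim 1$ on its support, so $\int_{\Omega}(g_{zz}-1)^{2}\sim\lambda$. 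This swamps the target excess $p^{2/3}h^{2/3}\lambda$ as $h\to0$, and the same blow-up occurs in the one-wrinkle regime. Relatedly, your claim that $\partial_{z}\Phi_{z}\geq0$ ``holds automatically'' fails once $|\partial_{z}w|>\sqrt{2}$, which already happens for the vKD many-wrinkle pattern at $m=2$. The paper avoids both issues by \emph{not} lifting the vKD profile: it rescales $f_{\delta,n}$ differently (by $\delta/n$ rather than $\sqrt{\delta}/n$), multiplies by $S_{f}^{-1}(\lambda/\delta)\leq1$ so that $\|\partial_{z}w\|_{L^{\infty}}<1$, and then defines $u$ by arclength, $\partial_{z}\Phi_{z}=\sqrt{1-(\partial_{z}w)^{2}}$, which forces $g_{zz}=1$ exactly and $\partial_{z}\Phi_{z}\geq0$ for free (see \prettyref{lem:NLadmissability}). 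This is a small but essential modification you would need to add; once it is in place, the rest of your upper-bound outline goes through and the lower-bound outline already matches the paper's argument in \prettyref{sub:largemandrelLB_nonlinear}.
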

\begin{rem}
\label{rem:apriorislopebound} In contrast with \prettyref{thm:FvKlargemandrelscaling},
we do not address the case $m=\infty$ in this result. As the reader
will observe, our proof of the lower bound part of \prettyref{thm:NLlargemandrelscaling}
rests on the assumption that $m<\infty$. However, in the proof of
the upper bound part, the successful test functions belong to $A_{\lambda,\varrho,1}^{NL}$
uniformly in $h$. It does not appear to us that one can improve the
scaling of these upper bounds by considering test functions with exploding
slopes. This should be contrasted with the blow-up estimates discussed
for the vKD model in \prettyref{rem:FvKslopeexlposion}. %
\end{rem}
\begin{proof}
{} \prettyref{thm:NLlargemandrelscaling} follows from \prettyref{prop:NLUBs}
and \prettyref{prop:NLLBs_largemandr} once we observe that
\[
\lambda h\leq\max\{h^{6/7}\lambda^{5/7}[(\varrho^{2}-1)\vee h^{2}]^{4/7},[(\varrho^{2}-1)\vee h^{2}]^{2/3}\lambda h^{2/3}\}\iff\min\{\lambda^{1/2}h^{1/4},h^{1/2}\}\leq(\varrho^{2}-1)\vee h^{2}.
\]

\end{proof}

\subsubsection{The neutral mandrel case\label{sub:neutralmandrelresults}}

Next we turn to the borderline case between the large and small mandrel
cases, given by $\varrho=1$. In this case, our methods prove upper
and lower bounds on the minimum energy which fail to match in general,
though they do match in a regime in which the thickness, $h$, is
large as compared to the compression, $\lambda$. %

We begin with the results for the vKD model.
\begin{thm}
\label{thm:FvKneutralbounds} Let $h,\lambda\in(0,\frac{1}{2}]$ and
$m\in[2,\infty)$. Then we have that 
\[
\min\left\{ \max\{h\lambda^{3/2},(h\lambda)^{12/11}\},\lambda^{2}\right\} \lesssim_{m}\min_{A_{\lambda,1,m}^{vKD}}E_{h}^{vKD}\lesssim\min\left\{ h\lambda,\lambda^{2}\right\} .
\]
In the case that $m=\infty$, we have that 
\[
\min\left\{ \max\{(h\lambda)^{12/11}\},\lambda^{2}\right\} \lesssim\min_{A_{\lambda,1,\infty}^{vKD}}E_{h}^{vKD}\lesssim\min\left\{ h\lambda,\lambda^{2}\right\} .
\]
 \end{thm}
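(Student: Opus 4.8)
The plan is to prove the two inequalities separately. The upper bound is the routine half --- take the better of two explicit competitors --- while the lower bound is the substance and must be ansatz-free.

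\emph{Upper bound.} The unbuckled configuration $\phi_\rho=\phi_\theta\equiv 0$, $\phi_z=-\lambda z$ lies in $A_{\lambda,1,m}^{vKD}$ for every $m\geq 2$ (its only nonvanishing slope is $\partial_z\phi_z=-\lambda$), has $\epsilon=-\lambda\,e_z\otimes e_z$ and $D^2\phi_\rho\equiv 0$, hence energy $\abs{\Omega}\lambda^2$; this gives the alternative $\lesssim\lambda^2$. For the alternative $\lesssim h\lambda$ I would use an axisymmetric wrinkle: set $\phi_\theta\equiv 0$, let $\phi_\rho$ be a nonnegative $1/n$-periodic profile of $z$, and choose $\phi_z$ so that $\epsilon_{zz}\equiv 0$; periodicity of $\phi_z+\lambda z$ then forces $\int_{I_z}(\partial_z\phi_\rho)^2=2\lambda$ and so fixes the amplitude of $\phi_\rho$ at order $\lambda^{1/2}/n$. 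Such a $\phi$ has $\epsilon=\phi_\rho\,e_\theta\otimes e_\theta$, so its membrane energy is $\int_\Omega\phi_\rho^2\sim\lambda n^{-2}$ and its bending energy is $h^2\int_\Omega(\partial_{zz}\phi_\rho)^2\sim h^2\lambda n^2$, while its slopes are $O(\lambda^{1/2})\leq 1$; choosing $n\sim h^{-1/2}$ balances the two terms at $\sim h\lambda$ and keeps the construction admissible for all $m\in[2,\infty]$. These are precisely the test functions of \prettyref{prop:FvKUB} specialized to $\varrho=1$.

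\emph{Lower bound.} Fix $\phi$ admissible and write $M=\int_\Omega\abs{\epsilon}^2$, $B=h^2\int_\Omega\abs{D^2\phi_\rho}^2$, $w=\phi_\rho$. If $M+B\geq c\lambda^2$ for a fixed small $c$ we are in the $\lambda^2$ branch and done, so assume $M+B<c\lambda^2$. I would then extract three ingredients. \textbf{(i) Buckling is forced.} Integrating the $zz$-entry of \prettyref{eq:FvKstrain} over $\Omega$ and using periodicity of $\phi_z+\lambda z$ in $z$ gives $\frac{1}{2}\int_\Omega(\partial_z w)^2=\lambda\abs{I_\theta}+\int_\Omega\epsilon_{zz}$; since $\abs{\int_\Omega\epsilon_{zz}}\leq\abs{\Omega}^{1/2}M^{1/2}$ and $M$ is small, this yields $\norm{\partial_z w}_{L^2(\Omega)}^2\gtrsim\lambda$. \textbf{(ii) The obstacle is felt in the membrane term.} Because $w\geq 0$, integrating the $\theta\theta$-entry of \prettyref{eq:FvKstrain} and using periodicity of $\phi_\theta$ gives $\int_\Omega w\leq\int_\Omega\epsilon_{\theta\theta}\leq\abs{\Omega}^{1/2}M^{1/2}$; slicing the same identity in $z$ gives the sharper $\norm{\int_{I_\theta}w\,d\theta}_{L^2(I_z)}\lesssim M^{1/2}$. \textbf{(iii) Geometry couples $w$ to the stresses.} The vKD incompatibility identity $\partial_{zz}\epsilon_{\theta\theta}-2\partial_{\theta z}\epsilon_{\theta z}+\partial_{\theta\theta}\epsilon_{zz}=\partial_{zz}w-\det D^2 w$ shows $\partial_{zz}w$ is $O(M^{1/2})$ in $H^{-2}(\Omega)$ modulo the null Lagrangian $\det D^2 w$, which is itself small in $H^{-2}$ unless $w$ has appreciable $\theta$-variation --- and $\theta$-variation costs bending through $B$. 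The lower bound then follows by feeding (i)--(iii) into Gagliardo--Nirenberg interpolation on $\Omega$: one interpolates $\norm{\partial_z w}_{L^2}^2$ against $\norm{w}_{L^1}\lesssim M^{1/2}$ (from (ii)) and $\norm{D^2 w}_{L^2}=B^{1/2}/h$, working with $I_z$-periodic functions so that the absence of a mean-zero normalization is harmless, then optimizes the resulting chain and uses (iii) to exclude the $\theta$-localized ``dimple'' competitors that would otherwise degrade it; this turns $\norm{\partial_z w}_{L^2}^2\gtrsim\lambda$ into $M+B\gtrsim(h\lambda)^{12/11}$. When $m<\infty$ one additionally has $\norm{\partial_z w}_{L^\infty}\leq m$; combined with (i) this forces $\norm{\partial_z w}_{L^1(\Omega)}\gtrsim_m\lambda$, and interpolating $\norm{\partial_z w}_{L^1}$ against $\norm{w}_{L^1}$ and $\norm{D^2 w}_{L^2}$ produces the extra term $h\lambda^{3/2}$.

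I expect ingredient (iii) and the ensuing optimization to be the main obstacle. In contrast with the large-mandrel case there is no pre-strain to pin the direction of wrinkling, so the ansatz-free argument has to allow deformations far from axisymmetric --- in particular crumpling-type competitors localized in both $\theta$ and $z$ --- and using (i)--(ii) alone yields only the weaker $\lambda h^{6/5}$; reaching the stated exponents requires balancing the obstacle constraint $w\geq 0$, the small-scale bending penalty, and the Gauss-curvature identity simultaneously and accounting carefully for how much a given competitor can exploit. That the outcome still falls short of the upper bound's $h\lambda$ reflects a genuine gap that we do not know how to close.
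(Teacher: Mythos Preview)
Your upper bound is fine and matches the paper. The issue is in the lower bound, specifically your ingredient (iii).

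The paper does not use the incompatibility identity at all; that route is a red herring. What you are missing is hidden in your own ingredient (ii). When you slice $\epsilon_{\theta\theta}=\partial_\theta\phi_\theta+\tfrac12(\partial_\theta w)^2+w$ in $z$ and integrate in $\theta$, periodicity of $\phi_\theta$ kills the first term and leaves the sum of two \emph{nonnegative} quantities. Hence the same Jensen argument that gives $\norm{w}_{L^2_zL^1_\theta}\lesssim M^{1/2}$ also gives
\[
\norm{\partial_\theta w}_{L^4_zL^2_\theta}^2\lesssim M^{1/2}.
\]
This is the control on $\theta$-variation you were trying to manufacture from the Gauss-curvature identity, and it comes for free from the obstacle constraint $w\geq0$ together with the structure of $\epsilon_{\theta\theta}$. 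The paper packages this by introducing the free-shear functional $FS_h=\int|\epsilon_{\theta\theta}|^2+|\epsilon_{zz}|^2+h^2|D^2w|^2$, dropping $\epsilon_{\theta z}$ entirely (Lemma~\ref{lem:FSLBs}).

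With all four quantities $\norm{w}_{L^2_zL^1_\theta}$, $\norm{\partial_\theta w}_{L^4_zL^2_\theta}$, $h\norm{D^2w}_{L^2}$, $\norm{\partial_z w}_{L^2}$ under control, the $(h\lambda)^{12/11}$ bound follows from two steps of interpolation: first the anisotropic inequality $\norm{w}_{L^2}\lesssim\norm{w}_{L^2_zL^1_\theta}+\norm{\partial_\theta w}_{L^4_zL^2_\theta}^{1/3}\norm{w}_{L^2_zL^1_\theta}^{2/3}$ (Lemma~\ref{lem:mixedinterp_1}), then the standard $\norm{Dw}_{L^2}\lesssim\norm{w}_{L^2}^{1/2}\norm{D^2w}_{L^2}^{1/2}$; chaining these and using $\norm{\partial_z w}_{L^2}^2\gtrsim\lambda$ yields $h\lambda\lesssim FS_h^{11/12}\vee FS_h$. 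The $h\lambda^{3/2}$ bound for $m<\infty$ uses instead $\norm{Dw}_{L^2}\lesssim\norm{Dw}_{L^\infty}^{1/3}\norm{w}_{L^1}^{1/3}\norm{D^2w}_{L^2}^{1/3}$ (Lemma~\ref{lem:2dinterp-1}), close to what you sketched. No $H^{-2}$ estimates, no $\det D^2w$, and no dimple exclusion are needed.
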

\begin{rem}
Although the lower bound in this result changes when $m=\infty$,
in this case it does not imply a blow-up rate for $\norm{D\phi}_{L^{\infty}}$
as $h\to0$. Indeed, as discussed in \prettyref{rem:unifbddslopes_neutralmandrel},
minimizing sequences need not have exploding slopes in the neutral
mandrel case. \end{rem}
\begin{proof}
Taking $\varrho=1$ in \prettyref{prop:FvKUB} proves the upper bound
part of \prettyref{thm:FvKneutralbounds}. To prove the lower bound
part, we first observe that if we define
\begin{equation}
FS_{h}(\phi)=\int_{\Omega}\,\abs{\epsilon_{\theta\theta}}^{2}+\abs{\epsilon_{zz}}^{2}+h^{2}\abs{D^{2}\phi_{\rho}}^{2}\,d\theta dz,\label{eq:FS}
\end{equation}
then 
\[
E_{h}^{vKD}(\phi)\geq FS_{h}(\phi)\quad\forall\,\phi\in A_{\lambda,\varrho,m}^{vKD}.
\]
\prettyref{prop:FSscalinglaw} identifies the minimum energy scaling
law of $FS_{h}$ over $A_{\lambda,1,m}^{vKD}$, and this proves the
result. 
\end{proof}
As the reader will note, the argument in the proof above uses only the $\theta\theta$-
and $zz$-components of the membrane term. As far as scaling is concerned,
the lower bounds given in \prettyref{thm:FvKneutralbounds} are the
optimal bounds that can be proved by such a method. This is discussed
in more detail in \prettyref{sub:neutralmandrelLB_FvK}; the essential
point is that our lower bounds arise as the minimum energy scaling
law of what we call the \emph{free-shear functional}, defined in \prettyref{eq:FS}
above.

The upper and lower bounds from \prettyref{thm:FvKneutralbounds}
match in a certain regime of the form $h\geq\lambda^{\alpha}$.
\begin{cor}
\label{cor:unbuckled}Let $h,\lambda\in(0,\frac{1}{2}]$ and $m\in[2,\infty)$.
If $h\geq\lambda^{5/6}$, we have that 
\[
\min_{A_{\lambda,1,m}^{vKD}}E_{h}^{vKD}\sim_{m}\lambda^{2}.
\]
The same result holds in the case that $m=\infty$. 
\end{cor}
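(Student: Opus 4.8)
The plan is to read \prettyref{cor:unbuckled} directly off the matching of the two sides of \prettyref{thm:FvKneutralbounds} under the hypothesis $h\geq\lambda^{5/6}$; the only content is an arithmetic comparison of exponents, exploiting that $\lambda\in(0,\tfrac12]$ so that $\mu\mapsto\lambda^{\mu}$ is decreasing.

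\emph{Upper bound.} Since $\tfrac56<1$ and $\lambda\leq\tfrac12<1$, we have $\lambda^{5/6}\geq\lambda$, so the hypothesis $h\geq\lambda^{5/6}$ forces $h\geq\lambda$, hence $h\lambda\geq\lambda^{2}$ and therefore $\min\{h\lambda,\lambda^{2}\}=\lambda^{2}$. The upper bound part of \prettyref{thm:FvKneutralbounds} then gives $\min_{A_{\lambda,1,m}^{vKD}}E_{h}^{vKD}\lesssim\lambda^{2}$ (with no $m$-dependence), and the same estimate holds when $m=\infty$.

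\emph{Lower bound.} Again using $\lambda\leq\tfrac12$, the hypothesis $h\geq\lambda^{5/6}$ yields
\[
(h\lambda)^{12/11}\;\geq\;(\lambda^{5/6}\cdot\lambda)^{12/11}\;=\;\lambda^{(11/6)(12/11)}\;=\;\lambda^{2},
\]
so that $\max\{h\lambda^{3/2},(h\lambda)^{12/11}\}\geq\lambda^{2}$ and hence $\min\{\max\{h\lambda^{3/2},(h\lambda)^{12/11}\},\lambda^{2}\}=\lambda^{2}$. The lower bound part of \prettyref{thm:FvKneutralbounds} then gives $\min_{A_{\lambda,1,m}^{vKD}}E_{h}^{vKD}\gtrsim_{m}\lambda^{2}$. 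The $m=\infty$ case is identical: the $m=\infty$ lower bound in \prettyref{thm:FvKneutralbounds} retains the term $(h\lambda)^{12/11}$, and the same computation shows $\min\{(h\lambda)^{12/11},\lambda^{2}\}=\lambda^{2}$. Combining the two bounds proves $\min_{A_{\lambda,1,m}^{vKD}}E_{h}^{vKD}\sim_{m}\lambda^{2}$ (and $\sim\lambda^{2}$ when $m=\infty$).

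There is essentially no obstacle here beyond bookkeeping; the one point worth flagging is that the exponent $\tfrac56$ is precisely calibrated so that the $(h\lambda)^{12/11}$ branch of the lower bound crosses $\lambda^{2}$ exactly at $h=\lambda^{5/6}$. In particular, one should resist the temptation to try to run the argument through the other branch $h\lambda^{3/2}$: forcing $h\lambda^{3/2}\geq\lambda^{2}$ would require the strictly stronger condition $h\geq\lambda^{1/2}$, which (since $\tfrac12<\tfrac56$ and $\lambda<1$) does not follow from $h\geq\lambda^{5/6}$. Thus the corollary is sharp in the sense that it extracts exactly the regime where \prettyref{thm:FvKneutralbounds} already pins down the scaling law to be that of the unbuckled configuration.
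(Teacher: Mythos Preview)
Your proof is correct and follows essentially the same approach as the paper: both invoke \prettyref{thm:FvKneutralbounds} and reduce the matter to the arithmetic observation that $h\geq\lambda^{5/6}$ is exactly the condition under which $(h\lambda)^{12/11}\geq\lambda^{2}$ (and hence the lower and upper bounds in that theorem coincide at $\lambda^{2}$). Your version is simply more explicit about the upper bound side and about why the $(h\lambda)^{12/11}$ branch is the operative one.
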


\begin{rem}
We note here a possible connection between our analysis and that of
\cite{grabovsky2015rigorous,grabovsky2015scaling}, which derives
Koiter's formula for the incipient buckling load of a (perfect) thin
cylinder via an analysis of the fully nonlinear model. Although our
focus is not on buckling as such, \prettyref{cor:unbuckled} proves
that, in the regime $\lambda\leq h^{6/5}$, the minimum energy scales
as that of the unbuckled deformation. In comparison, the buckling
load of a thin elastic cylinder scales linearly with $h$. If the
effect of the neutral mandrel is to improve local to global stability,
then perhaps the upper bound from \prettyref{thm:FvKneutralbounds}
is optimal in its scaling.\end{rem}
\begin{proof}
\prettyref{cor:unbuckled} follows from \prettyref{thm:FvKneutralbounds},
after observing that, since $\lambda\leq1$, 
\[
h\geq\lambda^{5/6}\iff\max\{h\lambda^{3/2},(h\lambda)^{12/11}\}\geq\lambda^{2}.
\]

\end{proof}
Now we state the corresponding results for the nonlinear energy.
\begin{thm}
\label{thm:NLneutralbounds} Let $h,\lambda\in(0,\frac{1}{2}]$ and
$m\in[1,\infty)$. Then we have that 
\[
\min\left\{ \max\left\{ h\lambda^{3/2},(h\lambda)^{12/11}\right\} ,\lambda^{2}\right\} \lesssim_{m}\min_{A_{\lambda,1,m}^{NL}}E_{h}^{NL}-\cE_{b}^{NL}(1,h)\lesssim_{\varrho_{0}}\min\left\{ \lambda h,\lambda^{2}\right\} .
\]
\end{thm}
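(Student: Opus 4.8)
The plan is to get the upper bound as a direct specialization of the general nonlinear construction, and the lower bound by reduction to the von K\'arm\'an--Donnell analysis of \prettyref{thm:FvKneutralbounds}. For the upper bound, specialize \prettyref{prop:NLUBs} to $\varrho=1$ (so one may take $\varrho_{0}=1$): since $(\varrho^{2}-1)\vee h^{2}=h^{2}$, that estimate becomes
\[
\min_{A_{\lambda,1,m}^{NL}}E_{h}^{NL}-\cE_{b}^{NL}(1,h)\ \lesssim\ \min\bigl\{\lambda^{2},\,\max\{\lambda h,\,h^{2}\lambda^{5/7},\,h^{2}\lambda\}\bigr\},
\]
and it suffices to verify the elementary inequality $\min\{\lambda^{2},\max\{\lambda h,h^{2}\lambda^{5/7},h^{2}\lambda\}\}\lesssim\min\{\lambda h,\lambda^{2}\}$ for $h,\lambda\in(0,\frac{1}{2}]$. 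Indeed, since $h\le1$ one always has $h^{2}\lambda\le\lambda h$; if moreover $h\le\lambda^{2/7}$ then $h^{2}\lambda^{5/7}\le\lambda h$ too and the left side is $\min\{\lambda^{2},\lambda h\}$; while if $h>\lambda^{2/7}$ then $h^{2}\lambda^{5/7}>\lambda^{2}$ and $\lambda\le\lambda^{2/7}<h$, so the left side is $\lambda^{2}=\min\{\lambda^{2},\lambda h\}$.

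For the lower bound, note first that the literal recipe of \prettyref{thm:FvKneutralbounds} --- bounding $E_{h}^{NL}$ below by its own free-shear part --- fails here: dropping the shear entry of $D\Phi^{T}D\Phi-\id$ leaves a functional whose infimum over $A_{\lambda,1,m}^{NL}$ is only of order $h^{2}\lambda$, as the twist $\Phi_{\rho}\equiv1$, $\Phi_{\theta}=\theta+g(z)$, $\partial_{z}\Phi_{z}=\sqrt{1-(g')^{2}}$ with $\int_{I_{z}}(g')^{2}\sim\lambda$ shows (both diagonal membrane densities of this $\Phi$ vanish identically and its bending excess is $O(h^{2}\lambda)$). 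Instead I would pass to the displacement $\phi=(\Phi_{\rho}-1,\Phi_{\theta}-\theta,\Phi_{z}-z)$, which lies in $A_{\lambda,1,m+1}^{vKD}$, and prove that on any $\Phi\in A_{\lambda,1,m}^{NL}$ whose energy lies below the $\lambda^{2}$ ceiling (the complementary case being immediate from the $\lambda^{2}$ ceiling and the bending lower bound noted below)
\[
E_{h}^{NL}(\Phi)-\cE_{b}^{NL}(1,h)\ \gtrsim_{m}\ FS_{h}(\phi)
\]
with $FS_{h}$ the free-shear functional of \prettyref{eq:FS}; then \prettyref{prop:FSscalinglaw} (the same tool used for \prettyref{thm:FvKneutralbounds}) finishes the proof. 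To obtain this inequality one writes $D\Phi^{T}D\Phi-\id=2\epsilon+R$, where $\epsilon$ is the von K\'arm\'an--Donnell strain \prettyref{eq:FvKstrain} of $\phi$ and $R$ gathers the remaining products of $\Phi_{\rho}$ with the first derivatives, shows $\abs{R}$ is negligible against $\abs{\epsilon}$ on low-energy states, and checks that $h^{2}\abs{D^{2}\Phi}^{2}$ dominates $h^{2}\abs{D^{2}\Phi_{\rho}}^{2}+\cE_{b}^{NL}(1,h)$ up to absorbable errors (integrating $\abs{\partial_{\theta}^{2}\Phi}^{2}$ by parts along each closed hoop and using $\Phi_{\rho}\ge1$, the $\Phi_{\theta}$-derivative terms being controlled by the slope bound). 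Recall that \prettyref{prop:FSscalinglaw} is itself proved by the Gagliardo--Nirenberg balancing scheme of \prettyref{sub:DiscussionofProofs}: the outward-bulge penalty $\int(\Phi_{\rho}-1)^{2}$ forced by the obstacle, the axial-deficit term $\int(\partial_{z}\Phi_{\rho})^{2}$ forced by $\partial_{z}\Phi_{z}\ge0$ and $\int_{I_{z}}\partial_{z}\Phi_{z}=1-\lambda$, and the bending term $h^{2}\int\abs{D^{2}\Phi_{\rho}}^{2}$ are traded off by interpolating $\norm{\partial_{z}\Phi_{\rho}}_{L^{2}}$ between $\norm{\Phi_{\rho}-1}_{L^{2}}$ and $\norm{D^{2}\Phi_{\rho}}_{L^{2}}$ and optimizing the emerging length scale, the two constrained optima producing the competing powers $h\lambda^{3/2}$ and $(h\lambda)^{12/11}$, with $\lambda^{2}$ coming from the trivial compression. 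No further algebraic simplification is needed on the lower-bound side.

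The step I expect to be the main obstacle is the one absent in \prettyref{thm:FvKneutralbounds}: establishing that the remainder $R=D\Phi^{T}D\Phi-\id-2\epsilon$ is negligible against $\epsilon$ on configurations whose energy lies below the $\lambda^{2}$ ceiling. Since $R$ consists of genuine products of $\Phi_{\rho}$ with the various first derivatives, this requires in particular an a priori bound on the bulge amplitude $\Phi_{\rho}-1$, which is not provided by the admissible set and must be extracted from the obstacle together with the hoop membrane term. This is exactly where the hypotheses $m<\infty$ and $\partial_{z}\Phi_{z}\ge0$ are essential: the former confines every first derivative to a bounded range, so the cross terms in $R$ are absorbed at the cost of an $m$-dependent constant (accounting for the $\lesssim_{m}$ in the statement), and the latter excludes the low-energy folding configurations discussed before \prettyref{rem:invertibilityhypothesis}, which would otherwise absorb the compression almost for free. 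The remaining ingredients --- the routine parts of the strain comparison, the integration-by-parts lower bound on the bending term, and the Gagliardo--Nirenberg interpolation inside \prettyref{prop:FSscalinglaw} --- go essentially as in the von K\'arm\'an--Donnell case.
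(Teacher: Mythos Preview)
Your upper-bound argument is correct and matches the paper's: both specialize \prettyref{prop:NLUBs} to $\varrho=\varrho_{0}=1$ and then simplify the resulting expression. Your case analysis and the paper's one-line observation $\lambda\le1\implies\lambda h\ge\min\{h^{2}\lambda^{5/7},\lambda^{2}\}$ amount to the same thing.

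Your lower-bound strategy, however, has a genuine gap. The central claim --- that for $\Phi$ with $\Delta^{NL}$ below the $\lambda^{2}$ ceiling one has $\Delta^{NL}\gtrsim_{m}FS_{h}(\phi)$, obtained by writing $D\Phi^{T}D\Phi-\id=2\epsilon+R$ and showing $|R|$ is negligible against $|\epsilon|$ --- is false. The paper's own many-wrinkles construction from \prettyref{lem:NLUB_uniforminmandrel} (second case) is a counterexample: there $\Phi_{\rho}=1+w(z)$, $\Phi_{\theta}=\theta$, $\partial_{z}\Phi_{z}=\sqrt{1-(w')^{2}}$ with $\delta=2\lambda$, so that $|w'|$ is of order one on a set of measure $\sim\lambda$. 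For this $\Phi$ one has $g_{zz}-1\equiv0$, hence $2\epsilon_{zz}=-R_{zz}=-\bigl(\sqrt{1-(w')^{2}}-1\bigr)^{2}$ exactly; the remainder is not negligible but equal in magnitude and opposite in sign to $2\epsilon_{zz}$. Consequently $\int|\epsilon_{zz}|^{2}\sim\lambda$, so $FS_{h}(\phi)\gtrsim\lambda$, while $\Delta^{NL}\sim\lambda h\ll\lambda$ for small $h$ (and $\lambda h<\lambda^{2}$ holds throughout that regime since $h<\lambda^{2}$). So the inequality you propose fails on the very test functions that achieve the upper bound.

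The underlying reason is that the vKD strain $\epsilon$ linearizes in the in-plane displacements, and this linearization is inaccurate precisely when the slopes are $O(1)$, which is where the optimal nonlinear constructions live. The paper avoids this entirely: instead of comparing $\Delta^{NL}$ to $FS_{h}(\phi)$, it proves directly in the nonlinear setting the four estimates that the free-shear lower bound actually uses --- control of $\norm{\Phi_{\rho}-\varrho}_{L_{z}^{2}L_{\theta}^{1}}$ and $\norm{\partial_{\theta}\Phi_{\rho}}_{L_{z}^{4}L_{\theta}^{2}}$ (\prettyref{lem:NLmembrLB}), buckling control on $\norm{\partial_{z}\Phi_{\rho}}_{L^{2}}$ (\prettyref{cor:NLbucklingcontrol}), and control of $h^{2}\norm{D^{2}\Phi_{\rho}}_{L^{2}}^{2}$ (\prettyref{lem:NLbendingcontrol}) --- and then feeds those into the same interpolation argument as in \prettyref{cor:FSLB-1} and \prettyref{cor:FSLB-2}. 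The key point is that these estimates involve only $\Phi_{\rho}$, so the problematic in-plane remainder terms never appear. Controlling the cross term $\Phi_{\rho}\partial_{z}\Phi_{\theta}$ (\prettyref{lem:NLcross-term}), which is where the shear component $g_{\theta z}$ is actually used, is what replaces your attempted comparison of the full strains.
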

\begin{rem}
As discussed in \prettyref{rem:apriorislopebound}, the lower bound
in the case that $m=\infty$ is not addressed for the nonlinear model
by our methods. \end{rem}
\begin{proof}
Taking $\varrho=\varrho_{0}=1$ in \prettyref{prop:NLUBs} gives the
upper bound part, once we observe that 
\[
\lambda\leq1\implies\lambda h\geq\min\{h^{2}\lambda^{5/7},\lambda^{2}\}.
\]
 The lower bound part follows from \prettyref{prop:NLneutralLBs}.\end{proof}
\begin{cor}
Let $h,\lambda\in(0,\frac{1}{2}]$ and $m\in[1,\infty)$. If $h\geq\lambda^{5/6}$,
then we have that 
\[
\min_{A_{\lambda,1,\infty}^{NL}}E_{h}^{NL}-\cE_{b}^{NL}(1,h)\sim_{m}\lambda^{2}
\]
\end{cor}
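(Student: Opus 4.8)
The plan is to obtain this corollary directly from \prettyref{thm:NLneutralbounds}, in exact parallel to the way \prettyref{cor:unbuckled} is deduced from \prettyref{thm:FvKneutralbounds}. (I read the admissible set in the statement as $A_{\lambda,1,m}^{NL}$ with $m\in[1,\infty)$; since the lower bound of \prettyref{thm:NLneutralbounds}, and hence the present corollary, relies on $m<\infty$ as flagged in \prettyref{rem:apriorislopebound}, the subscript $\infty$ on the admissible set appears to be a typo for $m$.) Only two facts are needed: first, that under the hypothesis $h\geq\lambda^{5/6}$ the minimum appearing in the lower bound of \prettyref{thm:NLneutralbounds} is attained at $\lambda^{2}$; second, that the upper bound of \prettyref{thm:NLneutralbounds} never exceeds $\lambda^{2}$.

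For the first fact I would record the elementary chain of equivalences, valid whenever $\lambda\in(0,1]$,
\[
h\geq\lambda^{5/6}\iff h\lambda\geq\lambda^{11/6}\iff(h\lambda)^{12/11}\geq\lambda^{2}\implies\max\{h\lambda^{3/2},(h\lambda)^{12/11}\}\geq\lambda^{2},
\]
so that $h\geq\lambda^{5/6}$ forces $\min\{\max\{h\lambda^{3/2},(h\lambda)^{12/11}\},\lambda^{2}\}=\lambda^{2}$; \prettyref{thm:NLneutralbounds} then yields $\min_{A_{\lambda,1,m}^{NL}}E_{h}^{NL}-\cE_{b}^{NL}(1,h)\gtrsim_{m}\lambda^{2}$. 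For the second fact, $\min\{\lambda h,\lambda^{2}\}\leq\lambda^{2}$ trivially, so \prettyref{thm:NLneutralbounds} also gives $\min_{A_{\lambda,1,m}^{NL}}E_{h}^{NL}-\cE_{b}^{NL}(1,h)\lesssim\lambda^{2}$. Combining the two displays produces $\sim_{m}\lambda^{2}$. If instead of invoking \prettyref{thm:NLneutralbounds} one wants a self-contained upper bound, I would evaluate $E_{h}^{NL}$ at the unbuckled deformation $\Phi=(1,\theta,(1-\lambda)z)$, which lies in $A_{\lambda,1,m}^{NL}$ for every $m\geq1$ (its radial component equals $\varrho=1$, one has $\partial_{z}\Phi_{z}=1-\lambda\geq1/2>0$, and each $\partial_{i}\Phi_{j}$ is bounded by $1$); its bending term equals $\cE_{b}^{NL}(1,h)$ exactly, and the membrane excess is the single component $|\epsilon_{zz}|^{2}\sim((1-\lambda)^{2}-1)^{2}\sim\lambda^{2}$.

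I do not expect a genuine obstacle here: the corollary is a bookkeeping consequence of the neutral-mandrel scaling bounds, and the only care needed is in the exponent arithmetic above. It is worth flagging that the term which crosses $\lambda^{2}$ at the threshold $h=\lambda^{5/6}$ is $(h\lambda)^{12/11}$, not $h\lambda^{3/2}$: since $\lambda^{5/6}<\lambda^{1/2}$ for $\lambda<1$, the hypothesis $h\geq\lambda^{5/6}$ does not by itself give $h\lambda^{3/2}\geq\lambda^{2}$, so one genuinely uses the $(h\lambda)^{12/11}$ branch of the lower bound. As in \prettyref{cor:unbuckled}, the upshot is that in the regime $\lambda\leq h^{6/5}$ the excess energy of the nonlinear model over the neutral mandrel scales, modulo the bulk energy $\cE_{b}^{NL}(1,h)$, like that of the unbuckled configuration --- Pattern A of \prettyref{fig:wrinklingpatterns}.
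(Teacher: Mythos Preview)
Your proposal is correct and follows exactly the paper's own approach: the paper's proof is simply ``Arguing as in the proof of \prettyref{cor:unbuckled}, we see that the result follows from \prettyref{thm:NLneutralbounds},'' and you have unpacked that argument accurately, including the key observation that the $(h\lambda)^{12/11}$ branch is the one that crosses $\lambda^{2}$ at the threshold. Your remark about the apparent typo $A_{\lambda,1,\infty}^{NL}$ for $A_{\lambda,1,m}^{NL}$ is also well taken.
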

\begin{proof}
Arguing as in the proof of \prettyref{cor:unbuckled}, we see that
the result follows from \prettyref{thm:NLneutralbounds}.
\end{proof}

\subsection{Discussion of the proofs\label{sub:DiscussionofProofs}}

We turn now to a discussion of the mathematical ideas behind the proofs
of these results. To fix ideas, we focus exclusively in this section
on the nonlinear model, given in \prettyref{eq:ENL}. For added clarity,
we consider \textbf{only} the case where $h\to0$ while $\lambda\in(0,\frac{1}{2}]$,
$\varrho\in[1,\infty)$, and $m\in[1,\infty)$ are held fixed. Under
these additional assumptions, \prettyref{thm:NLlargemandrelscaling}
and \prettyref{thm:NLneutralbounds} imply the following results:
\begin{itemize}
\item If $\varrho>1$, there are constants $c,C$ depending only on $\lambda,\varrho,m$
such that
\begin{equation}
ch^{2/3}\leq\min_{A_{\lambda,\varrho,m}^{NL}}E_{h}^{NL}-\cE_{b}^{NL}\leq Ch^{2/3}\quad\text{as}\ h\to0.\label{eq:asymptoticexp}
\end{equation}

\item If $\varrho=1$, there are constants $c,C$ depending only on $\lambda,m$
such that 
\begin{equation}
ch\leq\min_{A_{\lambda,1,m}^{NL}}E_{h}^{NL}-\cE_{b}^{NL}\leq Ch\quad\text{as}\ h\to0.\label{eq:asymptoticexp-1}
\end{equation}

\end{itemize}

\subsubsection{The bulk energy }

We see from \prettyref{eq:EbNL} that $\cE_{b}^{NL}$ is of the form
\[
\cE_{b}^{NL}=b_{m}(\varrho)+b_{\kappa}(\varrho)h^{2}.
\]
The first factor, $b_{m}$, is the ``bulk membrane energy'' that
remains in the limit $h\to0$. The second factor, $b_{\kappa}h^{2}$,
is the ``bulk bending energy'' and appears in $\cE_{b}^{NL}$ due
to our choice of bending term. 

The bulk membrane energy can be found by solving the relaxed problem:
\begin{equation}
b_{m}=\min_{\Phi\in A_{\lambda,\varrho,m}^{NL}}\int_{\Omega}QW(D\Phi)\,dx.\label{eq:relaxedpblm}
\end{equation}
Here, $QW$ is the quasiconvexification of $W(F)=\left|F^{T}F-\id\right|^{2}$.
It follows from the results of \cite{pipkin1994relaxed} that 
\[
QW(F)=(\lambda_{1}^{2}-1)_{+}^{2}+(\lambda_{2}^{2}-1)_{+}^{2}
\]
where $\{\lambda_{i}\}_{i=1,2}$ are the singular values of $F$.

Regardless of whether we consider the large, neutral, or small mandrel
cases, the deformation 
\[
\Phi_{\text{eff}}(\theta,z)=(1+(\varrho-1)_{+},\theta,(1-\lambda)z)
\]
is a minimizer of \prettyref{eq:relaxedpblm}. The effective (first
Piola\textendash Kirchhoff) stress field is given by 
\begin{equation}
\sigma_{\text{eff}}=DQW(D\Phi_{\text{eff}})=4\varrho(\varrho^{2}-1)_{+}E_{\theta}\otimes e_{\theta},\label{eq:effectivestress}
\end{equation}
and the bulk membrane energy satisfies 
\[
b_{m}=|\Omega|(\varrho^{2}-1)_{+}^2.
\]
We note here that in the large mandrel case, where $\varrho>1$,
both $\sigma_{\text{eff}}$ and $b_{m}$ are non-zero, whereas for
the small or neutral mandrels these both vanish. As will become clear,
the appearance of different power laws for the scaling of the excess energy in \prettyref{eq:asymptoticexp}
and \prettyref{eq:asymptoticexp-1} is due precisely to the vanishing
or non-vanishing of $\sigma_{\text{eff}}$.

\subsubsection{Upper bounds}

To achieve the upper bounds from \prettyref{eq:asymptoticexp} and
\prettyref{eq:asymptoticexp-1}, one must construct a good test function
and estimate its elastic energy. The particular test functions that
we use are of the form 
\begin{equation}
\Phi(\theta,z)=(\varrho+w(z),\theta,(1-\lambda)z+u(z)).\label{eq:axisymmpattern}
\end{equation}
We refer to such constructions as ``axisymmetric wrinkling patterns''
(see \prettyref{fig:wrinklingpatterns}). By construction, the metric
tensor $g=D\Phi^{T}D\Phi$ satisfies $g_{\theta z}=0$ and by choosing
$u,w$ suitably we can ensure that $g_{zz}=0$ as well. 

In \prettyref{sec:upperbounds}, we estimate the elastic energy of
\eqref{eq:axisymmpattern}. The result is that the excess energy is
bounded above by a multiple of
\[
\int_{I_{z}}(\varrho^{2}-1)_{+}|w|+|w|^{2}+h^{2}|w''|^{2}\,dz,
\]
where $\norm{w'}_{L^{2}}\geq c(\lambda)$. Minimizing over all such
$w$ leads to the desired upper bounds. Evidently, both the character
of the optimal $w$ and the scaling in $h$ of the resulting upper
bound depend crucially on whether $\varrho>1$.

\subsubsection{Ansatz-free lower bounds}

The proofs of the lower bounds from \prettyref{eq:asymptoticexp}
and \prettyref{eq:asymptoticexp-1} require an ansatz-free argument.
We start by establishing the following claims:
\begin{enumerate}
\item With enough axial confinement, low-energy configurations must buckle;
\item Buckling in the presence of the mandrel induces excess hoop stress,
and costs energy.
\end{enumerate}
The first claim is quantified in \prettyref{cor:NLbucklingcontrol},
with the result being that low-energy configurations must satisfy
\begin{equation}
\norm{D\Phi_{\rho}}_{L^{2}}\geq c(\lambda).\label{eq:LBsketch-inequality1}
\end{equation}
The second claim is quantified in \prettyref{lem:NLmembrLB}; this
result implies in particular that the excess energy is bounded below
by a multiple of 
\begin{equation}
(\varrho^{2}-1)_{+}\norm{\Phi_{\varrho}-\varrho}_{L^{1}(\Omega)}+\norm{\Phi_{\rho}-\varrho}_{L_{z}^{2}L_{\theta}^{1}}^{2}.\label{eq:LBsketch-inequality2}
\end{equation}
The anisotropic norm appearing here is characteristic of our neutral
mandrel analysis. It arises because we consider the stretching of
each $\theta$-hoop individually in this case, a choice that may be
sub-optimal in general as it ignores the cost of shear.

Finally, we prove in \prettyref{lem:NLbendingcontrol} that, for low-energy
configurations, the excess energy is bounded below by a multiple of
\begin{equation}
h^{2}\norm{D^{2}\Phi_{\rho}}_{L^{2}(\Omega)}^{2}.\label{eq:LBsketch-inequality3}
\end{equation}
While such a bound comes for free when we consider $E_{h}^{vKD}$,
it requires some extra work for $E_{h}^{NL}$, due to the 
nonlinearities in the bending term.

Combining \prettyref{eq:LBsketch-inequality1}, \prettyref{eq:LBsketch-inequality2},
and \prettyref{eq:LBsketch-inequality3} with various Gagliardo-Nirenberg
interpolation inequalities (see \prettyref{sec:Appendix}), we conclude
the desired lower bounds.

\subsubsection{The role of $\sigma_{\text{eff}}$ in lower bounds}

As described above, the vanishing of the effective applied stress,
$\sigma_{\text{eff}}$, affects both the scaling law of the excess
energy as well as the character of low energy sequences. We wish now
to present a short argument for the first part of \prettyref{eq:LBsketch-inequality2}.
While this argument is not strictly necessary for the proof of the
main results, we believe that it helps to clarify the role of $\sigma_{\text{eff}}$
in the lower bounds.

It turns out that 
\[
E_{h}^{NL}(\Phi)-\cE_{b}^{NL}\geq\int_{\Omega}W(D\Phi)-b_{m},
\]
i.e., the excess energy can be split into its membrane and bending
parts (see \prettyref{lem:NLexcesssplits}). Since $QW\leq W$, we
have that 
\[
\int_{\Omega}W(D\Phi)-b_{m}\geq\int_{\Omega}QW(D\Phi)-QW(D\Phi_{eff}).
\]
If $\sigma_{\text{eff}}\neq0$, then to first order 
\begin{equation}
QW(D\Phi)-QW(D\Phi_{eff})=\left\langle \sigma_{\text{eff}},D(\Phi-\Phi_{\text{eff}})\right\rangle +h.o.t.,\label{eq:TaylorQW}
\end{equation}
and in fact we have that
\[
QW(D\Phi)-QW(D\Phi_{\text{eff}})\geq\left\langle \sigma_{\text{eff}},D(\Phi-\Phi_{\text{eff}})\right\rangle 
\]
since $QW$ is convex (this also follows from \cite{pipkin1994relaxed}).
Integrating by parts with the formula \prettyref{eq:effectivestress},
and using that $\Phi_{\rho}\geq\varrho$, we conclude that 
\[
\int_{\Omega}\left\langle \sigma_{\text{eff}},D(\Phi-\Phi_{\text{eff}})\right\rangle =\int_{\Omega}|\sigma_{\text{eff}}||\Phi_{\rho}-\varrho|.
\]
Hence, 
\[
E_{h}^{NL}(\Phi)-\cE_{b}^{NL}\geq|\sigma_{\text{eff}}|\norm{\Phi_{\rho}-\varrho}_{L^{1}(\Omega)}\quad\forall\,\Phi\in A_{\lambda,\varrho,\infty}^{NL}.
\]

While this argument succeeds in proving the first part of \prettyref{eq:LBsketch-inequality2},
it fails to prove the second part since, essentially, the expansion
\prettyref{eq:TaylorQW} fails to capture the leading order behavior of $QW$ in
the neutral mandrel case. Nevertheless, one can prove the full power
of \prettyref{eq:LBsketch-inequality2} assuming only that the cylinder
is at least as large as the mandrel, i.e., $\varrho\geq1$. The argument
we give in \prettyref{sub:largemandrelLB_nonlinear} establishes both
parts at once, using only familiar calculus and Sobolev-type inequalities along with
the basic definitions.

\subsection{Outline}

In \prettyref{sec:upperbounds}, we give the proofs of the upper bound
parts of \prettyref{thm:FvKlargemandrelscaling}, \prettyref{thm:NLlargemandrelscaling},
\prettyref{thm:FvKneutralbounds}, and \prettyref{thm:NLneutralbounds}.
In \prettyref{sec:largemandrelLB} we prove the lower bounds in the
large mandrel case, i.e., the lower bound parts of \prettyref{thm:FvKlargemandrelscaling}
and \prettyref{thm:NLlargemandrelscaling}. In \prettyref{sec:neutralmandrelLB},
we consider the analysis of lower bounds in the neutral mandrel case.
There, we prove the lower bound parts of \prettyref{thm:FvKneutralbounds}
and \prettyref{thm:NLneutralbounds}, as well as the energy scaling
law for the free-shear functional. We end with a short appendix in
\prettyref{sec:Appendix} which contains the various interpolation
inequalities that we use.

\subsection{Notation\label{sub:notation}}

The notation $X\lesssim Y$ means that there exists a positive numerical
constant $C$ such that $X\leq CY$, and the notation $X\lesssim_{a}Y$
means that there exists a positive constant $C'$ depending only on
$a$ such that $X\leq C'(a)Y$. The notation $X\sim Y$ means that
$X\lesssim Y$ and $Y\lesssim X$, and similarly for $X\sim_{a}Y$. 

When the meaning is clear, we sometimes abbreviate function spaces on
$\Omega$ by dropping the dependence on the domain, e.g., 
 $H^{k}=H^{k}(\Omega)$. The space $H_{\text{per}}^{k}=H_{\text{per}}^{k}(\Omega)$
is the space of periodic Sobolev functions on $\Omega$ of order $k$ and integrability $2$.
We employ the following notation regarding mixed $L^{p}$-norms: 
\[
\norm{f}_{L_{x_{1}}^{p_{1}}L_{x_{2}}^{p_{2}}}=\left(\int\left(\int|f(x_{1},x_{2})|^{p_{2}}\,dx_{2}\right)^{\frac{p_{1}}{p_{2}}}\,dx_{1}\right)^{\frac{1}{p_{1}}}
\]
and
\[
\norm{f}_{L_{x_{1}}^{p}}(x_{2})=\left(\int|f(x_{1},x_{2})|^{p}\,dx_{1}\right)^{\frac{1}{p}}.
\]

We refer to the unit basis vectors for the reference $\theta,z$-coordinates
on $\Omega$ as $\{e_{i}\}_{i\in\{\theta,z\}}$, and the unit frame
of coordinate vectors for the cylindrical $\rho,\theta,z$-coordinates
on $\R^{3}$ as $\{E_{i}\}_{i\in\{\rho,\theta,z\}}$. Note that $E_{\rho}=E_{\rho}(x)$
and $E_{\theta}=E_{\theta}(x)$ depend on $x\in\R^{3}$ through its
$\theta$-coordinate, $x_{\theta}$; our convention is that $E_{\rho}$
points in the direction of increasing radial coordinate, $\rho$,
and $E_{\theta}$ in the direction of increasing azimuthal coordinate,
$\theta$, so that in particular $x=x_{\rho}E_{\rho}(x)+x_{z}E_{z}$.
We will sometimes perform Lebesgue averages of a function $f:\Omega\to\R$
over the reference $\theta$-coordinate. We denote this by
\[
\overline{f}(z)=\frac{1}{\left|I_{\theta}\right|}\int_{I_{\theta}}f(\theta,z)\,d\theta.
\]
The notation $\left|A\right|$ denotes the Euclidean volume of the
(Lebesgue measurable) set $A$. The set $\mathcal{B}(U)$ denotes
the set of Lebesgue measurable subsets $A\subset U$.

\subsection{Acknowledgements}

We would like to thank our advisor R.~V.\ Kohn for his constant support.
We would like to thank S.\ Conti for many inspirational discussions
during an intermediate phase of this project, and in particular for
his insight into the analysis of the free-shear functional. We would
like to thank the University of Bonn for its hospitality during our
visit in April and May of 2015. This research was conducted while
the author was supported by a National Science Foundation Graduate
Research Fellowship DGE-0813964, and National Science Foundation grants
OISE-0967140 and DMS-1311833.

\section{Elastic energy of axisymmetric wrinkling patterns\label{sec:upperbounds}}

We begin our analysis of the compressed cylinder by estimating the
elastic energy of various axisymmetric wrinkling patterns. This amounts
to considering test functions that depend only on the $z$-coordinate.
The results in this section constitute the upper bound parts of \prettyref{thm:FvKlargemandrelscaling},
\prettyref{thm:NLlargemandrelscaling}, \prettyref{thm:FvKneutralbounds},
and \prettyref{thm:NLneutralbounds}. We consider the vKD model in
\prettyref{sub:FvKUB} and the nonlinear model in \prettyref{sub:NLUB}.

\subsection{vKD model\label{sub:FvKUB}}

Recall the definitions of $E_{h}^{vKD}$, $A_{\lambda,\varrho,m}^{vKD}$,
and $\cE_{b}^{vKD}$, given in \prettyref{eq:EFvK}, \prettyref{eq:AFvK},
and \prettyref{eq:EbFVK} respectively. In this section, we prove
the following upper bound.
\begin{prop}
\label{prop:FvKUB} We have that 
\[
\min_{A_{\lambda,\varrho,m}^{vKD}}E_{h}^{vKD}-\cE_{b}^{vKD}\lesssim\min\left\{ \lambda^{2},\max\left\{ \lambda h,h^{6/7}\lambda^{5/7}(\varrho-1)^{4/7},m^{-1/3}(\varrho-1)^{2/3}\lambda h^{2/3}\right\} \right\} 
\]
whenever $h,\lambda\in(0,\frac{1}{2}]$, $\varrho\in[1,\infty)$,
and $m\in[2,\infty]$.\end{prop}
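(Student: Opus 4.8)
The plan is to reduce the minimization to a one-variable problem by restricting to axisymmetric test functions and then to exhibit explicit profiles realizing the three regimes in the claimed bound. Concretely, I would take $\phi=(w(z),0,-\lambda z+u(z))$ with $w(z)=(\varrho-1)+v(z)$ and $v\geq0$. For such $\phi$ the strain tensor \prettyref{eq:FvKstrain} is diagonal, with $\epsilon_{\theta\theta}=w$ and $\epsilon_{zz}=-\lambda+u'+\tfrac12(v')^{2}$; choosing $u'=\lambda-\tfrac12(v')^{2}$ annihilates the $zz$-component, which is consistent with $u\in H^{1}_{\text{per}}$ precisely when $\norm{v'}_{L^{2}(I_{z})}^{2}=2\lambda$. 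Since then $D^{2}\phi_{\rho}$ has only a $zz$-entry, one obtains the identity
\[
E_{h}^{vKD}(\phi)-\cE_{b}^{vKD}=\abs{I_{\theta}}\int_{I_{z}}\Bigl(2(\varrho-1)v+v^{2}+h^{2}(v'')^{2}\Bigr)\,dz,
\]
and admissibility reduces to $v\geq0$, $v\in H^{2}_{\text{per}}$, $\norm{v'}_{L^{2}(I_{z})}^{2}=2\lambda$, together with the slope bound $\norm{v'}_{L^{\infty}}\lesssim m^{1/2}$ (this controls $\partial_{z}\phi_{\rho}=v'$ and $\partial_{z}\phi_{z}=-\tfrac12(v')^{2}$; all $\theta$-derivatives vanish). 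The term $\lambda^{2}$ in the claim is then immediate: the trivial competitor $v\equiv0,\ u\equiv0$, which leaves $\epsilon_{zz}=-\lambda$, is admissible (its only nonzero slope is $\lambda\leq\tfrac12\leq m$) and has excess energy $\abs{\Omega}\lambda^{2}$.

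It remains to choose $v$. For the one/few-wrinkle regime (Pattern B) I would use a single rescaled bump $v(z)=a\,\chi(z/\ell)$ with $\chi$ a fixed nonnegative profile supported in $[0,1]$ and $a\sim(\lambda\ell)^{1/2}$ forced by the $L^{2}$-constraint; then the excess is $\lesssim(\varrho-1)\lambda^{1/2}\ell^{3/2}+\lambda\ell^{2}+h^{2}\lambda\ell^{-2}$ while the slope is $\sim(\lambda/\ell)^{1/2}$. A one-variable optimization over $\ell\in(0,1]$ — balancing the bending term $h^{2}\lambda\ell^{-2}$ against whichever of the two growing terms is larger at the relevant scale — yields excess $\lesssim\max\{\lambda h,(\varrho-1)^{4/7}\lambda^{5/7}h^{6/7}\}$, valid whenever the optimal $\ell$ obeys the slope bound; this covers $m=\infty$. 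For the many-wrinkle regime (Pattern C) I would instead use a comb of $N$ translates of the bump, each of width $\delta$, separated by flat segments where $v=0$; the $L^{2}$-constraint now forces $a\sim(\lambda\delta/N)^{1/2}$, the excess is $\lesssim(\varrho-1)\lambda^{1/2}N^{1/2}\delta^{3/2}+\lambda\delta^{2}+h^{2}\lambda\delta^{-2}$, and the slope is $\sim(\lambda/(N\delta))^{1/2}$. Taking $N$ as small as the slope bound permits, i.e.\ $N\delta\sim\lambda/m$, collapses the first term to $\sim(\varrho-1)\lambda m^{-1/2}\delta$, and optimizing $\delta$ against $h^{2}\lambda\delta^{-2}$ gives excess $\lesssim m^{-1/3}(\varrho-1)^{2/3}\lambda h^{2/3}$. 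These two constructions are the content of \prettyref{lem:FvKUB_onewrinkle} and \prettyref{lem:FvKUB_manywrinkles}; the proposition follows by comparing the trivial competitor with whichever wrinkled competitor is admissible and taking the minimum.

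I expect the main difficulty to lie not in any single estimate but in the bookkeeping over degenerate parameter ranges. One must check that the optimal parameters can actually be realized — $\ell\leq1$, $N\geq1$ an integer, $N\delta\leq1$ — and, when they cannot (e.g.\ $\varrho$ very close to $1$, or $\lambda$ small relative to $h$, or $m$ at its lower bound), verify that capping them at the feasible boundary still produces a bound dominated by $\max\{\lambda h,(\varrho-1)^{4/7}\lambda^{5/7}h^{6/7},m^{-1/3}(\varrho-1)^{2/3}\lambda h^{2/3}\}$. The complementary point is to confirm that the comb strictly outperforms a single wide wrinkle exactly when the slope constraint becomes binding — that is, when the optimal single-wrinkle slope exceeds $m^{1/2}$ — so that the ranges of validity of the two lemmas together exhaust all admissible $(h,\lambda,\varrho,m)$, and that in that binding range the $m^{-1/3}$ term indeed dominates the $(\varrho-1)^{4/7}$ term, so that the minimum of the constructions reproduces the stated maximum. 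The remaining work — evaluating the three model integrals on bump and comb profiles and running the elementary optimizations — is routine.
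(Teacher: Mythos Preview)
Your approach is essentially that of the paper: axisymmetric test functions with $\epsilon_{zz}=0$, reducing to the one-variable functional $\int 2(\varrho-1)v+v^{2}+h^{2}(v'')^{2}$, then a bump/comb construction with the same scalings (your $(N,\delta)$ is just a reparametrization of the paper's $(n,\delta)$ via $N\delta_{\text{you}}=\delta_{\text{paper}}$). The paper organizes the case analysis into three lemmas rather than two---\prettyref{lem:FvKUB_onewrinkle}, \prettyref{lem:FvKUB_manywrinkles}, and a separate \prettyref{lem:FvKUB_uniforminmandrel} for the $\lambda h$ regime---the last of which also invokes the comb (with $\delta\sim h^{1/2}$, $N\delta\sim\lambda/m$) when $\lambda>mh^{1/2}$ and the single bump violates the slope bound; you should make this sub-case explicit, since your single-bump argument for $\lambda h$ only covers $\lambda\leq mh^{1/2}$.
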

\begin{proof}
The upper bound of $\lambda^{2}$ is achieved by the unbuckled configuration,
$\phi=(\varrho-1,0,-\lambda z)$. To prove the remainder of the upper
bound, note first that it suffices to achieve it for $(h,\lambda,\varrho,m)\in(0,h_{0}]\times(0,\frac{1}{2}]\times[1,\infty)\times[2,\infty]$
for some $h_{0}\in(0,\frac{1}{2}]$. We apply \prettyref{lem:FvKUB_manywrinkles},
\prettyref{lem:FvKUB_onewrinkle}, and \prettyref{lem:FvKUB_uniforminmandrel}
to deduce the required upper bound in the stated parameter range with
$h_{0}=\frac{1}{2^{4}}$. 
\end{proof}
In the remainder of this section, we will \textbf{assume} that 
\[
h\in(0,\frac{1}{2^{4}}],\ \lambda\in(0,\frac{1}{2}],\ \varrho\in[1,\infty),\ \text{and}\ m\in[2,\infty]
\]
unless otherwise explicitly stated.

We begin by defining a two-scale axisymmetric wrinkling pattern. We
will refer to the parameters $n\in\N$ and $\delta\in(0,1]$, which
are the number of wrinkles and their relative extent. We refer the
reader to \prettyref{fig:radialheightfield} for a schematic of this
construction. 

Fix $f\in C^{\infty}(\R)$ such that
\begin{itemize}
\item $f$ is non-negative and one-periodic
\item $\text{supp}\,f\cap[-\frac{1}{2},\frac{1}{2}]\subset(-\frac{1}{2},\frac{1}{2})$
\item $\norm{f'}_{L^{\infty}}\leq2$
\item $\norm{f'}_{L^{2}(B_{1/2})}^{2}=1$,
\end{itemize}
and define $f_{\delta,n}\in C^{\infty}(\R)$ by
\[
f_{\delta,n}(t)=\frac{\sqrt{\delta}}{n}f(\frac{n}{\delta}\{t\})\ind{\{t\}\in B_{\delta/2}}.
\]
Define $w_{\delta,n,\lambda},u_{\delta,n,\lambda}:\Omega\to\R$ by
\[
w_{\delta,n,\lambda}(\theta,z)=\sqrt{2\lambda}f_{\delta,n}(z)\quad\text{and}\quad u_{\delta,n,\lambda}(\theta,z)=\int_{-\frac{1}{2}\leq z'\leq z}\lambda-\frac{1}{2}(\partial_{z}w_{\delta,n,\lambda}(\theta,z'))^{2}\,dz'.
\]
Finally, define $\phi_{\delta,n,\lambda,\varrho}:\Omega\to\R^{3}$
by
\[
\phi_{\delta,n,\lambda,\varrho}=(w_{\delta,n,\lambda}+\varrho-1,0,-\lambda z+u_{\delta,n,\lambda}),
\]
in cylindrical coordinates. 

\begin{figure}
\includegraphics[height=0.18\textheight]{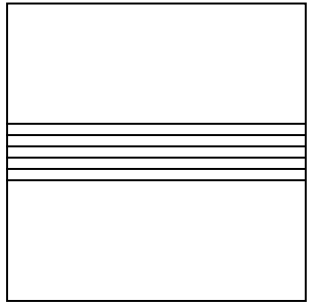}

\caption{This schematic depicts the axisymmetric wrinkle construction used
in the proof of the upper bounds. The pattern features $n$ wrinkles
in the $e_{z}$-direction with volume fraction $\delta$. The optimal
choice of $\delta,n$ depends on the axial compression, $\lambda$,
the thickness, $h$, the mandrel's radius, $\varrho$, and the \emph{a
priori} $L^{\infty}$ slope bound, $m$. \label{fig:radialheightfield}}
\end{figure}

Now, we estimate the elastic energy of this construction in the vKD
model. Define 
\[
m_{1}(\lambda,\delta)=2\max\left\{ \sqrt{\frac{2\lambda}{\delta}},\frac{2\lambda}{\delta}\right\} .
\]

\begin{lem}
\label{lem:FvKadmissability}We have that $\phi_{\delta,n,\lambda,\varrho}\in A_{\lambda,\varrho,m_{1}}^{vKD}$.
Furthermore,
\[
E_{h}^{vKD}(\phi_{\delta,n,\lambda,\varrho})-\cE_{b}^{vKD}\lesssim\max\left\{ (\varrho-1)\frac{\lambda^{1/2}\delta^{3/2}}{n},\frac{\lambda\delta^{2}}{n^{2}},h^{2}\frac{\lambda n^{2}}{\delta^{2}}\right\} .
\]
\end{lem}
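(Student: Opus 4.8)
The plan is a direct computation: the test function $\phi=\phi_{\delta,n,\lambda,\varrho}$ depends only on $z$, and the pair $(w,u)=(w_{\delta,n,\lambda},u_{\delta,n,\lambda})$ has been arranged so that its strain is essentially trivial. First I would record the scaling of $f_{\delta,n}$. Substituting $s=\tfrac{n}{\delta}z$ and using that $n\in\N$ while $f$ is one-periodic,
\[
\int_{I_z}f_{\delta,n}\,dz\sim\frac{\delta^{3/2}}{n},\qquad\int_{I_z}f_{\delta,n}^{2}\,dz\sim\frac{\delta^{2}}{n^{2}},\qquad\int_{I_z}(f_{\delta,n}')^{2}\,dz=\norm{f'}_{L^{2}(B_{1/2})}^{2}=1,\qquad\int_{I_z}(f_{\delta,n}'')^{2}\,dz\sim\frac{n^{2}}{\delta^{2}},
\]
together with the pointwise bound $\norm{f_{\delta,n}'}_{L^{\infty}}\le\tfrac{2}{\sqrt{\delta}}$ coming from $\norm{f'}_{L^{\infty}}\le2$. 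The third identity is precisely the normalization imposed on $f$, and it is exactly what makes $u$ one-periodic in $z$: since $\partial_z u=\lambda-\tfrac12(\partial_z w)^{2}$, we get $u(\tfrac12)-u(-\tfrac12)=\lambda\abs{I_z}-\tfrac12\int_{I_z}(\partial_z w)^{2}\,dz=\lambda-\tfrac12\cdot 2\lambda=0$. (Smoothness and one-periodicity of $f_{\delta,n}$, hence of $w$, follow from the support hypothesis on $f$, which forces $f_{\delta,n}$ to vanish to infinite order at the edges of its support within each period.)

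Next I would check that $\phi\in A_{\lambda,\varrho,m_{1}}^{vKD}$. The regularity requirements are immediate from the previous step: $\phi_{\rho}=w+\varrho-1\in H_{\text{per}}^{2}$, $\phi_{\theta}=0\in H_{\text{per}}^{1}$, and $\phi_{z}+\lambda z=u\in H_{\text{per}}^{1}$ (in fact $u$ is Lipschitz, $\partial_z u$ being bounded). The obstacle constraint holds because $\phi_{\rho}-(\varrho-1)=w=\sqrt{2\lambda}\,f_{\delta,n}\ge0$. For the slope bound, every $\partial_{\theta}$-derivative vanishes, while $\abs{\partial_z\phi_{\rho}}=\sqrt{2\lambda}\,\abs{f_{\delta,n}'}\le2\sqrt{2\lambda/\delta}$, $\partial_z\phi_{\theta}=0$, and $\partial_z\phi_{z}=-\lambda+\partial_z u=-\tfrac12(\partial_z w)^{2}$, so $\abs{\partial_z\phi_{z}}\le\tfrac12\bigl(2\sqrt{2\lambda/\delta}\bigr)^{2}=4\lambda/\delta$. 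Hence $\max_{i,j}\norm{\partial_i\phi_j}_{L^{\infty}}\le2\max\{\sqrt{2\lambda/\delta},2\lambda/\delta\}=m_{1}(\lambda,\delta)$.

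The core of the argument is the strain computation. Because $\phi_{\theta}=0$ and nothing depends on $\theta$, the linear strain $e(\phi_\theta,\phi_z)$ has $e_{\theta\theta}=e_{\theta z}=0$ and $e_{zz}=\partial_z\phi_z$; moreover $D\phi_{\rho}=w'\,e_z$, so $\tfrac12 D\phi_{\rho}\otimes D\phi_{\rho}=\tfrac12(w')^{2}\,e_z\otimes e_z$, and $\phi_{\rho}\,e_\theta\otimes e_\theta=(w+\varrho-1)\,e_\theta\otimes e_\theta$. Reading off the components of $\epsilon$ from \eqref{eq:FvKstrain}: $\epsilon_{\theta z}=0$, $\epsilon_{\theta\theta}=w+\varrho-1$, and $\epsilon_{zz}=\partial_z\phi_z+\tfrac12(w')^{2}=-\tfrac12(w')^{2}+\tfrac12(w')^{2}=0$ --- this last cancellation being exactly the purpose of the definition of $u$. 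Thus $\abs{\epsilon}^{2}=(w+\varrho-1)^{2}$, and similarly $D^{2}\phi_{\rho}=w''\,e_z\otimes e_z$ gives $\abs{D^{2}\phi_{\rho}}^{2}=(w'')^{2}$. Since $\cE_{b}^{vKD}(\varrho)=\abs{\Omega}(\varrho-1)^{2}=\abs{I_\theta}\int_{I_z}(\varrho-1)^{2}\,dz$, expanding $(w+\varrho-1)^{2}=(\varrho-1)^{2}+2(\varrho-1)w+w^{2}$ yields
\[
E_{h}^{vKD}(\phi)-\cE_{b}^{vKD}=\abs{I_\theta}\int_{I_z}2(\varrho-1)w+w^{2}+h^{2}(w'')^{2}\,dz.
\]

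Finally I would substitute $w=\sqrt{2\lambda}\,f_{\delta,n}$ and invoke the four scaling identities above: $\int_{I_z}2(\varrho-1)w\,dz\lesssim(\varrho-1)\lambda^{1/2}\delta^{3/2}/n$, $\int_{I_z}w^{2}\,dz\lesssim\lambda\delta^{2}/n^{2}$, and $h^{2}\int_{I_z}(w'')^{2}\,dz\lesssim h^{2}\lambda n^{2}/\delta^{2}$; bounding the sum by three times its maximum (and absorbing $\abs{I_\theta}=2\pi$ into $\lesssim$) produces the claimed estimate. There is no deep obstacle here --- the argument is bookkeeping --- but the points that need care are: getting the exponents of $\delta$ and $n$ right in the rescaling identities; verifying that the slope bound comes out to be exactly $m_{1}$ rather than merely $\lesssim m_{1}$, which is what pins down the particular form of $m_{1}$; and the minor regularity point that $f_{\delta,n}$ must decay to infinite order at the seams of its support, which is where the compact-support hypothesis on $f$ enters.
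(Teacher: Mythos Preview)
Your proposal is correct and follows essentially the same approach as the paper: both verify admissibility by checking periodicity (via the normalization $\int_{I_z}\tfrac12(\partial_z w)^2=\lambda$), the obstacle constraint $w\ge0$, and the slope bounds yielding exactly $m_1$, then exploit the cancellation $\epsilon_{zz}=0$ to reduce the energy to $\int_\Omega|w+\varrho-1|^2+h^2|\partial_z^2 w|^2$ and finish with the elementary rescaling estimates on $w=\sqrt{2\lambda}\,f_{\delta,n}$. Your write-up is a bit more explicit about the rescaling identities and the seam regularity, but the argument is the same.
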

\begin{proof}
Abbreviate $\phi_{\delta,n,\lambda,\varrho}$ by $\phi$, $w_{\delta,n,\lambda}$
by $w$, and $u_{\delta,n,\lambda}$ by $u$. We claim that $\phi_{\rho}\in H_{\text{per}}^{2}$,
$\phi_{\theta}\in H_{\text{per}}^{1}$, and $\phi_{z}+\lambda z\in H_{\text{per}}^{1}$.
To see this, observe that 
\[
\int_{I_{z}}\frac{1}{2}|\partial_{z}w_{\delta,n,\lambda}|^{2}dz=\lambda\int_{B_{\delta/2}}|f'_{\delta,n}|^{2}dt=\lambda\int_{B_{1/2}}|f'|^{2}dt=\lambda
\]
for all $\theta\in I_{\theta}$, so that $u\in H_{\text{per}}^{1}$. That
$w\in H_{\text{per}}^{2}$ follows from its definition. Observe also that
$\phi_{\rho}\geq\varrho-1$, since $w\geq0$. 

Now we check the slope bounds. By construction, we have that
\[
\epsilon_{zz}=\partial_{z}\phi_{z}+\frac{1}{2}(\partial_{z}\phi_{\rho})^{2}=0
\]
and that
\[
\partial_{z}\phi_{\rho}=\partial_{z}w=\sqrt{2\lambda}f'_{\delta,n}.
\]
Hence, 
\[
\norm{\partial_{z}\phi_{\rho}}_{L^{\infty}}\le\sqrt{2\lambda}\norm{f_{\delta,n}'}_{L^{\infty}}\leq2\sqrt{\frac{2\lambda}{\delta}}
\]
and
\[
\norm{\partial_{z}\phi_{z}}_{L^{\infty}}\leq\lambda\norm{f'_{\delta,n}}_{L^{\infty}}^{2}\leq\frac{4\lambda}{\delta}.
\]
It follows that
\[
\max_{\substack{i\in\left\{ \theta,z\right\} ,\,j\in\{\rho,\theta,z\}}
}\norm{\partial_{i}\phi_{j}}_{L^{\infty}}\leq m_{1}(\lambda,\delta),
\]
and therefore that $\phi\in A_{\lambda,\varrho,m_{1}}^{vKD}$.

Now we bound the elastic energy of this construction. Since $\epsilon_{zz}=\epsilon_{\theta z}=0$
and $w$ depends only on $z$, we see that
\[
E_{h}^{vKD}(\phi)=\int_{\Omega}\,\abs{w+\varrho-1}^{2}+h^{2}\abs{\partial_{z}^{2}w}^{2}\,d\theta dz
\]
and hence that
\[
E_{h}^{vKD}(\phi)-\cE_{b}^{vKD}\lesssim\max\left\{ (\varrho-1)_{+}\norm{w}_{L^{1}(\Omega)},\norm{w}_{L^{2}(\Omega)}^{2},h^{2}\norm{\partial_{z}^{2}w}_{L^{2}(\Omega)}^{2}\right\} .
\]
Now we conclude the desired result from the elementary bounds
\[
\norm{w}_{L^{1}(\Omega)}\lesssim\frac{\lambda^{1/2}\delta^{3/2}}{n},\quad\norm{w}_{L^{2}(\Omega)}^{2}\lesssim\frac{\lambda\delta^{2}}{n^{2}},\ \text{and}\quad\norm{\partial_{z}^{2}w}_{L^{2}(\Omega)}^{2}\lesssim\frac{\lambda n^{2}}{\delta^{2}}.
\]

\end{proof}
We make three choices of the parameters $n,\delta$ in what follows.
First, we consider a construction which features many wrinkles as
$h\to0$. 
\begin{lem}
\label{lem:FvKUB_manywrinkles} Assume that $m<\infty$ and that 
\[
m^{-1/3}(\varrho-1)^{2/3}\lambda h^{2/3}\geq\max\{\lambda h,h^{6/7}\lambda^{5/7}(\varrho-1)^{4/7}\}.
\]
Let $n\in\N$ and $\delta\in(0,1]$ satisfy
\[
n\in\left[(\varrho-1)^{1/3}\lambda h^{-2/3}m^{-7/6},2(\varrho-1)^{1/3}\lambda h^{-2/3}m^{-7/6}\right]\quad\text{and}\quad\delta=4\lambda m^{-1}.
\]
Then, $\phi_{\delta,n,\lambda,\varrho}\in A_{\lambda,\varrho,m}^{vKD}$
and 
\[
E_{h}^{vKD}(\phi_{\delta,n,\lambda,\varrho})-\cE_{b}^{vKD}\lesssim\frac{(\varrho-1)^{2/3}h^{2/3}\lambda}{m^{1/3}}.
\]
\end{lem}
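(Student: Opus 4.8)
The plan is to verify the two claims of the lemma---admissibility and the energy bound---by substituting the prescribed choices of $n$ and $\delta$ into the general estimates already established in \prettyref{lem:FvKadmissability}. First I would check admissibility: \prettyref{lem:FvKadmissability} gives $\phi_{\delta,n,\lambda,\varrho}\in A_{\lambda,\varrho,m_1}^{vKD}$ with $m_1(\lambda,\delta)=2\max\{\sqrt{2\lambda/\delta},2\lambda/\delta\}$, so it suffices to show $m_1(\lambda,\delta)\le m$ for the stated choice $\delta=4\lambda m^{-1}$. With this choice $2\lambda/\delta=m/2$, so $2\cdot(2\lambda/\delta)=m$, and $\sqrt{2\lambda/\delta}=\sqrt{m/2}\le m/2$ since $m\ge 2$; hence $m_1\le m$, as needed. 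One should also note that $\delta=4\lambda m^{-1}\le 2\lambda\le 1$ (using $m\ge 2$ and $\lambda\le\frac12$), so $\delta\in(0,1]$ is genuinely in range, and that $n\ge 1$ must be checked so that the interval for $n$ contains an integer --- this will follow from the standing assumption that $m^{-1/3}(\varrho-1)^{2/3}\lambda h^{2/3}$ dominates $\lambda h$ (equivalently $(\varrho-1)^{2/3}\ge m^{1/3}h^{1/3}$, which forces the lower endpoint $(\varrho-1)^{1/3}\lambda h^{-2/3}m^{-7/6}$ to be bounded below suitably); since the interval has length equal to its lower endpoint, any real lower endpoint $\ge 1$ guarantees an integer inside.

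Next I would plug $n$ and $\delta$ into the energy estimate from \prettyref{lem:FvKadmissability},
\[
E_h^{vKD}(\phi_{\delta,n,\lambda,\varrho})-\cE_b^{vKD}\lesssim\max\left\{(\varrho-1)\frac{\lambda^{1/2}\delta^{3/2}}{n},\ \frac{\lambda\delta^{2}}{n^{2}},\ h^{2}\frac{\lambda n^{2}}{\delta^{2}}\right\},
\]
and show each of the three terms is $\lesssim (\varrho-1)^{2/3}h^{2/3}\lambda\, m^{-1/3}$. For the bending term $h^2\lambda n^2/\delta^2$: with $\delta=4\lambda m^{-1}$ and $n\asymp(\varrho-1)^{1/3}\lambda h^{-2/3}m^{-7/6}$, one computes $n^2/\delta^2\asymp (\varrho-1)^{2/3}\lambda^2 h^{-4/3}m^{-7/3}\cdot m^2\lambda^{-2}=(\varrho-1)^{2/3}h^{-4/3}m^{-1/3}$, so $h^2\lambda n^2/\delta^2\asymp(\varrho-1)^{2/3}h^{2/3}\lambda m^{-1/3}$, matching the target exactly. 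For the membrane term $(\varrho-1)\lambda^{1/2}\delta^{3/2}/n$: $\delta^{3/2}=(4\lambda m^{-1})^{3/2}\asymp\lambda^{3/2}m^{-3/2}$ and $1/n\asymp(\varrho-1)^{-1/3}\lambda^{-1}h^{2/3}m^{7/6}$, giving $(\varrho-1)^{2/3}\lambda^{1/2}\cdot\lambda^{3/2}m^{-3/2}\cdot\lambda^{-1}h^{2/3}m^{7/6}=(\varrho-1)^{2/3}h^{2/3}\lambda\, m^{-1/3}$, again matching. For the term $\lambda\delta^2/n^2$, one checks it is dominated by the membrane term (equivalently $\delta^{1/2}/n\lesssim(\varrho-1)$, which should follow from $\delta\le 1$ and the lower bound on $n$); alternatively one bounds it directly and sees it is no larger than the common value.

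The main obstacle I expect is not the algebra of the three scalings---that is bookkeeping---but rather being careful about the arithmetic of the interval for $n$: one must confirm that the constraint assumed in the hypothesis, namely that $m^{-1/3}(\varrho-1)^{2/3}\lambda h^{2/3}\ge\max\{\lambda h, h^{6/7}\lambda^{5/7}(\varrho-1)^{4/7}\}$, is exactly what is needed to make the lower endpoint of the $n$-interval at least $1$ (so an integer $n$ exists) \emph{and} to ensure this construction actually beats the other two constructions in the relevant regime. Concretely, $\lambda h^{-2/3}m^{-7/6}(\varrho-1)^{1/3}\ge 1$ should be equivalent to $\lambda h\le m^{-7/6}(\varrho-1)^{1/3}\lambda h^{1/3}$, which rearranges to a comparison encoded in the hypothesis; I would double-check that the powers of $m$ are consistent here, since the hypothesis carries $m^{-1/3}$ while the definition of $n$ carries $m^{-7/6}$, and reconciling these requires using $\delta=4\lambda m^{-1}$ and tracking how $m$ enters through the slope constraint. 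Once that is pinned down, the lemma follows by combining the admissibility check with the three scaling estimates and taking the maximum.
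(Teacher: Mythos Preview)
Your approach is the same as the paper's, and the computations for the slope bound $m_1\le m$, for $\delta\in(0,1]$, and for the three energy terms are all correct. There is one small misattribution you should fix: the existence of an integer $n$ in the stated interval (i.e., the lower endpoint $(\varrho-1)^{1/3}\lambda h^{-2/3}m^{-7/6}\ge 1$) does \emph{not} follow from the comparison with $\lambda h$. Rearranging $m^{-1/3}(\varrho-1)^{2/3}\lambda h^{2/3}\ge h^{6/7}\lambda^{5/7}(\varrho-1)^{4/7}$ and raising to the $7/2$ power gives exactly $(\varrho-1)^{1/3}\lambda h^{-2/3}m^{-7/6}\ge 1$, so it is the \emph{second} part of the hypothesis that yields $n\ge 1$. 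The comparison with $\lambda h$ (equivalently $(\varrho-1)^{2/3}\ge m^{1/3}h^{1/3}$) is instead precisely what is needed for your third step, to show that the middle term $\lambda\delta^2/n^2\asymp \lambda m^{1/3}h^{4/3}(\varrho-1)^{-2/3}$ is dominated by the target $(\varrho-1)^{2/3}h^{2/3}\lambda m^{-1/3}$. So both halves of the hypothesis are used, just with their roles swapped from what you wrote; once you make this correction the argument is complete and matches the paper's.
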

\begin{proof}
Rearranging the inequality $m^{-1/3}(\varrho-1)^{2/3}\lambda h^{2/3}\geq h^{6/7}\lambda^{5/7}(\varrho-1)^{4/7}$,
we find that $(\varrho-1)^{1/3}\lambda h^{-2/3}m^{-7/6}\geq1$ so
that there exists such an $n\in\N$. Also, with our choice of $\delta$
we have that $m_{1}(\delta,\lambda)=m.$ We note that indeed $\delta\leq1$
since $\lambda\leq\frac{1}{2}$ and $m\geq2$.

It follows from \prettyref{lem:FvKadmissability} that $\phi_{\delta,n,\lambda,\varrho}\in A_{\lambda,\varrho,m}^{vKD}$,
and that
\[
E_{h}^{vKD}(\phi_{\delta,n,\lambda,\varrho})-\cE_{b}^{vKD}\lesssim\max\left\{ (\varrho-1)^{2/3}h^{2/3}m^{7/6}\delta^{3/2}\frac{1}{\lambda^{1/2}},\frac{\delta^{2}h^{4/3}m^{7/3}}{(\varrho-1)^{2/3}\lambda},h^{2/3}\frac{\lambda^{3}(\varrho-1)^{2/3}}{\delta^{2}m^{7/3}}\right\} .
\]
Using that $\delta\sim\frac{\lambda}{m}$, we have that
\[
E_{h}^{vKD}(\phi_{\delta,n,\lambda,\varrho})-\cE_{b}^{vKD}\lesssim\max\left\{ \frac{(\varrho-1)^{2/3}h^{2/3}\lambda}{m^{1/3}},\lambda m^{1/3}\frac{h^{4/3}}{(\varrho-1)^{2/3}}\right\} .
\]
Since
\[
\frac{(\varrho-1)^{2/3}h^{2/3}\lambda}{m^{1/3}}\geq\lambda m^{1/3}\frac{h^{4/3}}{(\varrho-1)^{2/3}}\iff(\varrho-1)^{2/3}\geq m^{1/3}h^{1/3},
\]
the result follows.
\end{proof}
Next, we consider a construction consisting of one wrinkle. 
\begin{lem}
\label{lem:FvKUB_onewrinkle} Assume that 
\[
h^{6/7}\lambda^{5/7}(\varrho-1)^{4/7}\geq\max\{\lambda h,m^{-1/3}(\varrho-1)^{2/3}\lambda h^{2/3}\}.
\]
Let $n=1$ and let $\delta\in(0,1]$ be given by
\[
\delta=4\lambda^{1/7}(\varrho-1)^{-2/7}h^{4/7}.
\]
Then, $\phi_{\delta,n,\lambda,\varrho}\in A_{\lambda,\varrho,m}^{vKD}$
and
\[
E_{h}^{vKD}(\phi_{\delta,n,\lambda,\varrho})-\cE_{b}^{vKD}\lesssim h^{6/7}\lambda^{5/7}(\varrho-1)^{4/7}.
\]
\end{lem}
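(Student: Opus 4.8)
The plan is to invoke \prettyref{lem:FvKadmissability} with the one-wrinkle choice $n=1$ and the stated $\delta$, and then to check the two things that lemma leaves open in this regime: that $\delta$ lies in $(0,1]$ with $m_{1}(\lambda,\delta)\le m$ (so that $\phi_{\delta,1,\lambda,\varrho}\in A_{\lambda,\varrho,m}^{vKD}$), and that each of the three terms in its energy bound is controlled by $h^{6/7}\lambda^{5/7}(\varrho-1)^{4/7}$. Before doing either, I would extract the two consequences of the hypothesis that get used throughout: rearranging $h^{6/7}\lambda^{5/7}(\varrho-1)^{4/7}\ge\lambda h$ gives $\varrho-1\ge\lambda^{1/2}h^{1/4}$ (equivalently $(\varrho-1)^{2}\ge\lambda h^{1/2}$), while rearranging $h^{6/7}\lambda^{5/7}(\varrho-1)^{4/7}\ge m^{-1/3}(\varrho-1)^{2/3}\lambda h^{2/3}$ and cubing gives $m\ge(\varrho-1)^{2/7}\lambda^{6/7}h^{-4/7}$.

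For admissibility, with $\delta=4\lambda^{1/7}(\varrho-1)^{-2/7}h^{4/7}$ the first consequence gives $\delta^{7}=4^{7}\lambda(\varrho-1)^{-2}h^{4}\le 4^{7}h^{7/2}\le 1$ since $h\le 2^{-4}$; hence $\delta\le 1$. For the slope bound I compute $\lambda/\delta=\tfrac14\lambda^{6/7}(\varrho-1)^{2/7}h^{-4/7}\le m/4$ using the second consequence, and then split cases: if $2\lambda/\delta\le 1$ then $m_{1}(\lambda,\delta)=2\sqrt{2\lambda/\delta}\le 2\le m$, while if $2\lambda/\delta>1$ then $m_{1}(\lambda,\delta)=4\lambda/\delta\le m$. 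Either way $m_{1}(\lambda,\delta)\le m$, so \prettyref{lem:FvKadmissability} places $\phi_{\delta,1,\lambda,\varrho}$ in $A_{\lambda,\varrho,m}^{vKD}$.

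It then remains to estimate the energy. Taking $n=1$ in \prettyref{lem:FvKadmissability} and substituting the stated $\delta$, a routine exponent count gives $(\varrho-1)\lambda^{1/2}\delta^{3/2}\sim(\varrho-1)^{4/7}\lambda^{5/7}h^{6/7}$ and $h^{2}\lambda\delta^{-2}\sim(\varrho-1)^{4/7}\lambda^{5/7}h^{6/7}$ --- these two terms were balanced by the very choice of $\delta$ --- while $\lambda\delta^{2}\sim\lambda^{9/7}(\varrho-1)^{-4/7}h^{8/7}$, and $\lambda\delta^{2}\lesssim(\varrho-1)^{4/7}\lambda^{5/7}h^{6/7}$ is equivalent to $\lambda^{1/2}h^{1/4}\lesssim\varrho-1$, which is again the first consequence of the hypothesis. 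Combining the three estimates yields the claimed bound. I do not anticipate any real obstacle here: all the analytic content is already contained in \prettyref{lem:FvKadmissability}, and this lemma is just the parameter optimization for the single-wrinkle phase; the only mild care needed is bookkeeping the fractional exponents and making sure the absolute constant $4^{7}$ arising in the check $\delta\le 1$ is absorbed by the standing assumption $h\le 2^{-4}$.
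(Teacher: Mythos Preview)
Your proposal is correct and follows essentially the same approach as the paper: both invoke \prettyref{lem:FvKadmissability}, verify $\delta\le 1$ via the consequence $(\varrho-1)^{2}\ge\lambda h^{1/2}$ together with $h\le 2^{-4}$, check $m_{1}(\lambda,\delta)\le m$ using the consequence $m\ge(\varrho-1)^{2/7}\lambda^{6/7}h^{-4/7}$, and then observe that the three energy terms are dominated by $h^{6/7}\lambda^{5/7}(\varrho-1)^{4/7}$. The only cosmetic difference is that the paper bounds both branches of $m_{1}$ directly (using $m\ge 2\Rightarrow m^{2}\ge 2m$), whereas you do a case split on whether $2\lambda/\delta\le 1$; both arguments are equivalent.
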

\begin{proof}
First, we check that $\delta\leq1$. Note that $4\lambda^{1/7}h^{4/7}(\varrho-1)^{-2/7}\leq1$
if and only if $\lambda h^{4}\leq(\varrho-1)^{2}\frac{1}{2^{14}}$.
By assumption, we have that $\lambda h\leq h^{6/7}\lambda^{5/7}(\varrho-1)^{4/7}$
so that $\lambda h^{1/2}\leq(\varrho-1)^{2}$. Since $h\leq\frac{1}{2^{4}}$,
it follows that $h^{4}\leq\frac{1}{2^{14}}h^{1/2}$ and hence that
$\lambda h^{4}\leq\frac{1}{2^{14}}(\varrho-1)^{2}$ as required.

Now we check the slope bounds. We have that
\[
m_{1}(\lambda,\delta)=\max\left\{ \sqrt{2}\lambda^{3/7}(\varrho-1)^{1/7}h^{-2/7},\lambda^{6/7}(\varrho-1)^{2/7}h^{-4/7}\right\} .
\]
By assumption, $m^{-1/3}(\varrho-1)^{2/3}\lambda h^{2/3}\leq h^{6/7}\lambda^{5/7}(\varrho-1)^{4/7}$
so that $(\varrho-1)^{2/7}\lambda^{6/7}h^{-4/7}\leq m$. Since $m\geq2$,
we have that $m^{2}\geq2m$ so that $2(\varrho-1)^{2/7}\lambda^{6/7}h^{-4/7}\leq2m\leq m^{2}$
and hence $\sqrt{2}(\varrho-1)^{1/7}\lambda^{3/7}h^{-2/7}\leq m$.
It follows that $m_{1}(\lambda,\delta)\leq m$. 

Using \prettyref{lem:FvKadmissability}, we conclude that $\phi_{\delta,n,\lambda,\varrho}\in A_{\lambda,\varrho,m}^{vKD}$
and that
\[
E_{h}^{vKD}(\phi)-\cE_{b}^{vKD}\lesssim\max\left\{ (\varrho-1)^{4/7}\lambda^{5/7}h^{6/7},\lambda^{9/7}(\varrho-1)^{-4/7}h^{8/7}\right\} .
\]
Since 
\[
(\varrho-1)^{4/7}\lambda^{5/7}h^{6/7}\geq\lambda^{9/7}(\varrho-1)^{-4/7}h^{8/7}\iff(\varrho-1)^{2}\geq\lambda h^{1/2}
\]
we conclude the desired result.
\end{proof}
The previous two results fail to cover the neutral mandrel case, where
$\varrho=1$. Our next result includes this case.
\begin{lem}
\label{lem:FvKUB_uniforminmandrel} Assume that
\[
\lambda h\geq\max\{m^{-1/3}(\varrho-1)^{2/3}\lambda h^{2/3},h^{6/7}\lambda^{5/7}(\varrho-1)^{4/7}\}.
\]
If $\lambda\leq mh^{1/2}$, then upon taking $n=1$ and $\delta=4h^{1/2}\in(0,1]$
we find that $\phi_{\delta,n,\lambda,\varrho}\in A_{\lambda,\varrho,m}^{vKD}$
and that
\[
E_{h}^{vKD}(\phi_{\delta,n,\lambda,\varrho})-\cE_{b}^{vKD}\lesssim\lambda h.
\]
If $\lambda>mh^{1/2}$, then upon taking $n\in\N$ and $\delta\in(0,1]$
which satisfy 
\[
n\in[\lambda h^{-1/2}m^{-1},2\lambda h^{-1/2}m^{-1}]\quad\text{and}\quad\delta=4\lambda m^{-1},
\]
we find that $\phi_{\delta,n,\lambda,\varrho}\in A_{\lambda,\varrho,m}^{vKD}$
and that
\[
E_{h}^{vKD}(\phi_{\delta,n,\lambda,\varrho})-\cE_{b}^{vKD}\lesssim\lambda h.
\]
\end{lem}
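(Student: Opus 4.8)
The plan is to verify the two sub-cases of \prettyref{lem:FvKUB_uniforminmandrel} directly using \prettyref{lem:FvKadmissability}, which already gives us admissibility in $A_{\lambda,\varrho,m_1}^{vKD}$ together with the energy bound $E_{h}^{vKD}(\phi_{\delta,n,\lambda,\varrho})-\cE_{b}^{vKD}\lesssim\max\{(\varrho-1)\lambda^{1/2}\delta^{3/2}/n,\ \lambda\delta^{2}/n^{2},\ h^{2}\lambda n^{2}/\delta^{2}\}$. The only things to check in each case are (i) that the chosen $\delta$ lies in $(0,1]$, (ii) that $m_{1}(\lambda,\delta)\leq m$ so the test function actually lands in $A_{\lambda,\varrho,m}^{vKD}$, and (iii) that plugging in the chosen $n,\delta$ makes each of the three terms in the energy bound $\lesssim\lambda h$. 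I would also exploit the standing hypothesis $\lambda h\geq\max\{m^{-1/3}(\varrho-1)^{2/3}\lambda h^{2/3},\ h^{6/7}\lambda^{5/7}(\varrho-1)^{4/7}\}$, which (rearranging the first inequality) forces $(\varrho-1)^{2/3}\lesssim m^{1/3}h^{1/3}$, i.e.\ $(\varrho-1)$ is small on the scale of $h$; this is what lets the mandrel term $(\varrho-1)\lambda^{1/2}\delta^{3/2}/n$ be absorbed into $\lambda h$ even though $\varrho$ may exceed $1$.

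For the first sub-case, $\lambda\leq mh^{1/2}$, take $n=1$ and $\delta=4h^{1/2}$. Then $\delta\leq1$ since $h\leq\tfrac{1}{2^4}$. Since $\delta=4h^{1/2}\geq 4\lambda/m = 4\lambda m^{-1}$ by hypothesis, and using $\lambda\le\tfrac12$, one checks $m_1(\lambda,\delta)=2\max\{\sqrt{2\lambda/\delta},2\lambda/\delta\}\leq m$: indeed $2\lambda/\delta\le m/2$, and $\sqrt{2\lambda/\delta}\le\sqrt{m/2}\le m/2$ since $m\ge2$. Now the three energy terms become, respectively, $(\varrho-1)\lambda^{1/2}h^{3/4}$, $\lambda h$, and $h^2\lambda h^{-1}=\lambda h$; the middle and last are exactly $\lambda h$, and the first is $\le(\varrho-1)\lambda h$ (using $\lambda^{1/2}h^{3/4}\le\lambda h$ would be false, so instead I use $(\varrho-1)\lesssim (m h/\lambda h^{-1/3})^{?}$—more carefully, from $(\varrho-1)^{2/3}\lesssim m^{1/3}h^{1/3}$ we get $(\varrho-1)\lesssim m^{1/2}h^{1/2}$, and then $(\varrho-1)\lambda^{1/2}h^{3/4}\lesssim m^{1/2}h^{1/2}\lambda^{1/2}h^{3/4}=m^{1/2}\lambda^{1/2}h^{5/4}$; since $\lambda\le mh^{1/2}$ this is $\le m h^{3/2}\le \lambda h$ provided $\lambda\ge mh^{1/2}$, which is the wrong direction). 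The cleaner route: from the hypothesis $\lambda h\ge h^{6/7}\lambda^{5/7}(\varrho-1)^{4/7}$ one gets $(\varrho-1)^{4/7}\le h^{1/7}\lambda^{2/7}$, i.e.\ $(\varrho-1)\le h^{1/4}\lambda^{1/2}$, so the mandrel term is $\le h^{1/4}\lambda^{1/2}\cdot\lambda^{1/2}h^{3/4}=\lambda h$. This closes the first sub-case.

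For the second sub-case, $\lambda>mh^{1/2}$, take $\delta=4\lambda m^{-1}$ (so $m_1(\lambda,\delta)=m$ exactly, as in \prettyref{lem:FvKUB_manywrinkles}, and $\delta\le1$ since $\lambda\le\tfrac12$, $m\ge2$) and $n\in[\lambda h^{-1/2}m^{-1},\,2\lambda h^{-1/2}m^{-1}]$; such an $n\in\N$ exists because $\lambda h^{-1/2}m^{-1}>1$ precisely by the case hypothesis $\lambda>mh^{1/2}$. Substituting $\delta\sim\lambda/m$ and $n\sim\lambda h^{-1/2}m^{-1}$, the three terms become: $(\varrho-1)\lambda^{1/2}(\lambda/m)^{3/2}/(\lambda m^{-1}h^{-1/2})=(\varrho-1)\lambda m^{-1/2}h^{1/2}$, and $\lambda(\lambda/m)^2/(\lambda h^{-1/2}m^{-1})^2=\lambda h$, and $h^2\lambda(\lambda h^{-1/2}m^{-1})^2/(\lambda/m)^2=\lambda h$. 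The last two are $\lambda h$; the first is $(\varrho-1)\lambda m^{-1/2}h^{1/2}\le(\varrho-1)\lambda h^{1/2}$, and again using $(\varrho-1)\le h^{1/4}\lambda^{1/2}$ from the hypothesis this is $\le \lambda^{3/2} h^{3/4}\le\lambda h$ since $\lambda\le 1$ forces $\lambda^{1/2}h^{-1/4}\le 1$ iff $h\ge\lambda^2$—which again need not hold, so I instead bound $(\varrho-1)\lambda h^{1/2}$ via $\lambda h^{1/2}\le\lambda h/(\varrho-1)$... The safe bound is just to re-use $(\varrho-1)\lesssim h^{1/4}\lambda^{1/2}\leq h^{1/4}$ and the already-established $\delta\le 1$, giving $(\varrho-1)\lambda m^{-1/2}h^{1/2}\lesssim \lambda h^{3/4}$, which is $\le\lambda h$ iff $h\ge?$—no. \emph{The main obstacle is precisely this bookkeeping}: showing the mandrel term $(\varrho-1)\lambda^{1/2}\delta^{3/2}/n\lesssim\lambda h$ in both sub-cases, which I expect to follow by combining \emph{all three} standing inequalities (not just one) — in particular using $\lambda h\ge m^{-1/3}(\varrho-1)^{2/3}\lambda h^{2/3}$, i.e.\ $(\varrho-1)^{2/3}\le m^{1/3}h^{1/3}$, so $(\varrho-1)\le m^{1/2}h^{1/2}$, together with $\delta\le 1$ and the case hypotheses on $\lambda$ versus $mh^{1/2}$ — to get the clean comparison. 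Once that term is controlled, the remaining two terms are $\lambda h$ by the direct substitutions above, and admissibility is immediate from \prettyref{lem:FvKadmissability}. The routine part is the algebra; the only real care needed is keeping track of which hypothesis to invoke where.
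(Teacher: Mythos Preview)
Your approach is the same as the paper's, and the first sub-case is handled exactly as the paper does it: the hypothesis $\lambda h\ge h^{6/7}\lambda^{5/7}(\varrho-1)^{4/7}$ is \emph{equivalent} to $(\varrho-1)\le \lambda^{1/2}h^{1/4}$, and substituting this into the mandrel term gives $(\varrho-1)\lambda^{1/2}h^{3/4}\le\lambda h$ on the nose.

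For the second sub-case, you have already written down the correct ingredient but stopped one line short. The hypothesis $\lambda h\ge m^{-1/3}(\varrho-1)^{2/3}\lambda h^{2/3}$ is \emph{equivalent} to $(\varrho-1)\le m^{1/2}h^{1/2}$, and substituting this directly into your (correctly computed) mandrel term gives
\[
(\varrho-1)\,\lambda\,m^{-1/2}h^{1/2}\ \le\ m^{1/2}h^{1/2}\cdot\lambda\,m^{-1/2}h^{1/2}\ =\ \lambda h.
\]
That is the entire ``bookkeeping obstacle''; no combination of hypotheses is needed, just the right one in each sub-case. The paper's proof is identical: it uses the $h^{6/7}\lambda^{5/7}(\varrho-1)^{4/7}$ hypothesis when $\lambda\le mh^{1/2}$ and the $m^{-1/3}(\varrho-1)^{2/3}\lambda h^{2/3}$ hypothesis when $\lambda>mh^{1/2}$. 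Your detours through $(\varrho-1)\le h^{1/4}\lambda^{1/2}$ in the second sub-case and through $(\varrho-1)\le m^{1/2}h^{1/2}$ in the first are the wrong pairings and lead nowhere, as you discovered.
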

\begin{rem}
\label{rem:unifbddslopes_neutralmandrel} We note here that if $\varrho-1$
is small enough, then the scaling law of $\lambda h$ can be achieved
by a construction with uniformly bounded slopes. Indeed, if one takes
$n\sim h^{-1/2}$ and $\delta=1$, then the resulting $\phi_{\delta,n,\lambda,\varrho}$
belongs to $A_{\lambda,\varrho,m}^{vKD}$ for all $\lambda\in[0,\frac{1}{2}]$
and $m\in[2,\infty]$, and the excess energy is bounded by a multiple
of $\lambda h$ whenever $\varrho-1\leq\lambda^{1/2}h^{1/2}$. \end{rem}
\begin{proof}
We prove this in two parts. Assume first that $\lambda\leq mh^{1/2}$.
Then let $n=1$ and $\delta=4h^{1/2}$. Note that $\delta\in(0,1]$
if and only if $h\leq\frac{1}{2^{4}}$. Also, 
\[
m_{1}(\lambda,\delta)=\max\left\{ 2\sqrt{\frac{2\lambda}{4h^{1/2}}},\frac{4\lambda}{4h^{1/2}}\right\} =\max\left\{ \sqrt{\frac{2\lambda}{h^{1/2}}},\frac{\lambda}{h^{1/2}}\right\} .
\]
Since $m\geq2$, $2m\leq m^{2}$. Thus, $\lambda\leq mh^{1/2}\implies2\lambda\leq2mh^{1/2}\leq m^{2}h^{1/2}$
so that $(2\lambda h^{-1/2})^{1/2}\leq m$. Thus, $m_{1}(\lambda,\delta)\leq m$.
By \prettyref{lem:FvKadmissability}, we have that $\phi_{\delta,n,\lambda,\varrho}\in A_{\lambda,\varrho,m}^{vKD}$
and that 
\[
E_{h}^{vKD}(\phi_{\delta,n,\lambda,\varrho})-\cE_{b}^{vKD}\lesssim\max\left\{ (\varrho-1)\lambda^{1/2}h^{3/4},\lambda h\right\} .
\]
Note that $(\varrho-1)\lambda^{1/2}h^{3/4}\leq\lambda h$ is a rearrangement
of $\lambda h\geq h^{6/7}\lambda^{5/7}(\varrho-1)^{4/7}$. Thus,
\[
E_{h}^{vKD}(\phi_{\delta,n,\lambda,\varrho})-\cE_{b}^{vKD}\lesssim\lambda h.
\]

Now assume that $\lambda>mh^{1/2}$. Let $n\in\N$ and $\delta\in(0,1]$
satisfy 
\[
n\in[\lambda h^{-1/2}m^{-1},2\lambda h^{-1/2}m^{-1}]\quad\text{and}\quad\delta=4\lambda m^{-1}.
\]
Note that $\lambda h^{-1/2}m^{-1}>1$ is a rearrangement of $\lambda>h^{1/2}m$,
so that such an $n$ exists. Also, note that $\delta\leq1$ since
$m\geq2$ and $\lambda\leq\frac{1}{2}$, and that $m_{1}(\delta,\lambda)=m$.
Hence by \prettyref{lem:FvKadmissability}, we have that $\phi_{\delta,n,\lambda,\varrho}\in A_{\lambda,\varrho,m}^{vKD}$
and that 
\[
E_{h}^{vKD}(\phi_{\delta,n,\lambda,\varrho})-\cE_{b}^{vKD}\lesssim\max\left\{ (\varrho-1)\frac{\lambda h^{1/2}}{m^{1/2}},\lambda h\right\} .
\]
Since $(\varrho-1)\frac{\lambda h^{1/2}}{m^{1/2}}\leq\lambda h$ is
a rearrangement of $\lambda h\geq m^{-1/3}(\varrho-1)^{2/3}\lambda h^{2/3}$,
we conclude that 
\[
E_{h}^{vKD}(\phi_{\delta,n,\lambda,\varrho})-\cE_{b}^{vKD}\lesssim\lambda h.
\]

\end{proof}

\subsection{Nonlinear model\label{sub:NLUB}}

Recall the definitions of $E_{h}^{NL}$, $A_{\lambda,\varrho,m}^{NL}$,
and $\cE_{b}^{NL}$, given in \prettyref{eq:ENL}, \prettyref{eq:ANL},
and \prettyref{eq:EbNL}. In this section, we prove the following
upper bound.
\begin{prop}
\label{prop:NLUBs}Let $\varrho_{0}\in[1,\infty)$. Then we have that
\[
\min_{A_{\lambda,\varrho,m}^{NL}}E_{h}^{NL}-\cE_{b}^{NL}\lesssim_{\varrho_{0}}\min\left\{ \lambda^{2},\max\left\{ \lambda h,h^{6/7}\lambda^{5/7}[(\varrho^{2}-1)\vee h^{2}]^{4/7},[(\varrho^{2}-1)\vee h^{2}]^{2/3}\lambda h^{2/3}\right\} \right\} 
\]
whenever $h,\lambda\in(0,\frac{1}{2}]$, $\varrho\in[1,\varrho_{0}]$,
and $m\in[1,\infty]$.\end{prop}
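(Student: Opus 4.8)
The plan is to construct axisymmetric wrinkling patterns of the form given in \eqref{eq:axisymmpattern}, namely $\Phi = (\varrho + w(z), \theta, (1-\lambda)z + u(z))$, and to choose $w, u$ so that the metric tensor $g = D\Phi^T D\Phi$ satisfies $g_{\theta z} = 0$ automatically and $g_{zz} = 1$ (so the $zz$-component of the membrane strain vanishes). For such $\Phi$, one computes $g_{\theta\theta} = (\varrho + w)^2$, so the only surviving membrane contribution is $\int_\Omega |(\varrho+w)^2 - 1|^2\,d\theta dz$, and the bending term contributes $h^2\int_\Omega |D^2\Phi|^2$, which for a radially-symmetric profile reduces essentially to $h^2\int_\Omega (\varrho+w)^2 + |w''|^2 + (\text{lower order})\,d\theta dz$. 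After subtracting the bulk energy $\cE_b^{NL}(\varrho,h) = |\Omega|(\varrho^2-1)^2 + |\Omega|\varrho^2 h^2$, the excess energy should be bounded above by a multiple of $\int_{I_z} (\varrho^2-1)_+|w| + |w|^2 + h^2|w''|^2\,dz$, with the constraint $\norm{w'}_{L^2} \geq c(\lambda)$ coming from the requirement that $u$ be periodic, i.e., $\int_{I_z} \lambda - \tfrac12 (w')^2\,dz = 0$ forces $\norm{w'}_{L^2}^2 = 2\lambda|I_z|$. This matches the structure described in the ``Upper bounds'' subsection.

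The key steps, in order, are: (i) set up the test function $\Phi_{\delta,n,\lambda,\varrho}$ built from the same two-scale bump profile $f_{\delta,n}$ used in the vKD case, defining $w = \sqrt{2\lambda}\,f_{\delta,n}$ and $u(z) = \int_{-1/2}^z \lambda - \tfrac12(\partial_z w)^2\,dz'$, and verify $\Phi \in A_{\lambda,\varrho,m_1}^{NL}$ for an appropriate slope bound $m_1(\lambda,\delta)$ — here one must also check the new hypothesis $\partial_z\Phi_z = (1-\lambda) + u' = 1 - \tfrac12(w')^2 \geq 0$, which holds provided $\norm{w'}_{L^\infty}^2 \leq 2$, i.e., $m_1$ is not too large relative to a universal constant, and this will need to be reconciled with the choice of $\delta$; (ii) establish the nonlinear analog of Lemma~\ref{lem:FvKadmissability}, namely the energy estimate $E_h^{NL}(\Phi) - \cE_b^{NL} \lesssim_{\varrho_0} \max\{(\varrho^2-1)_+ \tfrac{\lambda^{1/2}\delta^{3/2}}{n}, \tfrac{\lambda\delta^2}{n^2}, h^2\tfrac{\lambda n^2}{\delta^2}\}$, being careful that the nonlinearities in $|D\Phi^T D\Phi - \id|^2$ and $|D^2\Phi|^2$ produce only lower-order corrections controlled by the $\varrho_0$-dependent constant; and (iii) make three choices of $(n,\delta)$ — a ``many wrinkles'' choice, a ``one wrinkle'' choice, and a ``uniform in mandrel'' choice — paralleling Lemmas~\ref{lem:FvKUB_manywrinkles}, \ref{lem:FvKUB_onewrinkle}, and \ref{lem:FvKUB_uniforminmandrel}, with $\varrho - 1$ replaced throughout by $(\varrho^2 - 1) \vee h^2$. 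The appearance of $h^2$ in the effective pre-strain quantity $(\varrho^2-1)\vee h^2$ is the new feature: when $\varrho = 1$ (or $\varrho$ very close to $1$), the bulk bending term $|\Omega|\varrho^2 h^2$ in $\cE_b^{NL}$ interacts with the membrane term $\int |(\varrho+w)^2 - 1|^2 \approx \int |2w + w^2|^2$ so that the effective ``floor'' for the pre-strain is $h^2$ rather than $0$; tracking this carefully in the algebra is what forces the substitution.

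Finally, I would assemble the proposition exactly as \ref{prop:FvKUB} was assembled: the $\lambda^2$ bound comes from the unbuckled configuration $\Phi = (\varrho, \theta, (1-\lambda)z)$ (whose excess energy is $O(\lambda^2)$ after one checks $\partial_z\Phi_z = 1-\lambda \geq 0$ and $\Phi_\rho = \varrho$); the remaining bound is reduced, by a preliminary observation that it suffices to treat $h \leq h_0$ for some universal $h_0$, to the three lemmas above, whose max over the three regimes is exactly $\max\{\lambda h, h^{6/7}\lambda^{5/7}[(\varrho^2-1)\vee h^2]^{4/7}, [(\varrho^2-1)\vee h^2]^{2/3}\lambda h^{2/3}\}$. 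I expect the main obstacle to be step (i)/(ii): unlike the vKD model, where $\epsilon_{zz} = 0$ and $\epsilon_{\theta z} = 0$ kill the membrane term cleanly, here one must solve $g_{zz} = 1$ exactly (which is why $u$ is defined by that integral), verify the invertibility constraint $\partial_z\Phi_z \geq 0$, and control the genuinely nonlinear bending term $h^2|D^2\Phi|^2$ — in particular the cross terms involving $\partial_z u$ and the curvature contributions $(\varrho+w)^2$ that feed into the subtraction of $|\Omega|\varrho^2 h^2$ — all uniformly for $\varrho$ in the compact range $[1,\varrho_0]$, which is the role of the $\varrho_0$-dependence in the constant.
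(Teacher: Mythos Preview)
Your overall strategy is right and mirrors the paper's: axisymmetric test functions, a two-scale bump profile, three regime choices for $(n,\delta)$, and assembly as in \prettyref{prop:FvKUB}. But there is a genuine gap in step (i)/(ii), exactly where you anticipated trouble.

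The formula $u'(z)=\lambda-\tfrac12(w')^2$ does \emph{not} give $g_{zz}=1$. With $\Phi_z=(1-\lambda)z+u$ and $\partial_z\Phi_\theta=0$ you get $\partial_z\Phi_z=1-\tfrac12(w')^2$ and hence
\[
g_{zz}=(w')^2+\Bigl(1-\tfrac12(w')^2\Bigr)^2=1+\tfrac14(w')^4 .
\]
The resulting membrane error is $\norm{g_{zz}-1}_{L^2}^2=\tfrac1{16}\int(w')^8$. In the many-wrinkle regime you want $\delta\sim\lambda$, so $\norm{w'}_{L^\infty}\sim1$ and $\int(w')^8\sim\norm{w'}_{L^2}^2\sim\lambda$, which swamps the target bound $[(\varrho^2-1)\vee h^2]^{2/3}\lambda h^{2/3}$ as $h\to0$. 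You really do need $g_{zz}=1$ exactly, which forces $u'=\sqrt{1-(w')^2}-(1-\lambda)$, and then periodicity of $u$ becomes the arclength condition $\int_{I_z}\sqrt{1-(w')^2}\,dz=1-\lambda$ rather than the linearized $\int(w')^2=2\lambda$. Your vKD normalization of $f_{\delta,n}$ and $w=\sqrt{2\lambda}f_{\delta,n}$ satisfies only the latter.

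The paper fixes this by modifying the construction: it takes a profile $f$ with $\norm{f'}_{L^\infty}<1$ and $\int_{-1/2}^{1/2}\sqrt{1-f'^2}=\tfrac12$, sets $f_{\delta,n}=\tfrac{\delta}{n}f(\tfrac{n}{\delta}\{t\})$ (note the $\delta$, not $\sqrt{\delta}$), and scales $w=S_f^{-1}(\lambda/\delta)f_{\delta,n}$ where $S_f(q)=1-\int\sqrt{1-q^2f'^2}$. This makes $g_{zz}=1$ and $u$ periodic by design, keeps $\norm{w'}_{L^\infty}\leq\norm{f'}_{L^\infty}<1$ uniformly (so $\partial_z\Phi_z\geq0$ and $\Phi\in A_{\lambda,\varrho,1}^{NL}$ for all admissible $\delta$), and the estimate $S_f^{-1}(\lambda/\delta)\lesssim(\lambda/\delta)^{1/2}$ recovers the same size bounds on $\norm{w}_{L^1},\norm{w}_{L^2},\norm{w''}_{L^2}$ as in the vKD case. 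A smaller point: the $h^2$ in $(\varrho^2-1)\vee h^2$ comes not from any interaction with the membrane term but simply from the bending contribution $h^2\int(\varrho+w)^2-h^2|\Omega|\varrho^2\sim h^2\norm{w}_{L^1}$, which sits alongside $(\varrho^2-1)\norm{w}_{L^1}$.
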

\begin{proof}
Note that since $A_{\lambda,\varrho,m}^{NL}\subset A_{\lambda,\varrho,m'}^{NL}$
if $m\leq m'$, we only need to prove the claim for the case of $m=1$.
The upper bound of $\lambda^{2}$ is achieved by the unbuckled configuration,
$\Phi=(\varrho,\theta,(1-\lambda)z)$. To prove the remainder of the
upper bound, note first that it suffices to achieve it for $(h,\lambda,\varrho)\in(0,h_{0}]\times(0,\frac{1}{2}]\times[1,\varrho_{0}]$
for some $h_{0}\in(0,\frac{1}{2}]$. We apply \prettyref{lem:NLUB_manywrinkles},
\prettyref{lem:NLUB_onewrinkle}, and \prettyref{lem:NLUB_uniforminmandrel}
to deduce the required upper bound in the stated parameter range with
$h_{0}=\frac{1}{4}$. Note that the dependence of the constants in
these lemmas on $f$ can be dropped, since $f$ is fixed in the subsequent
paragraphs.
\end{proof}
In the remainder of this section, we\textbf{ fix} $\varrho_{0}\in[1,\infty)$
as in the claim. Furthermore, we \textbf{assume} that 
\[
h\in(0,\frac{1}{4}],\ \lambda\in(0,\frac{1}{2}],\ \text{and}\quad\varrho\in[1,\varrho_{0}]
\]
unless otherwise explicitly stated. 

As in the analysis of the vKD model, we define a two-scale axisymmetric
wrinkling pattern. We refer to $n\in\N$ and $\delta\in(0,1]$, which
represent the number of wrinkles and their relative extent respectively.
Again, we refer the reader to \prettyref{fig:radialheightfield} for
a schematic of this construction. 

We start by fixing $f\in C^{\infty}(\R)$ such that
\begin{itemize}
\item $f$ is non-negative and one-periodic
\item $\text{supp}\,f\cap[-\frac{1}{2},\frac{1}{2}]\subset(-\frac{1}{2},\frac{1}{2})$
\item $\norm{f'}_{L^{\infty}}<1$
\item $\int_{-\frac{1}{2}}^{\frac{1}{2}}\sqrt{1-f'^{2}}\,dt=\frac{1}{2}$.
\end{itemize}
Define $f_{\delta,n}\in C^{\infty}(\R)$ by
\[
f_{\delta,n}(t)=\frac{\delta}{n}f(\frac{n}{\delta}\{t\})\ind{\{t\}\in B_{\delta/2}}.
\]
Let $S_{f}:[0,1]\to\R$ be defined by
\[
S_{f}(q)=1-\int_{-\frac{1}{2}}^{\frac{1}{2}}\sqrt{1-q^{2}f'^{2}}\,dt,
\]
and observe that $S_{f}$ is a bijection of $[0,1]\leftrightarrow[0,\frac{1}{2}]$.
Hence, \textbf{if} $\delta\in[2\lambda,1]$, we can define $w_{\delta,n,\lambda},u_{\delta,n,\lambda}:\Omega\to\R$
by 
\[
w_{\delta,n,\lambda}(\theta,z)=S_{f}^{-1}\left(\frac{\lambda}{\delta}\right)f_{\delta,n}(z)\quad\text{and}\quad u_{\delta,n,\lambda}(\theta,z)=\int_{-\frac{1}{2}\leq z'\leq z}\sqrt{1-(\partial_{z}w_{\delta,n,\lambda}(\theta,z'))^{2}}-(1-\lambda)\,dz'.
\]
Finally, we define $\Phi_{\delta,n,\lambda,\varrho}:\Omega\to\R^{3}$
by
\[
\Phi_{\delta,n,\lambda,\varrho}=(w_{\delta,n,\lambda}+\varrho,\theta,(1-\lambda)z+u_{\delta,n,\lambda}),
\]
in cylindrical coordinates.

We now estimate the elastic energy of this wrinkling pattern. 
\begin{lem}
\label{lem:NLadmissability} Let $\delta\in[2\lambda,1]$. Then we
have that $\Phi_{\delta,n,\lambda,\varrho}\in A_{\lambda,\varrho,1}^{NL}$.
Furthermore,
\[
E_{h}^{NL}(\Phi_{\delta,n,\lambda,\varrho})-\cE_{b}^{NL}\lesssim_{\varrho_{0},f}\max\left\{ \left[(\varrho^{2}-1)\vee h^{2}\right]\frac{\lambda^{1/2}\delta^{3/2}}{n},\frac{\lambda\delta^{2}}{n^{2}},h^{2}\frac{\lambda n^{2}}{\delta^{2}}\right\} .
\]
\end{lem}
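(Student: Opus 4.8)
The plan is to check admissibility by a direct computation and then to estimate $E_h^{NL}(\Phi_{\delta,n,\lambda,\varrho})$ by exploiting the two cancellations engineered into the construction. Set $q = S_f^{-1}(\lambda/\delta)$; this is well-defined because $\delta \ge 2\lambda$ forces $\lambda/\delta \in [0,\tfrac12]$, the range of $S_f$. Abbreviate $w = w_{\delta,n,\lambda}$, $u = u_{\delta,n,\lambda}$, $\Phi = \Phi_{\delta,n,\lambda,\varrho}$, and record at the outset that $w = q\,f_{\delta,n}$ depends only on $z$ with $0 \le w \le \norm{f}_{L^\infty(\R)}$ and $\abs{w'} \le q\norm{f'}_{L^\infty(\R)} \le \norm{f'}_{L^\infty(\R)} < 1$, and that $(1-\lambda) + u' = \sqrt{1-(w')^2}$ while $u'' = -w'w''/\sqrt{1-(w')^2}$, so $\abs{u''} \lesssim_f \abs{w''}$.

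First I would verify $\Phi \in A^{NL}_{\lambda,\varrho,1}$. The functions $w,u$ are smooth and one-periodic in $z$: for $u$ this is exactly where the definition $q = S_f^{-1}(\lambda/\delta)$ is used, since it makes $\int_{I_z}\big(\sqrt{1-(w')^2} - (1-\lambda)\big)\,dz = \lambda - \delta S_f(q) = 0$. Hence $\Phi_\rho - \varrho$, $\Phi_\theta - \theta = 0$, and $\Phi_z - (1-\lambda)z = u$ all lie in $H_{\text{per}}^{2}(\Omega)$; moreover $\Phi_\rho = \varrho + w \ge \varrho$ since $w \ge 0$; and the only nonvanishing first partials of the cylindrical components are $\partial_\theta\Phi_\theta = 1$, $\partial_z\Phi_\rho = w'$ with $\abs{w'} < 1$, and $\partial_z\Phi_z = \sqrt{1-(w')^2} \in [0,1]$, so the slope bound holds with $m = 1$ and $\partial_z\Phi_z \ge 0$.

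Next I would compute the two energies. Since $w,u$ depend only on $z$ one has $\partial_\theta\Phi \perp \partial_z\Phi$, so $D\Phi^T D\Phi = \mathrm{diag}\big((\varrho+w)^2,\,(w')^2 + ((1-\lambda)+u')^2\big)$, whose $zz$-entry is $(w')^2 + (1-(w')^2) = 1$. Hence $\abs{D\Phi^T D\Phi - \id}^2 = \big((\varrho+w)^2 - 1\big)^2 = \big(\varrho^2-1+2\varrho w+w^2\big)^2$, and since $0 \le w \le \norm{f}_{L^\infty}$ and $\varrho \le \varrho_0$ give $2\varrho w + w^2 \lesssim_{\varrho_0,f} w$, subtracting the first term $\abs{\Omega}(\varrho^2-1)^2$ of $\cE_b^{NL}$ shows the membrane part of the excess is $\lesssim_{\varrho_0,f} (\varrho^2-1)\norm{w}_{L^1(\Omega)} + \norm{w}_{L^2(\Omega)}^2$. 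Similarly $\abs{D^2\Phi}^2 = (\varrho+w)^2 + 2(w')^2 + (w'')^2 + (u'')^2$; writing $(\varrho+w)^2 = \varrho^2+2\varrho w + w^2$, using $\abs{u''}\lesssim_f\abs{w''}$ and subtracting the second term $\abs{\Omega}\varrho^2 h^2$ of $\cE_b^{NL}$, the bending part of the excess is $\lesssim_{\varrho_0,f} h^2\big(\norm{w}_{L^1(\Omega)} + \norm{w'}_{L^2(\Omega)}^2 + \norm{w''}_{L^2(\Omega)}^2\big)$. Summing yields a bound on $E_h^{NL}(\Phi) - \cE_b^{NL}$.

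Finally I would evaluate the four norms and assemble. By the substitution $s = (n/\delta)\{z\}$ and one-periodicity of $f$, each $z$-integral factors through $\int_{B_{n/2}}$, so $\norm{w}_{L^1(\Omega)} \sim_f q\delta^2/n$, $\norm{w}_{L^2(\Omega)}^2 \sim_f q^2\delta^3/n^2$, $\norm{w'}_{L^2(\Omega)}^2 \sim_f q^2\delta$, and $\norm{w''}_{L^2(\Omega)}^2 \sim_f q^2 n^2/\delta$. The key analytic input is the uniform comparison $q^2 = S_f^{-1}(\lambda/\delta)^2 \sim_f \lambda/\delta$ on the whole interval $\lambda/\delta \in [0,\tfrac12]$: $S_f$ is a smooth increasing bijection $[0,1]\to[0,\tfrac12]$ with $S_f(0) = 0$ and $S_f'(0) = 0 < S_f''(0)$, so $p \mapsto S_f^{-1}(p)^2/p$ extends continuously and positively to $[0,\tfrac12]$. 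Substituting $q^2 \sim_f \lambda/\delta$ turns the bound into a $(\varrho_0,f)$-multiple of
\[
(\varrho^2-1)\,\frac{\lambda^{1/2}\delta^{3/2}}{n} + \frac{\lambda\delta^2}{n^2} + h^2\Big(\frac{\lambda^{1/2}\delta^{3/2}}{n} + \lambda + \frac{\lambda n^2}{\delta^2}\Big);
\]
combining the two $\lambda^{1/2}\delta^{3/2}/n$ terms into $[(\varrho^2-1)\vee h^2]\,\lambda^{1/2}\delta^{3/2}/n$, noting $h^2\lambda \le h^2\lambda n^2/\delta^2$ since $n\ge1\ge\delta$, and bounding the sum of three nonnegative terms by three times their maximum, gives the claimed estimate. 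The one point that takes real care is precisely this uniform two-sided bound on $S_f^{-1}$ --- one must combine the quadratic behaviour of $S_f$ near $0$ with a compactness argument away from $0$ --- together with keeping track that every implied constant depends only on $\varrho_0$ and $f$, and never on $h,\lambda,\varrho,n,\delta$.
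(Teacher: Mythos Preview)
Your proof is correct and follows essentially the same approach as the paper's: verify periodicity via the defining property of $S_f^{-1}$, check the slope bounds directly, compute the energy using $g_{zz}=1$ and $g_{\theta z}=0$, bound $\abs{u''}\lesssim_f\abs{w''}$, and then estimate the four norms of $w$ in terms of $q,\delta,n$. The only noteworthy difference is that you establish and invoke the two-sided comparison $q^2\sim_f\lambda/\delta$, whereas the paper uses only the upper bound $q\lesssim_f(\lambda/\delta)^{1/2}$, which follows more directly from the elementary inequality $1-\sqrt{1-t}\ge t/2$ (giving $S_f(q)\ge \tfrac{1}{2}q^2\norm{f'}_{L^2}^2$); the lower bound on $q$ is not needed since all the norm estimates are upper bounds.
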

\begin{proof}
Abbreviate $\Phi_{\delta,n,\lambda,\varrho}$ by $\Phi$, $w_{\delta,n,\lambda}$
by $w$, and $u_{\delta,n,\lambda}$ by $u$. By its definition, $\Phi_{\rho}\in H_{\text{per}}^{2}$,
$\Phi_{\theta}-\theta\in H_{\text{per}}^{2}$, and $\Phi_{z}-(1-\lambda)z\in H_{\text{per}}^{2}$.
To see these, note that $w,u\in H_{\text{per}}^{2}$. Indeed, we have that
\begin{align*}
\int_{-\frac{1}{2}}^{\frac{1}{2}}\sqrt{1-(\partial_{z}w(\theta,z))^{2}}\,dz & =\int_{[-\frac{1}{2},\frac{1}{2}]\backslash B_{\delta/2}}1\,dt+\int_{B_{\delta/2}}\sqrt{1-\left(S_{f}^{-1}\left(\frac{\lambda}{\delta}\right)f_{\delta,n}'(t)\right)^{2}}\,dt\\
 & =2(\frac{1}{2}-\frac{\delta}{2})+\delta\int_{-\frac{1}{2}}^{\frac{1}{2}}\sqrt{1-(S_{f}^{-1}\left(\frac{\lambda}{\delta}\right))^{2}\left(f'(t)\right)^{2}}\,dt\\
 & =1-\delta S_{f}\circ S_{f}^{-1}(\frac{\lambda}{\delta})=1-\lambda
\end{align*}
for each $\theta\in I_{\theta}$. Also, we have that $\Phi_{\rho}\geq\varrho$,
since $w\geq0$, and that 
\[
\partial_{z}\Phi_{z}=1-\lambda+\partial_{z}u=\sqrt{1-(\partial_{z}w)^{2}}\geq0.
\]

Now we check the slope bounds. Note that
\[
\partial_{z}\Phi_{\rho}=\partial_{z}w=S_{f}^{-1}\left(\frac{\lambda}{\delta}\right)f_{\delta,n}'(z)
\]
so that 
\[
\norm{\partial_{z}\Phi_{\rho}}_{L^{\infty}}\leq\left|S_{f}^{-1}\left(\frac{\lambda}{\delta}\right)\right|\norm{f_{\delta,n}'}_{L^{\infty}}\leq\norm{f'}_{L^{\infty}}<1.
\]
Also, by the above, we have that
\[
\partial_{z}\Phi_{z}=\sqrt{1-(\partial_{z}w)^{2}}\in[0,1].
\]
Hence, 
\[
\max_{\substack{i\in\left\{ \theta,z\right\} ,\,j\in\{\rho,\theta,z\}}
}\norm{\partial_{i}\Phi_{j}}_{L^{\infty}}\leq1
\]
and it follows that $\Phi\in A_{\lambda,\varrho,1}^{NL}$.

Now we bound the energy of this construction. Since $g_{zz}=1$, $g_{\theta z}=0$,
and $u,w$ are functions of $z$ alone, we have that
\[
E_{h}^{NL}(\Phi)=\int_{\Omega}\,\left|(\varrho+w)^{2}-1\right|^{2}+h^{2}(\left|\varrho+w\right|^{2}+|\partial_{z}^{2}w|^{2}+2|\partial_{z}w|^{2}+|\partial_{z}^{2}u|^{2})\,d\theta dz.
\]
Hence,
\begin{align*}
E_{h}^{NL}(\Phi)-\cE_{b}^{NL}\lesssim_{\varrho_{0}} & \max\{\left[(\varrho^{2}-1)\vee h^{2}\right]\norm{w}_{L^{1}(\Omega)},\norm{w}_{L^{2}(\Omega)}^{2},\\
 & \qquad h^{2}\left(\norm{\partial_{z}^{2}w}_{L^{2}(\Omega)}^{2}\vee\norm{\partial_{z}w}_{L^{2}(\Omega)}^{2}\vee\norm{\partial_{z}^{2}u}_{L^{2}(\Omega)}^{2}\right)\}.
\end{align*}
(Here we used that $\norm{w}_{L^{\infty}}\leq1$, which follows from
its definition and our choice of $f$.) By definition, we have that
\[
\partial_{z}^{2}u=-\frac{\partial_{z}w\partial_{z}^{2}w}{\sqrt{1-(\partial_{z}w)^{2}}}
\]
so that 
\[
\norm{\partial_{z}u}_{L^{2}(\Omega)}\lesssim_{f}\norm{\partial_{z}^{2}w}_{L^{2}(\Omega)}.
\]
Also, we have that
\begin{align*}
 & \norm{w}_{L^{1}(\Omega)}\lesssim S_{f}^{-1}(\frac{\lambda}{\delta})\frac{\delta^{2}}{n},\quad\norm{w}_{L^{2}(\Omega)}^{2}\lesssim\left(S_{f}^{-1}(\frac{\lambda}{\delta})\right)^{2}\frac{\delta^{3}}{n^{2}},\\
 & \norm{\partial_{z}w}_{L^{2}(\Omega)}^{2}\lesssim\left(S_{f}^{-1}(\frac{\lambda}{\delta})\right)^{2}\delta,\ \text{and}\quad\norm{\partial_{z}^{2}w}_{L^{2}(\Omega)}^{2}\lesssim\left(S_{f}^{-1}(\frac{\lambda}{\delta})\right)^{2}\frac{n^{2}}{\delta}.
\end{align*}
Since
\[
\frac{q^{2}}{2}\norm{f'}_{L^{2}([-\frac{1}{2},\frac{1}{2}])}^{2}\leq S_{f}(q)
\]
it follows that 
\[
S_{f}^{-1}(\frac{\lambda}{\delta})\lesssim_{f}\left(\frac{\lambda}{\delta}\right)^{1/2}.
\]
Combining the above, we conclude that
\[
E_{h}^{NL}(\Phi)-\cE_{b}^{NL}\lesssim_{\varrho_{0},f}\max\left\{ \left[(\varrho^{2}-1)\vee h^{2}\right]\frac{\lambda^{1/2}\delta^{3/2}}{n},\frac{\lambda\delta^{2}}{n^{2}},h^{2}\left(\frac{\lambda n^{2}}{\delta^{2}}\vee\lambda\right)\right\} 
\]
and the result immediately follows.
\end{proof}
Next, we choose $n,\delta$ which are optimal for our construction
in various regimes. Our first choice exhibits many wrinkles, and is
the nonlinear analog of \prettyref{lem:FvKUB_manywrinkles}.
\begin{lem}
\label{lem:NLUB_manywrinkles}Assume that 
\[
[(\varrho^{2}-1)\vee h^{2}]^{2/3}\lambda h^{2/3}\geq\max\{\lambda h,h^{6/7}\lambda^{5/7}[(\varrho^{2}-1)\vee h^{2}]^{4/7}\}.
\]
Let $n\in\N$ and $\delta\in(0,1]$ satisfy
\[
n\in\left[[(\varrho^{2}-1)\vee h^{2}]^{1/3}\lambda h^{-2/3},2[(\varrho^{2}-1)\vee h^{2}]^{1/3}\lambda h^{-2/3}\right]\quad\text{and}\quad\delta=2\lambda.
\]
Then, $\Phi_{\delta,n,\lambda,\varrho}\in A_{\lambda,\varrho,1}^{NL}$
and 
\[
E_{h}^{NL}(\Phi_{\delta,n,\lambda,\varrho})-\cE_{b}^{NL}\lesssim_{\varrho_{0},f}[(\varrho^{2}-1)\vee h^{2}]^{2/3}\lambda h^{2/3}.
\]
\end{lem}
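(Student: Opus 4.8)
The plan is to reduce everything to \prettyref{lem:NLadmissability}, so the only real work is to check that the prescribed choices of $n$ and $\delta$ are admissible and then to balance the resulting three-term bound. Throughout, write $A=(\varrho^{2}-1)\vee h^{2}$ for brevity. First I would verify that $\delta=2\lambda$ lies in $[2\lambda,1]$ (and hence in $(0,1]$), which is immediate from $\lambda\leq\frac{1}{2}$; this is exactly the hypothesis needed to invoke \prettyref{lem:NLadmissability}. Next I would check that an integer $n$ exists in the interval $[A^{1/3}\lambda h^{-2/3},2A^{1/3}\lambda h^{-2/3}]$. Since an interval of the form $[a,2a]$ contains an integer whenever $a\geq1$, it suffices to show $A^{1/3}\lambda h^{-2/3}\geq1$, and this follows by rearranging the part of the hypothesis that reads $A^{2/3}\lambda h^{2/3}\geq h^{6/7}\lambda^{5/7}A^{4/7}$: dividing through gives $A^{2/21}\lambda^{2/7}h^{-4/21}\geq1$, and raising to the power $7/2$ yields $A^{1/3}\lambda h^{-2/3}\geq1$. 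This step is the nonlinear analogue of the corresponding rearrangement in the proof of \prettyref{lem:FvKUB_manywrinkles}, with $\varrho-1$ replaced by $A$ and $m$ fixed to $1$.

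With those two facts in hand, \prettyref{lem:NLadmissability} applies and gives $\Phi_{\delta,n,\lambda,\varrho}\in A_{\lambda,\varrho,1}^{NL}$ together with
\[
E_{h}^{NL}(\Phi_{\delta,n,\lambda,\varrho})-\cE_{b}^{NL}\lesssim_{\varrho_{0},f}\max\left\{ A\frac{\lambda^{1/2}\delta^{3/2}}{n},\ \frac{\lambda\delta^{2}}{n^{2}},\ h^{2}\frac{\lambda n^{2}}{\delta^{2}}\right\} .
\]
I would then substitute $\delta=2\lambda$ and $n\sim A^{1/3}\lambda h^{-2/3}$ into each term. The first term becomes $\sim A^{2/3}\lambda h^{2/3}$, and a one-line computation shows the third term equals the first up to a constant. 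The middle term becomes $\sim\lambda A^{-2/3}h^{4/3}$, which is dominated by the first precisely when $h^{1/2}\leq A$; this in turn follows by rearranging the remaining part of the hypothesis, $A^{2/3}\lambda h^{2/3}\geq\lambda h$ (equivalently $A^{2}\geq h$). Collecting the three estimates yields the claimed bound $E_{h}^{NL}(\Phi_{\delta,n,\lambda,\varrho})-\cE_{b}^{NL}\lesssim_{\varrho_{0},f}A^{2/3}\lambda h^{2/3}=[(\varrho^{2}-1)\vee h^{2}]^{2/3}\lambda h^{2/3}$.

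The main (and essentially only) point requiring care is bookkeeping of exponents: the construction and its energy estimate are already packaged in \prettyref{lem:NLadmissability}, so the content of the lemma is simply that the two inequalities making up its hypothesis are exactly what is needed to (i) guarantee that the prescribed wrinkle count is at least one and (ii) ensure that, after balancing $\delta\sim\lambda$ against $n\sim A^{1/3}\lambda h^{-2/3}$, the ``membrane'' term $A\lambda^{1/2}\delta^{3/2}/n$ and the ``bending'' term $h^{2}\lambda n^{2}/\delta^{2}$ both sit at the scale $A^{2/3}\lambda h^{2/3}$ and dominate the intermediate term $\lambda\delta^{2}/n^{2}$. There is no analytic obstacle; the proof of \prettyref{lem:FvKUB_manywrinkles} can be followed line by line as a template.
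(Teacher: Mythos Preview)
Your proposal is correct and follows essentially the same approach as the paper's proof, which also verifies $\delta\in[2\lambda,1]$, rearranges the second inequality in the hypothesis to get $A^{1/3}\lambda h^{-2/3}\geq1$ so that $n$ exists, and then defers the energy balancing to \prettyref{lem:NLadmissability} via the template of \prettyref{lem:FvKUB_manywrinkles} (with $\varrho-1\to A$ and $m\to1$). Your write-up is in fact more explicit than the paper's, which simply cites the vKD computation rather than redoing the three-term substitution.
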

\begin{proof}
Rearranging the inequality $[(\varrho^{2}-1)\vee h^{2}]^{2/3}\lambda h^{2/3}\geq h^{6/7}\lambda^{5/7}[(\varrho^{2}-1)\vee h^{2}]^{4/7}$,
we find that $[(\varrho^{2}-1)\vee h^{2}]^{1/3}\lambda h^{-2/3}\geq1$
so that there exists such an $n\in\N$. Also, with our choice of $\delta$
we have that $\delta\in[2\lambda,1]$. It follows immediately from
\prettyref{lem:NLadmissability} that $\Phi_{\delta,n,\lambda,\varrho}\in A_{\lambda,\varrho,1}^{NL}$.
Finally, the bound on the energy follows from \prettyref{lem:NLadmissability}
as in the proof of \prettyref{lem:FvKUB_manywrinkles}, where $\varrho-1$
is replaced by $(\varrho^{2}-1)\vee h^{2}$ and $m$ is replaced by
the number $1$. 
\end{proof}
Next, we consider a pattern consisting of one wrinkle.
\begin{lem}
\label{lem:NLUB_onewrinkle}Assume that 
\[
h^{6/7}\lambda^{5/7}[(\varrho^{2}-1)\vee h^{2}]^{4/7}\geq\max\{\lambda h,[(\varrho^{2}-1)\vee h^{2}]^{2/3}\lambda h^{2/3}\}.
\]
Let $n=1$ and let $\delta\in[2\lambda,1]$ be given by
\[
\delta=2\lambda^{1/7}[(\varrho^{2}-1)\vee h^{2}]^{-2/7}h^{4/7}.
\]
Then, $\Phi_{\delta,n,\lambda,\varrho}\in A_{\lambda,\varrho,1}^{NL}$
and
\[
E_{h}^{NL}(\Phi_{\delta,n,\lambda,\varrho})-\cE_{b}^{NL}\lesssim_{\varrho_{0},f}h^{6/7}\lambda^{5/7}[(\varrho^{2}-1)\vee h^{2}]^{4/7}.
\]
\end{lem}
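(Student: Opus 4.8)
The plan is to follow the template of the proof of \prettyref{lem:FvKUB_onewrinkle}, replacing $\varrho-1$ throughout by $P:=(\varrho^2-1)\vee h^2$ and $m$ by the number $1$, and exploiting the fact that for the nonlinear construction the $L^\infty$-slope bound comes for free. The argument breaks into three steps: (i) check that the prescribed $\delta$ lies in $[2\lambda,1]$, so that $w_{\delta,1,\lambda},u_{\delta,1,\lambda}$, and hence $\Phi_{\delta,1,\lambda,\varrho}$, are well-defined; (ii) invoke \prettyref{lem:NLadmissability} to obtain admissibility at no extra cost; and (iii) substitute $n=1$ and the chosen $\delta$ into the energy estimate of \prettyref{lem:NLadmissability} and balance the three resulting terms.

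For step (i), I would note that $\delta=2\lambda^{1/7}P^{-2/7}h^{4/7}\leq1$ is equivalent to $2^7\lambda h^4\leq P^2$; since the hypothesis $\lambda h\leq h^{6/7}\lambda^{5/7}P^{4/7}$ rearranges to $\lambda h^{1/2}\leq P^2$, and $h\leq\frac14$ gives $2^7h^4\leq h^{1/2}$, this follows. Likewise $\delta\geq2\lambda$ is equivalent to $h^4\geq\lambda^6P^2$, which is exactly what the other half of the hypothesis, $h^{6/7}\lambda^{5/7}P^{4/7}\geq P^{2/3}\lambda h^{2/3}$, reduces to. This is the one place the full $\max$ in the statement is needed, and it is the bookkeeping step I expect to require the most care. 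With $\delta\in[2\lambda,1]$ established, \prettyref{lem:NLadmissability} gives $\Phi_{\delta,1,\lambda,\varrho}\in A_{\lambda,\varrho,1}^{NL}$ immediately; unlike the vKD case there is no separate slope-bound check, because the construction forces $\norm{\partial_z\Phi_\rho}_{L^\infty}<1$ and $\partial_z\Phi_z\in[0,1]$ by design.

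For step (iii), \prettyref{lem:NLadmissability} with $n=1$ yields
\[
E_h^{NL}(\Phi_{\delta,1,\lambda,\varrho})-\cE_b^{NL}\lesssim_{\varrho_0,f}\max\left\{P\lambda^{1/2}\delta^{3/2},\ \lambda\delta^2,\ h^2\lambda\delta^{-2}\right\}.
\]
A short computation with $\delta=2\lambda^{1/7}P^{-2/7}h^{4/7}$ shows that the first and third terms are each a constant times $h^{6/7}\lambda^{5/7}P^{4/7}$ --- this is precisely the balance that dictated the choice of $\delta$, pitting the outward-displacement membrane cost against the bending cost --- while the middle term is a constant times $\lambda^{9/7}P^{-4/7}h^{8/7}$. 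It then remains only to see that the middle term is dominated by the first, i.e.\ $\lambda^{4/7}h^{2/7}\lesssim P^{8/7}$, which follows by raising the hypothesis $\lambda h\leq h^{6/7}\lambda^{5/7}P^{4/7}$ (equivalently $\lambda^2h\leq P^4$) to the power $2/7$. Assembling the pieces gives the claimed estimate. The argument is elementary modulo exponent arithmetic, and I foresee no conceptual obstacle --- only the need to keep each rearrangement reversible and to track the sign of the exponent of $P$.
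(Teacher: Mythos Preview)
Your proposal is correct and follows essentially the same approach as the paper: verify $\delta\in[2\lambda,1]$ from the two halves of the hypothesis exactly as you do, invoke \prettyref{lem:NLadmissability} for admissibility, and then balance the three terms of its energy estimate with the chosen $\delta$. The paper's proof is in fact terser---it simply refers back to the computation in \prettyref{lem:FvKUB_onewrinkle} with $\varrho-1$ replaced by $(\varrho^2-1)\vee h^2$---but the content is identical to what you wrote out.
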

\begin{proof}
First, we check that $\delta\in[2\lambda,1]$. For the upper bound,
note that $2\lambda^{1/7}[(\varrho^{2}-1)\vee h^{2}]^{-2/7}h^{4/7}\leq1$
if and only if $\lambda h^{4}\leq\frac{1}{2^{7}}[(\varrho^{2}-1)\vee h^{2}]^{2}$.
By assumption, we have that $\lambda h\leq h^{6/7}\lambda^{5/7}[(\varrho^{2}-1)\vee h^{2}]^{4/7}$
so that $\lambda h^{1/2}\leq[(\varrho^{2}-1)\vee h^{2}]^{2}$. Since
$h\leq\frac{1}{4}$, it follows that $h^{4}\leq\frac{1}{2^{7}}h^{1/2}$
and hence that $\lambda h^{4}\leq\frac{1}{2^{7}}[(\varrho^{2}-1)\vee h^{2}]^{2}$
as required. For the lower bound, we note that $2\lambda^{1/7}[(\varrho^{2}-1)\vee h^{2}]^{-2/7}h^{4/7}\geq2\lambda$
if and only if $h^{4}\geq\lambda^{6}[(\varrho^{2}-1)\vee h^{2}]^{2}$.
As this is a rearrangement of $[(\varrho^{2}-1)\vee h^{2}]^{2/3}\lambda h^{2/3}\leq h^{6/7}\lambda^{5/7}[(\varrho^{2}-1)\vee h^{2}]^{4/7}$,
we conclude the lower bound.

It follows from \prettyref{lem:NLadmissability} that $\Phi_{\delta,n,\lambda,\varrho}\in A_{\lambda,\varrho,1}^{NL}$.
The bound on the energy also follows from \prettyref{lem:NLadmissability},
as in the proof of \prettyref{lem:FvKUB_onewrinkle} but where $\varrho-1$
is replaced by $(\varrho^{2}-1)\vee h^{2}$. 
\end{proof}
Finally, we discuss the neutral mandrel case, where $\varrho=1$.
\begin{lem}
\label{lem:NLUB_uniforminmandrel} Assume that
\[
\lambda h\geq\max\{[(\varrho^{2}-1)\vee h^{2}]^{2/3}\lambda h^{2/3},h^{6/7}\lambda^{5/7}[(\varrho^{2}-1)\vee h^{2}]^{4/7}\}.
\]
If $\lambda\leq h^{1/2}$, then upon taking $n=1$ and $\delta=2h^{1/2}\in[2\lambda,1]$
we find that $\Phi_{\delta,n,\lambda,\varrho}\in A_{\lambda,\varrho,1}^{NL}$
and that
\[
E_{h}^{NL}(\Phi_{\delta,n,\lambda,\varrho})-\cE_{b}^{NL}\lesssim_{\varrho_{0},f}\lambda h.
\]
If $\lambda>h^{1/2}$, then upon taking $n\in\N$ and $\delta\in[2\lambda,1]$
which satisfy 
\[
n\in[\lambda h^{-1/2},2\lambda h^{-1/2}]\quad\text{and}\quad\delta=2\lambda,
\]
we find that $\Phi_{\delta,n,\lambda,\varrho}\in A_{\lambda,\varrho,1}^{NL}$
and that
\[
E_{h}^{NL}(\Phi_{\delta,n,\lambda,\varrho})-\cE_{b}^{NL}\lesssim_{\varrho_{0},f}\lambda h.
\]
\end{lem}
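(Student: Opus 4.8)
The plan is to deduce both assertions directly from \prettyref{lem:NLadmissability}, applied to the two indicated choices of the wrinkle parameters $(n,\delta)$, in exact analogy with \prettyref{lem:FvKUB_uniforminmandrel}. In each regime the work reduces to two routine checks: first, that the structural constraint $\delta\in[2\lambda,1]$ holds, so that $S_{f}^{-1}(\lambda/\delta)$ is defined and \prettyref{lem:NLadmissability} places $\Phi_{\delta,n,\lambda,\varrho}$ in $A_{\lambda,\varrho,1}^{NL}$; and second, that the three-term maximum
\[
\max\left\{\left[(\varrho^{2}-1)\vee h^{2}\right]\frac{\lambda^{1/2}\delta^{3/2}}{n},\ \frac{\lambda\delta^{2}}{n^{2}},\ h^{2}\frac{\lambda n^{2}}{\delta^{2}}\right\}
\]
supplied by \prettyref{lem:NLadmissability} is $\lesssim_{\varrho_{0},f}\lambda h$ under the standing hypothesis $\lambda h\geq\max\{[(\varrho^{2}-1)\vee h^{2}]^{2/3}\lambda h^{2/3},\ h^{6/7}\lambda^{5/7}[(\varrho^{2}-1)\vee h^{2}]^{4/7}\}$.

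For the regime $\lambda\leq h^{1/2}$, I would take $n=1$ and $\delta=2h^{1/2}$. Then $\delta\leq1$ is equivalent to $h\leq\tfrac14$, and $\delta\geq2\lambda$ is the case hypothesis $h^{1/2}\geq\lambda$, so \prettyref{lem:NLadmissability} gives $\Phi_{\delta,1,\lambda,\varrho}\in A_{\lambda,\varrho,1}^{NL}$. Substituting $n=1$, $\delta=2h^{1/2}$ into the displayed maximum, the second term is $\sim\lambda h$, the third term is $\sim\lambda h$, and the first term is $\sim[(\varrho^{2}-1)\vee h^{2}]\,\lambda^{1/2}h^{3/4}$. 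Hence it suffices to show $[(\varrho^{2}-1)\vee h^{2}]\lesssim\lambda^{1/2}h^{1/4}$, which is a rearrangement of the standing inequality $\lambda h\geq h^{6/7}\lambda^{5/7}[(\varrho^{2}-1)\vee h^{2}]^{4/7}$. This yields the excess-energy bound $\lesssim_{\varrho_{0},f}\lambda h$.

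For the regime $\lambda> h^{1/2}$, I would take $\delta=2\lambda$ and $n$ any integer in $[\lambda h^{-1/2},2\lambda h^{-1/2}]$; such an $n$ exists because the interval has length $\lambda h^{-1/2}\geq1$, which is the case hypothesis $\lambda> h^{1/2}$. Since $\lambda\leq\tfrac12$, we have $\delta=2\lambda\in[2\lambda,1]$, so again \prettyref{lem:NLadmissability} gives admissibility. Plugging $n\sim\lambda h^{-1/2}$ and $\delta=2\lambda$ into the maximum, the second and third terms are each $\sim\lambda h$, while the first term is $\sim[(\varrho^{2}-1)\vee h^{2}]\,\lambda h^{1/2}$. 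Thus it suffices that $[(\varrho^{2}-1)\vee h^{2}]\lesssim h^{1/2}$, which is a rearrangement of $\lambda h\geq[(\varrho^{2}-1)\vee h^{2}]^{2/3}\lambda h^{2/3}$. This again gives the bound $\lesssim_{\varrho_{0},f}\lambda h$.

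I do not expect a genuine obstacle: the argument is a transcription of \prettyref{lem:FvKUB_uniforminmandrel} with $\varrho-1$ replaced throughout by $(\varrho^{2}-1)\vee h^{2}$, the slope parameter $m$ replaced by $1$, and \prettyref{lem:NLadmissability} used in place of \prettyref{lem:FvKadmissability}. The only point that has no analog in the vKD case is the two-sided constraint $\delta\in[2\lambda,1]$ (rather than $\delta\in(0,1]$): the lower bound $\delta\geq2\lambda$ is precisely what keeps $\lambda/\delta$ in the domain $[0,\tfrac12]$ of $S_{f}^{-1}$, and in both regimes it is verified directly from the case hypothesis, as indicated above. Since $f$ is fixed, the dependence of the constants on $f$ can be suppressed, as noted in the proof of \prettyref{prop:NLUBs}.
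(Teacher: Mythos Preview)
Your proposal is correct and follows essentially the same approach as the paper: both argue by applying \prettyref{lem:NLadmissability} to the two indicated parameter choices, verify the constraint $\delta\in[2\lambda,1]$, and then reduce the energy bound to the corresponding computation from \prettyref{lem:FvKUB_uniforminmandrel} with $\varrho-1$ replaced by $(\varrho^{2}-1)\vee h^{2}$ and $m$ replaced by $1$. Your write-up is in fact more explicit than the paper's, which simply points back to the vKD argument without repeating the term-by-term check of the three-term maximum.
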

\begin{proof}
We prove this in two parts. Assume first that $\lambda\leq h^{1/2}$.
Then let $n=1$ and $\delta=2h^{1/2}$. Note that $\delta\in[2\lambda,1]$
if and only if $h\leq\frac{1}{4}$ and $h^{1/2}\geq\lambda$. It follows
from \prettyref{lem:NLadmissability} that $\Phi_{\delta,n,\lambda,\varrho}\in A_{\lambda,\varrho,1}^{NL}$,
and the bound on the energy follows from \prettyref{lem:NLadmissability}
as in the proof of \prettyref{lem:FvKUB_uniforminmandrel}, where
$\varrho-1$ is replaced by $(\varrho^{2}-1)\vee h^{2}$. 

Now assume that $\lambda>h^{1/2}$. Let $n\in\N$ and $\delta\in[2\lambda,1]$
which satisfy 
\[
n\in[\lambda h^{-1/2},2\lambda h^{-1/2}]\quad\text{and}\quad\delta=2\lambda.
\]
Note that $\lambda h^{-1/2}>1$ is a rearrangement of $\lambda>h^{1/2}$,
so that such an $n$ exists. It follows immediately from \prettyref{lem:NLadmissability}
that $\Phi_{\delta,n,\lambda,\varrho}\in A_{\lambda,\varrho,1}^{NL}$.
The bound on the energy follows from \prettyref{lem:NLadmissability}
as in the proof of \prettyref{lem:FvKUB_uniforminmandrel}, where
$\varrho-1$ is replaced by $(\varrho^{2}-1)\vee h^{2}$ and $m$
is replaced by the number $1$. 
\end{proof}

\section{Ansatz-free lower bounds in the large mandrel case \label{sec:largemandrelLB}}

We turn now to prove the ansatz-free lower bounds from \prettyref{thm:FvKlargemandrelscaling}
and \prettyref{thm:NLlargemandrelscaling}. The key idea behind their
proof is that buckling in the presence of the mandrel requires ``outwards''
displacement, i.e., displacement in the direction of increasing $\rho$,
and that this results in the presence of non-trivial tensile hoop
stresses. This observation leads to lower bounds on $E_{h}^{vKD}$
in \prettyref{sub:largemandrelLB_FvK} and on $E_{h}^{NL}$ in \prettyref{sub:largemandrelLB_nonlinear}.
These bounds are optimal in certain regimes of the form $\varrho-1\geq c_{m}(\lambda,h)>0$
(for the precise statement, we refer the reader to \prettyref{sub:largemandrelresults}
in the introduction).

\subsection{vKD model\label{sub:largemandrelLB_FvK}}

Recall the definitions of $E_{h}^{vKD}$, $A_{\lambda,\varrho,m}^{vKD}$,
and $\cE_{b}^{vKD}$ from \prettyref{eq:EFvK}, \prettyref{eq:AFvK},
and \prettyref{eq:EbFVK}. In \prettyref{sub:largemandrelLBproof},
we prove the following lower bound.
\begin{prop}
\label{prop:FvKlargemandrelLB} We have that
\[
\min\left\{ \max\left\{ m^{-2/3}(\varrho-1)^{2/3}h^{2/3}\lambda,\lambda^{5/7}(\varrho-1)^{4/7}h^{6/7}\right\} ,\lambda^{2}\right\} \lesssim\min_{A_{\lambda,\varrho,m}^{vKD}}E_{h}^{vKD}-\cE_{b}^{vKD}
\]
whenever $h,\lambda\in(0,\infty)$, $\varrho\in[1,\infty)$, and $m\in(0,\infty]$. \end{prop}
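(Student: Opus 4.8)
The plan is to combine three lower bounds on the excess energy $\mathcal{E}:=E_h^{vKD}(\phi)-\cE_b^{vKD}$ with a buckling estimate and a handful of Gagliardo--Nirenberg interpolation inequalities. Throughout write $v:=\phi_\rho-(\varrho-1)\ge 0$; the case $\varrho=1$ is trivial since the right-hand side then vanishes and $\mathcal{E}\ge 0$, so assume $\varrho>1$. The first step is to read the energy bounds off the structure of $\epsilon$. Since $\abs{\epsilon}^2\ge\epsilon_{\theta\theta}^2+\epsilon_{zz}^2$ and, for every $z$, $\int_{I_\theta}\epsilon_{\theta\theta}(\theta,z)\,d\theta=\int_{I_\theta}\bigl(\phi_\rho+\frac12(\partial_\theta\phi_\rho)^2\bigr)\,d\theta\ge|I_\theta|(\varrho-1)$ (using $\phi_\rho\ge\varrho-1$ and that $\partial_\theta\phi_\theta$ has zero mean over each hoop), Jensen's inequality in $\theta$ followed by expanding the square and discarding nonnegative terms gives $\norm{\epsilon_{\theta\theta}}_{L^2(\Omega)}^2\ge|\Omega|(\varrho-1)^2+2(\varrho-1)\norm{v}_{L^1(\Omega)}+(\varrho-1)\norm{\partial_\theta\phi_\rho}_{L^2(\Omega)}^2$. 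Since $\cE_b^{vKD}=|\Omega|(\varrho-1)^2$ and the bending term is nonnegative, this yields
\[
\mathcal{E}\;\gtrsim\;(\varrho-1)\norm{v}_{L^1(\Omega)}\;+\;(\varrho-1)\norm{\partial_\theta\phi_\rho}_{L^2(\Omega)}^2\;+\;\norm{\epsilon_{zz}}_{L^2(\Omega)}^2\;+\;h^2\norm{D^2\phi_\rho}_{L^2(\Omega)}^2 .
\]
For the buckling estimate, periodicity of $\phi_z+\lambda z$ forces $\int_{I_z}\partial_z\phi_z\,dz=-\lambda$ for each $\theta$, so integrating $\epsilon_{zz}=\partial_z\phi_z+\frac12(\partial_z\phi_\rho)^2$ over $\Omega$ gives $\frac12\norm{\partial_z\phi_\rho}_{L^2(\Omega)}^2=|I_\theta|\lambda+\int_\Omega\epsilon_{zz}$. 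As $\norm{\epsilon_{zz}}_{L^2(\Omega)}^2\le\mathcal{E}$, either $\mathcal{E}\gtrsim\lambda^2$ — in which case the Proposition holds because its right-hand side is $\le\lambda^2$ — or $\mathcal{E}\ll\lambda^2$ and then $\norm{\partial_z\phi_\rho}_{L^2(\Omega)}^2\gtrsim\lambda$. Assume the latter from now on.

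For the first term in the $\max$ I would exploit the slope constraint. From $\norm{\partial_z\phi_\rho}_{L^\infty}\le m$ we get $\norm{\partial_z\phi_\rho}_{L^4(\Omega)}^4\le m^2\norm{\partial_z\phi_\rho}_{L^2(\Omega)}^2$. On the other hand, the one-dimensional inequality $\norm{f'}_{L^2}^2\lesssim\norm{f'}_{L^4}\norm{f''}_{L^2}^{1/2}\norm{f}_{L^1}^{1/2}$, applied slicewise in $z$ to $f=v(\theta,\cdot)\ge 0$ and then combined with Hölder in $\theta$ (exponents $4,2,4$), gives $\norm{\partial_z\phi_\rho}_{L^2(\Omega)}^2\lesssim\norm{\partial_z\phi_\rho}_{L^4(\Omega)}\norm{v}_{L^1(\Omega)}^{1/2}\norm{D^2\phi_\rho}_{L^2(\Omega)}^{1/2}$. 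Substituting the $L^4$ bound and rearranging yields $\norm{\partial_z\phi_\rho}_{L^2(\Omega)}^2\lesssim m^{2/3}\norm{v}_{L^1(\Omega)}^{2/3}\norm{D^2\phi_\rho}_{L^2(\Omega)}^{2/3}$; inserting $\norm{\partial_z\phi_\rho}_{L^2}^2\gtrsim\lambda$, $\norm{v}_{L^1}\lesssim\mathcal{E}/(\varrho-1)$, and $\norm{D^2\phi_\rho}_{L^2}\lesssim\mathcal{E}^{1/2}/h$ then gives $\lambda\lesssim m^{2/3}\mathcal{E}(\varrho-1)^{-2/3}h^{-2/3}$, i.e.\ $\mathcal{E}\gtrsim m^{-2/3}(\varrho-1)^{2/3}h^{2/3}\lambda$.

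For the second term I would pass to the $\theta$-average $\overline v=\overline{\phi_\rho}-(\varrho-1)\ge 0$, a one-dimensional periodic function on $I_z$ with $\norm{\overline v}_{L^1(I_z)}\lesssim\mathcal{E}/(\varrho-1)$ and, by Jensen, $\norm{\partial_z^2\overline v}_{L^2(I_z)}^2\lesssim\mathcal{E}/h^2$. Orthogonality gives $\norm{\partial_z\phi_\rho}_{L^2(\Omega)}^2=|I_\theta|\norm{\partial_z\overline v}_{L^2(I_z)}^2+\norm{\partial_z g}_{L^2(\Omega)}^2$ with $g:=\phi_\rho-\overline{\phi_\rho}$. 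If the axisymmetric part carries the buckling, i.e.\ $\norm{\partial_z\overline v}_{L^2(I_z)}^2\gtrsim\lambda$, then the one-dimensional inequality $\norm{f'}_{L^2}^2\lesssim\norm{f}_{L^1}^{4/5}\norm{f''}_{L^2}^{6/5}$ applied to $f=\overline v$ gives $\lambda\lesssim(\mathcal{E}/(\varrho-1))^{4/5}(\mathcal{E}/h^2)^{3/5}$, hence $\mathcal{E}\gtrsim\lambda^{5/7}(\varrho-1)^{4/7}h^{6/7}$. In the complementary case the non-axisymmetric part $g$ (which has zero $\theta$-mean) must do the buckling; here one uses the Poincaré inequality in $\theta$, the membrane bound $\norm{\partial_\theta\phi_\rho}_{L^2(\Omega)}^2\lesssim\mathcal{E}/(\varrho-1)$ recorded above (and the $L^\infty$-slope bound on $\partial_\theta\phi_\rho$), together with the bending bound, to show that such a configuration is no cheaper than an axisymmetric one and hence also has $\mathcal{E}\gtrsim\lambda^{5/7}(\varrho-1)^{4/7}h^{6/7}$. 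Combining the two terms with the dichotomy above proves the Proposition.

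I expect this last point — the non-axisymmetric case of the second term — to be the main obstacle: one needs the sharp anisotropic interpolation inequality controlling $\norm{\partial_z g}_{L^2(\Omega)}$ for mean-zero $g$ in terms of the hoop penalty $(\varrho-1)\norm{\partial_\theta g}_{L^2(\Omega)}^2$, the bending penalty $h^2\norm{D^2 g}_{L^2(\Omega)}^2$, and the slope bound, in order to conclude that the radial buckling cannot profitably be made non-axisymmetric. The remaining interpolation inequalities used above, and the lower-order (boundary) corrections that arise on the bounded domain $\Omega$, are standard and of the type collected in the appendix.
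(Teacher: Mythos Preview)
Your energy lower bounds and your derivation of the first term $m^{-2/3}(\varrho-1)^{2/3}h^{2/3}\lambda$ are correct and equivalent to the paper's. The gap is precisely where you flag it: the non-axisymmetric branch of the second term. The tools you list do not close it. From Poincar\'e in $\theta$ you get $\norm{g}_{L^2}^2\lesssim\norm{\partial_\theta\phi_\rho}_{L^2}^2\lesssim\mathcal{E}/(\varrho-1)$, and then $\lambda\lesssim\norm{\partial_z g}_{L^2}^2\le\norm{g}_{L^2}\norm{\partial_z^2 g}_{L^2}$ yields only $\mathcal{E}\gtrsim(\varrho-1)^{1/2}h\lambda$, which is strictly weaker than $\lambda^{5/7}(\varrho-1)^{4/7}h^{6/7}$ whenever $(\varrho-1)^{1/2}>h\lambda^2$. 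Invoking the $L^\infty$ slope bound cannot repair this, since the target scaling is $m$-independent and must hold for $m=\infty$. Your heuristic that ``non-axisymmetric buckling is no cheaper'' is true, but the anisotropic interpolation you would need to make it rigorous at the sharp exponents is not available from the ingredients you have collected.

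The paper avoids the decomposition $\phi_\rho=\overline{\phi_\rho}+g$ altogether by upgrading the buckling estimate from an integrated statement to a \emph{slicewise} one. Instead of only $\int_\Omega\epsilon_{zz}$, one applies Jensen in $z$ for each fixed $\theta$ to obtain
\[
\Bigl\|\tfrac12\norm{\partial_z\phi_\rho}_{L_z^2}^2-\lambda\Bigr\|_{L_\theta^2}^2\le\norm{\epsilon_{zz}}_{L^2(\Omega)}^2\le\mathcal{E}.
\]
A Chebyshev argument then produces a set of $\theta$'s of measure $\gtrsim|I_\theta|$ on which $\norm{\partial_z\phi_\rho(\theta,\cdot)}_{L_z^2}^2\gtrsim\lambda$. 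On each such slice one applies the one-dimensional periodic inequality $\norm{f'}_{L_z^2}\lesssim\norm{f}_{L_z^1}^{2/5}\norm{f''}_{L_z^2}^{3/5}$ directly to $f=v(\theta,\cdot)\ge0$, integrates the $\tfrac{10}{7}$-th power over $\theta$, and uses H\"older with exponents $(\tfrac74,\tfrac73)$ to land on $\norm{v}_{L^1(\Omega)}^{4/7}\norm{\partial_z^2\phi_\rho}_{L^2(\Omega)}^{6/7}$. This yields $\lambda^{5/7}\lesssim(\varrho-1)^{-4/7}h^{-6/7}\mathcal{E}$ with no $\theta$-averaging, no axisymmetric/non-axisymmetric split, and no $m$-dependence. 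The fix to your argument is therefore to keep the $\epsilon_{zz}$ control slicewise rather than collapsing it to $\int_\Omega\epsilon_{zz}$ at the outset.
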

\begin{proof}
This follows from \prettyref{cor:FvKLB_largemandr} and \prettyref{cor:FvKLB_largemandrel2},
which combine to prove the equivalent statement that
\[
\min_{A_{\lambda,\varrho,m}^{vKD}}E_{h}^{vKD}-\cE_{b}^{vKD}\gtrsim\max\left\{ \min\{m^{-2/3}(\varrho-1)^{2/3}h^{2/3}\lambda,\lambda^{2}\},\min\{\lambda^{5/7}(\varrho-1)^{4/7}h^{6/7},\lambda^{2}\}\right\} .
\]

\end{proof}
In \prettyref{sub:Blowuprate_largemandrel}, we prove an estimate
on the blow-up rate of $D\phi$ as $h\to0$ for the minimizers of
the $m=\infty$ problem.

\subsubsection{Proof of the ansatz-free lower bound\label{sub:largemandrelLBproof}}

We begin by controlling various features of the radial displacement,
$\phi_{\rho}$. Given $\phi\in A_{\lambda,\varrho,m}^{vKD}$ we call
\[
\Delta^{vKD}=E_{h}^{vKD}(\phi)-\cE_{b}^{vKD},
\]
which is the excess elastic energy in the vKD model.
\begin{lem}
\label{lem:FvKLBs_largemandr} Let $\phi\in A_{\lambda,\varrho,\infty}^{vKD}$.
Then we have that 
\[
\Delta^{vKD}\geq\max\left\{ (\varrho-1)\norm{\phi_{\rho}-(\varrho-1)}_{L^{1}(\Omega)},h^{2}\norm{D^{2}\phi_{\rho}}_{L^{2}(\Omega)}^{2},\norm{\frac{1}{2}\norm{\partial_{z}\phi_{\rho}}_{L_{z}^{2}}^{2}-\lambda}_{L_{\theta}^{2}}^{2}\right\} .
\]
\end{lem}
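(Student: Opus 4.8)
The plan is to bound $\Delta^{vKD}$ from below by discarding everything in the membrane density except the two diagonal components $\epsilon_{\theta\theta}$ and $\epsilon_{zz}$, keeping the full bending term $h^{2}\norm{D^{2}\phi_{\rho}}_{L^{2}(\Omega)}^{2}$, and then exploiting the algebraic structure of these components. From \prettyref{eq:FvKstrain} we have $\epsilon_{\theta\theta}=\phi_{\rho}+\partial_{\theta}\phi_{\theta}+\tfrac{1}{2}(\partial_{\theta}\phi_{\rho})^{2}$ and $\epsilon_{zz}=\partial_{z}\phi_{z}+\tfrac{1}{2}(\partial_{z}\phi_{\rho})^{2}$. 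In each case the ``extra'' pieces beyond $\phi_{\rho}$ (resp.\ beyond $\tfrac{1}{2}(\partial_{z}\phi_{\rho})^{2}$) are either mean-zero along one coordinate by periodicity, or sign-definite. Averaging along the appropriate coordinate and applying Cauchy--Schwarz then yields all three estimates essentially at once.

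First I would establish the hoop estimate. Integrating $\epsilon_{\theta\theta}$ over $I_{\theta}$ at fixed $z$: the term $\partial_{\theta}\phi_{\theta}$ integrates to zero since $\phi_{\theta}$ is periodic in $\theta$, the term $\tfrac{1}{2}(\partial_{\theta}\phi_{\rho})^{2}$ is nonnegative, and the obstacle constraint gives $\phi_{\rho}\geq\varrho-1\geq0$; hence $\int_{I_{\theta}}\epsilon_{\theta\theta}\,d\theta\geq\abs{I_{\theta}}\,\overline{\phi_{\rho}}(z)\geq0$. Since $t\mapsto t^{2}$ is increasing on $[0,\infty)$, Cauchy--Schwarz gives $\int_{I_{\theta}}\epsilon_{\theta\theta}^{2}\,d\theta\geq\abs{I_{\theta}}\,\overline{\phi_{\rho}}(z)^{2}$. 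Using $\overline{\phi_{\rho}}(z)\geq\varrho-1\geq0$ we have $\overline{\phi_{\rho}}(z)^{2}\geq(\varrho-1)^{2}+(\varrho-1)\big(\overline{\phi_{\rho}}(z)-(\varrho-1)\big)$; integrating in $z$ (recall $\abs{I_{z}}=1$) and using $\phi_{\rho}\geq\varrho-1$ to rewrite $\int_{I_{z}}\big(\overline{\phi_{\rho}}-(\varrho-1)\big)\,dz=\abs{I_{\theta}}^{-1}\norm{\phi_{\rho}-(\varrho-1)}_{L^{1}(\Omega)}$, this produces $\int_{\Omega}\epsilon_{\theta\theta}^{2}\geq\cE_{b}^{vKD}+(\varrho-1)\norm{\phi_{\rho}-(\varrho-1)}_{L^{1}(\Omega)}$. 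In particular $\int_{\Omega}\epsilon_{\theta\theta}^{2}\geq\cE_{b}^{vKD}$.

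Next I would do the axial estimate. Integrating $\epsilon_{zz}$ over $I_{z}$ at fixed $\theta$: since $\phi_{z}+\lambda z$ is periodic in $z$ and $\abs{I_{z}}=1$, we get $\int_{I_{z}}\partial_{z}\phi_{z}\,dz=-\lambda$, so $\int_{I_{z}}\epsilon_{zz}\,dz=\tfrac{1}{2}\norm{\partial_{z}\phi_{\rho}}_{L_{z}^{2}}^{2}-\lambda$, and Cauchy--Schwarz gives $\int_{I_{z}}\epsilon_{zz}^{2}\,dz\geq\big(\tfrac{1}{2}\norm{\partial_{z}\phi_{\rho}}_{L_{z}^{2}}^{2}-\lambda\big)^{2}$; integrating over $\theta$ yields $\int_{\Omega}\epsilon_{zz}^{2}\geq\norm{\tfrac{1}{2}\norm{\partial_{z}\phi_{\rho}}_{L_{z}^{2}}^{2}-\lambda}_{L_{\theta}^{2}}^{2}$. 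To finish, I assemble the three claims using $\abs{\epsilon}^{2}\geq\epsilon_{\theta\theta}^{2}+\epsilon_{zz}^{2}$ and dropping nonnegative terms: the hoop estimate gives $\Delta^{vKD}\geq(\varrho-1)\norm{\phi_{\rho}-(\varrho-1)}_{L^{1}(\Omega)}$; combining $\int_{\Omega}\epsilon_{\theta\theta}^{2}\geq\cE_{b}^{vKD}$ with the retained bending term gives $\Delta^{vKD}\geq h^{2}\norm{D^{2}\phi_{\rho}}_{L^{2}(\Omega)}^{2}$; and combining $\int_{\Omega}\epsilon_{\theta\theta}^{2}\geq\cE_{b}^{vKD}$ with the axial estimate gives $\Delta^{vKD}\geq\norm{\tfrac{1}{2}\norm{\partial_{z}\phi_{\rho}}_{L_{z}^{2}}^{2}-\lambda}_{L_{\theta}^{2}}^{2}$. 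Taking the maximum completes the proof.

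There is no deep difficulty here: everything reduces to one-dimensional averaging inequalities. The one point that genuinely needs care is the sign bookkeeping in the hoop estimate — the Cauchy--Schwarz/squaring step is legitimate only because $\int_{I_{\theta}}\epsilon_{\theta\theta}\,d\theta$ is nonnegative, which rests on $\varrho\geq1$ (so $\phi_{\rho}\geq\varrho-1\geq0$) together with the nonnegativity of $\tfrac{1}{2}(\partial_{\theta}\phi_{\rho})^{2}$ and the vanishing of $\int_{I_{\theta}}\partial_{\theta}\phi_{\theta}\,d\theta$ — and the passage from $\overline{\phi_{\rho}}^{2}$ to a bound linear in $\norm{\phi_{\rho}-(\varrho-1)}_{L^{1}(\Omega)}$ must keep the obstacle constraint in play throughout. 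The roles of periodicity are exactly to eliminate the shear-type membrane term $\partial_{\theta}\phi_{\theta}$ and to convert $\int_{I_{z}}\partial_{z}\phi_{z}\,dz$ into the explicit constant $-\lambda$; this is what lets the geometric term $\phi_{\rho}$ and the compression $\lambda$ survive cleanly in the lower bounds.
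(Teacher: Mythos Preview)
Your proof is correct and follows essentially the same approach as the paper: both isolate the diagonal strain components $\epsilon_{\theta\theta}$ and $\epsilon_{zz}$, use periodicity to eliminate $\partial_\theta\phi_\theta$ and convert $\int_{I_z}\partial_z\phi_z$ into $-\lambda$, and exploit the sign of $\phi_\rho-(\varrho-1)$ and $(\partial_\theta\phi_\rho)^2$. The only cosmetic difference is that the paper expands $|\epsilon_{\theta\theta}|^2$ about the constant $\varrho-1$ first and then integrates (obtaining the slightly sharper factor $2(\varrho-1)$), whereas you first apply Jensen in $\theta$ and then bound $\overline{\phi_\rho}^2$ from below; the two routes are equivalent.
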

\begin{proof}
Make the substitution
\[
\phi=(w+\varrho-1,u_{\theta},u_{z}-\lambda z),
\]
given in cylindrical coordinates. By definition, the vKD strain tensor,
$\epsilon$, satisfies
\[
\epsilon_{\theta\theta}=\partial_{\theta}u_{\theta}+\frac{1}{2}(\partial_{\theta}w)^{2}+w+(\varrho-1)\quad\text{and}\quad\epsilon_{zz}=\partial_{z}u_{z}-\lambda+\frac{1}{2}(\partial_{z}w)^{2}.
\]
Since $u_{\theta}\in H_{\text{per}}^{1}$, we have that
\begin{align*}
E_{h}^{vKD}(\phi) & \geq\int_{\Omega}\abs{\epsilon_{\theta\theta}}^{2}+\abs{\epsilon_{zz}}^{2}+h^{2}\abs{D^{2}w}^{2}\\
 & \geq\int_{\Omega}(\varrho-1)^{2}+2(\varrho-1)(\partial_{\theta}u_{\theta}+\frac{1}{2}(\partial_{\theta}w)^{2}+w)+\abs{\epsilon_{zz}}^{2}+h^{2}\abs{D^{2}w}^{2}\\
 & \geq\cE_{b}^{vKD}+\int_{\Omega}2(\varrho-1)w+\abs{\epsilon_{zz}}^{2}+h^{2}\abs{D^{2}w}^{2}.
\end{align*}
Since $w$ is non-negative, we conclude that 
\[
\Delta^{vKD}\geq\max\left\{ 2(\varrho-1)\norm{w}_{L^{1}(\Omega)},\norm{\epsilon_{zz}}_{L^{2}(\Omega)}^{2},h^{2}\norm{D^{2}w}_{L^{2}(\Omega)}^{2}\right\} .
\]
By applying Jensen's inequality and using that $u_{z}\in H_{\text{per}}^{1}$,
it follows that
\[
\norm{\epsilon_{zz}}_{L^{2}(\Omega)}^{2}\geq\frac{1}{\abs{I_{z}}}\int_{I_{\theta}}\abs{\int_{I_{z}}\epsilon_{zz}\,dz}^{2}\,d\theta=\frac{1}{\abs{I_{z}}}\norm{\frac{1}{2}\norm{\partial_{z}w}_{L_{z}^{2}}^{2}-\lambda}_{L_{\theta}^{2}}^{2}.
\]
Since $\abs{I_{z}}=1$, the result follows.
\end{proof}
Now, we will apply the Gagliardo-Nirenberg interpolation inequalities
from \prettyref{sec:Appendix} to deduce the desired lower bounds.
\begin{cor}
\label{cor:FvKLB_largemandr} If $\phi\in A_{\lambda,\varrho,m}^{vKD}$,
then
\[
\Delta^{vKD}\gtrsim\min\{m^{-2/3}(\varrho-1)^{2/3}h^{2/3}\lambda,\lambda^{2}\}.
\]
In fact, if $\phi\in A_{\lambda,\varrho,\infty}^{vKD}$, then 
\[
\Delta^{vKD}\gtrsim\min\{\norm{D\phi_{\rho}}_{L^{\infty}}^{-2/3}(\varrho-1)^{2/3}h^{2/3}\lambda,\lambda^{2}\}.
\]
\end{cor}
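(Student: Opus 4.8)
The plan is to feed the three lower bounds furnished by \prettyref{lem:FvKLBs_largemandr} into a single Gagliardo--Nirenberg interpolation inequality, keeping the $L^\infty$-norm of $\partial_z\phi_\rho$ explicit so that both asserted forms come out at once. Abbreviate $\Delta=\Delta^{vKD}$ and set $w=\phi_{\rho}-(\varrho-1)$, which is non-negative by the obstacle constraint; then \prettyref{lem:FvKLBs_largemandr} reads
\[
\Delta\geq\max\Bigl\{(\varrho-1)\norm{w}_{L^{1}(\Omega)},\ h^{2}\norm{D^{2}\phi_{\rho}}_{L^{2}(\Omega)}^{2},\ \bigl\|\tfrac{1}{2}\norm{\partial_{z}w}_{L^{2}_{z}}^{2}-\lambda\bigr\|_{L^{2}_{\theta}}^{2}\Bigr\}.
\]
Since the claimed inequality is vacuous when $\varrho=1$ (its right side is then $\min\{0,\lambda^{2}\}=0$), I assume $\varrho>1$; likewise, for the sharpened statement I assume $\norm{D\phi_{\rho}}_{L^{\infty}(\Omega)}<\infty$. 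If $\Delta\geq\tfrac{1}{4}\abs{I_{\theta}}\lambda^{2}$ there is nothing left to prove, so assume $\Delta<\tfrac14\abs{I_{\theta}}\lambda^{2}$.

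First I would convert the third bound into a genuine lower bound on $\norm{\partial_{z}\phi_{\rho}}_{L^{2}(\Omega)}$. Writing $q(\theta)=\tfrac12\norm{\partial_{z}w(\theta,\cdot)}_{L^{2}_{z}}^{2}$, the reverse triangle inequality and Cauchy--Schwarz in $\theta$ give $\norm{q}_{L^{1}_{\theta}}\geq\lambda\abs{I_{\theta}}-\abs{I_{\theta}}^{1/2}\norm{q-\lambda}_{L^{2}_{\theta}}\geq\lambda\abs{I_{\theta}}-\abs{I_{\theta}}^{1/2}\Delta^{1/2}>\tfrac12\lambda\abs{I_{\theta}}$, whence (using $\partial_z\phi_\rho=\partial_z w$) $\norm{\partial_{z}\phi_{\rho}}_{L^{2}(\Omega)}^{2}=2\norm{q}_{L^{1}_{\theta}}\gtrsim\lambda$. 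Next I would bound the same quantity from above. For a.e.\ $\theta$ the slice $w(\theta,\cdot)$ lies in $H^{2}_{\mathrm{per}}(I_{z})$, so $\partial_{z}w(\theta,\cdot)$ is continuous with $\norm{\partial_{z}w(\theta,\cdot)}_{L^{\infty}_{z}}\leq\norm{\partial_{z}\phi_{\rho}}_{L^{\infty}(\Omega)}$. Applying the one-dimensional interpolation inequality of \prettyref{sec:Appendix}, namely $\norm{v'}_{L^{2}(I)}^{2}\lesssim\norm{v'}_{L^{\infty}(I)}^{2/3}\norm{v}_{L^{1}(I)}^{2/3}\norm{v''}_{L^{2}(I)}^{2/3}$ for periodic $v$, to each slice, then integrating in $\theta$, bounding $\norm{\partial_{z}w(\theta,\cdot)}_{L^{\infty}_{z}}$ by $\norm{\partial_{z}\phi_{\rho}}_{L^{\infty}(\Omega)}$, and applying H\"older in $\theta$ with exponents $(\tfrac32,3)$, I obtain
\[
\norm{\partial_{z}\phi_{\rho}}_{L^{2}(\Omega)}^{2}\lesssim\norm{\partial_{z}\phi_{\rho}}_{L^{\infty}(\Omega)}^{2/3}\norm{w}_{L^{1}(\Omega)}^{2/3}\norm{\partial_{z}^{2}\phi_{\rho}}_{L^{2}(\Omega)}^{2/3}\leq\norm{\partial_{z}\phi_{\rho}}_{L^{\infty}(\Omega)}^{2/3}\norm{w}_{L^{1}(\Omega)}^{2/3}\norm{D^{2}\phi_{\rho}}_{L^{2}(\Omega)}^{2/3}.
\]

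Combining the two estimates for $\norm{\partial_{z}\phi_{\rho}}_{L^{2}(\Omega)}^{2}$ and then inserting the first two bounds of \prettyref{lem:FvKLBs_largemandr}, i.e.\ $\norm{w}_{L^{1}(\Omega)}\leq\Delta/(\varrho-1)$ and $\norm{D^{2}\phi_{\rho}}_{L^{2}(\Omega)}^{2}\leq\Delta/h^{2}$, gives $\lambda\lesssim\norm{\partial_{z}\phi_{\rho}}_{L^{\infty}(\Omega)}^{2/3}\,\Delta\,(\varrho-1)^{-2/3}h^{-2/3}$, that is, $\Delta\gtrsim\norm{\partial_{z}\phi_{\rho}}_{L^{\infty}(\Omega)}^{-2/3}(\varrho-1)^{2/3}h^{2/3}\lambda$. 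Recombining with the previously dispatched case $\Delta\geq\tfrac14\abs{I_{\theta}}\lambda^{2}$ yields $\Delta\gtrsim\min\{\norm{\partial_{z}\phi_{\rho}}_{L^{\infty}(\Omega)}^{-2/3}(\varrho-1)^{2/3}h^{2/3}\lambda,\lambda^{2}\}$; since $\norm{\partial_{z}\phi_{\rho}}_{L^{\infty}(\Omega)}\leq m$ for $\phi\in A^{vKD}_{\lambda,\varrho,m}$ (and $\leq\norm{D\phi_{\rho}}_{L^{\infty}(\Omega)}$ always), both assertions follow.

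The main obstacle is the one-dimensional interpolation inequality, which \emph{must} carry the factor $\norm{v'}_{L^{\infty}}$: the more familiar-looking estimate $\norm{v'}_{L^{1}(I)}\lesssim\norm{v}_{L^{1}(I)}^{2/3}\norm{v''}_{L^{2}(I)}^{1/3}$ is false for rapidly oscillating $v$, which is precisely the profile of the near-optimal wrinkling constructions, and it is the \emph{a priori} slope bound that repairs this. Making the exponents line up so that the outcome is exactly $(\varrho-1)^{2/3}h^{2/3}\lambda$ — rather than a weaker power of $\lambda$ — forces the specific homogeneous triple $(2/3,2/3,2/3)$, and verifying that this particular inequality is scale invariant (hence true on $I_z$, not merely up to a lower-order term) is the one genuinely delicate point; the remaining steps are routine manipulations of the bounds already in hand.
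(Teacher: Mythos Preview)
Your proof is correct and follows essentially the paper's approach: the same case split on whether $\Delta\gtrsim\lambda^{2}$, and then the same combination of the $L^{1}$, $L^{\infty}$-gradient, and $H^{2}$ bounds from \prettyref{lem:FvKLBs_largemandr} via a Gagliardo--Nirenberg inequality. The only difference is that you interpolate slice-by-slice in $z$ and then H\"older in $\theta$, whereas the paper applies the two-dimensional inequality of \prettyref{lem:2dinterp-1} directly on $\Omega$; note that the specific one-dimensional inequality you invoke is not actually stated in \prettyref{sec:Appendix} (only its 2D analog is), though it is true and follows by the same $L^{4/3}$ intermediate step used to derive \prettyref{lem:2dinterp-1}.
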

\begin{proof}
Observe that by \prettyref{lem:FvKLBs_largemandr} and an application
of H\"older's inequality, we have that 
\[
(\Delta^{vKD})^{1/2}\geq\abs{I_{z}}^{-1/2}|I_{\theta}|^{-1/2}\norm{\frac{1}{2}\norm{\partial_{z}\phi_{\rho}}_{L_{z}^{2}}^{2}-\lambda}_{L_{\theta}^{1}}.
\]
Hence, by the triangle inequality,
\[
\frac{1}{2}\norm{\partial_{z}\phi_{\rho}}_{L^{2}(\Omega)}^{2}+|\Omega|^{1/2}(\Delta^{vKD})^{1/2}\geq\lambda|I_{\theta}|.
\]
Now we perform a case analysis. If $\phi$ satisfies $\norm{\partial_{z}\phi_{\rho}}_{L^{2}(\Omega)}^{2}\leq\lambda|I_{\theta}|$,
then we conclude by the above that $\Delta^{vKD}\gtrsim\lambda^{2}$.

If, on the other hand, $\phi$ satisfies $\norm{\partial_{z}\phi_{\rho}}_{L^{2}(\Omega)}^{2}>\lambda|I_{\theta}|$,
then we can combine the interpolation inequality from \prettyref{lem:2dinterp-1}
(applied to $f=\phi_{\rho}-(\varrho-1)$) with \prettyref{lem:FvKLBs_largemandr}
to conclude that
\[
\lambda\lesssim\norm{D\phi_{\rho}}_{L^{\infty}(\Omega)}^{2/3}\left(\frac{1}{\varrho-1}\Delta^{vKD}\right)^{2/3}\left(\frac{1}{h^{2}}\Delta^{vKD}\right)^{1/3}\lesssim m^{2/3}(\varrho-1)^{-2/3}h^{-2/3}\Delta^{vKD}.
\]
These observations combine to prove the desired result.\end{proof}
\begin{cor}
\label{cor:FvKLB_largemandrel2} If $\phi\in A_{\lambda,\varrho,m}^{vKD}$,
then
\[
\Delta^{vKD}\gtrsim\min\{\lambda^{5/7}(\varrho-1)^{4/7}h^{6/7},\lambda^{2}\}.
\]
\end{cor}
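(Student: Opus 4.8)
The plan is to combine the three lower bounds furnished by \prettyref{lem:FvKLBs_largemandr} with a one-dimensional Gagliardo--Nirenberg inequality applied on a carefully chosen $\theta$-slice. Throughout, write $\Delta=\Delta^{vKD}$ and $w=\phi_{\rho}-(\varrho-1)$; since $A_{\lambda,\varrho,m}^{vKD}\subset A_{\lambda,\varrho,\infty}^{vKD}$, we have $w\geq0$ and $w\in H_{\text{per}}^{2}(\Omega)$, and \prettyref{lem:FvKLBs_largemandr} gives
\[
(\varrho-1)\norm{w}_{L^{1}(\Omega)}\leq\Delta,\qquad h^{2}\norm{D^{2}w}_{L^{2}(\Omega)}^{2}\leq\Delta,\qquad \norm{\tfrac{1}{2}\norm{\partial_{z}w}_{L_{z}^{2}}^{2}-\lambda}_{L_{\theta}^{2}}^{2}\leq\Delta.
\]
If $\Delta\geq\lambda^{2}$ there is nothing to prove, so I would assume $\Delta<\lambda^{2}$ and show $\Delta\gtrsim\lambda^{5/7}(\varrho-1)^{4/7}h^{6/7}$; the two cases together give the claim (with an absolute implied constant).

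The next step is to locate a good slice. Chebyshev's inequality applied to the third estimate shows that the set of $\theta\in I_{\theta}$ with $\norm{\partial_{z}w(\cdot,\theta)}_{L_{z}^{2}}^{2}<\lambda$ has measure at most $4\Delta/\lambda^{2}<4$. Markov's inequality applied to the identities $\int_{I_{\theta}}\norm{w(\cdot,\theta)}_{L_{z}^{1}}\,d\theta=\norm{w}_{L^{1}(\Omega)}$ and $\int_{I_{\theta}}\norm{\partial_{z}^{2}w(\cdot,\theta)}_{L_{z}^{2}}^{2}\,d\theta\leq\norm{D^{2}w}_{L^{2}(\Omega)}^{2}$ shows that, off a set of $\theta$ of measure at most $1$, one has both $\norm{w(\cdot,\theta)}_{L_{z}^{1}}\lesssim\norm{w}_{L^{1}(\Omega)}$ and $\norm{\partial_{z}^{2}w(\cdot,\theta)}_{L_{z}^{2}}^{2}\lesssim\norm{D^{2}w}_{L^{2}(\Omega)}^{2}$. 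Since $4+1<\abs{I_{\theta}}=2\pi$, these three good sets intersect in positive measure, so one can fix $\theta_{*}\in I_{\theta}$ with
\[
\norm{\partial_{z}w(\cdot,\theta_{*})}_{L_{z}^{2}}^{2}\geq\lambda,\qquad \norm{w(\cdot,\theta_{*})}_{L_{z}^{1}}\lesssim\frac{\Delta}{\varrho-1},\qquad \norm{\partial_{z}^{2}w(\cdot,\theta_{*})}_{L_{z}^{2}}^{2}\lesssim\frac{\Delta}{h^{2}}.
\]

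The final step applies the one-dimensional interpolation inequality $\norm{g'}_{L^{2}(I_{z})}^{2}\lesssim\norm{g}_{L^{1}(I_{z})}^{4/5}\norm{g''}_{L^{2}(I_{z})}^{6/5}$ for periodic $g$ (a Gagliardo--Nirenberg estimate; see \prettyref{sec:Appendix}) to $g=w(\cdot,\theta_{*})$, yielding
\[
\lambda\leq\norm{\partial_{z}w(\cdot,\theta_{*})}_{L_{z}^{2}}^{2}\lesssim\left(\frac{\Delta}{\varrho-1}\right)^{4/5}\left(\frac{\Delta}{h^{2}}\right)^{3/5}=\frac{\Delta^{7/5}}{(\varrho-1)^{4/5}h^{6/5}},
\]
so $\Delta\gtrsim\lambda^{5/7}(\varrho-1)^{4/7}h^{6/7}$, as desired.

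The one point requiring care is the choice of interpolation inequality. A direct two-dimensional Gagliardo--Nirenberg estimate on $\Omega$ is dimensionally forced to use the exponents $\norm{w}_{L^{1}(\Omega)}^{2/3}\norm{D^{2}w}_{L^{2}(\Omega)}^{4/3}$ and yields the strictly weaker bound $\Delta\gtrsim\lambda^{3/4}(\varrho-1)^{1/2}h$; it is therefore essential to pass to a single $\theta$-hoop and use the sharper one-dimensional inequality there, which is exactly what the $L_{\theta}^{2}$-structure of the third estimate in \prettyref{lem:FvKLBs_largemandr} allows. Nonnegativity of $w$ is used only to reduce the interpolation inequality to its homogeneous (mean-zero) form so that no additive lower-order term intervenes, and — in contrast with \prettyref{cor:FvKLB_largemandr} — no $L^{\infty}$-bound on $D\phi$ enters the argument, which is why the resulting estimate holds uniformly down to $m=\infty$.
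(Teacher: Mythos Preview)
Your argument is correct and follows essentially the same route as the paper: reduce to the case $\Delta<\lambda^{2}$, use Chebyshev on the third estimate from \prettyref{lem:FvKLBs_largemandr} to find many $\theta$ with $\norm{\partial_{z}w}_{L_{z}^{2}}^{2}\gtrsim\lambda$, and then feed the one-dimensional interpolation inequality from \prettyref{lem:1dinterp} together with the $L^{1}$ and $H^{2}$ bounds on $w$. The only technical difference is that the paper integrates the slicewise inequality $\norm{\partial_{z}w}_{L_{z}^{2}}^{10/7}\lesssim\norm{w}_{L_{z}^{1}}^{4/7}\norm{\partial_{z}^{2}w}_{L_{z}^{2}}^{6/7}$ over $I_{\theta}$ and closes with H\"older, whereas you use Markov to locate a single slice $\theta_{*}$ on which all three quantities are controlled; these are interchangeable devices yielding the same exponents.

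One remark on your closing commentary: the sentence about nonnegativity of $w$ being used ``to reduce the interpolation inequality to its homogeneous (mean-zero) form'' is not quite right. The inequality in \prettyref{lem:1dinterp} holds for every $f\in H_{\text{per}}^{2}(I)$ with no additive lower-order term (periodicity alone suffices, since one may subtract the mean and use $\norm{f-\bar f}_{L^{1}}\leq 2\norm{f}_{L^{1}}$), and nonnegativity certainly does not give mean zero. Nonnegativity of $w$ is not used anywhere in your argument; it entered only in the proof of \prettyref{lem:FvKLBs_largemandr} itself.
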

\begin{proof}
Evidently, it suffices to prove that 
\[
\Delta^{vKD}\leq|I_{\theta}|\lambda^{2}\implies\Delta^{vKD}\gtrsim\lambda^{5/7}(\varrho-1)^{4/7}h^{6/7}.
\]
Assume that $\Delta^{vKD}\leq|I_{\theta}|\lambda^{2}$, and define
the set 
\[
Z=\left\{ \theta\in I_{\theta}\ :\ \abs{\frac{1}{2}\norm{\partial_{z}\phi_{\rho}}_{L_{z}^{2}}^{2}-\lambda}\geq\sqrt{2}\lambda\right\} .
\]
We claim that $|I_{\theta}\backslash Z|\geq\frac{1}{2}|I_{\theta}|$.
Indeed, by Chebyshev's inequality and \prettyref{lem:FvKLBs_largemandr},
we have that
\[
2\lambda^{2}\abs{Z}\leq\norm{\frac{1}{2}\norm{\partial_{z}\phi_{\rho}}_{L_{z}^{2}}^{2}-\lambda}_{L_{\theta}^{2}}^{2}\leq|I_{\theta}|\lambda^{2}
\]
so that $|Z|\leq\frac{1}{2}|I_{\theta}|$ as desired. It follows that
\[
\lambda^{5/7}\abs{I_{\theta}}\lesssim\int_{I_{\theta}\backslash Z}\norm{\partial_{z}\phi_{\rho}}_{L_{z}^{2}}^{10/7}\,d\theta\leq\int_{I_{\theta}}\norm{\partial_{z}\phi_{\rho}}_{L_{z}^{2}}^{10/7}\,d\theta.
\]

Applying the first interpolation inequality from \prettyref{lem:1dinterp}
to $f=\phi_{\rho}-(\varrho-1)$, we conclude that 
\[
\lambda^{5/7}\abs{I_{\theta}}\lesssim\int_{I_{\theta}}\norm{f}_{L_{z}^{1}}^{4/7}\norm{\partial_{z}^{2}f}_{L_{z}^{2}}^{6/7}\,d\theta\leq\norm{\phi_{\rho}-(\varrho-1)}_{L^{1}(\Omega)}^{4/7}\norm{D^{2}\phi_{\rho}}_{L^{2}(\Omega)}^{6/7}.
\]
Note that we used H\"older's inequality in the second step. Finally,
\prettyref{lem:FvKLBs_largemandr} proves that 
\[
\lambda^{5/7}\lesssim\left(\frac{1}{\varrho-1}\Delta^{vKD}\right)^{4/7}\left(\frac{1}{h^{2}}\Delta^{vKD}\right)^{3/7}=(\varrho-1)^{-4/7}h^{-6/7}\Delta^{vKD}
\]
and the lower bound follows.
\end{proof}

\subsubsection{Blow-up rate of $D\phi$ as $h\to0$\label{sub:Blowuprate_largemandrel}}

We can now make \prettyref{rem:FvKslopeexlposion} precise, regarding
the claim that $E_{h}^{vKD}$ prefers exploding slopes in the limit
$h\to0$. The following result can be seen to justify the introduction
of the parameter $m$ in the definition of the admissible set, $A_{\lambda,\varrho,m}^{vKD}$. 
\begin{cor}
Let $\left\{ (h_{\alpha},\lambda_{\alpha},\varrho_{\alpha})\right\} _{\alpha\in\R_{+}}$
be such that $h_{\alpha},\lambda_{\alpha}\in(0,\frac{1}{2}]$ and
$\varrho_{\alpha}\geq1+\lambda_{\alpha}^{1/2}h_{\alpha}^{1/4}$. Assume
that $h_{\alpha}\ll(\varrho_{\alpha}-1)^{-2/3}\lambda_{\alpha}^{3/2}$
as $\alpha\to\infty$, and let $\{\phi^{\alpha}\}_{\alpha\in\R_{+}}$
satisfy 
\[
\phi^{\alpha}\in A_{\lambda_{\alpha},\varrho_{\alpha},\infty}^{vKD}\quad\text{and}\quad E_{h_{\alpha}}^{vKD}(\phi^{\alpha})=\min_{A_{\lambda_{\alpha},\varrho_{\alpha},\infty}^{vKD}}E_{h_{\alpha}}^{vKD}.
\]
 Then we have that 
\[
(\varrho_{\alpha}-1)^{1/7}h_{\alpha}^{-2/7}\lambda_{\alpha}^{3/7}\lesssim\norm{D\phi_{\rho}^{\alpha}}_{L^{\infty}}\quad\text{as}\ \alpha\to\infty.
\]
\end{cor}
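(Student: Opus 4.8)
The plan is to bound the excess energy $\Delta^{vKD}(\phi^{\alpha})=E_{h_{\alpha}}^{vKD}(\phi^{\alpha})-\cE_{b}^{vKD}$ from above by the ansatz-driven construction of \prettyref{prop:FvKUB} and from below by the ansatz-free inequality in the $m=\infty$ case of \prettyref{cor:FvKLB_largemandr}, and then to solve for $\norm{D\phi_{\rho}^{\alpha}}_{L^{\infty}}$. Everything here sits on top of results already in hand; the only subtlety is tracking which branch of the various $\max$'s and $\min$'s is active in the given parameter regime.

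First I would record the upper bound. Since $\phi^{\alpha}$ minimizes $E_{h_{\alpha}}^{vKD}$ over $A_{\lambda_{\alpha},\varrho_{\alpha},\infty}^{vKD}$, \prettyref{prop:FvKUB} with $m=\infty$ gives
\[
\Delta^{vKD}(\phi^{\alpha})\lesssim\min\bigl\{\lambda_{\alpha}^{2},\max\{\lambda_{\alpha}h_{\alpha},h_{\alpha}^{6/7}\lambda_{\alpha}^{5/7}(\varrho_{\alpha}-1)^{4/7}\}\bigr\}.
\]
The hypothesis $\varrho_{\alpha}\geq1+\lambda_{\alpha}^{1/2}h_{\alpha}^{1/4}$ rearranges to $\lambda_{\alpha}^{2}h_{\alpha}\leq(\varrho_{\alpha}-1)^{4}$, i.e.\ to $\lambda_{\alpha}h_{\alpha}\leq h_{\alpha}^{6/7}\lambda_{\alpha}^{5/7}(\varrho_{\alpha}-1)^{4/7}$, so the inner $\max$ equals $h_{\alpha}^{6/7}\lambda_{\alpha}^{5/7}(\varrho_{\alpha}-1)^{4/7}$. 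Moreover
\[
\frac{h_{\alpha}^{6/7}\lambda_{\alpha}^{5/7}(\varrho_{\alpha}-1)^{4/7}}{\lambda_{\alpha}^{2}}=\bigl(h_{\alpha}(\varrho_{\alpha}-1)^{2/3}\lambda_{\alpha}^{-3/2}\bigr)^{6/7}\to0
\]
by the assumption $h_{\alpha}\ll(\varrho_{\alpha}-1)^{-2/3}\lambda_{\alpha}^{3/2}$. Hence, for all large $\alpha$,
\[
\Delta^{vKD}(\phi^{\alpha})\lesssim h_{\alpha}^{6/7}\lambda_{\alpha}^{5/7}(\varrho_{\alpha}-1)^{4/7}=o(\lambda_{\alpha}^{2}).
\]

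Next I would apply the lower bound. If $\norm{D\phi_{\rho}^{\alpha}}_{L^{\infty}}=\infty$ there is nothing to prove, so assume it is finite; then the $m=\infty$ case of \prettyref{cor:FvKLB_largemandr} gives
\[
\Delta^{vKD}(\phi^{\alpha})\gtrsim\min\bigl\{\norm{D\phi_{\rho}^{\alpha}}_{L^{\infty}}^{-2/3}(\varrho_{\alpha}-1)^{2/3}h_{\alpha}^{2/3}\lambda_{\alpha},\,\lambda_{\alpha}^{2}\bigr\}.
\]
Were the minimum equal to $\lambda_{\alpha}^{2}$ along a subsequence of large $\alpha$, we would get $\Delta^{vKD}(\phi^{\alpha})\gtrsim\lambda_{\alpha}^{2}$, contradicting $\Delta^{vKD}(\phi^{\alpha})=o(\lambda_{\alpha}^{2})$. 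So for all large $\alpha$ the minimum is the first term, and combining it with the upper bound yields
\[
\norm{D\phi_{\rho}^{\alpha}}_{L^{\infty}}^{-2/3}(\varrho_{\alpha}-1)^{2/3}h_{\alpha}^{2/3}\lambda_{\alpha}\lesssim h_{\alpha}^{6/7}\lambda_{\alpha}^{5/7}(\varrho_{\alpha}-1)^{4/7}.
\]

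Finally I would solve for the slope. Isolating $\norm{D\phi_{\rho}^{\alpha}}_{L^{\infty}}^{-2/3}$ and using $6/7-2/3=4/21$, $5/7-1=-2/7$, $4/7-2/3=-2/21$ gives
\[
\norm{D\phi_{\rho}^{\alpha}}_{L^{\infty}}^{-2/3}\lesssim h_{\alpha}^{4/21}\lambda_{\alpha}^{-2/7}(\varrho_{\alpha}-1)^{-2/21},
\]
and raising both sides to the power $-3/2$ yields
\[
\norm{D\phi_{\rho}^{\alpha}}_{L^{\infty}}\gtrsim h_{\alpha}^{-2/7}\lambda_{\alpha}^{3/7}(\varrho_{\alpha}-1)^{1/7}\quad\text{as }\alpha\to\infty,
\]
which is the claimed bound. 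I do not anticipate a genuine obstacle: the one thing to get right is that the two hypotheses on $(h_{\alpha},\lambda_{\alpha},\varrho_{\alpha})$ simultaneously force the upper bound of \prettyref{prop:FvKUB} to take the value $h_{\alpha}^{6/7}\lambda_{\alpha}^{5/7}(\varrho_{\alpha}-1)^{4/7}$ and make it $o(\lambda_{\alpha}^{2})$, and that this smallness is exactly what selects the correct branch of the $\min$ in \prettyref{cor:FvKLB_largemandr}.
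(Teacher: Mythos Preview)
Your proof is correct and follows essentially the same approach as the paper: combine the upper bound from \prettyref{prop:FvKUB} with the $m=\infty$ lower bound in \prettyref{cor:FvKLB_largemandr}, use the hypotheses to eliminate the $\lambda^{2}$ branch, and rearrange. Your write-up is in fact more explicit than the paper's about how the two hypotheses on $(h_{\alpha},\lambda_{\alpha},\varrho_{\alpha})$ select the active branches, which is a nice touch.
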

\begin{proof}
For ease of notation, we omit the index $\alpha$ in what follows.
By \prettyref{prop:FvKUB} we have that
\[
E_{h}^{vKD}(\phi)-\cE_{b}^{vKD}\lesssim h^{6/7}\lambda^{5/7}(\varrho-1)^{4/7}.
\]
Hence, by \prettyref{cor:FvKLB_largemandr}, it follows that 
\[
\lambda^{2}\lesssim h^{6/7}\lambda^{5/7}(\varrho-1)^{4/7}\quad\text{or}\quad\norm{D\phi_{\rho}}_{L^{\infty}}^{-2/3}(\varrho-1)^{2/3}h^{2/3}\lambda\lesssim h^{6/7}\lambda^{5/7}(\varrho-1)^{4/7}.
\]
Rearranging, we have that
\[
h\gtrsim(\varrho-1)^{-2/3}\lambda^{3/2}\quad\text{or}\quad(\varrho-1)^{1/7}h^{-2/7}\lambda^{3/7}\lesssim\norm{D\phi_{\rho}}_{L^{\infty}}.
\]
By assumption the first inequality does not hold, and the result follows. 
\end{proof}

\subsection{Nonlinear model\label{sub:largemandrelLB_nonlinear}}

Recall the definitions of $E_{h}^{NL}$, $A_{\lambda,\varrho,m}^{NL}$,
and $\cE_{b}^{NL}$ given in \prettyref{eq:ENL}, \prettyref{eq:ANL},
and \prettyref{eq:EbNL}. In this section, we prove the following
lower bound.
\begin{prop}
\label{prop:NLLBs_largemandr}Let $\varrho_{0}\in[1,\infty)$. Then
we have that
\[
\min\left\{ \max\left\{ \left[(\varrho^{2}-1)\vee h^{2}\right]^{2/3}h^{2/3}\lambda,\lambda^{5/7}[(\varrho^{2}-1)\vee h^{2}]^{4/7}h^{6/7}\right\} ,\lambda^{2}\right\} \lesssim_{m,\varrho_{0}}\min_{A_{\lambda,\varrho,m}^{NL}}E_{h}^{NL}-\cE_{b}^{NL}
\]
whenever $h,\lambda\in(0,1]$, $\varrho\in[1,\varrho_{0}]$, and $m\in(0,\infty)$.
\end{prop}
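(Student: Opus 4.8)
The plan is to follow the same route as in the geometrically linear case, where \prettyref{prop:FvKlargemandrelLB} was deduced from \prettyref{cor:FvKLB_largemandr} and \prettyref{cor:FvKLB_largemandrel2}. First I would establish the nonlinear analogs of \prettyref{lem:FvKLBs_largemandr}: ansatz-free lower bounds on the excess energy $\Delta^{NL} = E_h^{NL}(\Phi) - \cE_b^{NL}$ of three kinds, a membrane bound, a bending bound, and a buckling-control bound. I would then feed these into the Gagliardo--Nirenberg interpolation inequalities from \prettyref{sec:Appendix}, exactly as in the vKD argument but with $\varrho-1$ replaced by $(\varrho^{2}-1)\vee h^{2}$ and with $m$ entering through the $L^{\infty}$-slope bound. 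The two features that distinguish the nonlinear setting are that the membrane density $\abs{D\Phi^{T}D\Phi-\id}^{2}$ is quartic in $D\Phi$ rather than quadratic, and that the bending density $\abs{D^{2}\Phi}^{2}$ genuinely couples the radial displacement with the rotating cylindrical frame $E_{\rho}(\Phi_{\theta})$.

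For the membrane bound (\prettyref{lem:NLmembrLB}) I would first record, in the spirit of \prettyref{lem:NLexcesssplits}, that the excess energy splits into a membrane part $\int_{\Omega}W(D\Phi)-b_{m}$ and a bending part $h^{2}\int_{\Omega}\abs{D^{2}\Phi}^{2}-\abs{\Omega}\varrho^{2}h^{2}$. For the membrane part, using $QW\le W$, the convexity of $QW$, the effective stress \prettyref{eq:effectivestress}, and $\Phi_{\rho}\geq\varrho$, one integrates by parts to obtain $\int_{\Omega}W(D\Phi)-b_{m}\gtrsim(\varrho^{2}-1)_{+}\norm{\Phi_{\rho}-\varrho}_{L^{1}(\Omega)}$, and a direct hoop-by-hoop estimate (using only calculus and Sobolev inequalities together with $\varrho\geq1$) gives the additional, anisotropic term $\norm{\Phi_{\rho}-\varrho}_{L_{z}^{2}L_{\theta}^{1}}^{2}$. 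For the bending part, the point is that the $E_{\rho}$-component of $\partial_{\theta}^{2}\Phi$ equals $\partial_{\theta}^{2}\Phi_{\rho}-\Phi_{\rho}(\partial_{\theta}\Phi_{\theta})^{2}$, which — since $\partial_{\theta}\Phi_{\theta}$ is close to $1$ wherever the hoop membrane energy is small and the remaining derivatives are controlled by $m$ — is close to $-\Phi_{\rho}$; combined with $\Phi_{\rho}\geq\varrho$ and $\abs{\Omega}\varrho^{2}h^{2}=h^{2}\int_{\Omega}\abs{D^{2}\Phi_{\text{eff}}}^{2}$, this yields $h^{2}\int_{\Omega}\abs{D^{2}\Phi}^{2}-\abs{\Omega}\varrho^{2}h^{2}\gtrsim_{m}h^{2}\norm{\Phi_{\rho}-\varrho}_{L^{1}(\Omega)}$. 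Together these produce $\Delta^{NL}\gtrsim_{m,\varrho_{0}}[(\varrho^{2}-1)\vee h^{2}]\norm{\Phi_{\rho}-\varrho}_{L^{1}(\Omega)}$ (plus the anisotropic square term). Separately, \prettyref{lem:NLbendingcontrol} — extracting the radial part of $D^{2}\Phi$ from $\abs{D^{2}\Phi}^{2}$ and absorbing the frame-rotation cross terms using the $m$-bound — gives $\Delta^{NL}\gtrsim h^{2}\norm{D^{2}\Phi_{\rho}}_{L^{2}(\Omega)}^{2}$. Finally, \prettyref{cor:NLbucklingcontrol} comes from expanding $1-\partial_{z}\Phi_{z}=1-\sqrt{g_{zz}-\abs{\partial_{z}\Phi_{\rho}}^{2}-\Phi_{\rho}^{2}\abs{\partial_{z}\Phi_{\theta}}^{2}}$ and using $\partial_{z}\Phi_{z}\geq0$ together with $\int_{I_{z}}\partial_{z}\Phi_{z}\,dz=1-\lambda$: either the $zz$-membrane energy is $\gtrsim\lambda^{2}$, or $\norm{\partial_{z}\Phi_{\rho}}_{L_{z}^{2}}^{2}\gtrsim\lambda$ on a set of hoops of positive $\theta$-measure (up to a shear term tamed by the smallness of $g_{\theta z}$ and the slope bound).

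With these in hand the endgame is the verbatim analog of \prettyref{cor:FvKLB_largemandr} and \prettyref{cor:FvKLB_largemandrel2}. If $\Delta^{NL}\gtrsim\lambda^{2}$ there is nothing to prove; otherwise, on the large set of hoops where $\frac{1}{2}\norm{\partial_{z}\Phi_{\rho}}_{L_{z}^{2}}^{2}\gtrsim\lambda$, applying the one-dimensional interpolation inequality \prettyref{lem:1dinterp} to $\Phi_{\rho}-\varrho$, integrating in $\theta$, and inserting the membrane and bending bounds gives $\lambda^{5/7}\lesssim\bigl([(\varrho^{2}-1)\vee h^{2}]^{-1}\Delta^{NL}\bigr)^{4/7}(h^{-2}\Delta^{NL})^{3/7}$, i.e.\ $\Delta^{NL}\gtrsim\lambda^{5/7}[(\varrho^{2}-1)\vee h^{2}]^{4/7}h^{6/7}$; and applying the two-dimensional inequality \prettyref{lem:2dinterp-1} (now using $\norm{D\Phi_{\rho}}_{L^{\infty}}\le m$) gives $\Delta^{NL}\gtrsim_{m}[(\varrho^{2}-1)\vee h^{2}]^{2/3}h^{2/3}\lambda$. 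Taking the maximum of the two, and of the comparison with $\lambda^{2}$, yields the stated bound.

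I expect the main obstacle to be the two bending estimates. Because the deformed frame $E_{\rho}(\Phi_{\theta})$ rotates, $\abs{D^{2}\Phi}^{2}$ mixes $\Phi_{\rho}$ with $\partial_{\theta}\Phi_{\theta}$ and $\partial_{\theta}\Phi_{z}$, so establishing both $h^{2}\norm{D^{2}\Phi}_{L^{2}}^{2}\gtrsim h^{2}\norm{D^{2}\Phi_{\rho}}_{L^{2}}^{2}$ and — more delicately — the extra $h^{2}\norm{\Phi_{\rho}-\varrho}_{L^{1}}$ hidden in $h^{2}\norm{D^{2}\Phi}_{L^{2}}^{2}-\abs{\Omega}\varrho^{2}h^{2}$ requires quantifying how the a priori slope bound $m$ (together with the membrane-energy cost of a nonconstant metric) keeps these cross terms from destroying the estimate; this is the source of the constant's dependence on $m$. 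A secondary point is making \prettyref{cor:NLbucklingcontrol} quantitative while genuinely using $\partial_{z}\Phi_{z}\geq0$: without this constraint the cylinder could fold, $\int_{I_{z}}\partial_{z}\Phi_{z}\,dz$ would no longer force wrinkling, and the bound would be false — consistent with the improved competitor energies noted after \prettyref{eq:ANL}.
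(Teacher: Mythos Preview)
Your plan is correct and is essentially the paper's own argument: the same chain of lemmas (\prettyref{lem:NLexcesssplits}, \prettyref{lem:NLmembrLB}, \prettyref{lem:NLcross-term}, \prettyref{cor:NLbucklingcontrol}, \prettyref{lem:NLbendingcontrol}) feeds into the same interpolation inequalities from \prettyref{sec:Appendix}, and your identification of the frame-rotation cross terms in the bending density as the main obstacle is exactly right.

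Two places where your sketch is slightly looser than what the proof actually requires. First, the bending estimate you state, $\Delta^{NL}\gtrsim h^{2}\norm{D^{2}\Phi_{\rho}}_{L^{2}}^{2}$ and $\Delta^{NL}\gtrsim_{m}h^{2}\norm{\Phi_{\rho}-\varrho}_{L^{1}}$, does not come out cleanly: what one gets (this is \prettyref{lem:NLbendingcontrol}) is
\[
\max\Bigl\{\tfrac{1}{h^{2}}\Delta^{NL},(\Delta^{NL})^{1/2}\Bigr\}\gtrsim_{\varrho_{0},m}\norm{D^{2}\Phi_{\rho}}_{L^{2}}^{2}\vee\norm{\Phi_{\rho}-\varrho}_{L^{1}},
\]
because the cross terms you mention are absorbed not by the slope bound alone but by the $\theta$-membrane estimates of \prettyref{lem:NLmembrLB}, which only cost $(\Delta^{NL})^{1/2}$. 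This forces a preliminary case split: if $(\Delta^{NL})^{1/2}\geq h^{-2}\Delta^{NL}$ one uses Poincar\'e on $D^{2}\Phi_{\rho}$ and the buckling control to get $\Delta^{NL}\gtrsim\lambda^{2}$ directly; only in the complementary case do your clean bounds hold and the interpolation endgame applies. Second, taming the shear term $\Phi_{\rho}\partial_{z}\Phi_{\theta}$ in the buckling estimate needs more than smallness of $g_{\theta z}$ and the slope bound: writing $\Phi_{\rho}^{2}\partial_{\theta}\Phi_{\theta}\partial_{z}\Phi_{\theta}=g_{\theta z}-\partial_{\theta}\Phi_{\rho}\partial_{z}\Phi_{\rho}-\partial_{\theta}\Phi_{z}\partial_{z}\Phi_{z}$, one must also invoke the $L^{2}$-smallness of $\partial_{\theta}\Phi_{\rho}$, $\partial_{\theta}\Phi_{z}$, and $\partial_{\theta}\Phi_{\theta}-1$ from \prettyref{lem:NLmembrLB} (this is \prettyref{lem:NLcross-term}, yielding $\norm{\Phi_{\rho}\partial_{z}\Phi_{\theta}}_{L^{2}}\lesssim_{\varrho_{0},m}(\Delta^{NL})^{1/4}$). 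With these two refinements your outline becomes the paper's proof.
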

The reader may notice that, although it is certainly more involved, the following argument shares the same overall structure as the one given for the vKD model in \prettyref{sub:largemandrelLB_FvK}. For more on this, we refer to the discussion in \prettyref{sub:DiscussionofProofs}.

In the remainder of this section, we \textbf{assume} that 
\[
0<h,\lambda\leq1,\quad1\leq\varrho\leq\varrho_{0}<\infty,\ \text{and}\quad0<m<\infty.
\]
Given $\Phi\in A_{\lambda,\varrho,m}^{NL}$ we call
\begin{equation}
\Delta^{NL}=E_{h}^{NL}(\Phi)-\cE_{b}^{NL},\label{eq:NLexcess}
\end{equation}
which is the excess elastic energy in the nonlinear model. Observe
we may \textbf{assume} that 
\[
\Phi\ \text{satisfies}\ \Delta^{NL}\leq1,
\]
since otherwise the desired bound is clear. As the reader will note,
this assumption simplifies the discussion throughout.

We will make frequent use of the following identities concerning the
components of the metric tensor, $g=D\Phi^{T}D\Phi$, in $(\theta,z)$-coordinates:
\begin{align}\label{eq:g_components}
\begin{split}
g_{\theta\theta}&=\left(\partial_{\theta}\Phi_{\rho}\right)^{2}+\Phi_{\rho}^{2}\left(\partial_{\theta}\Phi_{\theta}\right)^{2}+\left(\partial_{\theta}\Phi_{z}\right)^{2}\\
g_{zz}&=\left(\partial_{z}\Phi_{\rho}\right)^{2}+\Phi_{\rho}^{2}\left(\partial_{z}\Phi_{\theta}\right)^{2}+\left(\partial_{z}\Phi_{z}\right)^{2}\\
g_{\theta z}&=\partial_{\theta}\Phi_{\rho}\partial_{z}\Phi_{\rho}+\Phi_{\rho}^{2}\partial_{\theta}\Phi_{\theta}\partial_{z}\Phi_{\theta}+\partial_{\theta}\Phi_{z}\partial_{z}\Phi_{z}
\end{split}
\end{align}We will also make use of the following identities concerning the components
of $D^{2}\Phi$ in $(\theta,z)$-coordinates: \begin{align}\label{eq:DDPhi}
\begin{split}
\partial_{\theta}^{2}\Phi&=(\partial_{\theta}^{2}\Phi_{\rho}-\Phi_{\rho}(\partial_{\theta}\Phi_{\theta})^{2})E_{\rho}(\Phi)+(2\partial_{\theta}\Phi_{\rho}\partial_{\theta}\Phi_{\theta}+\Phi_{\rho}\partial_{\theta}^{2}\Phi_{\theta})E_{\theta}(\Phi)+\partial_{\theta}^{2}\Phi_{z}E_{z}\\
\partial_{z}^{2}\Phi&=(\partial_{z}^{2}\Phi_{\rho}-\Phi_{\rho}(\partial_{z}\Phi_{\theta})^{2})E_{\rho}(\Phi)+(2\partial_{z}\Phi_{\rho}\partial_{z}\Phi_{\theta}+\Phi_{\rho}\partial_{z}^{2}\Phi_{\theta})E_{\theta}(\Phi)+\partial_{z}^{2}\Phi_{z}E_{z}\\
\partial_{\theta z}\Phi&=(\partial_{\theta z}\Phi_{\rho}-\Phi_{\rho}\partial_{\theta}\Phi_{\theta}\partial_{z}\Phi_{\theta})E_{\rho}(\Phi)+(\partial_{\theta}\Phi_{\rho}\partial_{z}\Phi_{\theta}+\partial_{\theta}\Phi_{\theta}\partial_{z}\Phi_{\rho}+\Phi_{\rho}\partial_{\theta z}\Phi_{\theta})E_{\theta}(\Phi)+\partial_{\theta z}\Phi_{z}E_{z}
\end{split}
\end{align}Here, $\{E_{i}\}_{i\in\{\rho,\theta,z\}}$ denotes the unit frame
of coordinate vectors for the cylindrical $\rho,\theta,z$-coordinates
on $\R^{3}$ (as defined in \prettyref{sub:notation}).

\subsubsection{Controlling the radial deformation}

We begin by proving that the excess energy controls the membrane and
bending terms individually.
\begin{lem}
\label{lem:NLexcesssplits} If $\Phi\in A_{\lambda,\varrho,\infty}^{NL}$,
then 
\begin{align*}
\Delta^{NL} & \geq\max\left\{ \int_{\Omega}\abs{g_{\theta\theta}-1}^{2}-(\varrho^{2}-1)^{2},\norm{g_{\theta z}}_{L^{2}(\Omega)}^{2},\norm{g_{zz}-1}_{L^{2}(\Omega)}^{2}\right\} \\
\Delta^{NL} & \geq h^{2}\max\left\{ \int_{\Omega}\abs{\partial_{\theta}^{2}\Phi}^{2}-\varrho^{2},\norm{\partial_{\theta z}\Phi}_{L^{2}(\Omega)}^{2},\norm{\partial_{z}^{2}\Phi}_{L^{2}(\Omega)}^{2}\right\} .
\end{align*}
\end{lem}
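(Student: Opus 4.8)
The plan is to prove both displays by a single ``expand and discard'' computation, whose only non-elementary ingredients are two pointwise lower bounds obtained by slicing in the $z$-variable.

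\emph{Expansion.} First I would rewrite the integrand of $E_h^{NL}$ using the two natural orthogonal decompositions. Since $g-\id$ is a symmetric $2\times 2$ matrix, $|D\Phi^T D\Phi-\id|^2=(g_{\theta\theta}-1)^2+2g_{\theta z}^2+(g_{zz}-1)^2$; and since $\{E_\rho(\Phi),E_\theta(\Phi),E_z\}$ is an orthonormal frame of $\R^3$, the identities \eqref{eq:DDPhi} give $|D^2\Phi|^2=|\partial_\theta^2\Phi|^2+2|\partial_{\theta z}\Phi|^2+|\partial_z^2\Phi|^2$, where each $|\partial_i\partial_j\Phi|^2$ is the sum of squares of the three coefficients listed in \eqref{eq:DDPhi}. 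Subtracting $\cE_b^{NL}=|\Omega|(\varrho^2-1)^2+|\Omega|\varrho^2h^2$ then writes $\Delta^{NL}$ as the sum of the six nonnegative quantities appearing in the two maxima, together with the two ``bulk deficits''
\[
D_m:=\int_\Omega|g_{\theta\theta}-1|^2-|\Omega|(\varrho^2-1)^2,\qquad D_b:=h^2\Big(\int_\Omega|\partial_\theta^2\Phi|^2-|\Omega|\varrho^2\Big).
\]
Every one of the six claimed inequalities is then immediate provided I show $D_m\ge0$ and $D_b\ge0$: one isolates the desired term and discards everything else, using $D_m\ge0$ to pay for the constant $|\Omega|(\varrho^2-1)^2$ when isolating a bending term, and $D_b\ge0$ to pay for $|\Omega|\varrho^2h^2$ when isolating a membrane term (together with $|D^2\Phi|^2\ge|\partial_\theta^2\Phi|^2$, so that $h^2\int_\Omega|D^2\Phi|^2\ge|\Omega|\varrho^2h^2$).

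\emph{The two positivity facts.} These are where $\Phi_\rho\ge\varrho$ and the $\theta$-periodicity of $\Phi_\rho$ and $\Phi_\theta-\theta$ enter, and I would establish them by slicing. By Fubini, for a.e.\ $z$ one has $\Phi_\rho(\cdot,z),\Phi_\theta(\cdot,z)\in H^2_{\mathrm{per}}(I_\theta)\hookrightarrow C^1(\overline{I_\theta})$, which legitimizes the pointwise manipulations below and makes the periodicity boundary terms vanish. From $\int_{I_\theta}\partial_\theta\Phi_\theta\,d\theta=|I_\theta|$ and Cauchy--Schwarz, the $\theta$-average of $(\partial_\theta\Phi_\theta)^2$ is at least $1$; combined with $g_{\theta\theta}\ge\Phi_\rho^2(\partial_\theta\Phi_\theta)^2\ge\varrho^2(\partial_\theta\Phi_\theta)^2$ this gives $\overline{g_{\theta\theta}}(z)\ge\varrho^2\ge1$, and Jensen's inequality for the convex function $t\mapsto(t-1)^2$, which is nondecreasing on $[1,\infty)$, yields $\overline{|g_{\theta\theta}-1|^2}(z)\ge(\overline{g_{\theta\theta}}(z)-1)^2\ge(\varrho^2-1)^2$; integrating in $z$ (recall $|I_z|=1$) gives $\int_\Omega|g_{\theta\theta}-1|^2\ge|\Omega|(\varrho^2-1)^2$, i.e.\ $D_m\ge0$. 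For $D_b$, keep only the $E_\rho(\Phi)$-coefficient of $\partial_\theta^2\Phi$ from \eqref{eq:DDPhi}: $|\partial_\theta^2\Phi|^2\ge(\partial_\theta^2\Phi_\rho-\Phi_\rho(\partial_\theta\Phi_\theta)^2)^2$. Since $\partial_\theta\Phi_\rho$ is periodic, $\int_{I_\theta}\partial_\theta^2\Phi_\rho\,d\theta=0$, so the $\theta$-average of $\partial_\theta^2\Phi_\rho-\Phi_\rho(\partial_\theta\Phi_\theta)^2$ equals $-\,\overline{\Phi_\rho(\partial_\theta\Phi_\theta)^2}(z)$, of absolute value at least $\varrho\,\overline{(\partial_\theta\Phi_\theta)^2}(z)\ge\varrho$; Cauchy--Schwarz in $\theta$ then gives $\overline{|\partial_\theta^2\Phi|^2}(z)\ge\varrho^2$, and integrating in $z$ gives $\int_\Omega|\partial_\theta^2\Phi|^2\ge|\Omega|\varrho^2$, i.e.\ $D_b\ge0$.

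\emph{Main obstacle.} There is no compactness or variational input here, so I expect the only delicate points to be bookkeeping ones: getting the factors of $2$ right on the off-diagonal entry of $g$ and on the mixed partial $\partial_{\theta z}\Phi$ in the two expansions, and carefully justifying the slicing and the vanishing of the periodicity boundary terms via the one-dimensional embedding $H^2(I_\theta)\hookrightarrow C^1(\overline{I_\theta})$. Everything else reduces to Jensen/Cauchy--Schwarz applied on $\theta$-slices, powered by the two structural inputs $\Phi_\rho\ge\varrho$ and $\theta$-periodicity.
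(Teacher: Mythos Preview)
Your proof is correct and follows essentially the same approach as the paper's: reduce everything to the two positivity facts $D_m\ge0$ and $D_b\ge0$, and establish these via Jensen/Cauchy--Schwarz in $\theta$ using $\Phi_\rho\ge\varrho$ and $\int_{I_\theta}\partial_\theta\Phi_\theta=|I_\theta|$. The only cosmetic differences are that the paper applies Jensen globally over $\Omega$ rather than slice-by-slice in $z$, and for $D_m$ it uses the algebraic expansion $(g_{\theta\theta}-1)^2-(\varrho^2-1)^2=2(\varrho^2-1)(g_{\theta\theta}-\varrho^2)+(g_{\theta\theta}-\varrho^2)^2$ in place of your direct appeal to Jensen for $t\mapsto(t-1)^2$.
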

\begin{proof}
By the definition of $\Delta^{NL}$ in \prettyref{eq:NLexcess}, it
suffices to prove the following two inequalities to conclude the result:
\[
\int_{\Omega}\abs{g_{\theta\theta}-1}^{2}-(\varrho^{2}-1)^{2}\geq0\quad\text{and}\quad\int_{\Omega}\abs{\partial_{\theta}^{2}\Phi}^{2}-\varrho^{2}\geq0.
\]
To see the first inequality, we begin by noting that
\begin{equation}
(g_{\theta\theta}-1)^{2}-(\varrho^{2}-1)^{2}=2(\varrho^{2}-1)(g_{\theta\theta}-\varrho^{2})+(g_{\theta\theta}-\varrho^{2})^{2}\label{eq:excessthetaenergy}
\end{equation}
and 
\begin{equation}
g_{\theta\theta}-\varrho^{2}=(\partial_{\theta}\Phi_{\rho})^{2}+\Phi_{\rho}^{2}(\partial_{\theta}\Phi_{\theta})^{2}+(\partial_{\theta}\Phi_{z})^{2}-\varrho^{2}\label{eq:excessthetastrain}
\end{equation}
by \prettyref{eq:g_components}. It follows that 
\begin{equation}
(g_{\theta\theta}-1)^{2}-(\varrho^{2}-1)^{2}\geq2(\varrho^{2}-1)(\Phi_{\rho}^{2}(\partial_{\theta}\Phi_{\theta})^{2}-\varrho^{2}+(\partial_{\theta}\Phi_{\rho})^{2}+(\partial_{\theta}\Phi_{z})^{2}).\label{eq:excessenergylarge}
\end{equation}
Using the hypothesis that $\Phi_{\rho}\geq\varrho$ and applying Jensen's
inequality, we see that 
\begin{equation}
\int_{\Omega}\Phi_{\rho}^{2}(\partial_{\theta}\Phi_{\theta})^{2}-\varrho^{2}\geq\frac{\varrho^{2}}{\left|\Omega\right|}\left(\left(\int_{\Omega}\partial_{\theta}\Phi_{\theta}\right)^{2}-\left|\Omega\right|^{2}\right)=\frac{\varrho^{2}}{\left|\Omega\right|}\left(\left|\Omega\right|^{2}-\left|\Omega\right|^{2}\right)=0.\label{eq:excessenergylarge-1}
\end{equation}
Since $\varrho\geq1$, the first inequality follows.

To see the second inequality, note that by \prettyref{eq:DDPhi} we
have that 
\[
\abs{\partial_{\theta}^{2}\Phi}\geq\abs{\partial_{\theta}^{2}\Phi_{\rho}-\Phi_{\rho}(\partial_{\theta}\Phi_{\theta})^{2}}.
\]
Hence, by Jensen's inequality and since $\Phi_{\rho}\in H_{\text{per}}^{2}$,
it follows that 
\[
\int_{\Omega}\abs{\partial_{\theta}^2\Phi}^{2}-\varrho^{2}\geq\frac{1}{\abs{\Omega}}\left(\int_{\Omega}\partial_{\theta}^2\Phi_{\rho}-\Phi_{\rho}(\partial_{\theta}\Phi_{\theta})^{2}\right)^{2}-\abs{\Omega}\varrho^{2}=\frac{1}{\abs{\Omega}}\left(\int_{\Omega}\Phi_{\rho}(\partial_{\theta}\Phi_{\theta})^{2}\right)^{2}-\abs{\Omega}\varrho^{2}.
\]
Using that $\Phi_{\rho}\geq\varrho$ and applying Jensen's inequality
again, we conclude that 
\[
\int_{\Omega}\abs{\partial_{\theta}^2\Phi}^{2}-\varrho^{2}\geq\frac{\varrho^{2}}{|\Omega|}\left(\left(\int_{\Omega}(\partial_{\theta}\Phi_{\theta})^{2}\right)^{2}-\abs{\Omega}^{2}\right)\geq\frac{\varrho^{2}}{|\Omega|}\left(|\Omega|^{2}-|\Omega|^{2}\right)=0
\]
as desired.
\end{proof}
Next, we establish control on the radial component of the deformation,
$\Phi_{\rho}$. As we will require the uniform-in-mandrel estimates
from this result to complete the proof of \prettyref{prop:NLLBs_largemandr},
we record these alongside the large mandrel estimates now.
\begin{lem}
\label{lem:NLmembrLB}Let $\Phi\in A_{\lambda,\varrho,\infty}^{NL}$.
Then we have that
\begin{align*}
\Delta^{NL} & \gtrsim(\varrho^{2}-1)\max\{\norm{\Phi_{\rho}-\varrho}_{L^{1}(\Omega)},\norm{\partial_{\theta}\Phi_{\rho}}_{L^{2}(\Omega)}^{2},\norm{\partial_{\theta}\Phi_{\theta}-1}_{L^{2}(\Omega)}^{2},\norm{\partial_{\theta}\Phi_{z}}_{L^{2}(\Omega)}^{2}\}\\
(\Delta^{NL})^{1/2} & \gtrsim\max\left\{ \norm{\Phi_{\rho}-\varrho}_{L_{z}^{2}L_{\theta}^{1}},\norm{\partial_{\theta}\Phi_{\rho}}_{L_{z}^{4}L_{\theta}^{2}}^{2},\norm{\partial_{\theta}\Phi_{\theta}-1}_{L_{z}^{4}L_{\theta}^{2}}^{2},\norm{\partial_{\theta}\Phi_{z}}_{L_{z}^{4}L_{\theta}^{2}}^{2}\right\} .
\end{align*}
\end{lem}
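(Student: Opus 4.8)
The plan is to discard from the excess energy everything except the hoop-direction membrane term, and then exploit two structural facts: the obstacle constraint $\Phi_{\rho}\geq\varrho$, and the fact that, since $\Phi_{\theta}-\theta\in H^{2}_{\text{per}}$, one has $\int_{I_{\theta}}\partial_{\theta}\Phi_{\theta}\,d\theta=\left|I_{\theta}\right|$ on every hoop. By \prettyref{lem:NLexcesssplits} we have $\Delta^{NL}\geq\int_{\Omega}\abs{g_{\theta\theta}-1}^{2}-(\varrho^{2}-1)^{2}$, and combining \prettyref{eq:excessthetaenergy} and \prettyref{eq:excessthetastrain} the integrand splits as
\[
(g_{\theta\theta}-1)^{2}-(\varrho^{2}-1)^{2}=2(\varrho^{2}-1)\bigl(g_{\theta\theta}-\varrho^{2}\bigr)+\bigl(g_{\theta\theta}-\varrho^{2}\bigr)^{2},\qquad g_{\theta\theta}-\varrho^{2}=(\partial_{\theta}\Phi_{\rho})^{2}+\Phi_{\rho}^{2}(\partial_{\theta}\Phi_{\theta})^{2}+(\partial_{\theta}\Phi_{z})^{2}-\varrho^{2}.
\]
I will feed the linear piece $2(\varrho^{2}-1)(g_{\theta\theta}-\varrho^{2})$ into the $(\varrho^{2}-1)$-weighted estimates and the quadratic piece $(g_{\theta\theta}-\varrho^{2})^{2}$ into the mixed-norm estimates.

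For the three derivative terms: integrating $g_{\theta\theta}-\varrho^{2}$ over $\Omega$, the terms $(\partial_{\theta}\Phi_{\rho})^{2}$ and $(\partial_{\theta}\Phi_{z})^{2}$ are pointwise non-negative, while Jensen's inequality together with $\Phi_{\rho}\geq\varrho$ gives $\int_{\Omega}\Phi_{\rho}^{2}(\partial_{\theta}\Phi_{\theta})^{2}-\left|\Omega\right|\varrho^{2}\geq\varrho^{2}\bigl(\int_{\Omega}(\partial_{\theta}\Phi_{\theta})^{2}-\left|\Omega\right|\bigr)=\varrho^{2}\norm{\partial_{\theta}\Phi_{\theta}-1}_{L^{2}(\Omega)}^{2}$, the last equality being the variance identity together with $\int_{\Omega}\partial_{\theta}\Phi_{\theta}=\left|\Omega\right|$. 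Keeping one term at a time, and using $\varrho\geq1$, yields the first displayed inequality of the lemma except for the $\Phi_{\rho}$ term. For the $L^{4}_{z}L^{2}_{\theta}$ analogues I use instead $\Delta^{NL}\geq\int_{\Omega}(g_{\theta\theta}-\varrho^{2})^{2}\geq\frac{1}{\left|I_{\theta}\right|}\int_{I_{z}}\bigl(\int_{I_{\theta}}(g_{\theta\theta}-\varrho^{2})\,d\theta\bigr)^{2}\,dz$ (Jensen in $\theta$), noting that the same per-hoop computation shows $\int_{I_{\theta}}(g_{\theta\theta}-\varrho^{2})\,d\theta$ dominates each of $\int_{I_{\theta}}(\partial_{\theta}\Phi_{\rho})^{2}$, $\int_{I_{\theta}}(\partial_{\theta}\Phi_{z})^{2}$, and $\varrho^{2}\int_{I_{\theta}}(\partial_{\theta}\Phi_{\theta}-1)^{2}$; squaring and integrating in $z$ then recognizes the mixed norms.

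The heart of the matter is controlling $v:=\Phi_{\rho}-\varrho\geq0$, which I would do hoop by hoop. On a fixed hoop, travelling from a point where $v$ attains its minimum along the shorter arc gives $v(\theta)\leq\min_{I_{\theta}}v+\tfrac12\int_{I_{\theta}}\abs{\partial_{\theta}v}\,d\theta$, hence $\int_{I_{\theta}}v\,d\theta\leq\left|I_{\theta}\right|\min_{I_{\theta}}v+\tfrac{\left|I_{\theta}\right|}{2}\int_{I_{\theta}}\abs{\partial_{\theta}v}\,d\theta$. The first piece is controlled by the membrane energy since $\int_{I_{\theta}}\Phi_{\rho}^{2}(\partial_{\theta}\Phi_{\theta})^{2}\,d\theta-\left|I_{\theta}\right|\varrho^{2}\geq\left|I_{\theta}\right|\bigl((\varrho+\min_{I_{\theta}}v)^{2}-\varrho^{2}\bigr)\geq4\pi\varrho\min_{I_{\theta}}v$ (Jensen, plus $\Phi_{\rho}\geq\varrho+\min_{I_{\theta}}v$), and the second by Cauchy--Schwarz, $\int_{I_{\theta}}\abs{\partial_{\theta}v}\lesssim\norm{\partial_{\theta}\Phi_{\rho}}_{L^{2}(I_{\theta})}$; both of these quantities sit inside $g_{\theta\theta}-\varrho^{2}$, so whenever either one dominates $\int_{I_{\theta}}v$ the linear piece $2(\varrho^{2}-1)(g_{\theta\theta}-\varrho^{2})$ already produces $\gtrsim(\varrho^{2}-1)\int_{I_{\theta}}v$. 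The remaining case is that $\Phi_{\rho}$ is large on part of the hoop while $\partial_{\theta}\Phi_{\rho}$ is small there, so the linear piece is cheap; then $\Phi_{\rho}^{2}(\partial_{\theta}\Phi_{\theta})^{2}$ cannot stay near $\varrho^{2}$ on that set without forcing $\abs{\partial_{\theta}\Phi_{\theta}}$ strictly below $1$ on a set of size comparable to the whole hoop, which contradicts $\int_{I_{\theta}}\partial_{\theta}\Phi_{\theta}=\left|I_{\theta}\right|$; consequently $g_{\theta\theta}-\varrho^{2}$ is bounded away from $0$ on a set of definite size and the quadratic piece $\int_{I_{\theta}}(g_{\theta\theta}-\varrho^{2})^{2}\,d\theta$ is large. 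Assembling the cases I would obtain, per hoop, both $2(\varrho^{2}-1)\int_{I_{\theta}}(g_{\theta\theta}-\varrho^{2})\,d\theta+\int_{I_{\theta}}(g_{\theta\theta}-\varrho^{2})^{2}\,d\theta\gtrsim_{\varrho_{0}}(\varrho^{2}-1)\int_{I_{\theta}}v\,d\theta$ and $\int_{I_{\theta}}(g_{\theta\theta}-\varrho^{2})^{2}\,d\theta\gtrsim\bigl(\int_{I_{\theta}}v\,d\theta\bigr)^{2}$, and integrating these in $z$ (using $\left|I_{z}\right|=1$) gives the two remaining $\Phi_{\rho}$ estimates.

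The main obstacle I anticipate is making this last dichotomy quantitative: one must decide how much of $\int_{I_{\theta}}v\,d\theta$ can be concentrated on the region where $\abs{\partial_{\theta}\Phi_{\theta}}$ is small, and show that any such concentration is paid for either in $\int_{I_{\theta}}(g_{\theta\theta}-\varrho^{2})^{2}\,d\theta$ there or, through the winding constraint, on the complementary region. This needs a careful decomposition of the hoop and some bookkeeping; everything else in the argument is Jensen's inequality, Cauchy--Schwarz, and the fundamental theorem of calculus.
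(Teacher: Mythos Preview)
Your treatment of the three derivative terms $\partial_{\theta}\Phi_{\rho}$, $\partial_{\theta}\Phi_{\theta}-1$, $\partial_{\theta}\Phi_{z}$ is exactly the paper's: drop to the hoop integral of $g_{\theta\theta}-\varrho^{2}$, use Jensen and the winding identity $\int_{I_{\theta}}\partial_{\theta}\Phi_{\theta}=\left|I_{\theta}\right|$, and read off each term. The divergence is entirely in how you handle $v=\Phi_{\rho}-\varrho$.

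Your route via the fundamental theorem of calculus, $\int_{I_{\theta}}v\lesssim\min_{I_{\theta}}v+\norm{\partial_{\theta}v}_{L^{1}(I_{\theta})}$, runs into the mismatch you noticed: the linear piece $\int_{I_{\theta}}(g_{\theta\theta}-\varrho^{2})$ controls $\min_{I_{\theta}}v$ and $\norm{\partial_{\theta}v}_{L^{2}_{\theta}}^{2}$, but not $\norm{\partial_{\theta}v}_{L^{2}_{\theta}}$ itself. In the regime $\norm{\partial_{\theta}v}_{L^{2}_{\theta}}\ll1$ with $\int_{I_{\theta}}v\sim\norm{\partial_{\theta}v}_{L^{2}_{\theta}}$ and $\min_{I_{\theta}}v\ll\int_{I_{\theta}}v$, you propose to fall back on the quadratic piece $\int_{I_{\theta}}(g_{\theta\theta}-\varrho^{2})^{2}$. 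But for the \emph{first} estimate of the lemma that piece carries no factor of $(\varrho^{2}-1)$, so even if it delivers $\gtrsim\bigl(\int_{I_{\theta}}v\bigr)^{2}$ you only get $(\varrho^{2}-1)\int_{I_{\theta}}v$ when $\int_{I_{\theta}}v\gtrsim\varrho^{2}-1$, which is not given. This is why your assembled bound comes out as $\gtrsim_{\varrho_{0}}$ rather than the universal $\gtrsim$ the lemma asserts, and it is a genuine gap, not just unfinished bookkeeping.

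The paper bypasses the dichotomy entirely. Writing $\phi_{\rho}=\Phi_{\rho}-\varrho$ and $\phi_{\theta}=\Phi_{\theta}-\theta$, one has the pointwise algebraic bound
\[
\Phi_{\rho}^{2}(\partial_{\theta}\Phi_{\theta})^{2}-\varrho^{2}\geq\varrho^{2}\bigl(2\partial_{\theta}\phi_{\theta}+(\partial_{\theta}\phi_{\theta})^{2}\bigr)+2\varrho\,\phi_{\rho}(1+\partial_{\theta}\phi_{\theta})^{2}.
\]
Integrating and using periodicity kills $\int\partial_{\theta}\phi_{\theta}$; dropping the first bracket gives $I:=\int_{\Omega}\Phi_{\rho}^{2}(\partial_{\theta}\Phi_{\theta})^{2}-\varrho^{2}\gtrsim\int_{\Omega}\phi_{\rho}+\int_{\Omega}\phi_{\rho}\,\partial_{\theta}\phi_{\theta}$. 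The cross term is then handled by integration by parts and Poincar\'e:
\[
\Bigl|\int_{\Omega}\phi_{\rho}\,\partial_{\theta}\phi_{\theta}\Bigr|=\Bigl|\int_{\Omega}\partial_{\theta}\phi_{\rho}\,(\phi_{\theta}-\overline{\phi_{\theta}})\Bigr|\lesssim\norm{\partial_{\theta}\phi_{\rho}}_{L^{2}}\norm{\partial_{\theta}\phi_{\theta}}_{L^{2}},
\]
and both factors on the right are already controlled by $I$ and by $\int_{\Omega}(\partial_{\theta}\Phi_{\rho})^{2}$, hence by $\Delta^{NL}/(\varrho^{2}-1)$ from the linear piece. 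This closes the first estimate with a universal constant, and the same argument run hoop-by-hoop gives $\overline{g_{\theta\theta}-\varrho^{2}}\gtrsim\norm{\phi_{\rho}}_{L^{1}_{\theta}}$, which after squaring and integrating in $z$ is the second estimate. The point you were missing is that the winding constraint is most efficiently exploited not through a set-decomposition of the hoop but through this integration-by-parts step, which converts the dangerous cross term $\int\phi_{\rho}\partial_{\theta}\phi_{\theta}$ into a product of two quantities you already control.
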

\begin{proof}
We begin by proving the first estimate. Recall \prettyref{lem:NLexcesssplits}
and equations \prettyref{eq:excessenergylarge} and \prettyref{eq:excessenergylarge-1}.
Altogether, these imply that 
\begin{equation}
\Delta^{NL}\geq2(\varrho^{2}-1)\max\left\{ \int_{\Omega}\Phi_{\rho}^{2}(\partial_{\theta}\Phi_{\theta})^{2}-\varrho^{2},\norm{\partial_{\theta}\Phi_{\rho}}_{L^{2}(\Omega)}^{2},\norm{\partial_{\theta}\Phi_{z}}_{L^{2}(\Omega)}^{2}\right\} .\label{eq:NLmembrLB-1}
\end{equation}
Introduce the displacements $\phi_{\rho}=\Phi_{\rho}-\varrho$ and
$\phi_{\theta}=\Phi_{\theta}-\theta$. In these variables, 
\begin{equation}
\Phi_{\rho}^{2}(\partial_{\theta}\Phi_{\theta})^{2}-\varrho^{2}\geq\varrho^{2}\left(2\partial_{\theta}\phi_{\theta}+(\partial_{\theta}\phi_{\theta})^{2}\right)+2\varrho\phi_{\rho}(\partial_{\theta}\phi_{\theta}+1)^{2}.\label{eq:NLmembrLB-0}
\end{equation}
Since the second term is non-negative, and since $\phi_{\theta}\in H_{\text{per}}^{2}$
and $\varrho\geq1$, we conclude from \prettyref{eq:NLmembrLB-0}
that
\begin{equation}
I:=\int_{\Omega}\Phi_{\rho}^{2}(\partial_{\theta}\Phi_{\theta})^{2}-\varrho^{2}\geq\int_{\Omega}\varrho^{2}\left(2\partial_{\theta}\phi_{\theta}+(\partial_{\theta}\phi_{\theta})^{2}\right)\geq\norm{\partial_{\theta}\phi_{\theta}}_{L^{2}(\Omega)}^{2}.\label{eq:NLmembrLB-2}
\end{equation}
In a similar manner, we can conclude from \prettyref{eq:NLmembrLB-0}
that 
\[
I\geq\int_{\Omega}2\varrho\phi_{\rho}(\partial_{\theta}\phi_{\theta}+1)^{2}\geq\int_{\Omega}\phi_{\rho}(2\partial_{\theta}\phi_{\theta}+1)
\]
and, since $\phi_{\rho}\geq0$, that 
\[
I+\left|\int_{\Omega}\phi_{\rho}\partial_{\theta}\phi_{\theta}\right|\gtrsim\norm{\phi_{\rho}}_{L^{1}(\Omega)}.
\]

Recall the notation $\overline{f}$ for the $\theta$-average of a
function $f$, introduced in \prettyref{sub:notation}. Integrating
by parts and applying Poincare's inequality, we see that 
\begin{align*}
\abs{\int_{\Omega}\phi_{\rho}\partial_{\theta}\phi_{\theta}} & =\abs{\int_{\Omega}\partial_{\theta}\phi_{\rho}(\phi_{\theta}-\overline{\phi_{\theta}})}\leq\norm{\partial_{\theta}\phi_{\rho}}_{L^{2}(\Omega)}\norm{\phi_{\theta}-\overline{\phi_{\theta}}}_{L^{2}(\Omega)}\\
 & \lesssim\norm{\partial_{\theta}\phi_{\rho}}_{L^{2}(\Omega)}\norm{\partial_{\theta}\phi_{\theta}}_{L^{2}(\Omega)}.
\end{align*}
Hence, 
\begin{equation}
I+\norm{\partial_{\theta}\phi_{\rho}}_{L^{2}(\Omega)}\norm{\partial_{\theta}\phi_{\theta}}_{L^{2}(\Omega)}\gtrsim\norm{\phi_{\rho}}_{L^{1}(\Omega)}.\label{eq:NLmembrLB-3}
\end{equation}
Combining \prettyref{eq:NLmembrLB-1}, \prettyref{eq:NLmembrLB-2},
and \prettyref{eq:NLmembrLB-3} gives the required bound.

We turn now to prove the second estimate. First, we observe that by
\prettyref{eq:excessthetastrain} and \prettyref{eq:excessenergylarge-1},
\[
\int_{\Omega}g_{\theta\theta}-\varrho^{2}\geq\int_{\Omega}\Phi_{\rho}^{2}(\partial_{\theta}\Phi_{\theta})^{2}-\varrho^{2}\geq0.
\]
Hence, by \prettyref{lem:NLexcesssplits}, \prettyref{eq:excessthetaenergy},
and since $\varrho\geq1$, we have that
\[
\Delta^{NL}\geq\int_{\Omega}|g_{\theta\theta}-1|^{2}-(\varrho^{2}-1)^{2}\geq\int_{\Omega}(g_{\theta\theta}-\varrho^{2})^{2}.
\]
Applying Jensen's inequality along the slices $\{z\}\times I_{\theta}$,
we find that 
\begin{equation}
(\Delta^{NL})^{1/2}\gtrsim\norm{\overline{g_{\theta\theta}-\varrho^{2}}}_{L_{z}^{2}}.\label{eq:NLmembrLB-4}
\end{equation}
Now we estimate the integrand in the line above. It follows from \prettyref{eq:excessthetastrain}
that

\[
\overline{g_{\theta\theta}-\varrho^{2}}\geq\max\left\{ \overline{\Phi_{\rho}^{2}(\partial_{\theta}\Phi_{\theta})^{2}-\varrho^{2}},\norm{\partial_{\theta}\Phi_{\rho}}_{L_{\theta}^{2}}^{2},\norm{\partial_{\theta}\Phi_{z}}_{L_{\theta}^{2}}^{2}\right\} 
\]
for a.e.\ $z\in I_{z}$. Here we used that 
\[
II=\overline{\Phi_{\rho}^{2}(\partial_{\theta}\Phi_{\theta})^{2}-\varrho^{2}}\geq0
\]
for a.e.\ $z\in I_{z}$, which follows from Jensen's inequality (as
in the proof of \prettyref{eq:excessenergylarge-1}).

Now, we apply the same reasoning to $II$ as for $I$ above. The analog
of \prettyref{eq:NLmembrLB-2} is that 
\[
II\geq\norm{\partial_{\theta}\phi_{\theta}}_{L_{\theta}^{2}}^{2}\quad a.e.,
\]
and this is implied by \prettyref{eq:NLmembrLB-0}. The analog of
\prettyref{eq:NLmembrLB-3} is that 
\[
II+\norm{\partial_{\theta}\phi_{\rho}}_{L_{\theta}^{2}}\norm{\partial_{\theta}\phi_{\theta}}_{L_{\theta}^{2}}\gtrsim\norm{\phi_{\rho}}_{L_{\theta}^{1}}\quad a.e.
\]
This also follows from \prettyref{eq:NLmembrLB-0}, by an integration
by parts argument and Poincare's inequality. It follows that
\[
\overline{g_{\theta\theta}-\varrho^{2}}\gtrsim\max\left\{ \norm{\phi_{\rho}}_{L_{\theta}^{1}},\norm{\partial_{\theta}\phi_{\theta}}_{L_{\theta}^{2}}^{2},\norm{\partial_{\theta}\Phi_{\rho}}_{L_{\theta}^{2}}^{2},\norm{\partial_{\theta}\Phi_{z}}_{L_{\theta}^{2}}^{2}\right\} \quad a.e.
\]
Combining this with \prettyref{eq:NLmembrLB-4} proves the required
bound. 
\end{proof}
Now, we turn to quantify the observation that if $\lambda$ is large
enough, the cylinder should buckle. 
\begin{lem}
\label{lem:bucklingestimate_witherror}Let $\Phi\in A_{\lambda,1,\infty}^{NL}$.
Then we have that 
\[
\lambda\abs{A}\lesssim\max\{\int_{A}\norm{\partial_{z}\Phi_{\rho}}_{L_{z}^{2}}^{2}\,d\theta,(\Delta^{NL})^{1/2},\norm{\Phi_{\rho}\partial_{z}\Phi_{\theta}}_{L^{2}(\Omega)}^{2}\}
\]
 for all $A\in\cB(I_{\theta})$.\end{lem}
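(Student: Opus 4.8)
The plan is to exploit the constraint that $\Phi_{z}-(1-\lambda)z$ is periodic in $z$, together with the sign condition $\partial_{z}\Phi_{z}\geq0$ built into $A_{\lambda,1,\infty}^{NL}$, to show that the ``missing'' axial length $\lambda$ must be absorbed by the remaining entries of the metric tensor; these are controlled either by $\partial_{z}\Phi_{\rho}$, by $\Phi_{\rho}\partial_{z}\Phi_{\theta}$, or by $g_{zz}-1$, the latter being controlled by $\Delta^{NL}$ via \prettyref{lem:NLexcesssplits}. Concretely, for a.e.\ $\theta\in I_{\theta}$ the function $z\mapsto\Phi_{z}(\theta,z)$ differs from $(1-\lambda)z$ by an $H^{2}$ (hence absolutely continuous and periodic) function of $z$, so $\int_{I_{z}}\partial_{z}\Phi_{z}\,dz=1-\lambda$ since $|I_{z}|=1$. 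Consequently
\[
\int_{I_{z}}\abs{1-\partial_{z}\Phi_{z}}\,dz\geq\abs{\int_{I_{z}}(1-\partial_{z}\Phi_{z})\,dz}=\lambda\qquad\text{for a.e.}\ \theta.
\]

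The key pointwise inequality I would establish is
\[
\abs{1-\partial_{z}\Phi_{z}}\leq\abs{g_{zz}-1}+(\partial_{z}\Phi_{\rho})^{2}+\Phi_{\rho}^{2}(\partial_{z}\Phi_{\theta})^{2}\qquad\text{a.e. on }\Omega.
\]
To prove it I split on the sign of $1-\partial_{z}\Phi_{z}$. When $0\leq\partial_{z}\Phi_{z}\leq1$ (this is exactly where $\partial_{z}\Phi_{z}\geq0$ is used, and the inequality fails without it), one has $1-\partial_{z}\Phi_{z}\leq1-(\partial_{z}\Phi_{z})^{2}$; when $\partial_{z}\Phi_{z}>1$, one has $\partial_{z}\Phi_{z}-1\leq(\partial_{z}\Phi_{z})^{2}-1$. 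In either case $\abs{1-\partial_{z}\Phi_{z}}\leq\abs{1-(\partial_{z}\Phi_{z})^{2}}$, and since $g_{zz}=(\partial_{z}\Phi_{\rho})^{2}+\Phi_{\rho}^{2}(\partial_{z}\Phi_{\theta})^{2}+(\partial_{z}\Phi_{z})^{2}$ by \prettyref{eq:g_components}, we get $1-(\partial_{z}\Phi_{z})^{2}=(\partial_{z}\Phi_{\rho})^{2}+\Phi_{\rho}^{2}(\partial_{z}\Phi_{\theta})^{2}-(g_{zz}-1)$, and the claimed bound follows from the triangle inequality.

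Next I would integrate this pointwise bound over $z\in I_{z}$ and then over $\theta\in A$, obtaining
\[
\lambda\abs{A}\leq\int_{A}\!\int_{I_{z}}\abs{g_{zz}-1}\,dz\,d\theta+\int_{A}\norm{\partial_{z}\Phi_{\rho}}_{L_{z}^{2}}^{2}\,d\theta+\int_{\Omega}\Phi_{\rho}^{2}(\partial_{z}\Phi_{\theta})^{2}.
\]
The second term is the first quantity on the right-hand side of the lemma, and the third is $\norm{\Phi_{\rho}\partial_{z}\Phi_{\theta}}_{L^{2}(\Omega)}^{2}$. For the first term I would apply the Cauchy--Schwarz inequality on $\Omega$ together with the estimate $\norm{g_{zz}-1}_{L^{2}(\Omega)}^{2}\leq\Delta^{NL}$ from \prettyref{lem:NLexcesssplits}, giving $\int_{A}\int_{I_{z}}\abs{g_{zz}-1}\leq\abs{\Omega}^{1/2}(\Delta^{NL})^{1/2}$. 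Bounding the sum of the three terms by three times their maximum then yields $\lambda\abs{A}\lesssim\max\{\cdots\}$, as claimed. I do not anticipate a genuine obstacle; the only points requiring care are placing the hypothesis $\partial_{z}\Phi_{z}\geq0$ at exactly the step $0\leq\partial_{z}\Phi_{z}\leq1$ where it is needed, and justifying $\int_{I_{z}}\partial_{z}\Phi_{z}\,dz=1-\lambda$ from the periodicity and Sobolev regularity of $\Phi_{z}-(1-\lambda)z$.
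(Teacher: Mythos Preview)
Your proposal is correct and follows essentially the same route as the paper: both use periodicity to get $\int_{I_z}(1-\partial_z\Phi_z)\,dz=\lambda$, then the sign condition $\partial_z\Phi_z\geq0$ to pass from $|1-\partial_z\Phi_z|$ to $|1-(\partial_z\Phi_z)^2|$ (the paper does this in one line via $|\partial_z\Phi_z-1|\leq|\partial_z\Phi_z-1||1+\partial_z\Phi_z|$, while you split into cases), and finish by the identity for $g_{zz}$, the triangle inequality, and H\"older together with \prettyref{lem:NLexcesssplits}.
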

\begin{rem}
\label{rem:invertibilityhypothesis}It is precisely in the proof of
this lemma where the hypothesis on the sign of $\partial_{z}\Phi_{z}$
from the definition of $A_{\lambda,\varrho,m}^{NL}$ is used. We note
that this can be relaxed, the crucial hypothesis being that $\partial_{z}\Phi_{z}$
``stays away'' from the well at $-1$. Indeed, the lemma would remain
true if the statement that $\partial_{z}\Phi_{z}\geq0$ from \prettyref{eq:ANL}
were replaced with the statement that there exists a constant $c>0$
such that $|\partial_{z}\Phi_{z}+1|\geq c>1$. \end{rem}
\begin{proof}
Since $\Phi\in A_{\lambda,1,\infty}^{NL}$, we have that
\[
\int_{I_{z}}\partial_{z}\Phi_{z}-1\,dz=1-\lambda-1=-\lambda
\]
for a.e.\ $\theta\in I_{\theta}$. Since we have assumed that $\partial_{z}\Phi_{z}\geq0$
a.e., it follows that 
\[
\lambda\leq\int_{I_{z}}|\partial_{z}\Phi_{z}-1||1+\partial_{z}\Phi_{z}|\,dz=\norm{(\partial_{z}\Phi_{z})^{2}-1}_{L_{z}^{1}}
\]
for a.e.\ $\theta\in I_{\theta}$. By the identity for $g_{zz}$
in \prettyref{eq:g_components}, we see that
\[
\lambda\leq\norm{g_{zz}-1}_{L_{z}^{1}}+\norm{\partial_{z}\Phi_{\rho}}_{L_{z}^{2}}^{2}+\norm{\Phi_{\rho}\partial_{z}\Phi_{\theta}}_{L_{z}^{2}}^{2}.
\]
Now the result follows from \prettyref{lem:NLexcesssplits} by an
application of H\"older's inequality.
\end{proof}
Now we control the cross-term, $\Phi_{\rho}\partial_{z}\Phi_{\theta}$. 
\begin{lem}
\label{lem:NLcross-term}Let $\Phi\in A_{\lambda,\varrho,m}^{NL}.$
Then we have that
\[
\norm{\Phi_{\rho}\partial_{z}\Phi_{\theta}}_{L^{2}(\Omega)}\lesssim_{\varrho_{0},m}(\Delta^{NL})^{1/4}.
\]
\end{lem}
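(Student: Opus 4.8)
The plan is to combine the bound $\norm{g_{\theta z}}_{L^{2}(\Omega)}^{2}\leq\Delta^{NL}$ from \prettyref{lem:NLexcesssplits} with the hoopwise control of the $\theta$-derivatives from \prettyref{lem:NLmembrLB}, splitting $\Omega$ according to whether $\partial_{\theta}\Phi_{\theta}$ is close to $1$. Throughout I would use the standing assumption $\Delta^{NL}\leq1$ of \prettyref{sub:largemandrelLB_nonlinear}; when $\Delta^{NL}\geq1$ the claim is immediate from $\Phi_{\rho}^{2}(\partial_{z}\Phi_{\theta})^{2}\leq g_{zz}$ (a consequence of the identity for $g_{zz}$ in \prettyref{eq:g_components}), since then $\norm{\Phi_{\rho}\partial_{z}\Phi_{\theta}}_{L^{2}(\Omega)}^{2}\leq\norm{g_{zz}}_{L^{1}(\Omega)}\leq\abs{\Omega}^{1/2}(\Delta^{NL})^{1/2}+\abs{\Omega}\lesssim(\Delta^{NL})^{1/2}$ using $\norm{g_{zz}-1}_{L^{2}(\Omega)}\leq(\Delta^{NL})^{1/2}$.

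For the main estimate, set $B=\{\abs{\partial_{\theta}\Phi_{\theta}-1}>1/2\}$ and $G=\Omega\setminus B$. On $G$ we have $\partial_{\theta}\Phi_{\theta}>1/2$ and $\Phi_{\rho}\geq\varrho\geq1$, so $\Phi_{\rho}\partial_{\theta}\Phi_{\theta}\geq1/2$, and the identity for $g_{\theta z}$ in \prettyref{eq:g_components} can be solved for the middle term:
\[
\Phi_{\rho}\partial_{z}\Phi_{\theta}=\frac{g_{\theta z}-\partial_{\theta}\Phi_{\rho}\,\partial_{z}\Phi_{\rho}-\partial_{\theta}\Phi_{z}\,\partial_{z}\Phi_{z}}{\Phi_{\rho}\partial_{\theta}\Phi_{\theta}}\quad\text{on }G.
\]
Using $\Phi_{\rho}\partial_{\theta}\Phi_{\theta}\geq1/2$ and the $L^{\infty}$ bounds $\abs{\partial_{z}\Phi_{\rho}},\abs{\partial_{z}\Phi_{z}}\leq m$ from the definition \prettyref{eq:ANL} of $A_{\lambda,\varrho,m}^{NL}$, this yields the pointwise bound $\abs{\Phi_{\rho}\partial_{z}\Phi_{\theta}}\leq2\abs{g_{\theta z}}+2m\abs{\partial_{\theta}\Phi_{\rho}}+2m\abs{\partial_{\theta}\Phi_{z}}$ on $G$. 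Integrating and using $\norm{g_{\theta z}}_{L^{2}(\Omega)}^{2}\leq\Delta^{NL}\leq(\Delta^{NL})^{1/2}$ from \prettyref{lem:NLexcesssplits}, together with $\norm{\partial_{\theta}\Phi_{\rho}}_{L^{2}(\Omega)}^{2}\leq\norm{\partial_{\theta}\Phi_{\rho}}_{L_{z}^{4}L_{\theta}^{2}}^{2}\lesssim(\Delta^{NL})^{1/2}$ and $\norm{\partial_{\theta}\Phi_{z}}_{L^{2}(\Omega)}^{2}\lesssim(\Delta^{NL})^{1/2}$ from \prettyref{lem:NLmembrLB} (the embedding $L^{2}(\Omega)\hookrightarrow L_{z}^{4}L_{\theta}^{2}$ being just H\"older in $z$ since $\abs{I_{z}}=1$), I obtain $\int_{G}\Phi_{\rho}^{2}(\partial_{z}\Phi_{\theta})^{2}\lesssim_{m}(\Delta^{NL})^{1/2}$.

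On $B$ I would fall back to $\Phi_{\rho}^{2}(\partial_{z}\Phi_{\theta})^{2}\leq g_{zz}$ and estimate $\int_{B}g_{zz}\leq\norm{g_{zz}-1}_{L^{1}(\Omega)}+\abs{B}$. Chebyshev's inequality gives $\abs{B}\leq4\norm{\partial_{\theta}\Phi_{\theta}-1}_{L^{2}(\Omega)}^{2}\leq4\norm{\partial_{\theta}\Phi_{\theta}-1}_{L_{z}^{4}L_{\theta}^{2}}^{2}\lesssim(\Delta^{NL})^{1/2}$ by \prettyref{lem:NLmembrLB}, and $\norm{g_{zz}-1}_{L^{1}(\Omega)}\leq\abs{\Omega}^{1/2}(\Delta^{NL})^{1/2}$ by \prettyref{lem:NLexcesssplits}, so $\int_{B}\Phi_{\rho}^{2}(\partial_{z}\Phi_{\theta})^{2}\lesssim(\Delta^{NL})^{1/2}$. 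Adding the contributions from $G$ and $B$ and taking square roots gives $\norm{\Phi_{\rho}\partial_{z}\Phi_{\theta}}_{L^{2}(\Omega)}\lesssim_{m}(\Delta^{NL})^{1/4}$, which is stronger than the stated bound.

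The computation is routine once the decomposition is in place; the one point requiring care — and the natural place where the approach could fail — is keeping the denominator $\Phi_{\rho}\partial_{\theta}\Phi_{\theta}$ bounded below, which is precisely why one excises the set $B$ and why one needs the per-hoop control of $\partial_{\theta}\Phi_{\theta}$, $\partial_{\theta}\Phi_{\rho}$, $\partial_{\theta}\Phi_{z}$ from \prettyref{lem:NLmembrLB} rather than only the integrated membrane bound. (Note also that no dependence on $\varrho_{0}$ is actually used, only on $m$.)
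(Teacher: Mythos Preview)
Your argument is correct, and it takes a genuinely different route from the paper's proof. The paper never splits $\Omega$: instead it writes
\[
\abs{\Phi_{\rho}\partial_{z}\Phi_{\theta}}\leq\Phi_{\rho}^{2}\abs{\partial_{z}\Phi_{\theta}\partial_{\theta}\Phi_{\theta}}+\abs{\Phi_{\rho}}\abs{\partial_{z}\Phi_{\theta}}\abs{\partial_{\theta}\Phi_{\theta}-1}
\]
pointwise (using $\Phi_{\rho}\geq1$), bounds $\Phi_{\rho}^{2}\abs{\partial_{z}\Phi_{\theta}\partial_{\theta}\Phi_{\theta}}$ via the $g_{\theta z}$ identity, and for the second term invokes a separate $L^{\infty}$ bound on $\Phi_{\rho}$ (obtained from \prettyref{lem:NLmembrLB} plus a Lipschitz estimate). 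This is exactly where the $\varrho_{0}$-dependence enters. Your good/bad-set decomposition sidesteps that: on $G$ you divide rather than multiply, so only the lower bound $\Phi_{\rho}\geq1$ is used, and on $B$ the quantity $\Phi_{\rho}^{2}(\partial_{z}\Phi_{\theta})^{2}$ is absorbed wholesale into $g_{zz}$, again with no upper bound on $\Phi_{\rho}$ needed. So your closing observation is right --- your constant depends only on $m$, which is a small improvement. The trade-off is that the paper's argument is a single pointwise chain of inequalities, while yours requires the Chebyshev step and the separate treatment of $B$ via $g_{zz}$; both are short. One cosmetic point: the embedding you use is $L_{z}^{4}L_{\theta}^{2}\hookrightarrow L^{2}(\Omega)$, not the direction you wrote, though the inequality you actually apply is correct.
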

\begin{proof}
Since $\Phi_{\rho}\geq1$, we have that 
\[
\abs{\Phi_{\rho}\partial_{z}\Phi_{\theta}}\leq\abs{\Phi_{\rho}\partial_{z}\Phi_{\theta}\partial_{\theta}\Phi_{\theta}}+\abs{\Phi_{\rho}\partial_{z}\Phi_{\theta}(\partial_{\theta}\Phi_{\theta}-1)}\leq\Phi_{\rho}^{2}\abs{\partial_{z}\Phi_{\theta}\partial_{\theta}\Phi_{\theta}}+\abs{\Phi_{\rho}}\abs{\partial_{z}\Phi_{\theta}}\abs{\partial_{\theta}\Phi_{\theta}-1}.
\]
From the definition of $g_{\theta z}$ in \prettyref{eq:g_components},
we see that 
\[
\Phi_{\rho}^{2}\abs{\partial_{z}\Phi_{\theta}\partial_{\theta}\Phi_{\theta}}\leq\abs{g_{\theta z}}+\abs{\partial_{\theta}\Phi_{\rho}}\abs{\partial_{z}\Phi_{\rho}}+\abs{\partial_{\theta}\Phi_{z}}\abs{\partial_{z}\Phi_{z}}.
\]
Using a Lipschitz bound along with \prettyref{lem:NLmembrLB} and
H\"older's inequality, we see that
\begin{align*}
\norm{\Phi_{\rho}}_{L^{\infty}(\Omega)} & \lesssim\norm{\Phi_{\rho}}_{L^{1}(\Omega)}+\norm{D\Phi_{\rho}}_{L^{\infty}(\Omega)}\leq\varrho|\Omega|+\norm{\Phi_{\rho}-\varrho}_{L^{1}(\Omega)}+\norm{D\Phi_{\rho}}_{L^{\infty}(\Omega)}\\
 & \lesssim\varrho+(\Delta^{NL})^{1/2}+\norm{D\Phi_{\rho}}_{L^{\infty}(\Omega)}.
\end{align*}
Combining the above with the definition of $A_{\lambda,\varrho,m}^{NL}$
and the hypotheses that $\varrho\leq\varrho_{0}$ and $\Delta^{NL}\leq1$
gives that 
\[
\abs{\Phi_{\rho}\partial_{z}\Phi_{\theta}}\lesssim_{\varrho_{0},m}\max\{\abs{g_{\theta z}},\abs{\partial_{\theta}\Phi_{\rho}},\abs{\partial_{\theta}\Phi_{z}},\abs{\partial_{\theta}\Phi_{\theta}-1}\}.
\]
It follows that
\[
\norm{\Phi_{\rho}\partial_{z}\Phi_{\theta}}_{L^{2}(\Omega)}\lesssim_{\varrho_{0},m}\max\{\norm{g_{\theta z}}_{L^{2}(\Omega)},\norm{\partial_{\theta}\Phi_{\rho}}_{L^{2}(\Omega)},\norm{\partial_{\theta}\Phi_{\theta}-1}_{L^{2}(\Omega)},\norm{\partial_{\theta}\Phi_{z}}_{L^{2}(\Omega)}\}.
\]
Thus, after applying \prettyref{lem:NLexcesssplits}, \prettyref{lem:NLmembrLB},
and using H\"older's inequality, we find that
\[
\norm{\Phi_{\rho}\partial_{z}\Phi_{\theta}}_{L^{2}(\Omega)}\lesssim_{\varrho_{0},m}\max\{(\Delta^{NL})^{1/2},(\Delta^{NL})^{1/4}\}=(\Delta^{NL})^{1/4}
\]
as desired.
\end{proof}
Combining \prettyref{lem:bucklingestimate_witherror} and \prettyref{lem:NLcross-term}
gives the following result.
\begin{cor}
\label{cor:NLbucklingcontrol}Let $\Phi\in A_{\lambda,\varrho,m}^{NL}$.
Then we have that
\[
\lambda\abs{A}\lesssim_{\varrho_{0},m}\max\{\int_{A}\norm{\partial_{z}\Phi_{\rho}}_{L_{z}^{2}}^{2}\,d\theta,(\Delta^{NL})^{1/2}\}
\]
for all $A\in\mathcal{B}(I_{\theta})$.
\end{cor}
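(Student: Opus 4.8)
The plan is to combine \prettyref{lem:bucklingestimate_witherror} with \prettyref{lem:NLcross-term}, with essentially no new work required. First I would note that since $\varrho\geq1$, the radial constraint $\Phi_\rho\geq\varrho$ forces $\Phi_\rho\geq1$, while all the remaining constraints defining $A_{\lambda,\varrho,m}^{NL}$ — in particular the sign hypothesis $\partial_z\Phi_z\geq0$, which is exactly the ingredient underlying \prettyref{lem:bucklingestimate_witherror} (cf.\ \prettyref{rem:invertibilityhypothesis}) — are only relaxed upon dropping the $L^\infty$ slope bound. Hence $A_{\lambda,\varrho,m}^{NL}\subset A_{\lambda,1,\infty}^{NL}$, and \prettyref{lem:bucklingestimate_witherror} applies to any $\Phi\in A_{\lambda,\varrho,m}^{NL}$, yielding for every $A\in\cB(I_\theta)$
\[
\lambda\abs{A}\lesssim\max\left\{\int_A\norm{\partial_z\Phi_\rho}_{L_z^2}^2\,d\theta,\ (\Delta^{NL})^{1/2},\ \norm{\Phi_\rho\partial_z\Phi_\theta}_{L^2(\Omega)}^2\right\}.
\]

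Next I would absorb the cross-term. By \prettyref{lem:NLcross-term} we have $\norm{\Phi_\rho\partial_z\Phi_\theta}_{L^2(\Omega)}\lesssim_{\varrho_0,m}(\Delta^{NL})^{1/4}$, and squaring gives $\norm{\Phi_\rho\partial_z\Phi_\theta}_{L^2(\Omega)}^2\lesssim_{\varrho_0,m}(\Delta^{NL})^{1/2}$. Substituting this into the maximum above merges the cross-term into the $(\Delta^{NL})^{1/2}$ term, producing
\[
\lambda\abs{A}\lesssim_{\varrho_0,m}\max\left\{\int_A\norm{\partial_z\Phi_\rho}_{L_z^2}^2\,d\theta,\ (\Delta^{NL})^{1/2}\right\}
\]
for all $A\in\cB(I_\theta)$, which is precisely the claimed bound.

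There is no genuine obstacle here: both inputs have already been established, and the corollary amounts to two lines of bookkeeping. The only point deserving a moment's care is that \prettyref{lem:bucklingestimate_witherror} is phrased for $A_{\lambda,1,\infty}^{NL}$, so one must verify the inclusion above before invoking it; this is immediate from $\varrho\geq1$, and \prettyref{lem:NLcross-term} may be used freely under the standing assumption $\Delta^{NL}\leq1$ in force throughout this section.
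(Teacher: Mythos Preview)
Your proposal is correct and matches the paper's own argument exactly: the paper simply states that the corollary follows by combining \prettyref{lem:bucklingestimate_witherror} with \prettyref{lem:NLcross-term}, and you have spelled this out with the appropriate inclusion $A_{\lambda,\varrho,m}^{NL}\subset A_{\lambda,1,\infty}^{NL}$ and the squaring step.
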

Finally, we consider the bending term. 
\begin{lem}
\label{lem:NLbendingcontrol}Let $\Phi\in A_{\lambda,\varrho,m}^{NL}$.
Then we have that
\[
\max\left\{ \frac{1}{h^{2}}\Delta^{NL},(\Delta^{NL})^{1/2}\right\} \gtrsim_{\varrho_{0}m}\max\left\{ \norm{D^{2}\Phi_{\rho}}_{L^{2}(\Omega)}^{2},\norm{\Phi_{\rho}-\varrho}_{L^{1}(\Omega)}\right\} .
\]
\end{lem}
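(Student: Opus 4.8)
The plan is to bound each of the three second partials of $\Phi_\rho$ separately against $\max\{h^{-2}\Delta^{NL},(\Delta^{NL})^{1/2}\}$, with essentially all of the difficulty sitting in the $\partial_\theta^2$ term. Throughout I use the standing assumption $\Delta^{NL}\leq1$. First I would collect a handful of preliminary estimates. From \prettyref{lem:NLmembrLB} one reads off $\norm{\Phi_\rho-\varrho}_{L^1(\Omega)}\lesssim(\Delta^{NL})^{1/2}$ (using $\norm{f}_{L^1(\Omega)}\leq\norm{f}_{L_z^2L_\theta^1}$, as $\abs{I_z}=1$) together with $\norm{\partial_\theta\Phi_\rho}_{L^2(\Omega)}+\norm{\partial_\theta\Phi_\theta-1}_{L^2(\Omega)}\lesssim(\Delta^{NL})^{1/4}$ (using $\norm{g}_{L_z^2}\leq\norm{g}_{L_z^4}$). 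Arguing as in the proof of \prettyref{lem:NLcross-term} — a Morrey-type bound $\norm{\Phi_\rho}_{L^\infty}\lesssim\norm{\Phi_\rho}_{L^1(\Omega)}+\norm{D\Phi_\rho}_{L^\infty(\Omega)}$ combined with $\varrho\leq\varrho_0$, $\norm{D\Phi_\rho}_{L^\infty}\leq m$, and $\Delta^{NL}\leq1$ — gives $\norm{\Phi_\rho-\varrho}_{L^\infty(\Omega)}\lesssim_{\varrho_{0},m}1$, hence $\norm{\Phi_\rho-\varrho}_{L^2(\Omega)}^2\leq\norm{\Phi_\rho-\varrho}_{L^\infty}\norm{\Phi_\rho-\varrho}_{L^1}\lesssim_{\varrho_{0},m}(\Delta^{NL})^{1/2}$. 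Finally \prettyref{lem:NLcross-term} supplies $\norm{\Phi_\rho\partial_z\Phi_\theta}_{L^2(\Omega)}\lesssim_{\varrho_{0},m}(\Delta^{NL})^{1/4}$. Already the first of these estimates settles the bound on $\norm{\Phi_\rho-\varrho}_{L^1(\Omega)}$ asserted in the lemma.

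The hard part is $\partial_\theta^2\Phi_\rho$. By the $E_\rho(\Phi)$-component in \prettyref{eq:DDPhi} and orthonormality of $\{E_i\}$ one has $\abs{\partial_\theta^2\Phi_\rho}\leq\abs{\partial_\theta^2\Phi}+\Phi_\rho(\partial_\theta\Phi_\theta)^2$, but this is useless: both terms on the right are $O(1)$, so one must exploit the cancellation between the ``centripetal'' part of $\partial_\theta^2\Phi$ and $\Phi_\rho(\partial_\theta\Phi_\theta)^2\approx\varrho$. I would complete the square: expanding $\abs{\partial_\theta^2\Phi+\Phi_\rho E_\rho(\Phi)}^2$, using orthonormality and the identity $\langle\partial_\theta^2\Phi,E_\rho(\Phi)\rangle=\partial_\theta^2\Phi_\rho-\Phi_\rho(\partial_\theta\Phi_\theta)^2$ from \prettyref{eq:DDPhi}, and integrating by parts in $\theta$ (boundary terms vanish by periodicity) yields
\[
\int_\Omega\abs{\partial_\theta^2\Phi+\Phi_\rho E_\rho(\Phi)}^2=\int_\Omega\abs{\partial_\theta^2\Phi}^2-2\norm{\partial_\theta\Phi_\rho}_{L^2(\Omega)}^2-2\int_\Omega\Phi_\rho^2(\partial_\theta\Phi_\theta)^2+\int_\Omega\Phi_\rho^2.
\]
On each slice $\{z\}\times I_\theta$, Cauchy--Schwarz gives $\int_{I_\theta}\Phi_\rho^2(\partial_\theta\Phi_\theta)^2\,d\theta\geq\abs{I_\theta}^{-1}\bigl(\int_{I_\theta}\Phi_\rho\partial_\theta\Phi_\theta\,d\theta\bigr)^2$, and $\int_{I_\theta}\partial_\theta\Phi_\theta\,d\theta=\abs{I_\theta}$ by $\theta$-periodicity of $\Phi_\theta-\theta$; writing $\int_{I_\theta}\Phi_\rho\partial_\theta\Phi_\theta\,d\theta=\varrho\abs{I_\theta}+\int_{I_\theta}(\Phi_\rho-\varrho)\partial_\theta\Phi_\theta\,d\theta$ and $\int_{I_\theta}\Phi_\rho^2\,d\theta=\varrho^2\abs{I_\theta}+2\varrho\int_{I_\theta}(\Phi_\rho-\varrho)+\int_{I_\theta}(\Phi_\rho-\varrho)^2$, integrating in $z$, and discarding the two manifestly non-positive terms, I obtain
\[
\int_\Omega\abs{\partial_\theta^2\Phi+\Phi_\rho E_\rho(\Phi)}^2\leq\left(\int_\Omega\abs{\partial_\theta^2\Phi}^2-\abs{\Omega}\varrho^2\right)+4\varrho\,\abs{\int_\Omega(\Phi_\rho-\varrho)\partial_\theta\Phi_\theta}+2\varrho\norm{\Phi_\rho-\varrho}_{L^1(\Omega)}+\norm{\Phi_\rho-\varrho}_{L^2(\Omega)}^2.
\]
The first bracket is $\leq h^{-2}\Delta^{NL}$ by \prettyref{lem:NLexcesssplits}; the last two terms are $\lesssim_{\varrho_{0},m}(\Delta^{NL})^{1/2}$ by the preliminaries; and the cross term is handled by integrating by parts in $\theta$ and applying Poincar\'e's inequality against the $\theta$-average of $\Phi_\theta-\theta$, which gives $\abs{\int_\Omega(\Phi_\rho-\varrho)\partial_\theta\Phi_\theta}\lesssim\norm{\Phi_\rho-\varrho}_{L^1(\Omega)}+\norm{\partial_\theta\Phi_\rho}_{L^2(\Omega)}\norm{\partial_\theta\Phi_\theta-1}_{L^2(\Omega)}\lesssim(\Delta^{NL})^{1/2}$. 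Hence $\int_\Omega\abs{\partial_\theta^2\Phi+\Phi_\rho E_\rho(\Phi)}^2\lesssim_{\varrho_{0},m}h^{-2}\Delta^{NL}+(\Delta^{NL})^{1/2}$. Since $\partial_\theta^2\Phi_\rho=\langle\partial_\theta^2\Phi+\Phi_\rho E_\rho(\Phi),E_\rho(\Phi)\rangle+\Phi_\rho((\partial_\theta\Phi_\theta)^2-1)$ and $\abs{\Phi_\rho((\partial_\theta\Phi_\theta)^2-1)}\leq\norm{\Phi_\rho}_{L^\infty}(1+m)\abs{\partial_\theta\Phi_\theta-1}$, the preliminaries then yield $\norm{\partial_\theta^2\Phi_\rho}_{L^2(\Omega)}^2\lesssim_{\varrho_{0},m}h^{-2}\Delta^{NL}+(\Delta^{NL})^{1/2}$.

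The remaining two second partials are routine. From the $E_\rho(\Phi)$-components in \prettyref{eq:DDPhi} and orthonormality, $\abs{\partial_z^2\Phi_\rho}\leq\abs{\partial_z^2\Phi}+\Phi_\rho(\partial_z\Phi_\theta)^2$ and $\abs{\partial_{\theta z}\Phi_\rho}\leq\abs{\partial_{\theta z}\Phi}+\Phi_\rho\abs{\partial_\theta\Phi_\theta}\abs{\partial_z\Phi_\theta}$; bounding $\Phi_\rho(\partial_z\Phi_\theta)^2\leq m\,\abs{\Phi_\rho\partial_z\Phi_\theta}$ and $\Phi_\rho\abs{\partial_\theta\Phi_\theta\partial_z\Phi_\theta}\leq m\,\abs{\Phi_\rho\partial_z\Phi_\theta}$ via the $L^\infty$ slope bound, then using $h^2\norm{\partial_z^2\Phi}_{L^2(\Omega)}^2,\,h^2\norm{\partial_{\theta z}\Phi}_{L^2(\Omega)}^2\leq\Delta^{NL}$ from \prettyref{lem:NLexcesssplits} together with $\norm{\Phi_\rho\partial_z\Phi_\theta}_{L^2(\Omega)}\lesssim_{\varrho_{0},m}(\Delta^{NL})^{1/4}$ from \prettyref{lem:NLcross-term}, gives $\norm{\partial_z^2\Phi_\rho}_{L^2(\Omega)}^2+\norm{\partial_{\theta z}\Phi_\rho}_{L^2(\Omega)}^2\lesssim_{\varrho_{0},m}h^{-2}\Delta^{NL}+(\Delta^{NL})^{1/2}$. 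Summing the three contributions and using $(\Delta^{NL})^{1/2}\leq\max\{h^{-2}\Delta^{NL},(\Delta^{NL})^{1/2}\}$ yields $\norm{D^2\Phi_\rho}_{L^2(\Omega)}^2\lesssim_{\varrho_{0},m}\max\{h^{-2}\Delta^{NL},(\Delta^{NL})^{1/2}\}$, which together with the $L^1$ bound from the first paragraph proves the lemma. The only genuinely delicate step is the square-completion in the second paragraph — it is precisely the device that extracts the small quantity $\partial_\theta^2\Phi_\rho$ out of the $O(1)$ difference $\partial_\theta^2\Phi_\rho-\Phi_\rho(\partial_\theta\Phi_\theta)^2$ by pairing it against the bulk bending term $\abs{\Omega}\varrho^2h^2$ subtracted in \prettyref{lem:NLexcesssplits}.
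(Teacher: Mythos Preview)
Your proof is correct, and for the $\partial_z^2$ and $\partial_{\theta z}$ components it matches the paper's argument exactly. The treatment of the $\partial_\theta^2$ term, however, is genuinely different. The paper expands $\abs{\partial_\theta^2\Phi}^2-\varrho^2$ using both the $E_\rho$- and $E_\theta$-components from \prettyref{eq:DDPhi}, writing it as $\abs{\partial_\theta^2\Phi_\rho}^2+I+II$ with $I=\abs{\Phi_\rho(\partial_\theta\Phi_\theta)^2}^2-\varrho^2$ and $II$ collecting the cross- and $E_\theta$-terms; it then shows $\int_\Omega I\geq0$, bounds $\int_\Omega II\gtrsim_m-(\Delta^{NL})^{1/2}$ via an integration-by-parts in the $E_\theta$-component, and simultaneously extracts the $L^1$ bound on $\Phi_\rho-\varrho$ from $\abs{\int_\Omega I}$. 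Your square-completion $\abs{\partial_\theta^2\Phi+\Phi_\rho E_\rho(\Phi)}^2$ sidesteps the $E_\theta$-component entirely and avoids the whole $I$/$II$ decomposition; you instead pull the $L^1$ bound directly from \prettyref{lem:NLmembrLB} (via $\norm{\cdot}_{L^1(\Omega)}\leq\norm{\cdot}_{L_z^2L_\theta^1}$), making the argument more modular. The paper's route has the side benefit of showing that the bending term itself controls $\norm{\Phi_\rho-\varrho}_{L^1}$ (not just the membrane term), but your approach is algebraically lighter and equally sufficient for the lemma as stated.
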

\begin{proof}
First, we consider the $\theta z$- and $zz$-components of $D^{2}\Phi_{\rho}$.
From \prettyref{eq:DDPhi}, it follows that 
\begin{align*}
\abs{\partial_{\theta z}\Phi} & \geq\abs{\partial_{\theta z}\Phi_{\rho}-\Phi_{\rho}\partial_{\theta}\Phi_{\theta}\partial_{z}\Phi_{\theta}}\\
\abs{\partial_z^2\Phi} & \geq\abs{\partial_z^2\Phi_{\rho}-\Phi_{\rho}\left(\partial_{z}\Phi_{\theta}\right)^{2}}
\end{align*}
so that
\begin{align*}
\norm{\partial_{\theta z}\Phi_{\rho}}_{L^{2}(\Omega)} & \leq\norm{\partial_{\theta z}\Phi}_{L^{2}(\Omega)}+\norm{\Phi_{\rho}\partial_{\theta}\Phi_{\theta}\partial_{z}\Phi_{\theta}}_{L^{2}(\Omega)}\\
\norm{\partial_z^2\Phi_{\rho}}_{L^{2}(\Omega)} & \leq\norm{\partial_z^2\Phi}_{L^{2}(\Omega)}+\norm{\Phi_{\rho}\left(\partial_{z}\Phi_{\theta}\right)^{2}}_{L^{2}(\Omega)}.
\end{align*}
Using \prettyref{lem:NLcross-term}, we can bound the error terms
in the same manner: 
\begin{align*}
\norm{\Phi_{\rho}\partial_{\theta}\Phi_{\theta}\partial_{z}\Phi_{\theta}}_{L^{2}(\Omega)} & \leq\norm{\Phi_{\rho}\partial_{z}\Phi_{\theta}}_{L^{2}(\Omega)}\norm{\partial_{\theta}\Phi_{\theta}}_{L^{\infty}(\Omega)}\lesssim_{\varrho_{0},m}(\Delta^{NL})^{1/4}\\
\norm{\Phi_{\rho}\left(\partial_{z}\Phi_{\theta}\right)^{2}}_{L^{2}(\Omega)} & \leq\norm{\Phi_{\rho}\partial_{z}\Phi_{\theta}}_{L^{2}(\Omega)}\norm{\partial_{z}\Phi_{\theta}}_{L^{\infty}(\Omega)}\lesssim_{\varrho_{0},m}(\Delta^{NL})^{1/4}.
\end{align*}
Combining this with \prettyref{lem:NLexcesssplits}, we find that
\[
\norm{\partial_{\theta z}\Phi_{\rho}}_{L^{2}(\Omega)}\vee\norm{\partial_z^2\Phi_{\rho}}_{L^{2}(\Omega)}\lesssim_{\varrho_{0},m}(\frac{1}{h^{2}}\Delta^{NL})^{1/2}\vee(\Delta^{NL})^{1/4}.
\]
This completes the $\theta z$- and $zz$-components of the result.

Now we consider the $\theta\theta$-component of $D^{2}\Phi$, which
requires a more careful estimate. We begin by using \prettyref{eq:DDPhi}
to write that
\begin{equation}
\abs{\partial_{\theta}^2\Phi}^{2}-\varrho^{2}\geq\abs{\partial_{\theta}^2\Phi_{\rho}-\Phi_{\rho}\left(\partial_{\theta}\Phi_{\theta}\right)^{2}}^{2}+\abs{2\partial_{\theta}\Phi_{\rho}\partial_{\theta}\Phi_{\theta}+\Phi_{\rho}\partial_{\theta}^2\Phi_{\theta}}^{2}-\varrho^{2}=\abs{\partial_{\theta}^2\Phi_{\rho}}^{2}+I+II\label{eq:NLbendingeq1}
\end{equation}
where
\begin{align*}
I & =\abs{\Phi_{\rho}\left(\partial_{\theta}\Phi_{\theta}\right)^{2}}^{2}-\varrho^{2}\\
II & =\abs{\Phi_{\rho}\partial_{\theta}^2\Phi_{\theta}}^{2}+4\abs{\partial_{\theta}\Phi_{\rho}\partial_{\theta}\Phi_{\theta}}^{2}+4\partial_{\theta}\Phi_{\rho}\partial_{\theta}\Phi_{\theta}\Phi_{\rho}\partial_{\theta}^2\Phi_{\theta}-2\Phi_{\rho}\partial_{\theta}^2\Phi_{\rho}\left(\partial_{\theta}\Phi_{\theta}\right)^{2}.
\end{align*}
First, we discuss $I$. Introducing the displacement $\phi_{\rho}=\Phi_{\rho}-\varrho$,
which is non-negative, we have that
\[
I=(\phi_{\rho}+\varrho)^{2}\left(\partial_{\theta}\Phi_{\theta}\right)^{4}-\varrho^{2}\geq\varrho^{2}((\partial_{\theta}\Phi_{\theta})^{4}-1)+2\varrho|\phi_{\rho}|(\partial_{\theta}\Phi_{\theta})^{4}.
\]
By Jensen's inequality and since $\varrho\geq1$, 
\[
\int_{\Omega}I\geq2\varrho\int_{\Omega}|\phi_{\rho}|(\partial_{\theta}\Phi_{\theta})^{4}\geq\norm{\phi_{\rho}(\partial_{\theta}\Phi_{\theta})^{4}}_{L^{1}(\Omega)}.
\]
In particular, this shows that $\int_{\Omega}I\geq0$. Continuing,
we have that
\begin{align*}
\norm{\phi_{\rho}}_{L^{1}(\Omega)} & \leq\norm{\phi_{\rho}((\partial_{\theta}\Phi_{\theta})^{4}-1)}_{L^{1}(\Omega)}+\int_{\Omega}I \\
& \leq\norm{\phi_{\rho}}_{L^{2}(\Omega)}\norm{(\partial_{\theta}\Phi_{\theta})^{4}-1)}_{L^{2}(\Omega)}+\int_{\Omega}I
  \lesssim_{m}\norm{\phi_{\rho}}_{L^{2}(\Omega)}\norm{\partial_{\theta}\Phi_{\theta}-1}_{L^{2}(\Omega)}+\int_{\Omega}I \\
 &\lesssim(\norm{\partial_{\theta}\phi_{\rho}}_{L^{2}(\Omega)}\vee\norm{\phi_{\rho}}_{L_{z}^{2}L_{\theta}^{1}})\norm{\partial_{\theta}\Phi_{\theta}-1}_{L^{2}(\Omega)}+\int_{\Omega}I
\end{align*}
where in the last step we used Poincare's inequality. So by \prettyref{lem:NLmembrLB},
H\"older's inequality, and our assumption that $\Delta^{NL}\leq1$,
it follows that 
\begin{equation}
\norm{\phi_{\rho}}_{L^{1}(\Omega)}\lesssim_{m}(\Delta^{NL})^{1/2}\vee\left|\int_{\Omega}I\right|.\label{eq:NLbendingeq2}
\end{equation}

Next, we discuss $II$. An integration by parts argument shows that
\[
\int_{\Omega}\Phi_{\rho}\partial_{\theta}^2\Phi_{\rho}\left(\partial_{\theta}\Phi_{\theta}\right)^{2}=-\int_{\Omega}\left(\partial_{\theta}\Phi_{\rho}\partial_{\theta}\Phi_{\theta}\right)^{2}+2\Phi_{\rho}\partial_{\theta}\Phi_{\rho}\partial_{\theta}\Phi_{\theta}\partial_{\theta}^2\Phi_{\theta},
\]
so that by an elementary Young's inequality we have that
\[
\int_{\Omega}II=\int_{\Omega}\abs{\Phi_{\rho}\partial_{\theta}^2\Phi_{\theta}}^{2}+6\abs{\partial_{\theta}\Phi_{\rho}\partial_{\theta}\Phi_{\theta}}^{2}+8\partial_{\theta}\Phi_{\rho}\partial_{\theta}\Phi_{\theta}\Phi_{\rho}\partial_{\theta}^2\Phi_{\theta}\geq-10\int_{\Omega}\abs{\partial_{\theta}\Phi_{\rho}\partial_{\theta}\Phi_{\theta}}^{2}.
\]
Hence, by H\"older's inequality and \prettyref{lem:NLmembrLB}, it
follows that
\[
\int_{\Omega}II\gtrsim_{m}-\norm{\partial_{\theta}\Phi_{\rho}}_{L_{z}^{4}L_{\theta}^{2}}^{2}\gtrsim-(\Delta^{NL})^{1/2}.
\]

Now we combine the estimates. Using \prettyref{lem:NLexcesssplits}
along with \prettyref{eq:NLbendingeq1} and the fact that $\int_{\Omega}\,I\geq 0$, we have that
\[
\frac{1}{h^{2}}\Delta^{NL}\geq\int_{\Omega}\abs{\partial_{\theta}^2\Phi}^{2}-\varrho^{2}\geq\norm{\partial_{\theta}^2\Phi_{\rho}}_{L^{2}(\Omega)}^{2}+\left|\int_{\Omega}I\right|+\int_{\Omega}II
\]
and hence that
\begin{equation}
\left|\int_{\Omega}I\right|+\norm{\partial_{\theta}^2\Phi_{\rho}}_{L^{2}(\Omega)}^{2}\leq\frac{1}{h^{2}}\Delta^{NL}-\int_{\Omega}II\lesssim_{m}(\frac{1}{h^{2}}\Delta^{NL})\vee(\Delta^{NL})^{1/2}.\label{eq:NLbendingeq3}
\end{equation}
Combining \prettyref{eq:NLbendingeq2}
and \prettyref{eq:NLbendingeq3} gives the desired result.
\end{proof}

\subsubsection{Proof of the ansatz-free lower bound}

We now combine the above estimates with the Gagliardo-Nirenberg interpolation inequalities from \prettyref{sec:Appendix} to prove the desired lower bound. At this stage, the argument is more-or-less parallel to the one given for the vKD model in \prettyref{sub:largemandrelLB_FvK}.

\begin{proof}[Proof of \prettyref{prop:NLLBs_largemandr}] Introduce
the radial displacement, $\phi_{\rho}=\Phi_{\rho}-\varrho$. As a
result of \prettyref{lem:NLmembrLB}, \prettyref{cor:NLbucklingcontrol},
and \prettyref{lem:NLbendingcontrol}, we have the following estimates:
\[
\Delta^{NL}\gtrsim(\varrho^{2}-1)\norm{\phi_{\rho}}_{L^{1}(\Omega)},
\]
\[
\max\left\{ \frac{1}{h^{2}}\Delta^{NL},(\Delta^{NL})^{1/2}\right\} \gtrsim_{\varrho_{0},m}\max\left\{ \norm{D^{2}\phi_{\rho}}_{L^{2}(\Omega)}^{2},\norm{\phi_{\rho}}_{L^{1}(\Omega)}\right\} ,
\]
and 
\[
\max\{\int_{A}\norm{\partial_{z}\phi_{\rho}}_{L_{z}^{2}}^{2}\,d\theta,(\Delta^{NL})^{1/2}\}\gtrsim_{\varrho_{0},m}\lambda\abs{A}\quad\forall\,A\in\cB(I_{\theta}).
\]
We now conclude the proof by a case analysis. 

First, consider the case that $\frac{1}{h^{2}}\Delta^{NL}\leq(\Delta^{NL})^{1/2}$.
In this case, we conclude by Poincare's inequality (since $\phi_{\rho}\in H_{\text{per}}^{2}$)
that
\[
(\Delta^{NL})^{1/2}\gtrsim_{\varrho_{0},m}\norm{D^{2}\phi_{\rho}}_{L^{2}(\Omega)}^{2}\gtrsim\norm{\partial_{z}\phi_{\rho}}_{L^{2}(\Omega)}^{2}
\]
and hence that 
\[
\Delta^{NL}\gtrsim_{\varrho_{0},m}\lambda^{2}
\]
upon taking $A=I_{\theta}$.

In the opposite case, we have the lower bound 
\[
\Delta^{NL}\gtrsim_{\varrho_{0},m}\max\left\{ \left[(\varrho^{2}-1)\vee h^{2}\right]\norm{\phi_{\rho}}_{L^{1}(\Omega)},h^{2}\norm{D^{2}\phi_{\rho}}_{L^{2}(\Omega)}^{2}\right\} .
\]
Now, we give two separate arguments that combine to give the desired
result. First, we apply the interpolation inequality from \prettyref{lem:2dinterp-1}
to $\phi_{\rho}$ to conclude that
\begin{align*}
\norm{D\phi_{\rho}}_{L^{2}(\Omega)}^{2} &\lesssim_{\varrho_{0},m}\norm{D\phi_{\rho}}_{L^{\infty}(\Omega)}^{2/3}\left(\frac{1}{(\varrho^{2}-1)\vee h^{2}}\Delta^{NL}\right)^{2/3}\left(\frac{1}{h^{2}}\Delta^{NL}\right)^{1/3}\\
&\lesssim_{\varrho_{0},m}\left[(\varrho^{2}-1)\vee h^{2}\right]^{-2/3}h^{-2/3}\Delta^{NL}.
\end{align*}
Taking $A=I_{\theta}$ gives that
\[
\max\{\norm{\partial_{z}\phi_{\rho}}_{L^{2}(\Omega)}^{2},(\Delta^{NL})^{1/2}\}\gtrsim_{\varrho_{0},m}\lambda
\]
so that
\[
\max\left\{ \left[(\varrho^{2}-1)\vee h^{2}\right]^{-2/3}h^{-2/3}\Delta^{NL},(\Delta^{NL})^{1/2}\right\} \gtrsim_{\varrho_{0},m}\lambda.
\]
Therefore, we conclude by this argument that
\[
\Delta^{NL}\gtrsim_{\varrho_{0},m}\min\left\{ \lambda^{2},h^{2/3}\left[(\varrho^{2}-1)\vee h^{2}\right]^{2/3}\lambda\right\} .
\]

For the second argument, we begin by defining the sets 
\[
Z_{\epsilon}=\left\{ \theta\in I_{\theta}\ :\ \norm{\partial_{z}\phi_{\rho}}_{L_{z}^{2}}^{2}\geq\epsilon\lambda\right\} 
\]
for $\epsilon\in\R_{+}$. Choosing $A=I_{\theta}\backslash Z_{\epsilon}$
gives that
\[
\max\{\epsilon\lambda\abs{I_{\theta}\backslash Z_{\epsilon}},(\Delta^{NL})^{1/2}\}\geq c_{1}(\varrho_{0},m)\lambda\abs{I_{\theta}\backslash Z_{\epsilon}}.
\]
In particular, taking $\epsilon=c_{1}/2$, we conclude that 
\[
\Delta^{NL}\geq c_{1}^{2}\abs{I_{\theta}\backslash Z_{c_{1}/2}}^{2}\lambda^{2}.
\]
Now if $\abs{I_{\theta}\backslash Z_{c_{1}/2}}\geq\frac{1}{2}\abs{I_{\theta}}$,
we conclude that
\[
\Delta^{NL}\geq\frac{c_{1}^{2}}{4}\abs{I_{\theta}}^{2}\lambda^{2}.
\]
Otherwise, we are in the case where $\abs{Z_{c_{1}/2}}>\frac{1}{2}\abs{I_{\theta}}$. 

In this final case, we have that 
\[
\lambda^{5/7}\lesssim_{\varrho_{0},m}\frac{1}{2}\abs{I_{\theta}}(\frac{c_{1}}{2}\lambda)^{5/7}\leq\int_{Z_{c_{1}/2}}\norm{\partial_{z}\phi_{\rho}}_{L_{z}^{2}}^{10/7}\,d\theta\leq\int_{I_{\theta}}\norm{\partial_{z}\phi_{\rho}}_{L_{z}^{2}}^{10/7}\,d\theta.
\]
Applying the first interpolation inequality in \prettyref{lem:1dinterp}
to $\phi_{\rho}$, we get that 
\begin{align*}
\lambda^{5/7} & \lesssim_{\varrho_{0},m}\int_{I_{\theta}}\left(\norm{\phi_{\rho}}_{L_{z}^{1}}^{2/5}\norm{\partial_z^2\phi_{\rho}}_{L_{z}^{2}}^{3/5}\right)^{10/7}\,d\theta=\int_{I_{\theta}}\norm{\phi_{\rho}}_{L_{z}^{1}}^{4/7}\norm{\partial_z^2\phi_{\rho}}_{L_{z}^{2}}^{6/7}\,d\theta\\
 & \leq\norm{\phi_{\rho}}_{L^{1}(\Omega)}^{4/7}\norm{\partial_z^2\phi_{\rho}}_{L^{2}(\Omega)}^{6/7}
\end{align*}
after an application of H\"older's inequality. It follows that
\[
\lambda^{5/7}\lesssim_{\varrho,m}\left(\frac{1}{(\varrho^{2}-1)\vee h^{2}}\Delta^{NL}\right)^{4/7}\left(\frac{1}{h^{2}}\Delta^{NL}\right)^{3/7}=\left[(\varrho^{2}-1)\vee h^{2}\right]^{-4/7}h^{-6/7}\Delta^{NL}
\]
and so we conclude the second result:
\[
\Delta^{NL}\gtrsim_{\varrho_{0},m}\min\left\{ \lambda^{2},\lambda^{5/7}[(\varrho^{2}-1)\vee h^{2}]^{4/7}h^{6/7}\right\} .
\]

In conclusion, we have proved that
\[
\Delta^{NL}\gtrsim_{\varrho_{0},m}\min\left\{ \lambda^{2},\min\left\{ \lambda^{2},h^{2/3}\left[(\varrho^{2}-1)\vee h^{2}\right]^{2/3}\lambda\right\} \vee\min\left\{ \lambda^{2},\lambda^{5/7}[(\varrho^{2}-1)\vee h^{2}]^{4/7}h^{6/7}\right\} \right\} ,
\]
which is simply a restatement of the desired result.

\end{proof}

\section{Ansatz-free lower bounds in the neutral mandrel case \label{sec:neutralmandrelLB}}

In this section, we prove the lower bounds from \prettyref{thm:FvKneutralbounds}
and \prettyref{thm:NLneutralbounds}. We begin with the vKD model
in \prettyref{sub:neutralmandrelLB_FvK}. There, we introduce the
free-shear functional from \prettyref{eq:FS} as a bounding device
and prove its minimum energy scaling law. Then, we turn to the nonlinear
model in \prettyref{sub:neutralmandrelLB_NL}.

\subsection{vKD model\label{sub:neutralmandrelLB_FvK}}

In the neutral mandrel case, where $\varrho=1$, the estimates proved
in \prettyref{sub:largemandrelLB_FvK} do not lead to useful lower
bounds on $E_{h}^{vKD}$. Nevertheless, buckling in the presence of
the mandrel continues to induce tensile hoop stresses when $\varrho=1$,
and this can still be used to prove non-trivial lower bounds. We emphasize
here that it is not clear at first the degree of success that we should
expect from this approach: indeed, the magnitude of the hoop stresses
induced by the mandrel vanish as $h\to0$ in the neutral mandrel case.
This is in stark contrast with the large mandrel case, where the
effective hoop streses are of order one and the excess hoop stresses
set the minimum energy scaling law. For more on this, we refer the
reader to the discussion in \prettyref{sub:DiscussionofProofs}.

Let us briefly recall from \prettyref{sub:neutralmandrelresults}
our approach to \prettyref{thm:FvKneutralbounds}: introducing the
free-shear functional,
\[
FS_{h}(\phi)=\int_{\Omega}\,\abs{\epsilon_{\theta\theta}}^{2}+\abs{\epsilon_{zz}}^{2}+h^{2}\abs{D^{2}\phi_{\rho}}^{2}\,d\theta dz,
\]
we observe that 
\[
E_{h}^{vKD}(\phi)\geq FS_{h}(\phi)\quad\forall\,\phi\in A_{\lambda,\varrho,m}^{vKD}
\]
since in the definition of $FS_{h}$ we have simply neglected the
cost of shear in the membrane term. Thus, lower bounds on the minimum
of $FS_{h}$ give lower bounds on the minimum of $E_{h}^{vKD}$. In
the present section, we give the optimal argument along these lines.
To do so, we answer the following question: what is the minimum energy
scaling law of the free-shear functional? 

Let $A_{\lambda,m}=A_{\lambda,1,m}^{vKD}$.
\begin{prop}
\label{prop:FSscalinglaw} Let $h,\lambda\in(0,\frac{1}{2}]$ and
$m\in[2,\infty)$. Then we have that
\[
\min_{A_{\lambda,m}}\,FS_{h}\sim_{m}\min\left\{ \max\{h\lambda^{3/2},(h\lambda)^{12/11}\},\lambda^{2}\right\} 
\]
In the case that $m=\infty$, we have that
\[
\min_{A_{\lambda,\infty}}\,FS_{h}\sim\min\left\{ (h\lambda)^{12/11},\lambda^{2}\right\} .
\]
\end{prop}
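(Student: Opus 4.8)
The plan is to prove matching upper and lower bounds, following the template of \prettyref{sub:largemandrelLB_FvK} but taking full advantage of the fact that shear is \emph{free} in $FS_h$. For the upper bound I would test with ``traveling wrinkle'' patterns. Given a nonnegative radial profile $\phi_\rho$, the in-surface displacements can be taken to be the pointwise-optimal ones: setting $\partial_z\phi_z=-\tfrac12(\partial_z\phi_\rho)^2+\tfrac12\norm{\partial_z\phi_\rho(\theta,\cdot)}_{L_z^2}^2-\lambda$ keeps $\phi_z+\lambda z$ periodic and makes $\epsilon_{zz}(\theta,z)=\tfrac12\norm{\partial_z\phi_\rho(\theta,\cdot)}_{L_z^2}^2-\lambda$, while an analogous choice of $\partial_\theta\phi_\theta$ keeps $\phi_\theta$ periodic and makes $\epsilon_{\theta\theta}(z)=\overline{\tfrac12|\partial_\theta\phi_\rho|^2+\phi_\rho}(z)$ depend only on $z$. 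The successful $\phi_\rho$ consists of $n$ narrow bumps in $z$, of common height $a$ and width $\ell$, whose centers drift together through a $z$-interval of length $A$ as $\theta$ sweeps $I_\theta$, with $a$ tuned so that $\tfrac12\norm{\partial_z\phi_\rho(\theta,\cdot)}_{L_z^2}^2\equiv\lambda$ (hence $\epsilon_{zz}\equiv0$ and $na^2\sim\lambda\ell$). A direct computation then gives $\norm{\overline{\phi_\rho}}_{L_z^2}^2\sim\lambda\ell^3/A$, $\norm{\overline{|\partial_\theta\phi_\rho|^2}}_{L_z^2}^2\sim\lambda^2A^3/n$, and $h^2\norm{D^2\phi_\rho}_{L^2}^2\sim h^2\lambda/\ell^2$ when $A\lesssim1$, so that $FS_h\lesssim\lambda^2A^3/n+\lambda\ell^3/A+h^2\lambda/\ell^2$. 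Optimizing over $\ell$, then over $A$ with $n\sim 1/A$ as large as the geometry permits, yields $FS_h\lesssim(h\lambda)^{12/11}$; together with the unbuckled competitor $\phi=(0,0,-\lambda z)$, whose energy is $\sim\lambda^2$, this gives the upper bound for $m=\infty$. When $m<\infty$ the slope constraint caps $n$ and $a$, and re-optimizing produces the extra branch $h\lambda^{3/2}$, which dominates exactly when $h\lesssim\lambda^{9/2}$. I would package all of this as a family of ``admissibility plus energy estimate'' lemmas in the spirit of \prettyref{lem:FvKadmissability}.

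For the lower bound --- the substantive half --- I would argue ansatz-free. Fix $\phi\in A_{\lambda,m}$ and assume $FS_h\leq1$ (otherwise there is nothing to prove). Three facts are available. First, since $\int_{I_\theta}\epsilon_{\theta\theta}\,d\theta$ equals the $\theta$-integral of $\tfrac12|\partial_\theta\phi_\rho|^2+\phi_\rho$ (by periodicity of $\phi_\theta$) and $\phi_\rho\geq0$, Jensen's inequality gives
\[
FS_h\;\gtrsim\;\norm{\overline{\tfrac12|\partial_\theta\phi_\rho|^2+\phi_\rho}}_{L_z^2}^2\;\gtrsim\;\norm{\phi_\rho}_{L_z^2L_\theta^1}^2+\norm{\partial_\theta\phi_\rho}_{L_z^4L_\theta^2}^4 ,
\]
using $\overline{\phi_\rho}(z)=|I_\theta|^{-1}\norm{\phi_\rho(\cdot,z)}_{L_\theta^1}$ and $\overline{|\partial_\theta\phi_\rho|^2}(z)=|I_\theta|^{-1}\norm{\partial_\theta\phi_\rho(\cdot,z)}_{L_\theta^2}^2$; by H\"older in $z$ this also gives $\norm{\phi_\rho}_{L^1(\Omega)}\lesssim(FS_h)^{1/2}$. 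Second, $FS_h\gtrsim h^2\norm{D^2\phi_\rho}_{L^2}^2$, trivially. Third, integrating $\epsilon_{zz}$ in $z$ against periodicity and applying Chebyshev's inequality, exactly as in \prettyref{cor:FvKLB_largemandrel2}, shows that unless $FS_h\gtrsim\lambda^2$ there is a $\theta$-set of measure $\gtrsim1$ on which $\norm{\partial_z\phi_\rho(\theta,\cdot)}_{L_z^2}^2\gtrsim\lambda$. From here I would run two arguments in parallel with the two corollaries of \prettyref{sub:largemandrelLB_FvK}. Combining the third fact, the bound $\norm{\phi_\rho}_{L^1(\Omega)}\lesssim(FS_h)^{1/2}$, the bending bound, and the $L^\infty$ slope bound via \prettyref{lem:2dinterp-1} gives $FS_h\gtrsim_m h\lambda^{3/2}$ --- the exact analog of \prettyref{cor:FvKLB_largemandr}, with the effective-stress term replaced by $\norm{\phi_\rho}_{L_z^2L_\theta^1}^2$. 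Separately, applying the one-dimensional interpolation inequality of \prettyref{lem:1dinterp} on $z$-slices over the good $\theta$-set, integrating in $\theta$, and feeding in the $\theta$-direction Poincar\'e and Gagliardo--Nirenberg estimates coming from the first fact together with the bending bound, one balances powers of $\lambda$, $h$ and $FS_h$ to obtain $FS_h\gtrsim(h\lambda)^{12/11}$. Taking the maximum of the two bounds and the minimum with $\lambda^2$ completes both assertions.

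The main obstacle is this last interpolation step. Because the mandrel is neutral there is no effective stress to control $\phi_\rho$ in an isotropic norm, and the cylinder ``sees'' $\phi_\rho$ only through the hoop-averaged combination in the first fact; carrying the anisotropic norms $L_z^2L_\theta^1$ and $L_z^4L_\theta^2$ --- rather than $L^1(\Omega)$, which is too lossy here --- through a chain of one- and two-dimensional Gagliardo--Nirenberg inequalities without degrading the exponent $12/11$ is what requires care. (This is precisely what forces the mixed norm into the statements of \prettyref{thm:FvKneutralbounds} and \prettyref{thm:NLneutralbounds}.) A minor technical matter is checking that the traveling-wrinkle test functions genuinely lie in $A_{\lambda,m}$: one must verify that the induced $\phi_\theta,\phi_z$ are periodic $H^1$ functions and, when $m<\infty$, that every $\norm{\partial_i\phi_j}_{L^\infty}$ respects the bound $m$ after the re-optimization.
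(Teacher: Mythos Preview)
Your overall architecture matches the paper's: upper bounds via tilted (``traveling'') wrinkle constructions that make $\epsilon_{zz}=0$ and $\epsilon_{\theta\theta}=\overline{\epsilon_{\theta\theta}}$, and ansatz-free lower bounds built on the three facts you list (which are exactly the content of \prettyref{lem:FSLBs}). Your $h\lambda^{3/2}$ lower bound via \prettyref{lem:2dinterp-1} is precisely \prettyref{cor:FSLB-1}, and the upper bound constructions are the same in spirit as the paper's $f_{\delta,n}(\tfrac{\theta}{2\pi}+kz)$ family, though parameterized differently.

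The gap is in your route to the $(h\lambda)^{12/11}$ lower bound. You propose to apply the one-dimensional inequality of \prettyref{lem:1dinterp} on $z$-slices, i.e.\ $\norm{\partial_z\phi_\rho(\theta,\cdot)}_{L_z^2}\lesssim\norm{\phi_\rho(\theta,\cdot)}_{L_z^1}^{2/5}\norm{\partial_z^2\phi_\rho(\theta,\cdot)}_{L_z^2}^{3/5}$, then integrate in $\theta$. But this produces $\norm{\phi_\rho}_{L_\theta^pL_z^1}$ on the right, whereas what you control is $\norm{\phi_\rho}_{L_z^2L_\theta^1}$---the order of integration is wrong. If you pass to $\norm{\phi_\rho}_{L^1(\Omega)}\lesssim FS_h^{1/2}$ you recover only $FS_h\gtrsim h^{6/5}\lambda$; if instead you use a $\theta$-Sobolev embedding to get $\norm{\phi_\rho}_{L_\theta^\infty L_z^2}\lesssim FS_h^{1/4}$ and feed that in, the exponents balance to $FS_h\gtrsim h^{3/2}\lambda^{5/4}$, still not $(h\lambda)^{12/11}$. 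The $L^1$--$L^2$--$L^2$ inequality on $z$-slices is the right tool for the large-mandrel case (\prettyref{cor:FvKLB_largemandrel2}) but is too lossy here, exactly as you anticipate in your final paragraph.

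The paper's route is different and is the missing idea: use the isotropic $L^2$--$L^2$--$L^2$ interpolation $\norm{D\phi_\rho}_{L^2}\lesssim\norm{\phi_\rho}_{L^2}^{1/2}\norm{D^2\phi_\rho}_{L^2}^{1/2}$ (the third inequality in \prettyref{lem:2dinterp}), and then bound $\norm{\phi_\rho}_{L^2(\Omega)}$ by the anisotropic interpolation inequality
\[
\norm{\phi_\rho}_{L^2(\Omega)}\lesssim\norm{\phi_\rho}_{L_z^2L_\theta^1}+\norm{\partial_\theta\phi_\rho}_{L_z^4L_\theta^2}^{1/3}\norm{\phi_\rho}_{L_z^2L_\theta^1}^{2/3}
\]
(\prettyref{lem:mixedinterp_1}), which is a one-dimensional Gagliardo--Nirenberg inequality in $\theta$ integrated in $z$. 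Substituting $\norm{\phi_\rho}_{L_z^2L_\theta^1}\lesssim FS_h^{1/2}$, $\norm{\partial_\theta\phi_\rho}_{L_z^4L_\theta^2}\lesssim FS_h^{1/4}$, and $\norm{D^2\phi_\rho}_{L^2}\lesssim h^{-1}FS_h^{1/2}$ then yields $h\lambda\lesssim FS_h^{11/12}$ on the nose. So the ``$\theta$-direction Gagliardo--Nirenberg'' you allude to is indeed the key, but it must be combined with the $L^2$ (not $L^1$) version of the $z$-interpolation, and the specific anisotropic inequality \prettyref{lem:mixedinterp_1} is what makes the exponents close.
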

\begin{rem}
\label{rem:blowupFS} As in the analysis of the large mandrel case,
we can quantify the blow-up rate of $\norm{D\phi}_{L^{\infty}}$ for
the free-shear functional as $h\to0$. See \prettyref{sub:BlowuprateFS}
for the precise statement of this result. \end{rem}
\begin{proof}
The asserted lower bounds follow from \prettyref{cor:FSLB-1} and
\prettyref{cor:FSLB-2}. The upper bound of $\lambda^{2}$ is achieved
by the unbuckled configuration, $\phi=(0,0,-\lambda z)$. To prove
the remainder of the upper bound, note first that it suffices to achieve
it for $(h,\lambda,m)\in(0,h_{0}]\times(0,\frac{1}{2}]\times[2,\infty)$
for some $h_{0}\in(0,\frac{1}{2}]$. So, we take $h_{0}=\frac{1}{2^{10}}$
and apply \prettyref{lem:FSUB_manywrinkles_tilted}, \prettyref{lem:FSUB_onewrinkle_tilted_long},
and \prettyref{lem:FSUB_onewrinkle_tilted} to get that 
\[
\min_{A_{\lambda,m}}\,FS_{h}\lesssim\min\left\{ \lambda^{2},\max\left\{ m^{-1/2}h\lambda^{3/2},(h\lambda)^{12/11},h^{6/5}\lambda\right\} \right\} 
\]
in the stated parameter range. Since 
\[
\min\left\{ \lambda^{2},\max\left\{ (h\lambda)^{12/11},h^{6/5}\lambda\right\} \right\} =\min\left\{ \lambda^{2},(h\lambda)^{12/11}\right\} 
\]
the result follows.
\end{proof}
This result shows that the free-shear functional prefers three types
of low-energy patterns if $m<\infty$, and two if $m=\infty$. See
\prettyref{fig:FSheightfield} for a schematic of these patterns.

\subsubsection{Lower bounds on the free-shear functional}

Here, we prove the lower bound from \prettyref{prop:FSscalinglaw}.
Our first result is the free-shear version of \prettyref{lem:FvKLBs_largemandr}.
\begin{lem}
\label{lem:FSLBs}Let $\phi\in A_{\lambda,\infty}$. Then we have
that 
\[
FS_{h}(\phi)\gtrsim\max\left\{ \norm{\phi_{\rho}}_{L_{z}^{2}L_{\theta}^{1}}^{2},\norm{\partial_{\theta}\phi_{\rho}}_{L_{z}^{4}L_{\theta}^{2}}^{4},h^{2}\norm{D^{2}\phi_{\rho}}_{L^{2}(\Omega)}^{2},\norm{\frac{1}{2}\norm{\partial_{z}\phi_{\rho}}_{L_{z}^{2}}^{2}-\lambda}_{L_{\theta}^{2}}^{2}\right\} .
\]
\end{lem}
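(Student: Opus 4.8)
The plan is to imitate the argument for \prettyref{lem:FvKLBs_largemandr}, specialized to $\varrho=1$, but working hoop-by-hoop so as to capture the anisotropic norms. Write $\phi=(w,u_\theta,u_z-\lambda z)$ in cylindrical coordinates, where $w=\phi_\rho\ge 0$ (this uses the obstacle constraint with $\varrho=1$), $u_\theta=\phi_\theta\in H^2_{\text{per}}$, and $u_z\in H^2_{\text{per}}$. From \prettyref{eq:FvKstrain} the relevant strain components are
\[
\epsilon_{\theta\theta}=\partial_\theta u_\theta+\tfrac12(\partial_\theta w)^2+w,\qquad \epsilon_{zz}=\partial_z u_z-\lambda+\tfrac12(\partial_z w)^2,
\]
and since $FS_h(\phi)=\int_\Omega|\epsilon_{\theta\theta}|^2+|\epsilon_{zz}|^2+h^2|D^2w|^2$, the bound $FS_h(\phi)\ge h^2\norm{D^2\phi_\rho}_{L^2(\Omega)}^2$ is immediate.

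For the term involving $\tfrac12\norm{\partial_z\phi_\rho}_{L_z^2}^2-\lambda$, I would integrate $\epsilon_{zz}$ over $I_z$: by periodicity of $u_z$ one has $\int_{I_z}\epsilon_{zz}\,dz=\tfrac12\norm{\partial_z w}_{L_z^2}^2-\lambda$ (using $|I_z|=1$), and then Jensen's (or Cauchy--Schwarz) along the slices $I_z\times\{\theta\}$ gives
\[
FS_h(\phi)\ge\norm{\epsilon_{zz}}_{L^2(\Omega)}^2\ge\int_{I_\theta}\Big|\int_{I_z}\epsilon_{zz}\,dz\Big|^2\,d\theta=\Norm{\tfrac12\norm{\partial_z\phi_\rho}_{L_z^2}^2-\lambda}_{L_\theta^2}^2,
\]
exactly as in \prettyref{lem:FvKLBs_largemandr}. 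For the two remaining terms I would average $\epsilon_{\theta\theta}$ over each hoop $I_\theta\times\{z\}$: by periodicity of $u_\theta$ the shear contribution drops, leaving $\int_{I_\theta}\epsilon_{\theta\theta}\,d\theta=\int_{I_\theta}\tfrac12(\partial_\theta w)^2+w\,d\theta$, and since both summands are nonnegative this quantity is $\ge\max\{\tfrac12\norm{\partial_\theta w}_{L_\theta^2}^2,\ \norm{w}_{L_\theta^1}\}$ for a.e.\ $z$. Squaring and applying Cauchy--Schwarz over the hoop, $\big(\int_{I_\theta}\epsilon_{\theta\theta}\big)^2\le|I_\theta|\int_{I_\theta}\epsilon_{\theta\theta}^2$, then integrating in $z$, I obtain
\[
|I_\theta|\,FS_h(\phi)\ge\int_{I_z}\Big(\int_{I_\theta}\epsilon_{\theta\theta}\,d\theta\Big)^2\,dz\ge\max\Big\{\tfrac14\norm{\partial_\theta\phi_\rho}_{L_z^4L_\theta^2}^4,\ \norm{\phi_\rho}_{L_z^2L_\theta^1}^2\Big\},
\]
where the last inequality uses that $\big(\int_{I_\theta}\tfrac12(\partial_\theta w)^2\big)^2=\tfrac14\norm{\partial_\theta w}_{L_\theta^2}^4$ and $\big(\int_{I_\theta}w\big)^2=\norm{w}_{L_\theta^1}^2$ (again $w\ge0$). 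Combining the four displays (and absorbing the fixed constant $|I_\theta|=2\pi$) gives the claim.

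There is no genuinely hard step here; the argument is elementary calculus together with Jensen/Cauchy--Schwarz. The one point worth flagging is \emph{why} the anisotropic norms $\norm{\cdot}_{L_z^2L_\theta^1}$ and $\norm{\partial_\theta\phi_\rho}_{L_z^4L_\theta^2}$ appear rather than full $L^2(\Omega)$ norms: discarding the shear term $\partial_\theta u_\theta$ from $\epsilon_{\theta\theta}$ is only legitimate after integrating in $\theta$, so we can control the hoop-averaged hoop stretching but not its pointwise value — this is precisely the loss of information that the free-shear functional encodes, and it is what forces the lower bounds in \prettyref{thm:FvKneutralbounds} to have the shape they do.
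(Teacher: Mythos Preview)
Your proof is correct and follows essentially the same approach as the paper: average $\epsilon_{\theta\theta}$ over each hoop (using periodicity of $\phi_\theta$ and $\phi_\rho\ge0$) to extract the anisotropic norms, and average $\epsilon_{zz}$ over $I_z$ (using periodicity of $u_z$) to extract the buckling term, with the bending bound immediate. One trivial slip: in the vKD admissible set $A_{\lambda,\infty}$ you only have $\phi_\theta\in H^1_{\text{per}}$ and $u_z\in H^1_{\text{per}}$, not $H^2_{\text{per}}$, but this is irrelevant since only periodicity is used.
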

\begin{proof}
By the definition of $FS_{h}$ in \prettyref{eq:FS}, we have that
\[
FS_{h}(\phi)=\int_{\Omega}\,\abs{\partial_{\theta}\phi_{\theta}+\frac{1}{2}(\partial_{\theta}\phi_{\rho})^{2}+\phi_{\rho}}^{2}+\abs{\partial_{z}\phi_{z}+\frac{1}{2}(\partial_{z}\phi_{\rho})^{2}}^{2}+h^{2}\abs{D^{2}\phi_{\rho}}^{2}\,d\theta dz.
\]
Applying Jensen's inequality in the $\theta$-direction and using
that $\phi_{\theta}\in H_{\text{per}}^{1}$ and that $\phi_{\rho}\geq0$
we see that
\[
\norm{\partial_{\theta}\phi_{\theta}+\frac{1}{2}(\partial_{\theta}\phi_{\rho})^{2}+\phi_{\rho}}_{L^{2}(\Omega)}\gtrsim\norm{\int_{I_{\theta}}\partial_{\theta}\phi_{\theta}+\frac{1}{2}(\partial_{\theta}\phi_{\rho})^{2}+\phi_{\rho}\,d\theta}_{L_{z}^{2}}\gtrsim\norm{\partial_{\theta}\phi_{\rho}}_{L_{z}^{4}L_{\theta}^{2}}^{2}\vee\norm{\phi_{\rho}}_{L_{z}^{2}L_{\theta}^{1}}.
\]
Applying Jensen's inequality in the $z$-direction and using that
$\phi_{z}+\lambda z\in H_{\text{per}}^{1}$ we see that
\[
\norm{\partial_{z}\phi_{z}+\frac{1}{2}(\partial_{z}\phi_{\rho})^{2}}_{L^{2}(\Omega)}\gtrsim\norm{\int_{I_{z}}\partial_{z}\phi_{z}+\frac{1}{2}(\partial_{z}\phi_{\rho})^{2}\,dz}_{L_{\theta}^{2}}=\norm{\frac{1}{2}\norm{\partial_{z}\phi_{\rho}}_{L_{z}^{2}}^{2}-\lambda}_{L_{\theta}^{2}}.
\]
The result now follows.
\end{proof}
Now, we apply the Gagliardo-Nirenberg interpolation inequalities from
\prettyref{sec:Appendix} to deduce the desired lower bounds.
\begin{cor}
\label{cor:FSLB-1} If $\phi\in A_{\lambda,m}$, then
\[
FS_{h}(\phi)\gtrsim\min\{ m^{-1}h\lambda^{3/2},\lambda^{2}\} 
\]
whenever $h,\lambda\in(0,\infty)$ and $m\in(0,\infty]$.

In fact, if $\phi\in A_{\lambda,\infty}$, then 
\[
FS_{h}(\phi)\gtrsim\min\{\norm{D\phi_{\rho}}_{L^{\infty}(\Omega)}^{-1}h\lambda^{3/2},\lambda^{2}\}.
\]
\end{cor}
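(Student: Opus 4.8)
The plan is to mirror the argument for \prettyref{cor:FvKLB_largemandr}, with \prettyref{lem:FSLBs} playing the role of \prettyref{lem:FvKLBs_largemandr}. First I would extract from \prettyref{lem:FSLBs} the three inequalities I actually need. Since $\norm{\phi_\rho}_{L^1(\Omega)}\le|I_z|^{1/2}\norm{\phi_\rho}_{L_z^2L_\theta^1}$ by Cauchy--Schwarz in the $z$-variable, \prettyref{lem:FSLBs} gives $\norm{\phi_\rho}_{L^1(\Omega)}\lesssim FS_h(\phi)^{1/2}$; it also gives $h^2\norm{D^2\phi_\rho}_{L^2(\Omega)}^2\le FS_h(\phi)$ and the ``buckling'' bound $\norm{\frac{1}{2}\norm{\partial_z\phi_\rho}_{L_z^2}^2-\lambda}_{L_\theta^2}^2\le FS_h(\phi)$. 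Applying H\"older in $\theta$ and the triangle inequality to the last of these yields $\frac{1}{2}\norm{\partial_z\phi_\rho}_{L^2(\Omega)}^2+|\Omega|^{1/2}FS_h(\phi)^{1/2}\ge\lambda|I_\theta|$.

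Then I would split into two cases exactly as in \prettyref{cor:FvKLB_largemandr}. In the nearly unbuckled case $\norm{\partial_z\phi_\rho}_{L^2(\Omega)}^2\le\lambda|I_\theta|$ the displayed inequality forces $FS_h(\phi)\gtrsim\lambda^2$ directly. In the complementary case $\lambda\lesssim\norm{\partial_z\phi_\rho}_{L^2(\Omega)}^2$, I would apply the Gagliardo--Nirenberg interpolation inequality \prettyref{lem:2dinterp-1} to $f=\phi_\rho$ (whose $z$-derivative is unchanged by additive constants), obtaining
\[
\lambda\lesssim\norm{\partial_z\phi_\rho}_{L^2(\Omega)}^2\lesssim\norm{D\phi_\rho}_{L^\infty(\Omega)}^{2/3}\,\norm{\phi_\rho}_{L^1(\Omega)}^{2/3}\,\norm{D^2\phi_\rho}_{L^2(\Omega)}^{2/3}.
\]
Substituting $\norm{\phi_\rho}_{L^1(\Omega)}\lesssim FS_h(\phi)^{1/2}$ and $\norm{D^2\phi_\rho}_{L^2(\Omega)}^{2/3}=(\norm{D^2\phi_\rho}_{L^2(\Omega)}^2)^{1/3}\le(h^{-2}FS_h(\phi))^{1/3}$ collapses the right-hand side to $\norm{D\phi_\rho}_{L^\infty(\Omega)}^{2/3}FS_h(\phi)^{2/3}h^{-2/3}$, and rearranging gives $FS_h(\phi)\gtrsim\norm{D\phi_\rho}_{L^\infty(\Omega)}^{-1}h\lambda^{3/2}$. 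Together the two cases give the ``in fact'' statement for $\phi\in A_{\lambda,\infty}$; bounding $\norm{D\phi_\rho}_{L^\infty(\Omega)}\le m$ for $\phi\in A_{\lambda,m}$ then yields the first assertion, and since the only $m$-dependence is the explicit factor $m^{-1}$, the implied constant is absolute (matching the stated $\gtrsim$, not $\gtrsim_m$). Note that no upper bound on $h,\lambda$ is used, consistent with the claimed range.

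The argument is essentially routine once \prettyref{lem:FSLBs} is in hand, so there is no serious obstacle; the only point requiring attention is that \prettyref{lem:FSLBs} controls the anisotropic norm $\norm{\phi_\rho}_{L_z^2L_\theta^1}$ rather than $\norm{\phi_\rho}_{L^1(\Omega)}$, and the cheap inclusion $\norm{\cdot}_{L^1(\Omega)}\lesssim\norm{\cdot}_{L_z^2L_\theta^1}$ is what lets me invoke \prettyref{lem:2dinterp-1} unchanged. I would also double-check that the $2/3$--$2/3$--$2/3$ exponent split in \prettyref{lem:2dinterp-1} is precisely what makes the $FS_h$-controlled quantities combine into the power $h\lambda^{3/2}$ with a single factor of $\norm{D\phi_\rho}_{L^\infty}^{-1}$. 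It is worth emphasising that this factor is genuinely unavoidable here --- it is the slope that the interpolation inequality cannot eliminate --- which is why this branch of the lower bound degrades as $m\to\infty$, and why the complementary estimate \prettyref{cor:FSLB-2}, built on $\norm{\partial_\theta\phi_\rho}_{L_z^4L_\theta^2}$ in place of the $L^\infty$ slope, is needed to obtain a bound that survives $m=\infty$.
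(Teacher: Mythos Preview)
Your proposal is correct and follows essentially the same approach as the paper's own proof: both extract from \prettyref{lem:FSLBs} the buckling estimate, the $L^1$ control on $\phi_\rho$ (via the $L_z^2L_\theta^1$ bound and H\"older/Cauchy--Schwarz in $z$), and the bending control, then perform the identical case split and invoke \prettyref{lem:2dinterp-1} in the buckled case. Your commentary about the role of the $L^\infty$ slope factor and the relation to \prettyref{cor:FSLB-2} is also accurate.
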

\begin{proof}
Observe that by \prettyref{lem:FSLBs} and H\"older's inequality,
we have that 
\[
c_{1}\left(FS_{h}(\phi)\right)^{1/2}\geq\norm{\frac{1}{2}\norm{\partial_{z}\phi_{\rho}}_{L_{z}^{2}}^{2}-\lambda}_{L_{\theta}^{1}}
\]
for some numerical constant $c_{1}$.
Hence, by the triangle inequality,
\[
\frac{1}{2}\norm{\partial_{z}\phi_{\rho}}_{L^{2}(\Omega)}^{2}+c_{1}(FS_{h}(\phi))^{1/2}\geq\lambda|I_{\theta}|.
\]
Now we perform a case analysis. If $\phi$ satisfies $\norm{\partial_{z}\phi_{\rho}}_{L^{2}(\Omega)}^{2}\leq\lambda|I_{\theta}|$,
then we conclude by the above that $FS_{h}(\phi)\gtrsim\lambda^{2}$.

On the other hand, suppose that $\phi$ satisfies $\norm{\partial_{z}\phi_{\rho}}_{L^{2}(\Omega)}^{2}>\lambda|I_{\theta}|$.
Then, observe that by \prettyref{lem:FSLBs} and H\"older's inequality,
\[
FS_{h}(\phi)\gtrsim\max\left\{ \norm{\phi_{\rho}}_{L^{1}(\Omega)}^{2},h^{2}\norm{D^{2}\phi_{\rho}}_{L^{2}(\Omega)}^{2}\right\} .
\]
Combining this with the interpolation inequality from \prettyref{lem:2dinterp-1},
we conclude that 
\[
\lambda^{1/2}\lesssim\norm{D\phi_{\rho}}_{L^{2}(\Omega)}\lesssim\norm{D\phi_{\rho}}_{L^{\infty}(\Omega)}^{1/3}\norm{\phi_{\rho}}_{L^{1}(\Omega)}^{1/3}\norm{D^{2}\phi_{\rho}}_{L^{2}(\Omega)}^{1/3}\lesssim m^{1/3}h^{-1/3}(FS_{h}(\phi))^{1/3}
\]
and the result follows.\end{proof}
\begin{cor}
\label{cor:FSLB-2} If $\phi\in A_{\lambda,m}$, then
\[
FS_{h}(\phi)\gtrsim\min\left\{ (h\lambda)^{12/11},\lambda^{2}\right\} 
\]
whenever $h,\lambda\in(0,1]$ and $m\in(0,\infty]$.\end{cor}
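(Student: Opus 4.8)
The plan is to argue by interpolation, uniformly in $m$, following the same skeleton as the proof of \prettyref{cor:FvKLB_largemandrel2} in the large mandrel case but feeding in the mixed-norm information recorded in \prettyref{lem:FSLBs}. Write $\mathcal{E}=FS_h(\phi)$. Since $\min\{(h\lambda)^{12/11},\lambda^2\}\le\lambda^2\le 1$, we may assume throughout that $\mathcal{E}\le c_0\lambda^2$ for a small numerical constant $c_0$ to be fixed (otherwise $\mathcal{E}\gtrsim\lambda^2$ and there is nothing to prove); in particular $\mathcal{E}\le 1$.

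The first step is a buckling bound. Combining the last term of \prettyref{lem:FSLBs} with H\"older's inequality and the triangle inequality, exactly as in the proof of \prettyref{cor:FSLB-1}, gives
\[
\tfrac12\norm{\partial_z\phi_\rho}_{L^2(\Omega)}^2+|I_\theta|^{1/2}\mathcal{E}^{1/2}\ge\lambda|I_\theta|,
\]
so that for $c_0$ small enough $\norm{\partial_z\phi_\rho}_{L^2(\Omega)}^2\gtrsim\lambda$, hence $\norm{D\phi_\rho}_{L^2(\Omega)}\gtrsim\lambda^{1/2}$. The second, and most delicate, step is to upgrade the weak membrane control in \prettyref{lem:FSLBs} to a genuine $L^2(\Omega)$ bound on $\phi_\rho$. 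That lemma furnishes $\norm{\phi_\rho}_{L_z^2L_\theta^1}^2\lesssim\mathcal{E}$ and $\norm{\partial_\theta\phi_\rho}_{L_z^4L_\theta^2}^4\lesssim\mathcal{E}$. Applying the one-dimensional Gagliardo\textendash Nirenberg inequality on the circle $I_\theta$ to $\phi_\rho(\cdot,z)$ for a.e.\ $z$ (whose scale-invariant part has exponents $2/3$ on $\norm{\cdot}_{L_\theta^1}$ and $1/3$ on $\norm{\partial_\theta\cdot}_{L_\theta^2}$), then taking $L_z^2$ and using H\"older in $z$ together with $|I_z|=1$, I expect
\[
\norm{\phi_\rho}_{L^2(\Omega)}\lesssim\norm{\phi_\rho}_{L_z^2L_\theta^1}^{2/3}\norm{\partial_\theta\phi_\rho}_{L_z^4L_\theta^2}^{1/3}+\norm{\phi_\rho}_{L_z^2L_\theta^1}\lesssim\mathcal{E}^{5/12},
\]
the additive term being absorbed since $\mathcal{E}\le 1$.

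The third step is a bending interpolation. \prettyref{lem:FSLBs} also gives $\norm{D^2\phi_\rho}_{L^2(\Omega)}\lesssim h^{-1}\mathcal{E}^{1/2}$, so the two-dimensional interpolation inequality $\norm{Df}_{L^2(\Omega)}\lesssim\norm{f}_{L^2(\Omega)}^{1/2}\norm{D^2f}_{L^2(\Omega)}^{1/2}+\norm{f}_{L^2(\Omega)}$, valid for $f\in H_{\text{per}}^{2}(\Omega)$ (see \prettyref{sec:Appendix}), applied to $\phi_\rho$ together with the previous display yields
\[
\norm{D\phi_\rho}_{L^2(\Omega)}\lesssim h^{-1/2}\mathcal{E}^{11/24}+\mathcal{E}^{5/12}.
\]
Comparing with the buckling bound, $\lambda^{1/2}\lesssim h^{-1/2}\mathcal{E}^{11/24}+\mathcal{E}^{5/12}$, whence either $\mathcal{E}\gtrsim(h\lambda)^{12/11}$ or $\mathcal{E}\gtrsim\lambda^{6/5}\ge\lambda^2$ (the last inequality because $\lambda\le 1$); in both cases $\mathcal{E}\gtrsim\min\{(h\lambda)^{12/11},\lambda^2\}$, as desired. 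Note that $m$ never enters, which is the point of this bound as opposed to \prettyref{cor:FSLB-1}.

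The main obstacle is the middle step: the only $\theta$-direction information available on $\phi_\rho$ is the anisotropic norm $\norm{\cdot}_{L_z^2L_\theta^1}$, which on its own is too weak to interpolate usefully, and it can be promoted to an honest $L^2(\Omega)$ estimate only by spending $\norm{\partial_\theta\phi_\rho}_{L_z^4L_\theta^2}$ --- precisely the extra ingredient that \prettyref{lem:FSLBs} makes available in the free-shear setting, and which has no analogue in the large mandrel argument. Once this upgrade is secured, the remainder is the same interpolation-plus-buckling bookkeeping as in \prettyref{cor:FvKLB_largemandrel2}, the only care needed being to keep the mixed $L^p$-norms straight --- applying H\"older in the correct variable and exploiting $|I_z|=1$ to pass between $L_z^p$-norms.
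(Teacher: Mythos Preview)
Your proof is correct and follows essentially the same route as the paper. Both arguments reduce to the buckling bound $\norm{D\phi_\rho}_{L^2}\gtrsim\lambda^{1/2}$, then combine the third inequality of \prettyref{lem:2dinterp} with the anisotropic interpolation of \prettyref{lem:mixedinterp_1} (which you inline rather than cite) and the mixed-norm control from \prettyref{lem:FSLBs}; the only cosmetic difference is that the paper packages the outcome as $h\lambda\lesssim\max\{FS_h^{11/12},FS_h\}$ while you carry the additive lower-order term through to the end and dispose of it via $\lambda^{6/5}\ge\lambda^2$.
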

\begin{proof}
As in the proof of \prettyref{cor:FSLB-1}, it suffices to prove that
\[
\norm{\partial_{z}\phi_{\rho}}_{L^{2}(\Omega)}^{2}\gtrsim\lambda\implies FS_{h}(\phi)\gtrsim(h\lambda)^{12/11}.
\]
Combining the third interpolation inequality from \prettyref{lem:2dinterp}
with the anisotropic interpolation inequality from \prettyref{lem:mixedinterp_1},
we find that
\begin{align*}
\norm{D\phi_{\rho}}_{L^{2}(\Omega)} & \lesssim\norm{\phi_{\rho}}_{L^{2}(\Omega)}^{1/2}\norm{D^{2}\phi_{\rho}}_{L^{2}(\Omega)}^{1/2}\lesssim(\norm{\partial_{\theta}\phi_{\rho}}_{L_{z}^{4}L_{\theta}^{2}}^{1/3}\norm{\phi_{\rho}}_{L_{z}^{2}L_{\theta}^{1}}^{2/3}+\norm{\phi_{\rho}}_{L_{z}^{2}L_{\theta}^{1}})^{1/2}\norm{D^{2}\phi_{\rho}}_{L_{\theta z}^{2}}^{1/2}\\
 & \lesssim\max\left\{ \norm{\partial_{\theta}\phi_{\rho}}_{L_{z}^{4}L_{\theta}^{2}}^{1/6}\norm{\phi_{\rho}}_{L_{z}^{2}L_{\theta}^{1}}^{1/3}\norm{D^{2}\phi_{\rho}}_{L_{\theta z}^{2}}^{1/2},\norm{\phi_{\rho}}_{L_{z}^{2}L_{\theta}^{1}}^{1/2}\norm{D^{2}\phi_{\rho}}_{L_{\theta z}^{2}}^{1/2}\right\} .
\end{align*}
Hence, by \prettyref{lem:FSLBs}, we conclude that
\[
h\lambda\lesssim\max\left\{ FS_{h}^{11/12},FS_{h}\right\} .
\]
It follows immediately that
\[
FS_{h}\gtrsim\min\left\{ (h\lambda)^{12/11},h\lambda\right\} =(h\lambda)^{12/11}
\]
as desired.
\end{proof}

\subsubsection{Upper bounds on the free-shear functional}

In this section, we prove the upper bound from \prettyref{prop:FSscalinglaw}.
Since this upper bound matches the lower bounds from the previous
section, our analysis of the free-shear functional is optimal as far
as scaling laws are concerned. In the remainder of this section, we
will \textbf{assume} that 
\[
h\in(0,\frac{1}{2^{10}}],\ \lambda\in(0,\frac{1}{2}],\ \text{and}\ m\in[2,\infty)
\]
unless otherwise explicitly stated.

We begin by defining a two-scale wrinkling pattern along a to-be-chosen
direction. We refer to the parameters $n,k\in\N$ and $\delta\in(0,1]$,
which are the number of wrinkles, the number of times each wrinkle
wraps about the cylinder, and the relative extent of the wrinkles.
See \prettyref{fig:FSheightfield} for a schematic of this construction.

To define the construction, we fix $f\in C^{\infty}(\R)$ such that
\begin{itemize}
\item $f$ is non-negative and one-periodic
\item $\text{supp}\,f\cap[-\frac{1}{2},\frac{1}{2}]\subset(-\frac{1}{2},\frac{1}{2})$
\item $\norm{f'}_{L^{\infty}}\leq2$
\item $\norm{f'}_{L^{2}(B_{1/2})}^{2}=1$.
\end{itemize}
Define $f_{\delta,n}\in C^{\infty}(\R)$ by
\[
f_{\delta,n}(t)=\frac{\sqrt{\delta}}{n}f(\frac{n}{\delta}\{t\})\ind{\{t\}\in B_{\delta/2}}
\]
and $w_{\delta,n,k,\lambda}:\Omega\to\R$ by 
\[
w_{\delta,n,k,\lambda}(\theta,z)=\frac{\sqrt{2\lambda}}{k}f_{\delta,n}(\frac{\theta}{2\pi}+kz).
\]
Recall that we write $\overline{f}$ to denote the $\theta$-average
of $f$, as given in \prettyref{sub:notation}. Define $u^{\delta,n,k,\lambda}=(u_{\theta}^{\delta,n,k,\lambda},u_{z}^{\delta,n,k,\lambda}):\Omega\to\R^{2}$
by
\begin{align*}
u_{\theta}^{\delta,n,k,\lambda}(\theta,z) & =\int_{0\leq\theta'\leq\theta}\left[\left(\overline{\frac{1}{2}(\partial_{\theta}w)^{2}+w}\right)(z)-\frac{1}{2}(\partial_{\theta}w(\theta',z))^{2}-w(\theta',z)\right]\,d\theta'\\
u_{z}^{\delta,n,k,\lambda}(\theta,z) & =\int_{-\frac{1}{2}\leq z'\leq z}\left[\lambda-\frac{1}{2}(\partial_{z}w(\theta,z'))^{2}\right]\,dz'
\end{align*}
where $w=w_{\delta,n,k,\lambda}$. Finally, define $\phi_{\delta,n,k,\lambda}:\Omega\to\R^{3}$
by
\[
\phi_{\delta,n,k,\lambda}=(w_{\delta,n,k,\lambda},u_{\theta}^{\delta,n,k,\lambda},-\lambda z+u_{z}^{\delta,n,k,\lambda}),
\]
in cylindrical coordinates. 

\begin{figure}
\includegraphics[height=0.18\textheight]{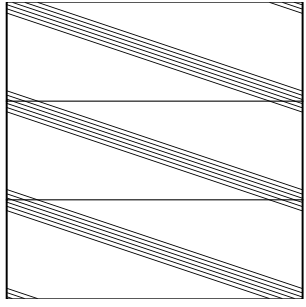}

\caption{This schematic depicts the free-shear construction. The pattern features
$n$ wrinkles which wrap $k$ times about the cylinder, with total
volume fraction $\delta$. The optimal choice of $n,k\delta$ depends
on the axial compression, $\lambda$, the thickness, $h$, and the
\emph{a priori} $L^{\infty}$ slope bound, $m$. \label{fig:FSheightfield}}
\end{figure}

Now, we estimate the energy of this construction. Let
\[
m_{2}(\delta,n,k,\lambda)=2\max\left\{ \sqrt{\frac{2\lambda}{\delta}},\frac{2\lambda}{\delta},\frac{2\lambda}{\pi k\delta}+\frac{2\pi\sqrt{2\lambda\delta}}{n}\right\} .
\]

\begin{lem}
\label{lem:FSadmissability}We have that $\phi_{\delta,n,k,\lambda}\in A_{\lambda,m_{2}}$.
Furthermore,
\[
FS_{h}(\phi_{\delta,n,k,\lambda})\lesssim\max\left\{ \frac{\lambda\delta^{3}}{k^{2}n^{2}},\frac{\lambda^{2}}{k^{4}},h^{2}\frac{\lambda k^{2}n^{2}}{\delta^{2}}\right\} .
\]
\end{lem}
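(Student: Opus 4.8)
The plan is to follow the proof of \prettyref{lem:FvKadmissability}, adapting all computations to the sheared variable $s=\tfrac{\theta}{2\pi}+kz$. Abbreviate $\phi=\phi_{\delta,n,k,\lambda}$, $w=w_{\delta,n,k,\lambda}$, $u=u^{\delta,n,k,\lambda}$. I would first isolate the two structural facts that drive everything. By the choices of $u_z$ and $u_\theta$, the von K\'arm\'an strain has $\epsilon_{zz}=\partial_z\phi_z+\tfrac12(\partial_z w)^2=0$ and $\epsilon_{\theta\theta}=\partial_\theta u_\theta+\tfrac12(\partial_\theta w)^2+w=\overline{\left(\tfrac12(\partial_\theta w)^2+w\right)}(z)$. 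Moreover, since $f_{\delta,n}$ is one-periodic and $\theta\mapsto\tfrac{\theta}{2\pi}+kz$ sweeps exactly one period of it as $\theta$ runs over $I_{\theta}$, the $\theta$-averages $\overline{(f_{\delta,n}'(s))^2}$ and $\overline{f_{\delta,n}(s)}$ are independent of $z$, equal to $\norm{f_{\delta,n}'}_{L^2(0,1)}^2=1$ and to $\norm{f_{\delta,n}}_{L^1(0,1)}$ respectively; in particular $\epsilon_{\theta\theta}$ is the \emph{constant} $\tfrac{\lambda}{4\pi^2 k^2}+\tfrac{\sqrt{2\lambda}}{k}\norm{f_{\delta,n}}_{L^1(0,1)}$. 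I would also record the elementary scalings $\norm{f_{\delta,n}}_{L^\infty}\lesssim\tfrac{\sqrt\delta}{n}$, $\norm{f_{\delta,n}'}_{L^\infty}\le\tfrac{2}{\sqrt\delta}$, $\norm{f_{\delta,n}''}_{L^\infty}\lesssim\tfrac{n}{\delta^{3/2}}$, $\norm{f_{\delta,n}}_{L^1(0,1)}\lesssim\tfrac{\delta^{3/2}}{n}$, $\norm{f_{\delta,n}''}_{L^2(0,1)}^2\lesssim\tfrac{n^2}{\delta^2}$ (all from the change of variables $t\mapsto\tfrac n\delta t$ and the normalizations of $f$), together with $\norm{f}_{L^\infty}\le1$, which follows from $\norm{f'}_{L^\infty}\le2$ and $\text{supp}\,f\cap[-\tfrac12,\tfrac12]\subset(-\tfrac12,\tfrac12)$.

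Admissibility splits into regularity/periodicity and the slope bounds. For the former: $w$ is smooth, $2\pi$-periodic in $\theta$, and one-periodic in $z$ because $k\in\N$, so $\phi_\rho=w\in H_{\text{per}}^{2}$; the map $\theta\mapsto u_\theta$ returns to its starting value because the integrand defining it has vanishing $\theta$-average by construction, and $z\mapsto u_z$ is one-periodic because $\int_{I_{z}}\tfrac12(\partial_z w(\theta,z'))^2\,dz'=\lambda$ for each $\theta$ (substitute $s=\tfrac\theta{2\pi}+kz'$ and use $\norm{f_{\delta,n}'}_{L^2(0,1)}^2=1$), so $\phi_\theta\in H_{\text{per}}^{1}$ and $\phi_z+\lambda z=u_z\in H_{\text{per}}^{1}$; finally $\phi_\rho=w\ge0$ since $f\ge0$, which is the obstacle constraint at $\varrho=1$. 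For the slope bounds I would differentiate and estimate: $\partial_\theta w=\tfrac{\sqrt{2\lambda}}{2\pi k}f_{\delta,n}'(s)$, $\partial_z w=\sqrt{2\lambda}\,f_{\delta,n}'(s)$, $\partial_z\phi_z=-\tfrac12(\partial_z w)^2$, giving $\norm{\partial_\theta\phi_\rho}_{L^\infty},\norm{\partial_z\phi_\rho}_{L^\infty}\le2\sqrt{2\lambda/\delta}$ and $\norm{\partial_z\phi_z}_{L^\infty}\le4\lambda/\delta$; $\partial_\theta\phi_\theta=\overline{(\tfrac12(\partial_\theta w)^2+w)}-\tfrac12(\partial_\theta w)^2-w$ is bounded by $2\big(\tfrac12\norm{\partial_\theta w}_{L^\infty}^2+\norm{w}_{L^\infty}\big)$; and, using that $\overline{(\tfrac12(\partial_\theta w)^2+w)}$ is constant in $z$, $\partial_z\phi_\theta=-\int_0^\theta(\partial_\theta w\,\partial_{\theta z}w+\partial_z w)\,d\theta'$ and $\partial_\theta\phi_z=-\int_{-1/2}^z\partial_z w\,\partial_{\theta z}w\,dz'$, each of which telescopes --- being $\tfrac12\big[(f_{\delta,n}')^2\big]$ or $\big[f_{\delta,n}\big]$ evaluated at the endpoints, with the differences controlled using $f_{\delta,n}\ge0$ and $\norm{f}_{L^\infty}\le1$ --- to yield bounds in terms of $\tfrac{2\lambda}{\pi k\delta}$ and $\tfrac{2\pi\sqrt{2\lambda\delta}}{n}$. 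Comparing all six bounds termwise with $m_2$, whose leading factor $2$ absorbs the remaining slack, gives $\phi\in A_{\lambda,m_2}$.

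The energy estimate is then short. From $\epsilon_{zz}=0$ we get $FS_h(\phi)=\int_\Omega|\epsilon_{\theta\theta}|^2+h^2\int_\Omega|D^2 w|^2$. Since $\epsilon_{\theta\theta}$ is the constant identified above, $\int_\Omega|\epsilon_{\theta\theta}|^2=|\Omega|\,\big(\tfrac{\lambda}{4\pi^2 k^2}+\tfrac{\sqrt{2\lambda}}{k}\norm{f_{\delta,n}}_{L^1(0,1)}\big)^2\lesssim\tfrac{\lambda^2}{k^4}+\tfrac{\lambda\delta^3}{k^2 n^2}$, where the second term uses $\norm{f_{\delta,n}}_{L^1(0,1)}^2\lesssim\delta\,\norm{f_{\delta,n}}_{L^2(0,1)}^2\lesssim\tfrac{\delta^3}{n^2}$ --- the extra $\sqrt\delta$ gained by integrating the hoop displacement is exactly what produces $\delta^3$ rather than $\delta^2$. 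For the bending term, $\partial_z^2 w=\sqrt{2\lambda}\,k\,f_{\delta,n}''(s)$ pointwise dominates $\partial_{\theta z}w$ and $\partial_\theta^2 w$ (each smaller by a factor $\le1$ since $k\ge1$), so $\int_\Omega|D^2 w|^2\lesssim\lambda k^2\int_\Omega(f_{\delta,n}''(s))^2\lesssim\lambda k^2\cdot\tfrac{n^2}{\delta^2}$ by Fubini in $z$ and $\norm{f_{\delta,n}''}_{L^2(0,1)}^2\lesssim\tfrac{n^2}{\delta^2}$. Adding the membrane and bending contributions gives $FS_h(\phi)\lesssim\max\big\{\tfrac{\lambda\delta^3}{k^2 n^2},\tfrac{\lambda^2}{k^4},h^2\tfrac{\lambda k^2 n^2}{\delta^2}\big\}$.

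The computations are routine; the fussiest part is tracking the factors of $k$ and $2\pi$ in the slope bounds for $\partial_z\phi_\theta$ and $\partial_\theta\phi_z$, which is why $m_2$ is defined with its somewhat elaborate third argument. The one genuinely substantive point --- as opposed to bookkeeping --- is the hoop-average behavior noted in the first paragraph: that $\overline{(f_{\delta,n}'(s))^2}$ is constant in $z$, so the $\tfrac12(\partial_\theta w)^2$ part of $\epsilon_{\theta\theta}$ is the harmless constant $\tfrac{\lambda}{4\pi^2 k^2}$, and that $\overline{f_{\delta,n}(s)}=\norm{f_{\delta,n}}_{L^1(0,1)}$ scales like $\delta^{3/2}/n$ rather than $\delta/n$.
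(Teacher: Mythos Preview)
Your proposal is correct and follows essentially the same approach as the paper's proof: verify periodicity and the obstacle constraint, compute all six slope bounds to show $\phi\in A_{\lambda,m_2}$, then use $\epsilon_{zz}=0$ and $\epsilon_{\theta\theta}=\overline{\tfrac12(\partial_\theta w)^2+w}$ to reduce the energy estimate to bounding $\|w\|_{L^2_zL^1_\theta}^2$, $\|\partial_\theta w\|_{L^4_zL^2_\theta}^4$, and $h^2\|D^2w\|_{L^2}^2$. Your additional observation that $\epsilon_{\theta\theta}$ is in fact a \emph{constant} (because $\theta\mapsto\tfrac{\theta}{2\pi}+kz$ sweeps exactly one period of $f_{\delta,n}$) is correct and lets you compute the membrane term directly rather than via the mixed norms, but this is a minor streamlining rather than a different route.
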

\begin{proof}
Abbreviate $\phi_{\delta,n,k,\lambda}$ by $\phi$, $w_{\delta,n,k,\lambda}$
by $w$, and $u^{\delta,n,k,\lambda}$ by $u$. By its definition,
$\phi_{\rho}\in H_{\text{per}}^{2}$, $\phi_{\theta}\in H_{\text{per}}^{1}$, and
$\phi_{z}+\lambda z\in H_{\text{per}}^{1}$. In particular, we note that
\[
\int_{-\frac{1}{2}\leq z'\leq\frac{1}{2}}\frac{1}{2}|\partial_{z}w(\theta,z')|^{2}dz=\lambda\int_{B_{\delta/2}}|f'_{\delta,n}|^{2}dt=\lambda\int_{B_{1/2}}|f'|^{2}dt=\lambda
\]
for all $\theta\in I_{\theta}$, so that $u_{z}^{\delta,n,k,\lambda}\in H_{\text{per}}^{1}$.
Also, we have that $w\geq0$ so that $\phi_{\rho}\geq0$. 

Now we obtain the slope bounds. Since
\begin{align*}
\epsilon_{\theta\theta} & =\partial_{\theta}\phi_{\theta}+\frac{1}{2}(\partial_{\theta}\phi_{\rho})^{2}+\phi_{\rho}=\overline{\frac{1}{2}(\partial_{\theta}\phi_{\rho})^{2}+\phi_{\rho}}\\
\epsilon_{zz} & =\partial_{z}\phi_{z}+\frac{1}{2}(\partial_{z}\phi_{\rho})^{2}=0
\end{align*}
and 
\begin{align*}
\partial_{\theta}\phi_{\rho}(\theta,z) & =\partial_{\theta}w(\theta,z)=\frac{1}{2\pi}\frac{\sqrt{2\lambda}}{k}f'_{\delta,n}(\frac{\theta}{2\pi}+kz)\\
\partial_{z}\phi_{\rho}(\theta,z) & =\partial_{z}w(\theta,z)=\sqrt{2\lambda}f'_{\delta,n}(\frac{\theta}{2\pi}+kz),
\end{align*}
we find that 
\begin{align*}
\norm{\partial_{\theta}\phi_{\rho}}_{L^{\infty}(\Omega)} & \leq\frac{1}{2\pi}\frac{\sqrt{2\lambda}}{k}\norm{f_{\delta,n}'}_{L^{\infty}}\leq\frac{1}{\pi k}\sqrt{\frac{2\lambda}{\delta}}\\
\norm{\partial_{z}\phi_{\rho}}_{L^{\infty}(\Omega)} & \le\sqrt{2\lambda}\norm{f_{\delta,n}'}_{L^{\infty}}\leq2\sqrt{\frac{2\lambda}{\delta}}\\
\norm{\partial_{z}\phi_{z}}_{L^{\infty}(\Omega)} & \leq\lambda\norm{f'_{\delta,n}}_{L^{\infty}}^{2}\leq\frac{4\lambda}{\delta},
\end{align*}
and that
\begin{align*}
\norm{\partial_{\theta}\phi_{\theta}}_{L^{\infty}(\Omega)} & \leq\norm{\overline{\frac{1}{2}(\partial_{\theta}\phi_{\rho})^{2}+\phi_{\rho}}-\frac{1}{2}(\partial_{\theta}\phi_{\rho})^{2}-\phi_{\rho}}_{L^{\infty}(\Omega)}\leq2\norm{\frac{1}{2}(\partial_{\theta}\phi_{\rho})^{2}+\phi_{\rho}}_{L^{\infty}(\Omega)}\\
 & \leq2(\frac{1}{4\pi^{2}}\frac{\lambda}{k^{2}}\norm{f'_{\delta,n}}_{L^{\infty}}^{2}+\frac{\sqrt{2\lambda}}{k}\norm{f_{\delta,n}}_{L^{\infty}})\leq2\left(\frac{\lambda}{\pi^{2}k^{2}\delta}+\frac{\sqrt{2\lambda\delta}}{kn}\right).
\end{align*}
Here, we used that $\norm{f}_{L^{\infty}}\leq1$, which follows from
its definition.

Now we deal with the shear terms. We have that 
\begin{align*}
\partial_{\theta}\phi_{z}(\theta,z) & =\partial_{\theta}u_{z}(\theta,z)=-\int_{-\frac{1}{2}\leq z'\leq z}\partial_{z}w\partial_{\theta z}w(\theta,z')\,dz'\\
\partial_{z}\phi_{\theta}(\theta,z) & =\partial_{z}u_{\theta}(\theta,z)=\int_{0\leq\theta'\leq\theta}\left[\overline{\partial_{\theta}w\partial_{z\theta}w+\partial_{z}w}(z)-\partial_{\theta}w\partial_{z\theta}w(\theta',z)-\partial_{z}w(\theta',z)\right]\,d\theta'.
\end{align*}
Since
\[
\partial_{\theta z}w(\theta,z)=\frac{\sqrt{2\lambda}}{2\pi}f''_{\delta,n}(\frac{\theta}{2\pi}+kz),
\]
we see that
\begin{align*}
\partial_{\theta}\phi_{z}(\theta,z) & =-\int_{-\frac{1}{2}\leq z'\leq z}\frac{2\lambda}{2\pi}f'_{\delta,n}f''_{\delta,n}(\frac{\theta}{2\pi}+kz')\,dz'=-\int_{-\frac{1}{2}\leq t\leq z}\frac{\lambda}{2\pi}\frac{1}{k}\frac{d}{dt}\left[\left(f'_{\delta,n}\right)^{2}(\frac{\theta}{2\pi}+kt)\right]\,dt\\
 & =\frac{1}{2\pi}\frac{\lambda}{k}\left(\left(f'_{\delta,n}\right)^{2}(\frac{\theta}{2\pi}-\frac{k}{2})-\left(f'_{\delta,n}\right)^{2}(\frac{\theta}{2\pi}+kz)\right)
\end{align*}
so that
\[
\norm{\partial_{\theta}\phi_{z}}_{L^{\infty}(\Omega)}\leq2\frac{1}{2\pi}\frac{\lambda}{k}\norm{f'_{\delta,n}}_{L^{\infty}}^{2}\leq\frac{4\lambda}{\pi k\delta}.
\]
Similarly, we have that
\begin{align*}
 & \int_{0\leq\theta'\leq\theta}\left[\partial_{\theta}w\partial_{z\theta}w(\theta',z)+\partial_{z}w(\theta',z)\right]\,d\theta'\\
 & \qquad=\int_{0\leq\theta'\leq\theta}\left[\frac{2\lambda}{(2\pi)^{2}}\frac{1}{k}f'_{\delta,n}f''_{\delta,n}(\frac{\theta'}{2\pi}+kz)+\sqrt{2\lambda}f'_{\delta,n}(\frac{\theta'}{2\pi}+kz)\right]\,d\theta'\\
 & \qquad=\int_{0\leq t\leq\theta}\frac{1}{2\pi}\frac{\lambda}{k}\frac{d}{dt}\left[\left(f'_{\delta,n}\right)^{2}(\frac{t}{2\pi}+kz)\right]+2\pi\sqrt{2\lambda}\frac{d}{dt}\left[f{}_{\delta,n}(\frac{t}{2\pi}+kz)\right]\,dt\\
 & \qquad=\frac{1}{2\pi}\frac{\lambda}{k}\left(\left(f'_{\delta,n}(\frac{\theta}{2\pi}+kz)\right)^{2}-\left(f'_{\delta,n}(kz)\right)^{2}\right)+2\pi\sqrt{2\lambda}\left(f{}_{\delta,n}(\frac{\theta}{2\pi}+kz)-f{}_{\delta,n}(kz)\right).
\end{align*}
Hence,
\[
\partial_{z}\phi_{\theta}(\theta,z)=-\frac{1}{2\pi}\frac{\lambda}{k}\left(\left(f'_{\delta,n}(\frac{\theta}{2\pi}+kz)\right)^{2}-\left(f'_{\delta,n}(kz)\right)^{2}\right)-2\pi\sqrt{2\lambda}\left(f{}_{\delta,n}(\frac{\theta}{2\pi}+kz)-f{}_{\delta,n}(kz)\right)
\]
so that
\[
\norm{\partial_{z}\phi_{\theta}}_{L^{\infty}(\Omega)}\leq2\left(\frac{1}{2\pi}\frac{\lambda}{k}\norm{f'_{\delta,n}}_{L^{\infty}}^{2}+2\pi\sqrt{2\lambda}\norm{f_{\delta,n}}_{L^{\infty}}\right)\leq2\left(\frac{2\lambda}{\pi k\delta}+\frac{2\pi\sqrt{2\lambda\delta}}{n}\right).
\]

Combining the above, we have shown that 
\[
\max_{\substack{i\in\left\{ \theta,z\right\} ,\,j\in\{\rho,\theta,z\}}
}\norm{\partial_{i}\phi_{j}}_{L^{\infty}(\Omega)}\leq2\max\left\{ \sqrt{\frac{2\lambda}{\delta}},\frac{2\lambda}{\delta},\frac{2\lambda}{\pi k\delta}+\frac{2\pi\sqrt{2\lambda\delta}}{n}\right\} =m_{2}
\]
and it follows that $\phi\in A_{\lambda,m_{2}}$.

Now we bound the free-shear energy of this construction. Since $\epsilon_{\theta\theta}=\overline{\epsilon_{\theta\theta}}$
and $\epsilon_{zz}=0$, we have that
\[
FS_{h}(\phi)=\int_{\Omega}\,\abs{\overline{\frac{1}{2}(\partial_{\theta}w)^{2}+w}}^{2}+h^{2}\abs{D^{2}w}^{2}\,d\theta dz
\]
so that
\[
FS_{h}(\phi)\lesssim\max\left\{ \norm{w}_{L_{z}^{2}L_{\theta}^{1}}^{2},\norm{\partial_{\theta}w}_{L_{z}^{4}L_{\theta}^{2}}^{4},h^{2}\norm{D^{2}w}_{L^{2}(\Omega)}^{2}\right\} .
\]
Since
\[
\norm{w}_{L_{z}^{2}L_{\theta}^{1}}^{2}\lesssim\frac{\lambda\delta^{3}}{k^{2}n^{2}},\quad\norm{\partial_{\theta}w}_{L_{z}^{4}L_{\theta}^{2}}^{4}\lesssim\frac{\lambda^{2}}{k^{4}},\ \text{and}\quad\norm{D^{2}w}_{L^{2}(\Omega)}^{2}\lesssim\frac{\lambda k^{2}n^{2}}{\delta^{2}},
\]
it follows that 
\[
FS_{h}(\phi)\lesssim\max\left\{ \frac{\lambda\delta^{3}}{k^{2}n^{2}},\frac{\lambda^{2}}{k^{4}},h^{2}\frac{\lambda k^{2}n^{2}}{\delta^{2}}\right\} .
\]

\end{proof}
Next, we choose $n,k,\delta$ to optimize this bound. Note that each
of the following three choices is optimal in a different parameter
regime. First, we consider a construction made of up many wrinkles,
each of which wraps many times about the cylinder.
\begin{lem}
\label{lem:FSUB_manywrinkles_tilted} Assume that 
\[
m^{-1/2}h\lambda^{3/2}\geq\max\{h^{6/5}\lambda,(h\lambda)^{12/11}\}.
\]
Let $n,k\in\N$ and $\delta\in(0,1]$ satisfy 
\begin{align*}
n&\in\left[7\lambda^{9/8}h^{-1/4}m^{-11/8},8\lambda^{9/8}h^{-1/4}m^{-11/8}\right] \\ k&\in\left[7h^{-1/4}\lambda^{1/8}m^{1/8},8h^{-1/4}\lambda^{1/8}m^{1/8}\right] \\ \delta&=4\lambda m^{-1}.
\end{align*}
Then, $\phi_{\delta,n,k,\lambda}\in A_{\lambda,m}$ and 
\[
FS_{h}(\phi_{\delta,n,k,\lambda})\lesssim\frac{1}{m^{1/2}}h\lambda^{3/2}.
\]
\end{lem}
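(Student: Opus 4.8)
The plan is to reduce everything to Lemma~\prettyref{lem:FSadmissability}, which already does the analytic work: it tells us that $\phi_{\delta,n,k,\lambda}\in A_{\lambda,m_2}$ with $m_2=2\max\{\sqrt{2\lambda/\delta},\,2\lambda/\delta,\,\tfrac{2\lambda}{\pi k\delta}+\tfrac{2\pi\sqrt{2\lambda\delta}}{n}\}$, and that $FS_h(\phi_{\delta,n,k,\lambda})\lesssim\max\{\lambda\delta^{3}k^{-2}n^{-2},\,\lambda^{2}k^{-4},\,h^{2}\lambda k^{2}n^{2}\delta^{-2}\}$. So the lemma will follow once I check three things: (i) the prescribed ranges for $n$ and $k$ are nonempty, (ii) the chosen $\delta,n,k$ force $m_2\le m$, and (iii) the chosen values collapse all three terms of the energy bound to $m^{-1/2}h\lambda^{3/2}$.

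For (i) I would first note that $\delta=4\lambda m^{-1}\in(0,1]$ since $m\ge 2$ and $\lambda\le\tfrac12$. Then I rewrite the two halves of the hypothesis: from $m^{-1/2}h\lambda^{3/2}\ge(h\lambda)^{12/11}$ one gets $h\le m^{-11/2}\lambda^{9/2}$, hence $\lambda^{9/8}h^{-1/4}m^{-11/8}\ge 1$; from $m^{-1/2}h\lambda^{3/2}\ge h^{6/5}\lambda$ one gets $h\le m^{-5/2}\lambda^{5/2}$, hence $h^{-1/4}\lambda^{1/8}m^{1/8}\ge m^{3/4}\lambda^{-1/2}\ge 1$. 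Since each prescribed interval has the form $[7X,8X]$ with $X\ge 1$, it has length $\ge 1$ and so contains an integer, and that integer is $\ge 7X\ge 7$; in particular $n\ge 7$ and $k\ge 7$.

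For (ii), substituting $\delta=4\lambda m^{-1}$ into the three quantities in $m_2$ gives $\sqrt{2\lambda/\delta}=\sqrt{m/2}$, $2\lambda/\delta=m/2$, and $\tfrac{2\lambda}{\pi k\delta}+\tfrac{2\pi\sqrt{2\lambda\delta}}{n}=\tfrac{m}{2\pi k}+\tfrac{4\sqrt2\,\pi\lambda}{\sqrt m\,n}$. Using $k,n\ge 7$, $\lambda\le\tfrac12$, and $m\ge 2$, the last expression is bounded by $\tfrac{m}{14\pi}+\tfrac{2\sqrt2\,\pi}{7\sqrt m}\le\tfrac{m}{2}$, and $\sqrt{m/2}\le m/2$ since $m\ge 2$; hence $m_2\le m$, i.e.\ $\phi_{\delta,n,k,\lambda}\in A_{\lambda,m}$.

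For (iii), writing $k\sim h^{-1/4}\lambda^{1/8}m^{1/8}$, $n\sim\lambda^{9/8}h^{-1/4}m^{-11/8}$, and $\delta\sim\lambda m^{-1}$ (with absolute constants coming from the intervals), one computes $k^{2}n^{2}\sim h^{-1}\lambda^{5/2}m^{-5/2}$, and then $\lambda\delta^{3}k^{-2}n^{-2}\sim h\lambda^{3/2}m^{-1/2}$, $\lambda^{2}k^{-4}\sim h\lambda^{3/2}m^{-1/2}$, and $h^{2}\lambda k^{2}n^{2}\delta^{-2}\sim h\lambda^{3/2}m^{-1/2}$; feeding this into the energy bound of Lemma~\prettyref{lem:FSadmissability} gives $FS_h(\phi_{\delta,n,k,\lambda})\lesssim m^{-1/2}h\lambda^{3/2}$, as claimed. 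The proof has no real obstacle beyond bookkeeping: the parameters $n,k,\delta$ have been tuned precisely so that the $L^\infty$ slope constraint is met at the boundary and the three competing energy contributions balance, and the proof is just the verification that this balance indeed holds for the stated choices (the slightly delicate point being the shear-slope bound $\tfrac{m}{2\pi k}+\tfrac{4\sqrt2\,\pi\lambda}{\sqrt m n}\le\tfrac m2$, which is why $n,k\ge 7$ is recorded explicitly).
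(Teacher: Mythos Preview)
Your proof is correct and follows essentially the same approach as the paper: reduce to \prettyref{lem:FSadmissability}, verify the parameter ranges are nonempty by rearranging the hypothesis, check $m_{2}\leq m$ after substituting $\delta=4\lambda m^{-1}$ (using $n,k\geq7$, $m\geq2$, $\lambda\leq\tfrac12$), and then verify that all three terms in the energy estimate collapse to $m^{-1/2}h\lambda^{3/2}$. The only cosmetic difference is that the paper substitutes $n,k$ into the energy bound first and then uses $\delta\sim\lambda/m$, whereas you compute $k^{2}n^{2}$ directly; the content is identical.
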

\begin{proof}
Rearranging the inequality $m^{-1/2}h\lambda^{3/2}\geq(h\lambda)^{12/11}$,
we find that $\lambda^{9/8}h^{-1/4}m^{-11/8}\geq1$ so that there
exists such an $n\in\N$. Rearranging the inequality $m^{-1/2}h\lambda^{3/2}\geq h^{6/5}\lambda$,
we find that $\lambda^{5/8}\geq h^{1/4}m^{5/8}$. Since $m\geq1$
and $\lambda\leq1$, it follows that $\lambda^{1/8}m^{1/8}h^{-1/4}\geq1$.
Hence, there exists such a $k\in\N$. Also, we have that $\delta\leq1$,
since $\lambda\leq\frac{1}{2}$ and $m\geq2$. Now we check the slope
bound. We claim that $m_{2}(\delta,n,k,\lambda)=m.$ Indeed, we have
that
\[
m_{2}=2\max\left\{ \sqrt{\frac{m}{2}},\frac{m}{2},\frac{1}{2\pi}\frac{m}{k}+2\pi\frac{2\sqrt{2}\lambda}{nm^{1/2}}\right\} =2\max\left\{ \frac{m}{2},\frac{1}{2\pi}\frac{m}{k}+2\pi\frac{2\sqrt{2}\lambda}{nm^{1/2}}\right\} ,
\]
and using that $m\geq2$, $\lambda\leq\frac{1}{2}$, and $n,k\geq7$
we see that 
\[
\frac{1}{2\pi}\frac{m}{k}+2\pi\frac{2\sqrt{2}\lambda}{nm^{1/2}}\leq\frac{m}{2}
\]
so that $m_{2}\leq m$ as required.

It follows from \prettyref{lem:FSadmissability} that $\phi_{\delta,n,k,\lambda}\in A_{\lambda,m}$,
and that
\[
FS_{h}(\phi_{\delta,n,k,\lambda})\lesssim\max\left\{ \frac{hm^{5/2}\delta^{3}}{\lambda^{3/2}},\frac{1}{m^{1/2}}h\lambda^{3/2},\frac{h\lambda^{7/2}}{m^{5/2}\delta^{2}}\right\} .
\]
Using that $\delta\sim\frac{\lambda}{m}$, we have that
\[
FS_{h}(\phi_{\delta,n,k,\lambda})\lesssim\frac{1}{m^{1/2}}h\lambda^{3/2}.
\]

\end{proof}
We now consider a construction made up of a few wrinkles, each of
which wraps many times about the cylinder.
\begin{lem}
\label{lem:FSUB_onewrinkle_tilted_long} Assume that 
\[
(h\lambda)^{12/11}\geq\max\{h^{6/5}\lambda,m^{-1/2}h\lambda^{3/2}\}.
\]
Let $n,k\in\N$ and $\delta\in(0,1]$ satisfy 
\[
n=12,\quad k\in\left[12h^{-3/11}\lambda^{5/22},13h^{-3/11}\lambda^{5/22}\right],\ \text{and}\quad\delta=4(h\lambda)^{2/11}.
\]
Then, $\phi_{\delta,n,k,\lambda}\in A_{\lambda,m}$ and 
\[
FS_{h}(\phi_{\delta,n,k,\lambda})\lesssim(h\lambda)^{12/11}.
\]
\end{lem}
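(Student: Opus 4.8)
The plan is to follow the template of the proof of \prettyref{lem:FSUB_manywrinkles_tilted}: check the three conditions needed to apply \prettyref{lem:FSadmissability} --- that $\delta\in(0,1]$, that the stated interval for $k$ contains a natural number, and that the slope bound $m_2(\delta,n,k,\lambda)\le m$ holds --- and then read off the energy estimate from that lemma.

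First I would verify that $\delta=4(h\lambda)^{2/11}\le1$; this is equivalent to $h\lambda\le 2^{-11}$, which follows from the standing assumptions $h\le2^{-10}$ and $\lambda\le\tfrac12$. Next, the interval $[12h^{-3/11}\lambda^{5/22},13h^{-3/11}\lambda^{5/22}]$ has length $h^{-3/11}\lambda^{5/22}$, so it contains an integer as soon as $h^{-3/11}\lambda^{5/22}\ge1$, i.e.\ (equivalently) $\lambda^{5}\ge h^{6}$; but this is precisely a rearrangement of the hypothesis $(h\lambda)^{12/11}\ge h^{6/5}\lambda$. Hence such a $k$ exists, with $k\sim h^{-3/11}\lambda^{5/22}$ up to universal constants, and of course $n=12\in\N$.

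The substantial step is the slope bound. Plugging $n=12$, $k\sim h^{-3/11}\lambda^{5/22}$, and $\delta=4(h\lambda)^{2/11}$ into the definition of $m_2$, one finds $2\lambda/\delta= h^{-2/11}\lambda^{9/11}/2$, hence $\sqrt{2\lambda/\delta}=\bigl(h^{-2/11}\lambda^{9/11}/2\bigr)^{1/2}$, while the third entry satisfies $\frac{2\lambda}{\pi k\delta}+\frac{2\pi\sqrt{2\lambda\delta}}{n}\lesssim\lambda^{13/22}h^{1/11}$ --- here the two summands turn out to be of the same order, which is exactly why the construction carries both $k$ and $n$ (indeed, $k\delta^{3/2}\lambda^{-1/2}\sim1$ with these choices). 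The remaining hypothesis $(h\lambda)^{12/11}\ge m^{-1/2}h\lambda^{3/2}$ rearranges to $m\ge h^{-2/11}\lambda^{9/11}$, which dominates $2\lambda/\delta$ up to the factor $2$; the bound on $\sqrt{2\lambda/\delta}$ then follows from $m\ge2$, and the third entry is $\le m$ because $\lambda^{13/22}h^{1/11}$ is small (using $h\le2^{-10}$ and $\lambda\le\tfrac12$). Tracking the universal constants through these comparisons yields $m_2(\delta,n,k,\lambda)\le m$, so \prettyref{lem:FSadmissability} applies, giving $\phi_{\delta,n,k,\lambda}\in A_{\lambda,m_2}\subseteq A_{\lambda,m}$ together with
\[
FS_{h}(\phi_{\delta,n,k,\lambda})\lesssim\max\left\{\frac{\lambda\delta^{3}}{k^{2}n^{2}},\ \frac{\lambda^{2}}{k^{4}},\ h^{2}\frac{\lambda k^{2}n^{2}}{\delta^{2}}\right\}.
\]

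Finally I would substitute the chosen $n,k,\delta$ into the right-hand side above: a direct computation shows that each of the three terms equals $(h\lambda)^{12/11}$ up to a universal constant (the parameters were selected precisely so as to balance them), whence $FS_{h}(\phi_{\delta,n,k,\lambda})\lesssim(h\lambda)^{12/11}$, as claimed. The one place I expect to require genuine care is the bookkeeping in the slope-bound verification: one must make sure that, after the two hypotheses and the bounds $m\ge2$, $h\le2^{-10}$, $\lambda\le\tfrac12$ have all been used, the universal constants really combine so that the third entry of $m_2$ comes out $\le m$ rather than merely $\lesssim m$.
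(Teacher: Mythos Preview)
Your proposal is correct and follows essentially the same approach as the paper. The only minor difference is in bounding the third entry of $m_{2}$: the paper uses the already-established inequality $h^{-3/11}\lambda^{5/22}\geq1$ to replace $h^{1/11}\lambda^{13/22}$ by $\lambda^{2/3}$ before invoking $m\geq2$, $\lambda\leq\tfrac12$, and $n,k\geq12$, which makes the final constant-check slightly cleaner than your direct estimate of $h^{1/11}\lambda^{13/22}$ via $h\leq2^{-10}$ and $\lambda\leq\tfrac12$.
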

\begin{proof}
Rearranging the inequality $(h\lambda)^{12/11}\geq h^{6/5}\lambda$,
we find that $h^{-3/11}\lambda^{5/22}\geq1$ so that there exists
such a $k\in\N$. Also we note that $\delta\leq1$ since $\lambda\leq\frac{1}{2}$
and $h\leq\frac{1}{2^{10}}$. Now we check the slope bound. We have
that
\[
m_{2}=2\max\left\{ \sqrt{\frac{\lambda^{9/11}}{2h{}^{2/11}}},\frac{\lambda^{9/11}}{2h{}^{2/11}},\frac{1}{\pi k}\frac{\lambda^{9/11}}{2h{}^{2/11}}+2\pi\frac{2\sqrt{2}h^{1/11}\lambda^{13/22}}{n}\right\} .
\]
Rearranging the inequality $(h\lambda)^{12/11}\geq m^{-1/2}h\lambda^{3/2}$,
we find that $m\geq\lambda^{9/11}h^{-2/11}$ so that
\[
m_{2}\leq2\max\left\{ \sqrt{\frac{m}{2}},\frac{m}{2},\frac{1}{2\pi}\frac{m}{k}+2\pi\frac{2\sqrt{2}}{n}h^{1/11}\lambda^{13/22}\right\} =2\max\left\{ \frac{m}{2},\frac{1}{2\pi}\frac{m}{k}+2\pi\frac{2\sqrt{2}}{n}h^{1/11}\lambda^{13/22}\right\} .
\]
Using that $h^{-3/11}\lambda^{5/22}\geq1$ we see that
\[
m_{2}\leq2\max\left\{ \frac{m}{2},\frac{1}{2\pi}\frac{m}{k}+2\pi\frac{2\sqrt{2}}{n}\lambda^{2/3}\right\} .
\]
Since $m\geq2$ , $\lambda\leq\frac{1}{2}$, and $n,k\geq12$ we find
that 
\[
\frac{1}{2\pi}\frac{m}{k}+2\pi\frac{2\sqrt{2}}{n}\lambda^{2/3}\leq\frac{m}{2}
\]
so that $m_{2}\leq m$ as required.

It follows from \prettyref{lem:FSadmissability} that $\phi_{\delta,n,k,\lambda}\in A_{\lambda,m}$,
and that
\[
FS_{h}(\phi_{\delta,n,k,\lambda})\lesssim(h\lambda)^{12/11}.
\]

\end{proof}
Finally, we consider a construction made up of a few wrinkles, each
of which wraps a few times about the cylinder.
\begin{lem}
\label{lem:FSUB_onewrinkle_tilted}Assume that
\[
h^{6/5}\lambda\geq\max\{m^{-1/2}h\lambda^{3/2},(h\lambda)^{12/11}\}.
\]
Let $n,k\in\N$ and $\delta\in(0,1]$ satisfy 
\[
n=2,\quad k=2,\ \text{and}\quad\delta=4h^{2/5}.
\]
Then, $\phi_{\delta,n,k,\lambda}\in A_{\lambda,m}$ and 
\[
FS_{h}(\phi_{\delta,n,k,\lambda})\lesssim h^{6/5}\lambda.
\]
\end{lem}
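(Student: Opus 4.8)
The plan is to apply \prettyref{lem:FSadmissability} with the prescribed parameters $n=k=2$ and $\delta=4h^{2/5}$, exactly in the spirit of the proofs of \prettyref{lem:FSUB_manywrinkles_tilted} and \prettyref{lem:FSUB_onewrinkle_tilted_long}. There are three things to verify: that $\delta\in(0,1]$, that the slope quantity $m_{2}(\delta,n,k,\lambda)$ produced by \prettyref{lem:FSadmissability} satisfies $m_{2}\leq m$ (so that $\phi_{\delta,n,k,\lambda}\in A_{\lambda,m_{2}}\subseteq A_{\lambda,m}$, the inclusion being immediate from the definition of $A_{\lambda,m}=A_{\lambda,1,m}^{vKD}$), and that the energy bound from \prettyref{lem:FSadmissability} collapses to a multiple of $h^{6/5}\lambda$. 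The first point is immediate: since $h\leq\frac{1}{2^{10}}$ we have $h^{2/5}\leq 2^{-4}$, hence $\delta=4h^{2/5}\leq 2^{-2}\leq 1$.

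For the slope bound I would first translate the two regime hypotheses. Rearranging $h^{6/5}\lambda\geq m^{-1/2}h\lambda^{3/2}$ gives $\lambda h^{-2/5}\leq m$; since with our parameters $2\lambda/\delta=\frac{1}{2}\lambda h^{-2/5}\leq m/2$ and $\sqrt{2\lambda/\delta}=\sqrt{\frac{1}{2}\lambda h^{-2/5}}\leq\sqrt{m/2}\leq m/2$ (using $m\geq 2$), the first two terms inside $m_{2}$ are each at most $m/2$. The third term, $\frac{2\lambda}{\pi k\delta}+\frac{2\pi\sqrt{2\lambda\delta}}{n}$, is the delicate one and is where the main work lies. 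Its first summand equals $\frac{\lambda}{4\pi h^{2/5}}\leq\frac{m}{4\pi}\leq\frac{m}{8}$ by the same rearrangement. For the second summand, the crucial observation is that the other hypothesis, $h^{6/5}\lambda\geq(h\lambda)^{12/11}$, is \emph{equivalent} to $\lambda\leq h^{6/5}$; consequently $\sqrt{2\lambda\delta}=\sqrt{8\lambda h^{2/5}}\leq 2\sqrt{2}\,h^{4/5}\leq 2\sqrt{2}\cdot 2^{-8}$, so $\frac{2\pi\sqrt{2\lambda\delta}}{n}$ is bounded by an absolute constant smaller than $\frac{1}{4}\leq\frac{m}{8}$. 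Adding, the third term is $\leq\frac{m}{8}+\frac{m}{8}=\frac{m}{4}\leq\frac{m}{2}$, so $m_{2}=2\max\{\cdots\}\leq m$ and \prettyref{lem:FSadmissability} yields $\phi_{\delta,n,k,\lambda}\in A_{\lambda,m}$.

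Finally I would estimate the energy. Substituting $n=k=2$ and $\delta=4h^{2/5}$ into the bound of \prettyref{lem:FSadmissability}, the three competing terms become $\frac{\lambda\delta^{3}}{k^{2}n^{2}}\sim\lambda h^{6/5}$, $\frac{\lambda^{2}}{k^{4}}\sim\lambda^{2}$, and $h^{2}\frac{\lambda k^{2}n^{2}}{\delta^{2}}\sim\lambda h^{6/5}$; invoking $\lambda\leq h^{6/5}$ once more gives $\lambda^{2}\leq\lambda h^{6/5}$, so the maximum of the three is $\lesssim h^{6/5}\lambda$, which is the claim. The main obstacle is thus the verification $m_{2}\leq m$ for the third (tilting-induced) term, and its resolution hinges on the algebraic fact that the regime hypothesis $h^{6/5}\lambda\geq(h\lambda)^{12/11}$ forces $\lambda\leq h^{6/5}$ — a single inequality that simultaneously kills the oscillatory shear contribution $\sqrt{2\lambda\delta}$ in the slope bound and disposes of the $\lambda^{2}$ term in the energy estimate.
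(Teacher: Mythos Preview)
Your proposal is correct and follows essentially the same approach as the paper's proof: both apply \prettyref{lem:FSadmissability} with the given parameters, rearrange the two regime hypotheses to obtain $m\geq\lambda h^{-2/5}$ and $\lambda\leq h^{6/5}$, use these to verify $m_{2}\leq m$, and then invoke $\lambda\leq h^{6/5}$ once more to absorb the $\lambda^{2}$ term in the energy estimate. Your write-up is in fact somewhat more explicit in the numerical checks than the paper's, but the logic is identical.
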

\begin{rem}
Although this choice of $n,k,\delta$ is sometimes optimal with respect
to the wrinkling construction considered in this section, it is suboptimal
at the level of the free-shear functional. More precisely, in the
regime of this result, one can achieve significantly less free-shear
energy by not wrinkling at all. Indeed, the scaling law of $h^{6/5}\lambda$
is not present in the statement of \prettyref{prop:FSscalinglaw}. \end{rem}
\begin{proof}
Note that $\delta\leq1$ since $h\leq\frac{1}{2^{5}}$. Now we check
the slope bound. We have that
\[
m_{2}=2\max\left\{ \sqrt{\frac{\lambda}{2h^{2/5}}},\frac{\lambda}{2h^{2/5}},\frac{1}{2\pi}\frac{1}{k}\frac{\lambda}{h^{2/5}}+2\pi\frac{2\sqrt{2}\lambda^{1/2}h^{1/5}}{n}\right\} .
\]
Rearranging the inequality $h^{6/5}\lambda\geq m^{-1/2}h\lambda^{3/2}$,
we find that $m\geq\lambda h^{-2/5}$ so that
\[
m_{2}\leq2\max\left\{ \sqrt{\frac{m}{2}},\frac{m}{2},\frac{1}{2\pi}\frac{m}{k}+2\pi\frac{2\sqrt{2}}{n}\lambda^{1/2}h^{1/5}\right\} =2\max\left\{ \frac{m}{2},\frac{1}{2\pi}\frac{m}{k}+2\pi\frac{2\sqrt{2}}{n}\lambda^{1/2}h^{1/5}\right\} .
\]
Rearranging the inequality $h^{6/5}\lambda\geq(h\lambda)^{12/11}$
we find that $\lambda\leq h^{6/5}$, and hence that
\[
m_{2}\leq2\max\left\{ \frac{m}{2},\frac{1}{2\pi}\frac{m}{k}+2\pi\frac{2\sqrt{2}}{n}h^{4/5}\right\} .
\]
Using that $h\leq\frac{1}{2^{5}}$, $m\geq2$, and $n,k\geq2$ we
see that 
\[
\frac{1}{2\pi}\frac{m}{k}+2\pi\frac{2\sqrt{2}}{n}h^{4/5}\leq\frac{m}{2}
\]
so that $m_{2}\leq m$ as required.

It follows from \prettyref{lem:FSadmissability} that $\phi_{\delta,n,k,\lambda}\in A_{\lambda,m}$,
and that
\[
FS_{h}(\phi_{\delta,n,k,\lambda})\lesssim\max\left\{ \lambda h^{6/5},\lambda^{2}\right\} =\lambda h^{6/5}.
\]

\end{proof}

\subsubsection{Blow-up rate of $D\phi$ as $h\to0$ for the free-shear functional\label{sub:BlowuprateFS}}

We can now make \prettyref{rem:blowupFS}  precise.
\begin{cor}
Let $\left\{ (h_{\alpha},\lambda_{\alpha})\right\} _{\alpha\in\R_{+}}$
be such that $h_{\alpha},\lambda_{\alpha}\in(0,\frac{1}{2}]$. Assume
that $h_{\alpha}\ll\lambda_{\alpha}^{5/6}$ as $\alpha\to\infty$,
and let $\{\phi^{\alpha}\}_{\alpha\in\R_{+}}$ satisfy 
\[
\phi^{\alpha}\in A_{\lambda_{\alpha},\infty}\quad\text{and}\quad FS_{h_{\alpha}}(\phi^{\alpha})=\min_{A_{\lambda_{\alpha},\infty}}FS_{h_{\alpha}}.
\]
 Then we have that
\[
h_{\alpha}^{-1/11}\lambda_{\alpha}^{9/22}\lesssim\norm{D\phi_{\rho}^{\alpha}}_{L^{\infty}(\Omega)}\quad\text{as}\ \alpha\to\infty.
\]
\end{cor}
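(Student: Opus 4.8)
The plan is to mimic the argument used for the large mandrel blow-up estimate in \prettyref{sub:Blowuprate_largemandrel}, now pairing the sharp upper bound for the free-shear functional against the refined, $L^{\infty}$-dependent lower bound from \prettyref{cor:FSLB-1}. Throughout I would suppress the index $\alpha$.

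First I would record the two inputs. Since $\phi$ minimizes $FS_h$ over $A_{\lambda,\infty}$, the upper bound part of \prettyref{prop:FSscalinglaw} (the case $m=\infty$) gives
\[
FS_h(\phi)=\min_{A_{\lambda,\infty}}FS_h\lesssim\min\{(h\lambda)^{12/11},\lambda^{2}\}\leq(h\lambda)^{12/11},
\]
while the refined lower bound in \prettyref{cor:FSLB-1} gives
\[
FS_h(\phi)\gtrsim\min\{\norm{D\phi_{\rho}}_{L^{\infty}(\Omega)}^{-1}h\lambda^{3/2},\lambda^{2}\}.
\]
Chaining the two yields $\min\{\norm{D\phi_{\rho}}_{L^{\infty}(\Omega)}^{-1}h\lambda^{3/2},\lambda^{2}\}\lesssim(h\lambda)^{12/11}$, so that
\[
\lambda^{2}\lesssim(h\lambda)^{12/11}\quad\text{or}\quad\norm{D\phi_{\rho}}_{L^{\infty}(\Omega)}^{-1}h\lambda^{3/2}\lesssim(h\lambda)^{12/11}.
\]
Rearranging (using $1-\tfrac{12}{11}=-\tfrac1{11}$ and $\tfrac32-\tfrac{12}{11}=\tfrac9{22}$), this reads $h\gtrsim\lambda^{5/6}$ or $\norm{D\phi_{\rho}}_{L^{\infty}(\Omega)}\gtrsim h^{-1/11}\lambda^{9/22}$.

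Finally I would rule out the first alternative using the hypothesis. Since the constants hidden in $\lesssim$ and $\gtrsim$ are absolute while $h_\alpha\ll\lambda_\alpha^{5/6}$, for all sufficiently large $\alpha$ the inequality $\lambda_\alpha^{2}\lesssim(h_\alpha\lambda_\alpha)^{12/11}$ cannot hold; concretely, $(h_\alpha\lambda_\alpha)^{12/11}/\lambda_\alpha^2=(h_\alpha/\lambda_\alpha^{5/6})^{12/11}\to0$, which contradicts such an inequality once $\alpha$ is large. Hence the second alternative must hold for all large $\alpha$, which is exactly the claimed estimate. Since every ingredient is already established earlier in the paper, there is essentially no obstacle here; the only point needing a touch of care is making the statement ``$h_\alpha\ll\lambda_\alpha^{5/6}$ excludes the $\lambda^2$-branch'' precise, and this is handled exactly as in the large mandrel corollary.
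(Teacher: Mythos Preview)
Your proof is correct and follows essentially the same approach as the paper: apply the upper bound from \prettyref{prop:FSscalinglaw} and the $L^\infty$-dependent lower bound from \prettyref{cor:FSLB-1}, obtain the dichotomy, and eliminate the $\lambda^2$-branch using $h_\alpha\ll\lambda_\alpha^{5/6}$. The paper's proof is slightly terser in dismissing the first alternative, but the argument is identical.
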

\begin{proof}
For ease of notation, we omit the index $\alpha$ in what follows.
By \prettyref{prop:FSscalinglaw} we have that
\[
FS_{h}(\phi)\lesssim(h\lambda)^{12/11}.
\]
Hence, by \prettyref{cor:FSLB-1}, it follows that 
\[
\lambda^{2}\lesssim(h\lambda)^{12/11}\quad\text{or}\quad\norm{D\phi_{\rho}}_{L^{\infty}(\Omega)}^{-1}h\lambda^{3/2}\lesssim(h\lambda)^{12/11}.
\]
Rearranging, we have that
\[
\lambda^{5/6}\lesssim h\quad\text{or}\quad h^{-1/11}\lambda^{9/22}\lesssim\norm{D\phi_{\rho}}_{L^{\infty}(\Omega)}.
\]
By assumption the first inequality does not hold, so the result follows.
\end{proof}

\subsection{Nonlinear model\label{sub:neutralmandrelLB_NL}}

By combining the interpolation inequalities used in the analysis of
the free-shear functional above and the uniform-in-mandrel lower bounds
from \prettyref{sub:largemandrelLB_nonlinear}, we obtain the following
lower bound in the neutral mandrel case.
\begin{prop}
\label{prop:NLneutralLBs}We have that
\[
\min_{A_{\lambda,1,m}^{NL}}\,E_{h}^{NL}-\cE_{b}^{NL}(1,h)\gtrsim_{m}\min\left\{ \max\{m^{-1}h\lambda^{3/2},(h\lambda)^{12/11}\},\lambda^{2}\right\} 
\]
whenever $h,\lambda\in(0,1]$ and $m\in(0,\infty)$.\end{prop}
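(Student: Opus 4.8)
The plan is to run the argument of \prettyref{prop:NLLBs_largemandr} with $\varrho=\varrho_0=1$, where now $(\varrho^2-1)\vee h^2=h^2$ and the radial displacement $\phi_\rho=\Phi_\rho-1$ is non-negative, and to combine the resulting estimates with the anisotropic interpolation inequalities from \prettyref{sec:Appendix} exactly as in the proof of \prettyref{prop:FSscalinglaw}. Write $\Delta^{NL}=E_h^{NL}(\Phi)-\cE_b^{NL}(1,h)$; since the claim is trivial when $\Delta^{NL}>1$, we may assume $\Delta^{NL}\leq1$. Specializing \prettyref{lem:NLmembrLB}, \prettyref{cor:NLbucklingcontrol}, and \prettyref{lem:NLbendingcontrol} to $\varrho=1$ furnishes three ingredients: the membrane control $(\Delta^{NL})^{1/2}\gtrsim\max\{\norm{\phi_\rho}_{L_z^2L_\theta^1},\norm{\partial_\theta\phi_\rho}_{L_z^4L_\theta^2}^2\}$ (together with $\norm{\phi_\rho}_{L^1(\Omega)}\lesssim\norm{\phi_\rho}_{L_z^2L_\theta^1}$, so that $\norm{\phi_\rho}_{L^1(\Omega)}\lesssim(\Delta^{NL})^{1/2}$); the buckling control $\lambda\abs{A}\lesssim_m\max\{\int_A\norm{\partial_z\phi_\rho}_{L_z^2}^2\,d\theta,(\Delta^{NL})^{1/2}\}$ for all $A\in\cB(I_\theta)$; and the bending control $\max\{h^{-2}\Delta^{NL},(\Delta^{NL})^{1/2}\}\gtrsim_m\norm{D^2\phi_\rho}_{L^2(\Omega)}^2$.

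The proof then proceeds by the same case analysis as in the large mandrel case. If $h^{-2}\Delta^{NL}\leq(\Delta^{NL})^{1/2}$, then $(\Delta^{NL})^{1/2}\gtrsim_m\norm{D^2\phi_\rho}_{L^2}^2\gtrsim\norm{\partial_z\phi_\rho}_{L^2}^2$ by Poincar\'e (using $\phi_\rho\in H_{\text{per}}^2$), and taking $A=I_\theta$ in the buckling control gives $\Delta^{NL}\gtrsim_m\lambda^2$. In the opposite case $h^{-2}\Delta^{NL}>(\Delta^{NL})^{1/2}$, so $\Delta^{NL}\gtrsim_m h^2\norm{D^2\phi_\rho}_{L^2}^2$; taking $A=I_\theta$ again we either get $\Delta^{NL}\gtrsim_m\lambda^2$ outright, or else $\norm{D\phi_\rho}_{L^2(\Omega)}^2\geq\norm{\partial_z\phi_\rho}_{L^2}^2\gtrsim_m\lambda$, and we run two interpolation chains on $\phi_\rho$. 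First, \prettyref{lem:2dinterp-1} combined with the slope bound $\norm{D\phi_\rho}_{L^\infty}\leq m$, the bound $\norm{\phi_\rho}_{L^1}\lesssim(\Delta^{NL})^{1/2}$, and $\norm{D^2\phi_\rho}_{L^2}^2\lesssim_m h^{-2}\Delta^{NL}$ yields $\lambda\lesssim_m\norm{D\phi_\rho}_{L^2}^2\lesssim_m m^{2/3}h^{-2/3}(\Delta^{NL})^{2/3}$; solving for $\Delta^{NL}$ turns the factor $m^{2/3}$ into the advertised $m^{-1}$ and gives $\Delta^{NL}\gtrsim_m m^{-1}h\lambda^{3/2}$. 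Second, chaining the third interpolation inequality of \prettyref{lem:2dinterp} with the anisotropic inequality \prettyref{lem:mixedinterp_1} exactly as in \prettyref{cor:FSLB-2}, and inserting $\norm{\partial_\theta\phi_\rho}_{L_z^4L_\theta^2}\lesssim(\Delta^{NL})^{1/4}$, $\norm{\phi_\rho}_{L_z^2L_\theta^1}\lesssim(\Delta^{NL})^{1/2}$, and $\norm{D^2\phi_\rho}_{L^2}\lesssim_m h^{-1}(\Delta^{NL})^{1/2}$, yields $\lambda^{1/2}\lesssim_m h^{-1/2}(\Delta^{NL})^{11/24}$, hence $\Delta^{NL}\gtrsim_m(h\lambda)^{12/11}$. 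Collecting the cases gives $\Delta^{NL}\gtrsim_m\min\{\lambda^2,\max\{m^{-1}h\lambda^{3/2},(h\lambda)^{12/11}\}\}$, which is the assertion.

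I do not anticipate a genuine obstacle here: the substantive work — splitting the excess energy into membrane and bending parts, extracting the tensile hoop stress that buckling induces against the mandrel, controlling $D^2\Phi_\rho$ through the nonlinear bending term, and the buckling and cross-term estimates — has already been carried out in \prettyref{sub:largemandrelLB_nonlinear} and applies verbatim at $\varrho=1$, while the needed Gagliardo--Nirenberg inequalities are assembled in \prettyref{sec:Appendix}. The only care required is bookkeeping: tracking how the $L^\infty$ slope bound $m$ propagates through the interpolation so that the final estimate carries only the explicit polynomial factor $m^{-1}$ in front of $h\lambda^{3/2}$, and checking that the anisotropic interpolation exponents compose to the power $11/12$ so that the second chain produces exactly $(h\lambda)^{12/11}$ — these are the same computations that appear in \prettyref{cor:FSLB-1} and \prettyref{cor:FSLB-2}, now with the free-shear energy replaced by $\Delta^{NL}$ and the membrane/bending inputs supplied by \prettyref{lem:NLmembrLB} and \prettyref{lem:NLbendingcontrol}.
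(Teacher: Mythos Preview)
Your proposal is correct and follows essentially the same route as the paper: specialize \prettyref{lem:NLmembrLB}, \prettyref{cor:NLbucklingcontrol}, and \prettyref{lem:NLbendingcontrol} to $\varrho=\varrho_0=1$, perform the same case split as in \prettyref{prop:NLLBs_largemandr}, and then run the two interpolation chains exactly as in \prettyref{cor:FSLB-1} and \prettyref{cor:FSLB-2} with $\Delta^{NL}$ in place of $FS_h$. The paper's own proof is terser (it simply invokes these references), but your expanded bookkeeping of the exponents --- including the $m^{-1}h\lambda^{3/2}$ and $(h\lambda)^{12/11}$ computations --- matches it step for step.
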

\begin{proof}
Let $\Phi\in A_{\lambda,1,m}^{NL}$ and introduce the radial displacement
$\phi_{\rho}=\Phi_{\rho}-1$. Recall the definition of the excess
energy given in \prettyref{eq:NLexcess}. Applying \prettyref{lem:NLmembrLB},
\prettyref{cor:NLbucklingcontrol}, and \prettyref{lem:NLbendingcontrol}
in the case $\varrho=\varrho_{0}=1$, we obtain the following estimates:
\[
\Delta^{NL}\gtrsim\norm{\phi_{\rho}}_{L_{z}^{2}L_{\theta}^{1}}^{2}\vee\norm{\partial_{\theta}\phi_{\rho}}_{L_{z}^{4}L_{\theta}^{2}}^{4},
\]
\[
\max\left\{ \frac{1}{h^{2}}\Delta^{NL},(\Delta^{NL})^{1/2}\right\} \gtrsim_{m}\norm{D^{2}\phi_{\rho}}_{L^{2}(\Omega)}^{2},
\]
and
\[
\max\{\norm{\partial_{z}\phi_{\rho}}_{L^{2}(\Omega)}^{2},(\Delta^{NL})^{1/2}\}\gtrsim_{m}\lambda.
\]
As in the proof of \prettyref{prop:NLLBs_largemandr}, we see that
either $\Delta^{NL}\gtrsim_{m}\lambda^{2}$ or else 
\[
\Delta^{NL}\gtrsim_{m}\max\left\{ \norm{\phi_{\rho}}_{L_{z}^{2}L_{\theta}^{1}}^{2},\norm{\partial_{\theta}\phi_{\rho}}_{L_{z}^{4}L_{\theta}^{2}}^{4},h^{2}\norm{D^{2}\phi_{\rho}}_{L^{2}(\Omega)}^{2}\right\} 
\]
and 
\[
\norm{\partial_{z}\phi_{\rho}}_{L^{2}(\Omega)}^{2}\gtrsim_{m}\lambda.
\]
Now the result follows from the interpolation inequalities in \prettyref{sec:Appendix},
just as in the proofs of \prettyref{cor:FSLB-1} and \prettyref{cor:FSLB-2}.
\end{proof}

\section{Appendix\label{sec:Appendix}}

In this appendix, we collect the interpolation inequalities that
were used in \prettyref{sec:largemandrelLB} and \prettyref{sec:neutralmandrelLB}.
We call $I=[-\frac{1}{2},\frac{1}{2}]$ and $Q=[-\frac{1}{2},\frac{1}{2}]^{2}$.

\subsection{Isotropic interpolation inequalities}

The following periodic Gagliardo-Nirenberg inequalities are standard. They can, for example, be easily deduced from their non-periodic analogs (see, e.g., \cite{friedman1969partial} for the non-periodic case).

\begin{lem}
\label{lem:1dinterp} \label{lem:2dinterp} We have that
\[
\norm{f}_{L^{1}(I)}^{2/5}\norm{f''}_{L^{2}(I)}^{3/5}\gtrsim\norm{f'}_{L^{2}(I)}
\]
for all $f\in H_{\text{per}}^{2}(I)$, and that 
\begin{align*}
\norm{f}_{L^{1}(Q)}^{1/2}\norm{D^{2}f}_{L^{2}(Q)}^{1/2} & \gtrsim\norm{Df}_{L^{4/3}(Q)}\\
\norm{f}_{L^{2}(Q)}^{1/2}\norm{D^{2}f}_{L^{2}(Q)}^{1/2} & \gtrsim\norm{Df}_{L^{2}(Q)}
\end{align*}
for all $f\in H_{\text{per}}^{2}(Q)$. 
\end{lem}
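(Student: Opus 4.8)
The plan is to reduce each inequality to its classical non-periodic counterpart on the same domain (these are standard; see \cite{friedman1969partial}) and then exploit periodicity to delete the lower-order term that the non-periodic versions carry. Concretely, the Gagliardo--Nirenberg inequalities on $I=[-\frac12,\frac12]$ and $Q=[-\frac12,\frac12]^2$ take the form
\begin{align*}
\norm{f'}_{L^2(I)}&\lesssim\norm{f}_{L^1(I)}^{2/5}\norm{f''}_{L^2(I)}^{3/5}+\norm{f}_{L^1(I)},\\
\norm{Df}_{L^{4/3}(Q)}&\lesssim\norm{f}_{L^1(Q)}^{1/2}\norm{D^2f}_{L^2(Q)}^{1/2}+\norm{f}_{L^1(Q)},\\
\norm{Df}_{L^2(Q)}&\lesssim\norm{f}_{L^2(Q)}^{1/2}\norm{D^2f}_{L^2(Q)}^{1/2}+\norm{f}_{L^2(Q)}
\end{align*}
for every $f$ in $H^2$ of the relevant domain, and the content of the lemma is exactly that, for periodic $f$, the final summand on each right-hand side may be dropped. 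I would handle all three cases by a single mechanism.

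The periodic ingredient is a Poincar\'e inequality requiring no boundary condition: if $f$ is periodic and $H^2$, then each $\partial_i f$ has vanishing average over the cell (its integral telescopes to cancelling boundary traces), so the mean-zero Poincar\'e--Wirtinger inequality on $I$ (resp.\ $Q$) gives $\norm{Df}_{L^2}\lesssim\norm{D^2f}_{L^2}$; together with $\norm{Df}_{L^{4/3}(Q)}\le\norm{Df}_{L^2(Q)}$ (H\"older on the unit-measure cube) this yields, in each case, a ``pure bending'' bound $\norm{Lf}\lesssim\norm{D^2f}_{L^2}$, where $Lf$ denotes the left-hand side of the inequality under consideration. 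I would then split into two cases. Writing the target schematically as $\norm{Lf}\lesssim\norm{f}_q^{1-\alpha}\norm{D^2f}_{L^2}^{\alpha}+\norm{f}_q$ with $\alpha\in\{3/5,1/2\}$ and $q\in\{1,2\}$: if $\norm{D^2f}_{L^2}\ge\norm{f}_q$, then $\norm{f}_q=\norm{f}_q^{1-\alpha}\norm{f}_q^{\alpha}\le\norm{f}_q^{1-\alpha}\norm{D^2f}_{L^2}^{\alpha}$, so the lower-order term is absorbed into the main term and the non-periodic inequality already finishes the job; if instead $\norm{D^2f}_{L^2}<\norm{f}_q$, I discard the non-periodic inequality and use the pure-bending bound, $\norm{Lf}\lesssim\norm{D^2f}_{L^2}=\norm{D^2f}_{L^2}^{1-\alpha}\norm{D^2f}_{L^2}^{\alpha}\le\norm{f}_q^{1-\alpha}\norm{D^2f}_{L^2}^{\alpha}$. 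Either way the periodic inequality follows, and it is then applied to $f=\phi_\rho$ (or $\Phi_\rho-\varrho$, etc.) in the lower-bound arguments of \prettyref{sec:largemandrelLB} and \prettyref{sec:neutralmandrelLB}.

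I do not expect a genuine obstacle here; the only point to keep in view is that periodicity enters solely through the vanishing of the cell-averages of $\partial_i f$, which is precisely what makes Poincar\'e available, and that the case split on $\norm{D^2f}_{L^2}$ versus $\norm{f}_q$ is the device upgrading the inhomogeneous non-periodic estimate to the homogeneous periodic one. (Alternatively one can argue purely by Fourier series: the $L^2$--$L^2$ inequality is then immediate from Cauchy--Schwarz applied to $\sum_k\abs{2\pi k}^2\abs{\hat f_k}^2$, and for the $L^1$-based inequalities one truncates the frequency sum at a radius $K$, estimating the low modes via $\abs{\hat f_k}\le\norm{f}_{L^1}$ and the high modes via $\norm{D^2f}_{L^2}$, then optimizes over $K\in\N$; but the reduction to \cite{friedman1969partial} is shorter to record.)
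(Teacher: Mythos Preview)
Your proposal is correct and follows exactly the route the paper indicates: the paper does not give a proof at all, merely asserting that these periodic inequalities ``can, for example, be easily deduced from their non-periodic analogs (see, e.g., \cite{friedman1969partial}),'' and your argument supplies precisely that deduction. The mechanism you use---Poincar\'e on the mean-zero derivatives to obtain $\norm{Df}_{L^2}\lesssim\norm{D^2f}_{L^2}$, followed by a case split on $\norm{D^2f}_{L^2}$ versus $\norm{f}_q$ to absorb the inhomogeneous term---is the standard way to carry this out, so there is nothing to correct.
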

Combing H\"older's inequality with the second inequality above, we deduce the following result.
\begin{lem}
\label{lem:2dinterp-1} We have that 
\[
\norm{Df}_{L^{\infty}(Q)}^{1/3}\norm{f}_{L^{1}(Q)}^{1/3}\norm{D^{2}f}_{L^{2}(Q)}^{1/3}\gtrsim\norm{Df}_{L^{2}(Q)}
\]
for all $f\in H_{\text{per}}^{2}(Q)$.\end{lem}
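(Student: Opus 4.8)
The statement to prove is \prettyref{lem:2dinterp-1}: for all $f\in H_{\text{per}}^{2}(Q)$,
\[
\norm{Df}_{L^{\infty}(Q)}^{1/3}\norm{f}_{L^{1}(Q)}^{1/3}\norm{D^{2}f}_{L^{2}(Q)}^{1/3}\gtrsim\norm{Df}_{L^{2}(Q)}.
\]

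The plan is to interpolate the middle exponent $L^{2}$ between $L^{\infty}$ and $L^{1}$ applied to the gradient $Df$, and then feed the resulting $L^{2}$ norm of $Df$ into the second Gagliardo--Nirenberg inequality of \prettyref{lem:2dinterp}. First I would write, by the standard $L^{p}$-interpolation (a one-line consequence of H\"older's inequality with exponents $2$ and $2$), that for each component of the gradient
\[
\norm{Df}_{L^{2}(Q)}^{2}\leq\norm{Df}_{L^{\infty}(Q)}\norm{Df}_{L^{1}(Q)},
\]
so that $\norm{Df}_{L^{2}(Q)}\leq\norm{Df}_{L^{\infty}(Q)}^{1/2}\norm{Df}_{L^{1}(Q)}^{1/2}$. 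This reduces the problem to controlling $\norm{Df}_{L^{1}(Q)}$ in terms of $\norm{f}_{L^{1}(Q)}$ and $\norm{D^{2}f}_{L^{2}(Q)}$. For that I would apply the first inequality of \prettyref{lem:2dinterp}, namely $\norm{Df}_{L^{4/3}(Q)}\lesssim\norm{f}_{L^{1}(Q)}^{1/2}\norm{D^{2}f}_{L^{2}(Q)}^{1/2}$, together with the fact that $|Q|=1$ so that $\norm{Df}_{L^{1}(Q)}\leq\norm{Df}_{L^{4/3}(Q)}$ by Jensen (or H\"older). Combining the two displays gives
\[
\norm{Df}_{L^{2}(Q)}\lesssim\norm{Df}_{L^{\infty}(Q)}^{1/2}\norm{f}_{L^{1}(Q)}^{1/4}\norm{D^{2}f}_{L^{2}(Q)}^{1/4}.
\]

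This is not yet the claimed inequality — the exponents are $(1/2,1/4,1/4)$ rather than $(1/3,1/3,1/3)$ — so a second step is needed to rebalance. I would bootstrap: square the claimed bound on $\norm{Df}_{L^{2}(Q)}^{2}$ obtained above against itself in a way that trades $L^{\infty}$-weight for the other two factors. Concretely, write $\norm{Df}_{L^{2}(Q)}^{2}\leq\norm{Df}_{L^{\infty}(Q)}\norm{Df}_{L^{1}(Q)}$ and estimate $\norm{Df}_{L^{1}(Q)}$ by interpolating \emph{it} between $\norm{f}_{L^{1}}$ and $\norm{Df}_{L^{2}}$ — indeed there is an inequality of the form $\norm{Df}_{L^{1}(Q)}\lesssim\norm{f}_{L^{1}(Q)}^{1/3}\norm{D^{2}f}_{L^{2}(Q)}^{1/3}\norm{f}_{L^{1}(Q)}^{?}\cdots$; more cleanly, I expect the cleanest route is to apply the scaling/dimensional-analysis heuristic to pin down that the only homogeneous inequality of the asserted shape has exponents $(1/3,1/3,1/3)$, and then prove it directly by the same two GN inequalities but split differently, e.g.\ bound $\norm{Df}_{L^{2}}^{3}=\norm{Df}_{L^{2}}\cdot\norm{Df}_{L^{2}}^{2}\leq\norm{Df}_{L^{2}}\cdot\norm{Df}_{L^{\infty}}\norm{Df}_{L^{1}}$ and then insert $\norm{Df}_{L^{2}}\lesssim\norm{f}_{L^{2}}^{1/2}\norm{D^{2}f}_{L^{2}}^{1/2}$, $\norm{Df}_{L^{1}}\lesssim\norm{f}_{L^{1}}^{1/2}\norm{D^{2}f}_{L^{2}}^{1/2}$, and finally $\norm{f}_{L^{2}}\lesssim\norm{f}_{L^{1}}^{1/2}\norm{D^{2}f}_{L^{2}}^{1/2}$ (the $d=2$ interpolation $\norm{f}_{L^2}\lesssim \norm{f}_{L^1}^{1/2}\|f\|_{H^2}^{1/2}$, which again follows from \prettyref{lem:2dinterp} after using Poincar\'e on $f-\bar f$ to absorb lower-order terms, legitimate here because $f$ is periodic). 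Multiplying these out yields exactly $\norm{Df}_{L^{2}(Q)}^{3}\lesssim\norm{Df}_{L^{\infty}(Q)}\norm{f}_{L^{1}(Q)}\norm{D^{2}f}_{L^{2}(Q)}$, which is the claim.

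The main obstacle, such as it is, is purely bookkeeping: making sure the exponents in the chain of interpolations multiply to $(1/3,1/3,1/3)$ and that every intermediate inequality is one that is either elementary (H\"older/Jensen on the unit square, where $|Q|=1$ removes all volume factors) or a verbatim instance of \prettyref{lem:2dinterp}. There is no genuine analytic difficulty since all the hard Gagliardo--Nirenberg content is imported; the periodicity of $f$ is used only to justify Poincar\'e-type absorption of the mean value when converting the homogeneous GN estimates of \prettyref{lem:2dinterp} into the estimate $\norm{f}_{L^2(Q)}\lesssim\norm{f}_{L^1(Q)}^{1/2}\norm{D^2 f}_{L^2(Q)}^{1/2}$ that the last step requires. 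I would present the argument as the short chain of displayed inequalities above, noting at the start that $|Q|=1$, and conclude by multiplying them together.
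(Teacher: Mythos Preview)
Your route is more complicated than needed and, as written, contains a gap. The paper's proof is a two-liner: interpolate $L^{2}$ between $L^{\infty}$ and $L^{4/3}$ rather than $L^{1}$. By log-convexity of $L^{p}$ norms (H\"older),
\[
\norm{Df}_{L^{2}(Q)}\leq \norm{Df}_{L^{\infty}(Q)}^{1/3}\,\norm{Df}_{L^{4/3}(Q)}^{2/3},
\]
and then one application of the first $2$d inequality in \prettyref{lem:2dinterp}, namely $\norm{Df}_{L^{4/3}}\lesssim \norm{f}_{L^{1}}^{1/2}\norm{D^{2}f}_{L^{2}}^{1/2}$, gives exactly the exponents $(1/3,1/3,1/3)$. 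Your choice to pair $L^{\infty}$ with $L^{1}$ produces $(1/2,1/4,1/4)$ and forces the bootstrap.

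That bootstrap has a genuine gap: the step $\norm{f}_{L^{2}}\lesssim \norm{f}_{L^{1}}^{1/2}\norm{D^{2}f}_{L^{2}}^{1/2}$ is false in general---take $f\equiv 1$, so the right-hand side vanishes while the left does not. Poincar\'e on $f-\bar f$ handles the oscillatory part but leaves the mean $\bar f$, which contributes $|\bar f|\sim\norm{f}_{L^{1}}$ to $\norm{f}_{L^{2}}$ and is not controlled by $\norm{D^{2}f}_{L^{2}}$. One can repair the chain by splitting into the cases $\norm{D^{2}f}_{L^{2}}\geq \norm{f}_{L^{1}}$ (where the additive $\norm{f}_{L^{1}}$ term is absorbed and your inequality becomes valid) and $\norm{D^{2}f}_{L^{2}}<\norm{f}_{L^{1}}$ (where the spurious term $\norm{Df}_{L^{\infty}}\norm{f}_{L^{1}}^{3/4}\norm{D^{2}f}_{L^{2}}^{5/4}$ that emerges is dominated by the target $\norm{Df}_{L^{\infty}}\norm{f}_{L^{1}}\norm{D^{2}f}_{L^{2}}$). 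But all of this is unnecessary once you use the $L^{4/3}$ endpoint directly.
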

%

\subsection{An anisotropic interpolation inequality}

The next lemma was used to interpolate between the mixed
norms appearing in the discussion of the neutral mandrel case (see \prettyref{sec:neutralmandrelLB}).
Here, we refer to a point $x\in Q$ by its coordinates, i.e., $x=(x_{1},x_{2})$
where $x_{i}\in I$, $i=1,2$. Recall the notation for mixed $L^{p}$-norms given in \prettyref{sub:notation}.

\begin{lem}
\label{lem:mixedinterp_1} We have that
\[
\norm{f}_{L_{x_{2}}^{2}L_{x_{1}}^{1}}+\norm{\partial_{x_{1}}f}_{L_{x_{2}}^{4}L_{x_{1}}^{2}}^{1/3}\norm{f}_{L_{x_{2}}^{2}L_{x_{1}}^{1}}^{2/3}\gtrsim\norm{f}_{L^{2}(Q)}
\]
for all $f\in W^{1,4}(Q)$.\end{lem}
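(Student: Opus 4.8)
The plan is to prove the inequality
\[
\norm{f}_{L_{x_{2}}^{2}L_{x_{1}}^{1}}+\norm{\partial_{x_{1}}f}_{L_{x_{2}}^{4}L_{x_{1}}^{2}}^{1/3}\norm{f}_{L_{x_{2}}^{2}L_{x_{1}}^{1}}^{2/3}\gtrsim\norm{f}_{L^{2}(Q)}
\]
by first reducing to a one-dimensional statement along the $x_{1}$-slices and then combining the slicewise estimate with H\"older's inequality in $x_{2}$. The key one-dimensional fact is the Gagliardo--Nirenberg-type inequality on $I=[-\frac12,\frac12]$: for $g\in W^{1,2}(I)$,
\[
\norm{g}_{L^{2}(I)}\lesssim\norm{g}_{L^{1}(I)}+\norm{g'}_{L^{2}(I)}^{1/3}\norm{g}_{L^{1}(I)}^{2/3}.
\]
(This is a standard interpolation inequality; the additive $\norm{g}_{L^{1}(I)}$ term is necessary because the interpolation estimate $\norm{g}_{L^{2}}\lesssim\norm{g'}_{L^{2}}^{1/3}\norm{g}_{L^{1}}^{2/3}$ holds only modulo the low-frequency part, and on a bounded interval one controls the mean of $g$ by $\norm{g}_{L^{1}(I)}$.) Applying this with $g(\cdot)=f(\cdot,x_{2})$ for a.e.\ fixed $x_{2}\in I$ gives
\[
\norm{f}_{L_{x_{1}}^{2}}(x_{2})\lesssim\norm{f}_{L_{x_{1}}^{1}}(x_{2})+\norm{\partial_{x_{1}}f}_{L_{x_{1}}^{2}}(x_{2})^{1/3}\norm{f}_{L_{x_{1}}^{1}}(x_{2})^{2/3}.
\]

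Next I would take the $L^{2}$ norm in $x_{2}$ of both sides. For the left side this is exactly $\norm{f}_{L^{2}(Q)}$ by Fubini. For the first term on the right, $\norm{\,\norm{f}_{L_{x_{1}}^{1}}(x_{2})\,}_{L_{x_{2}}^{2}}=\norm{f}_{L_{x_{2}}^{2}L_{x_{1}}^{1}}$ by definition. For the second term, I apply H\"older's inequality in $x_{2}$ with exponents $6$ and $3$ (so that $\tfrac{1}{6}+\tfrac{2}{6}=\tfrac12$, written via $\tfrac{1}{6}\cdot\tfrac{6}{6}$... more precisely: $\tfrac13\cdot\tfrac12$ weight via exponents $6$ on the first factor and $3$ on the second, since $(1/3)(1/6)+(2/3)(1/3)$ matches):
\[
\norm{\,\norm{\partial_{x_{1}}f}_{L_{x_{1}}^{2}}^{1/3}\norm{f}_{L_{x_{1}}^{1}}^{2/3}\,}_{L_{x_{2}}^{2}}\le\norm{\,\norm{\partial_{x_{1}}f}_{L_{x_{1}}^{2}}\,}_{L_{x_{2}}^{4}}^{1/3}\norm{\,\norm{f}_{L_{x_{1}}^{1}}\,}_{L_{x_{2}}^{2}}^{2/3}=\norm{\partial_{x_{1}}f}_{L_{x_{2}}^{4}L_{x_{1}}^{2}}^{1/3}\norm{f}_{L_{x_{2}}^{2}L_{x_{1}}^{1}}^{2/3},
\]
where the exponents are chosen so that $(1/3)(1/4)+(2/3)(1/2)=1/12+1/3=5/12$, hmm — I will instead verify the correct H\"older split directly: we need $\tfrac{1}{3p}=\tfrac{1}{4}\cdot\tfrac{1}{3}$ and $\tfrac{2}{3q}=\tfrac{1}{2}\cdot\tfrac{2}{3}$ with $\tfrac1p+\tfrac1q=\tfrac12$; this forces $p=3$, $q=3$, and indeed $\tfrac13\cdot\tfrac13+\tfrac13\cdot\tfrac13$... the clean route is to raise the slicewise inequality to the power $2$ first, integrate in $x_{2}$, and apply H\"older to the resulting integral of $\norm{\partial_{x_1}f}_{L^2_{x_1}}^{2/3}\norm{f}_{L^1_{x_1}}^{4/3}$ with exponents $6$ and $3/2$; then $(2/3)/6=1/9$ no — the exponents to use are $3$ and $3/2$ since $(2/3)\cdot 3=2$ gives $\norm{\partial_{x_1}f}^{2}$ integrated, i.e.\ $\norm{\partial_{x_1}f}_{L^4_{x_2}L^2_{x_1}}^{2/3}$, and $(4/3)\cdot(3/2)=2$ gives $\norm{f}_{L^2_{x_2}L^1_{x_1}}^{4/3}$. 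I will carry out exactly this computation in the writeup.

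The main obstacle, such as it is, is purely bookkeeping: getting the H\"older exponents and the powers to match up correctly after squaring the slicewise inequality. There is no analytic difficulty beyond the standard one-dimensional Gagliardo--Nirenberg inequality, which I will cite (it follows from the non-periodic version as in \cite{friedman1969partial}, or can be proved directly by writing $f(x_1)$ in terms of its mean plus $\int f'$, estimating the mean by $\norm f_{L^1(I)}$, and using $\norm{f-\bar f}_{L^2(I)}\lesssim\norm{f-\bar f}_{L^1(I)}^{2/3}\norm{(f-\bar f)'}_{L^2(I)}^{1/3}$ on the zero-mean part). A minor point to note is that $W^{1,4}(Q)$-regularity of $f$ guarantees that a.e.\ $x_{1}$-slice lies in $W^{1,2}(I)$ (indeed $W^{1,4}(I)$), so the slicewise application is justified by Fubini.
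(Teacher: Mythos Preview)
Your approach is correct and is exactly the paper's: apply the one-dimensional Gagliardo--Nirenberg inequality $\norm{g}_{L^2(I)}\lesssim\norm{g}_{L^1(I)}+\norm{g'}_{L^2(I)}^{1/3}\norm{g}_{L^1(I)}^{2/3}$ on each $x_1$-slice, then take $L^2_{x_2}$ and use H\"older on the product term.

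One bookkeeping point worth cleaning up: the H\"older split you settle on (exponents $3$ and $3/2$ applied to $\int A^{2/3}B^{4/3}\,dx_2$ with $A=\norm{\partial_{x_1}f}_{L^2_{x_1}}$, $B=\norm{f}_{L^1_{x_1}}$) actually yields $\norm{A}_{L^2_{x_2}}^{1/3}\norm{B}_{L^2_{x_2}}^{2/3}$, not $\norm{A}_{L^4_{x_2}}^{1/3}\norm{B}_{L^2_{x_2}}^{2/3}$ as you write. There is no pair of H\"older exponents that lands exactly on $L^4_{x_2}$ for $A$ and $L^2_{x_2}$ for $B$ simultaneously; the stated inequality follows only after invoking $\norm{A}_{L^2(I)}\le\norm{A}_{L^4(I)}$, which holds because $|I|=1$. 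The paper is equally terse on this point, so your proof matches it once you make this embedding explicit.
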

\begin{proof}
By a standard one-dimensional Gagliardo-Nirenberg interpolation inequality,
we have that
\[
\norm{f}_{L_{x_{1}}^{2}}\lesssim\norm{\partial_{x_{1}}f}_{L_{x_{1}}^{2}}^{1/3}\norm{f}_{L_{x_{1}}^{1}}^{2/3}+\norm{f}_{L_{x_{1}}^{1}}
\]
for a.e.\ $x_2 \in I$.
After integrating and applying H\"older's inequality, it follows that 
\[
\norm{f}_{L_{x_{2}}^{2}L_{x_{1}}^{2}}\lesssim\norm{\norm{\partial_{x_{1}}f}_{L_{x_{1}}^{2}}^{1/3}\norm{f}_{L_{x_{1}}^{1}}^{2/3}}_{L_{x_{2}}^{2}}+\norm{f}_{L_{x_{2}}^{2}L_{x_{1}}^{1}}\lesssim \norm{\partial_{x_{1}}f}_{L_{x_{2}}^{4}L_{x_{1}}^{2}}^{1/3}\norm{f}_{L_{x_{2}}^{2}L_{x_{1}}^{1}}^{2/3} + \norm{f}_{L_{x_{2}}^{2}L_{x_{1}}^{1}}.
\]

\end{proof}
\bibliographystyle{amsplain}
\bibliography{cylreferences_abbr}

\end{document}